\newcommand\NN{\mathbb{N}}
\newcommand\RR{\mathbb{R}}
\newcommand\ZZ{\mathbb{Z}}
\newcommand\E{\mathcal{E}}
\newcommand\Y{\mathcal{Y}}
\newcommand\Z[1]{\mathcal{Z}_{#1}}
\newcommand\Cinfini{\mathcal{C}^{\infty}(\RR,\RR)}
\newcommand\wt{\widetilde}
\newcommand\Qb{\wt{Q}_b}
\newcommand\Psib{\wt{\Psi}_b}
\newcommand\charfunc[1]{{\mathbf{1}}_{#1}}
\newcommand\e{\varepsilon}
\newcommand\g{\gamma}
\renewcommand\leq{\leqslant}
\renewcommand\geq{\geqslant}
\newcommand\m{\mbox}
\newcommand\be{\begin{equation}}
\newcommand\ee{\end{equation}}
\theoremstyle{plain}
\newtheorem{theorem}{Theorem}[section]
\newtheorem{lemma}[theorem]{Lemma}
\newtheorem{proposition}[theorem]{Proposition}
\newtheorem{corollary}[theorem]{Corollary}
\theoremstyle{definition}
\newtheorem{remark}[theorem]{Remark}
\newtheorem{notation}[theorem]{Notation}
\numberwithin{equation}{section}
\title[Minimal mass solution for gKdV]{Sharp asymptotics for the minimal mass blow up solution of critical gKdV equation}
\author[V. Combet]{Vianney Combet}
\address{Univ.~Lille, CNRS, UMR 8524 - Laboratoire Paul Painlevé, F-59000 Lille, France}
\email{vianney.combet@math.univ-lille1.fr}
\author[Y. Martel]{Yvan Martel}
\address{CMLS, École Polytechnique, CNRS, Université Paris-Saclay, 91128 Palaiseau, France}
\email{yvan.martel@polytechnique.edu}
\begin{document}

\begin{abstract}
Let $S$ be a minimal mass blow up solution of the critical generalized KdV equation as constructed in~\cite{MMR2}.
We prove both time and space sharp asymptotics for $S$ close to the blow up time.
Let $Q$ be the unique ground state of~(gKdV), satisfying $Q''+Q^5=Q$.

First, we show that there exist
universal smooth profiles $Q_k\in\mathcal{S}(\RR)$ (with $Q_0=Q$) and a constant $c_0\in\RR$ such that,
fixing the blow up time at $t=0$ and appropriate scaling and translation parameters, $S$ satisfies,
for any $m\geq 0$,
\[
\partial_x^m S(t) - \sum_{k=0}^{[m/2]} \frac 1{t^{\frac 12+m-2k}} Q_k^{(m-k)}\left(\frac{\cdot+ \frac1t}{t}+c_0\right)\to 0\quad
\m{ in }\ L^2 \m{ as }\ t\downarrow 0.
\]
Second, we prove that, for $0<t\ll 1$, $x\leq -\frac 1t -1$,
\[
S(t,x) \sim - \frac 12 \|Q\|_{L^1} |x|^{-3/2},
\]
and related bounds for the derivatives of $S(t)$ of any order.
We also prove $\int_{\RR} S(t,x)\,dx=0$.
\end{abstract}

\maketitle

\section{Introduction}

\subsection{Blow up and dynamics around the soliton for critical (gKdV)}

We consider the mass critical generalized Korteweg--de Vries equation:
\be \label{kdv}
\mathrm{(gKdV)}\quad\left\{
\begin{alignedat}{2}
&u_t + (u_{xx} + u^5)_x =0, \quad && (t,x)\in [0,T)\times\RR, \\
&u(0,x)= u_0(x), && x\in\RR.
\end{alignedat}
\right.
\ee

We first recall a few well-known facts about equation~\eqref{kdv}.
The Cauchy problem is locally well-posed in the energy space $H^1(\RR)$ from~\cite{KPV,KPV2}:
given $u_0 \in H^1$, there exists a unique (in a certain sense) maximal solution $u$ of~\eqref{kdv} in $\mathcal{C}([0,T),H^1)$ and
\be \label{blowucifoi}
T<+\infty \quad \m{implies} \quad \lim_{t\uparrow T} \|u_x(t)\|_{L^2} = +\infty.
\ee
Moreover, such $H^1$ solutions satisfy the conservation of mass and energy:
\[
M(u(t))=\int u^2(t)= M(u_0), \quad E(u(t))= \frac12 \int u_x^2(t) - \frac16 \int u^6(t)= E(u_0).
\]
Equation~\eqref{kdv} is invariant under scaling and translation: if $u$ is a solution of~\eqref{kdv}
then $u_{\lambda,x_0}$, defined by
\[
u_{\lambda,x_0}(t,x)=\lambda^{\frac12} u(\lambda^3t,\lambda x+x_0), \quad \lambda>0, \ x_0\in \RR,
\]
is also a solution of~\eqref{kdv}.

Recall that solitons are solutions of~\eqref{kdv} of the form $u(t,x)=Q(x-t)$,
where $Q$ is the ground state solitary wave
\[
Q(x) = \left(\frac3{\cosh^2\left(2x\right)}\right)^{\frac14},\quad Q''+Q^5=Q,
\]
which attains the best constant in the sharp Gagliardo--Nirenberg inequality, as proved in~\cite{W1983}:
\[
\forall v\in H^1,\quad \int v^6 \leq 3 \int v_x^2 \left(\frac{\int v^2}{\int Q^2}\right)^2.
\]
The conservation of mass and energy and the blow up criterion~\eqref{blowucifoi}
ensure that $H^1$ initial data with \emph{subcritical mass},
\emph{i.e.} $\|u_0\|_{L^2}<\|Q\|_{L^2}$, generate global in time solutions.

\medskip

The existence and properties of blow up solutions for~\eqref{kdv} were studied in two series of works
for $H^1$ initial data satisfying the following \emph{slightly supercritical mass} constraint:
\be \label{in:1}
\|Q\|_{L^2} \leq \|u_0\|_{L^2} \leq (1+\delta_0) \|Q\|_{L^2} \quad \m{where }\ 0<\delta_0\ll 1.
\ee

In the first series of works~\cite{MMjmpa,MMgafa,MMannals,MMjams,Mjams}, the main results can be summarized as follows:
(1) proof of blow up in finite or infinite time for general $H^1$ initial data with negative energy;
(2) proof of blow up in finite time with a lower bound on the blow up rate under an additional decay assumption on the initial data.

The second series of works~\cite{MMNR,MMR1,MMR2,MMR3} provides a better understanding of the blow up phenomenon under assumption~\eqref{in:1},
and more generally a classification of the (gKdV) flow for initial data close to $Q$ in a topology stronger than $H^1$.
Indeed, it is proved in~\cite{MMR1} that for initial data in
\[
\mathcal{A} = \left\{ u_0=Q+\e_0 \m{ with } \|\e_0\|_{H^1}<\alpha_0 \m{ and }
\int_{y>0} y^{10}\e_0^2< 1 \right\},\quad 0<\alpha_0\ll 1,
\]
only three possible behaviors can occur:
\begin{description}
\item[(Blowup)] The solution blows up in finite time $T>0$ with blow up rate $\frac 1{T-t}$.
\item[(Soliton)] The solution is global and locally converges to a soliton in large time.
\item[(Exit)] The solution defocuses and eventually exits any small $L^2$ neighborhood of the soliton.
\end{description}
It is also proved in~\cite{MMNR} that the (Soliton) case corresponds
to a codimension one manifold of initial data in $\mathcal{A}$ that separates the cases (Blowup) and (Exit).
The necessity of a decay assumption in the definition of the set $\mathcal{A}$ is also clarified:
blow up solutions with various blow up rates (in finite or infinite time) are constructed in~\cite{MMR3}
for initial data arbitrarily close to~$Q$ in the energy space
(these initial data obviously do not belong to $\mathcal{A}$).

Finally, the question of the minimal mass blow up dynamics is addressed in~\cite{MMR2};
we recall the following existence and uniqueness result.

\begin{theorem}[Existence and uniqueness of the minimal mass solution~\cite{MMR2}] \label{thm:previous}
\leavevmode
\begin{enumerate}[label=\emph{(\roman*)}]
\item \label{existsS} \emph{Existence.}
There exist a solution $S\in \mathcal{C}((0,+\infty),H^1)$ to~\eqref{kdv}
and universal constants $c_0\in\RR,C_0>0$ such that $\|S(t)\|_{L^2}=\|Q\|_{L^2}$ for all $t>0$ and
\begin{gather}
\|\partial_x S(t)\|_{L^2} \sim \frac{C_0}t \quad \m{ as }\ t\downarrow 0, \nonumber \\
S(t)-\frac1{t^{\frac12}} Q\left(\frac{\cdot+ \frac1t}t + c_0\right)\to 0\quad \m{ in }\ L^2 \m{ as }\ t\downarrow 0. \label{d:S}
\end{gather}
\item \emph{Regularity and exponential decay on the right.}
For all $m\geq 1$, $S\in \mathcal{C}((0,+\infty),H^m)$.
There exist $C(t),\g(t)>0$ such that, for all $t\in (0,1]$, $x\geq -\frac 1t$,
\[
|S(t,x)|\leq C(t)\exp\left(-\g(t)\left({x+\frac 1t}\right)\right).
\]
\item \label{uniqueS} \emph{Uniqueness.}
Let $u_0\in H^1$ with $\|u_0\|_{L^2}=\|Q\|_{L^2}$
and assume that the corresponding solution $u(t)$ to~\eqref{kdv} blows up in finite time.
Then, $u\equiv S$ up to the symmetries of the flow.
\end{enumerate}
\end{theorem}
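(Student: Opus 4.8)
The plan is to recover all three assertions from a single construction, in the spirit of minimal mass blow up for mass critical dispersive equations (Bourgain--Wang, Rapha\"el--Szeftel, Martel--Merle--Rapha\"el): build an approximate self-similar profile carrying the right algebraic structure, propagate it by a compactness argument from times approaching the blow up time with uniform bootstrap estimates, then read off regularity, decay on the right, and rigidity. For the profile, pass to self-similar variables $y=(x-x(t))/\lambda(t)$, $ds/dt=\lambda^{-3}$, and use $L=-\partial_y^2+1-5Q^4$, $\Lambda Q=\tfrac12 Q+yQ'$, so that $L\Lambda Q=-2Q$ and $\ker L=\mathbb{R}Q'$. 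One looks for profiles $\Qb=Q+bP+b^2R_2+\cdots$ solving $(\Qb''-\Qb+\Qb^5)'+b\Lambda\Qb+\cdots=0$ up to an error $\Psib$ that is small in all exponentially weighted norms on the right, where $P$ solves a fixed ODE built from $L$ and $\Lambda Q$. The feature specific to the \emph{minimal} mass is the selection of $P$ (and of the higher corrections) with a prescribed slowly decaying tail as $y\to-\infty$ and exponential decay as $y\to+\infty$, so that $\|\Qb\|_{L^2}^2=\|Q\|_{L^2}^2$ up to negligible corrections and the critical mass can be maintained in the limit; the constant $c_0$ then appears on integrating the modulation law $x_s/\lambda=1+\cdots$. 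One also records the formal system $\lambda_s/\lambda=-b+\cdots$, $b_s=-cb^2+\cdots$, $x_s/\lambda=1+\cdots$, whose integration gives $\lambda(t)\sim t$, $b(t)\to0^-$, $x(t)\sim-1/t$ as $t\downarrow0$.

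For existence, fix $t_n\downarrow0$ and let $S_n$ solve \eqref{kdv} with $S_n(t_n)=\lambda_n^{-1/2}\Qb((\cdot-x_n)/\lambda_n)$, $\lambda_n=t_n$, $b_n=b(t_n)$, $x_n=-1/t_n$. On a common interval $(0,t^*]$ decompose $S_n=\lambda^{-1/2}(\Qb+\e)(s,(x-x(t))/\lambda)$ by modulation, imposing orthogonality conditions on $\e$ that make $L$ coercive on $\e$; the bootstrap uses (i) conservation of mass to bound $\|\e\|_{L^2}$, (ii) a mixed energy--virial Lyapunov functional $\mathcal F[\e]\gtrsim\int\e_y^2+\int\e^2e^{-|y|}$ whose time derivative is controlled by $\Psib$ and by the modulation remainders, and (iii) the modulation equations for $b,\lambda,x$. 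Closing the bootstrap yields bounds uniform in $n$; a weak-$H^1$ limit $S$ then solves \eqref{kdv}, and letting $t_n\to0$ gives \eqref{d:S} together with $\|\partial_x S(t)\|_{L^2}\sim C_0/t$. The right-decay statement is obtained by iterating Kato-type monotonicity identities for the localized norms $\int(\partial_x^m S)^2\,\psi(x-x(t))$ with $\psi$ an exponentially weighted cutoff supported in $\{x\geq x(t)\}$: this both propagates $H^m$ regularity for every $m$ and yields $|S(t,x)|\leq C(t)e^{-\g(t)(x+1/t)}$ for $x\geq-1/t$; it is also needed already in the existence step, to prevent loss of mass at $+\infty$, the controlled slowly decaying tail handling $-\infty$, so that $\|S(t)\|_{L^2}=\|Q\|_{L^2}$.

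The main obstacle is uniqueness. Given any $H^1$ solution $u$ with $\|u_0\|_{L^2}=\|Q\|_{L^2}$ blowing up at finite $T$, the sharp Gagliardo--Nirenberg inequality together with a profile decomposition forces $u(t)$, after modulation, to be $H^1$-close to a rescaled $Q$ as $t\uparrow T$; the analysis of the previous step then pins the parameters to the rigid law $\lambda(t)\sim T-t$, $b(t)\to0^-$ and gives $\e\to0$ in $H^1$ with a rate. Normalizing the symmetries so that $T$ and the scaling/translation parameters agree with those of $S$, one compares $w=u-S$: it satisfies a linear-type equation with a quadratically small source, and an energy--virial estimate for $w$ combined with the exponential right-decay and the rigidly determined left tail should force $w\equiv0$ on an interval up to $T$, hence $u\equiv S$ by uniqueness for \eqref{kdv}. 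I expect most of the work here: one must show the slowly decaying left tail is uniquely determined — it cannot be perturbed without leaving minimal mass or violating blow up — which requires a Liouville-type argument and a careful backward-in-time coercivity estimate for the energy--virial functional in the presence of a non-localized tail.
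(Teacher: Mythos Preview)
This theorem is not proved in the present paper: it is quoted from~\cite{MMR2}, and the paper uses it as input (see the sentence preceding the statement and the use of~\ref{uniqueS} in Section~\ref{sec:proofprop}). What the paper does reprove, incidentally, is an alternative construction of the existence part~(i), via Proposition~\ref{prop:SHm} and the compactness argument in Section~\ref{sec:proofprop}; that construction is exactly the scheme you outline (refined profile $Q_b$, bootstrap on a sequence $u_n$ with data at $t=T_n\downarrow0$, weak $H^1$ limit, Lemma~\ref{le:weak}), so on~(i) your plan and the paper's agree. Regularity and right-decay~(ii) are also obtained here in sharper form (see~\eqref{th:ptwiser}) by the weighted $\dot H^m_B$ functionals of Lemmas~\ref{le:enerloc}--\ref{le:Hmloc}, which is the Kato-type mechanism you mention.

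On~(iii), your sketch is in the right spirit but the concrete route you propose---study $w=u-S$ directly and run an energy--virial argument for $w$---is not how~\cite{MMR2} proceeds, and there is a real obstacle with your version. The difference $w$ is \emph{not} localized on either side: both $u$ and $S$ carry the same nonlocalized $|x|^{-3/2}$ tail on the left (this is forced by minimal mass), so the usual virial/monotonicity machinery, which needs spatial localization or an exponential weight, does not close for $w$ as stated. In~\cite{MMR2} one instead first runs the \emph{same} bootstrap on the arbitrary minimal mass blow up solution $u$ to pin its parameters and obtain the decomposition $u=\lambda^{-1/2}(Q_b+\e)$ with the rigid rates; only then does one compare the two \emph{remainders} $\e$ and $\tilde\e$ (which are small in the exponentially weighted norm $\dot H^0_B$), and the contraction is done at the level of these localized quantities, not at the level of $u-S$. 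Your last paragraph anticipates the difficulty (``the slowly decaying left tail is uniquely determined''), but the fix is structural: decompose first, compare $\e$'s second.
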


The solution $S$ is interesting in itself, since it is the only solution (up to symmetries) that blows up with the critical mass $\|Q\|_{L^2}$.
It is also important in the general dynamics of (gKdV)
since it appears as a universal profile of all solutions in the (Exit) case.
In this context, an important open problem is the description of the behavior of $S(t)$ as $t\to +\infty$;
in particular, whether or not $S(t)$ scatters, \emph{i.e.}~behaves in $L^2$ as a solution of the linear Airy equation $u_t+u_{xxx}=0$,
would have important consequences for all solutions in the (Exit) case. We refer to~\cite{MMR2} on these questions.

\subsection{Previous minimal mass blow up results for critical (NLS)}

Recall that the first results on minimal mass blow up for nonlinear dispersive models concern
the mass critical nonlinear Schr\"odinger equation (in dimension $N\geq 1$),
\[
\mathrm{(NLS)} \quad i \partial_t u+\Delta u +|u|^{\frac4N} u=0, \quad (t,x)\in \RR\times \RR^N.
\]
For (NLS), it is well-known that the pseudo conformal symmetry generates an \emph{explicit} minimal mass blow up solution
\be \label{defSNLS}
S_\mathrm{NLS}(t,x)=\frac1 {t^{\frac N2}} e^{- i\frac{|x|^2}{4t}- \frac i t} Q_\mathrm{NLS}\left(\frac x t\right),
\ee
defined for all $t>0$ and blowing up as $t\downarrow 0$, where $Q_\mathrm{NLS}$ is the ground state of (NLS), solution to
\[
\Delta Q_\mathrm{NLS}-Q_\mathrm{NLS}+Q_\mathrm{NLS}^{1+\frac4N}=0, \quad Q>0, \quad Q\in H^1(\RR^N).
\]
Using the pseudo conformal symmetry, it was proved in~\cite{Mduke} that $S_\mathrm{NLS}$ is the unique
(up to the symmetries of the equation) minimal mass blow up element in the energy space.

For the inhomogeneous mass critical (NLS) in dimension~2,
\[
i \partial_t u+\Delta u +k(x)|u|^2 u=0,
\]
while~\cite{Me} derived sufficient conditions on the function $k(x)>0$ to ensure
the \emph{nonexistence} of minimal elements,~\cite{RS2010} introduced a new approach to obtain existence and uniqueness
of a minimal blow up solution under a necessary and sufficient condition on $k(x)$.
This work shows that the minimal blow up dynamics is not directly related to the pseudo conformal symmetry,
even if this symmetry heavily simplifies the analysis for critical (NLS).
For other constructions of minimal mass solutions for (NLS) type equations, we refer to~\cite{BCD,BW,CG,KLR,LMR}.
For related works on energy critical models, see~\cite{DM,DR}.

In the above presentation, we have restricted ourselves to minimal mass blow up.
For a review on general blow up results for $L^2$ critical (NLS), we refer to \cite{Ca03,MMRS,MRinvent,Rzurich} and to references therein.

\subsection{Main results}

The objective of this paper is to describe sharply the behavior of the minimal mass solution $S$ defined in Theorem~\ref{thm:previous},
both in time and space, close to the blow up time.

First, the following result generalizes the asymptotic result as $t\downarrow 0$ given by~\eqref{d:S} to any order of derivative.

\begin{theorem}[Time asymptotics] \label{thm:maintime}
Let $S$ and $c_0$ be defined as in~\ref{existsS} of Theorem~\ref{thm:previous}. Then,
\[
S(t) - \frac 1{t^{\frac 12}} Q\left(\frac{\cdot+\frac1t}t+c_0\right) \to 0\quad
\m{ in }\ H^1 \m{ as }\ t\downarrow 0.
\]
More generally, there exist functions $\{Q_k\}_{k\geq 0}\subset \mathcal{S}(\RR)$ (with $Q_0=Q$) such that, for all $m\geq 0$,
\be \label{th:timem}
\partial_x^m S(t) - \sum_{k=0}^{[m/2]} \frac 1{t^{\frac 12+m-2k}} Q_k^{(m-k)}\left(\frac{\cdot+ \frac1t}{t}+c_0\right)\to 0\quad
\m{ in }\ L^2 \m{ as }\ t\downarrow 0.
\ee
\end{theorem}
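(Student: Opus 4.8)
The plan is to proceed by constructing an \emph{approximate blow up profile} to arbitrary order in $t$ and then to close the argument by energy-type estimates on the remainder, exactly in the spirit of the analysis of~\cite{MMR1,MMR2}. Recall that after the self-similar change of variables $s=1/t$ (so $s\to+\infty$ as $t\downarrow 0$) and rescaling $y=(x+1/t)/t+c_0$, the equation for $v(s,y)=t^{1/2}S(t,t(y-c_0)-1/t)$ becomes the rescaled flow $v_s - (\mathcal{L}v)_y + (\text{cubic-type error})= \frac1s \Lambda v + \cdots$, where $\Lambda = \frac12 + y\partial_y$ is the scaling generator and $\mathcal{L} = -\partial_y^2 + 1 - 5Q^4$ is the linearized operator around $Q$. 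The first step is to seek a formal expansion $v(s,y) \approx \sum_{k\geq 0} s^{-2k} Q_k(y)$ and to identify the profile equations: plugging in and matching powers of $1/s$ yields $(\mathcal{L}Q_k)_y = F_k(Q_0,\dots,Q_{k-1})$ for explicit $F_k$ built from lower-order profiles (the factor $s^{-2k}$ rather than $s^{-k}$ comes from the parity/structure of the scaling term, as in~\eqref{d:S}). One must check that each $F_k$ lies in the range of $\partial_y\mathcal{L}$ acting on $\mathcal{S}(\RR)$ — this is where the kernel of $\mathcal{L}$ (spanned by $Q'$) and the orthogonality conditions enter, and is the standard solvability condition; since $\|S(t)\|_{L^2}=\|Q\|_{L^2}$ is exactly critical, the mass orthogonality is automatically satisfied and the constant $c_0$ (already fixed in Theorem~\ref{thm:previous}) absorbs the translation degeneracy. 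Solving these ODEs recursively and using that $\mathcal{L}^{-1}$ preserves Schwartz decay on the appropriate subspace gives $Q_k\in\mathcal{S}(\RR)$ with $Q_0=Q$.

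Once the profiles $\{Q_k\}$ are constructed, the second step is to define, for each $K\geq 1$, the approximate solution $V_K(s,y)=\sum_{k=0}^{K} s^{-2k}Q_k(y)$ and the corresponding rescaled profile $\widetilde S_K(t,x)$, and to set the remainder $\varepsilon_K = v - V_K$. By construction, $\varepsilon_K$ solves a perturbed equation whose source term is $O(s^{-2K-2})$ in every Sobolev norm (Schwartz profiles make all spatial norms of the error comparable). The third step is to prove that $\|\varepsilon_K(s)\|_{H^m}\to 0$ as $s\to+\infty$ for every fixed $m$, with a rate like $s^{-\alpha_{K,m}}$; the natural tool is a Lyapunov/monotonicity functional adapted to the linearized flow — the same kind of weighted energy estimate (with a weight localizing on the right, matching item~(ii) of Theorem~\ref{thm:previous}) used in~\cite{MMR1,MMR2} to control the remainder. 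The coercivity of $\mathcal{L}$ under the orthogonality conditions, together with the fact that we \emph{already know from~\eqref{d:S}} that $\varepsilon_0 = v - Q\to 0$ in $L^2$, provides the bootstrap anchor; one then upgrades the decay and the regularity inductively in $K$ and $m$, using that $S(t)\in H^m$ for all $m$ by item~(ii). Finally, undoing the change of variables and differentiating $V_K$ in $x$ — noting $\partial_x = t^{-1}\partial_y$ so that $\partial_x^m[s^{-2k}Q_k((x+1/t)/t+c_0)] = t^{-1/2-m+2k}Q_k^{(m-k)}(\cdots)$ after collecting the $t$-powers, which is precisely the $k$-th term in~\eqref{th:timem} — and keeping only $k\leq[m/2]$ (higher $k$ contribute positive powers of $t$, hence vanish in $L^2$) yields the claimed convergence.

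The main obstacle I anticipate is the \emph{uniform-in-$K$ energy estimate closing the bootstrap}: one needs a functional that is simultaneously coercive (controlling $\|\varepsilon_K\|_{H^m}$ from above by a computable quantity) and almost-monotone along the flow, and the nonlinear term $(\,\cdot\,)^5$ combined with the non-self-adjoint transport part $(\mathcal{L}\,\cdot\,)_y$ makes the standard $L^2$ energy insufficient — this is exactly why~\cite{MMR1,MMR2} introduce weighted virial-type functionals. Here the situation is somewhat more favorable than in the general blow up analysis because the exact minimal mass means no modulation parameters fluctuate (scaling and translation are rigidly $\lambda(t)=t$, $x(t)=-1/t+c_0 t$), so there is no modulation loss and no unstable direction to project out beyond the null space of $\mathcal{L}$; nevertheless, propagating the Schwartz-type (polynomially weighted) bounds on $\varepsilon_K$ to all orders $m$ simultaneously, rather than one $m$ at a time, will require care with the weights, and making the induction on $K$ and $m$ genuinely uniform — so that the limiting statement~\eqref{th:timem} holds for \emph{all} $m$ with a single family $\{Q_k\}$ — is the delicate bookkeeping step. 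A secondary subtlety is verifying the solvability conditions $F_k\in\mathrm{Range}(\partial_y\mathcal{L})$ at every order; this should follow from the variational characterization of $Q$ and the identities $\mathcal{L}Q'=0$, $\mathcal{L}(\Lambda Q)=-2Q$ (the latter being exactly the source of the leading scaling correction), but one must confirm inductively that no obstruction appears, which is typically where a hidden algebraic identity (here presumably the one producing the constant $C_0$ in~\eqref{d:S}) must be invoked.
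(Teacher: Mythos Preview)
Your proposal contains a genuine gap at the first step. You assert that the profile equations $(\mathcal{L}Q_k)_y = F_k$ can be solved with $Q_k\in\mathcal{S}(\RR)$, claiming that ``$\mathcal{L}^{-1}$ preserves Schwartz decay on the appropriate subspace''. But the operator to invert is $\partial_y\mathcal{L}$, not $\mathcal{L}$, and $\partial_y^{-1}$ preserves Schwartz only on functions of zero integral. Already at order $k=1$ the source is $\Lambda Q$, and $\int \Lambda Q = -\frac12\int Q\neq 0$; consequently the solution $P_1$ of $(\mathcal{L}P_1)'=\Lambda Q$ satisfies $P_1(y)\to \frac12\int Q$ as $y\to-\infty$ (Lemma~\ref{lemma:P}) and is \emph{not} Schwartz. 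At higher orders the situation is worse: $P_k$ grows like $|y|^{k-1}$ on the left (Remark~\ref{rem:Pk}). The paper deals with this by (i) localizing the profiles with a cutoff $\chi_b$ at scale $|b|^{-\gamma}$, and (ii) observing the key structural fact that $P_k^{(k)}\in\mathcal{Y}\subset\mathcal{S}$ even though $P_k\notin\mathcal{S}$. This is precisely why the statement of~\eqref{th:timem} involves $Q_k^{(m-k)}$ rather than $Q_k^{(m)}$: one must differentiate $k$ times before the $k$-th profile becomes Schwartz. Your chain-rule computation $\partial_x^m[s^{-2k}Q_k(\cdot)] = t^{-1/2-m+2k}Q_k^{(m-k)}(\cdot)$ is therefore wrong --- a straight chain rule would give $Q_k^{(m)}$, and the $(m-k)$ is not a typo but encodes exactly the non-Schwartz tail phenomenon (see the paper's Comment~1 after Theorem~\ref{thm:mainspace}).

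A second, related issue: you assume the modulation is rigid, $\lambda(t)=t$ and $x(t)=-1/t+c_0t$ exactly. This is false; the paper shows (Step~2 of Section~\ref{sec:proofmaintime}) that $\lambda(t)=t+\lambda_0 t^3+\cdots$, $b(t)=-t^2+b_0t^4+\cdots$, $x(t)=-t^{-1}-c_0t-c_1t^3-\cdots$, and these higher-order corrections feed back into the definition of the $Q_k$ via Taylor expansion of $Q_b((x-x(t))/\lambda(t))$ around $(x+1/t)/t+c_0$. In particular $Q_1$ is a specific combination~\eqref{Qun} mixing $P_1'$ with correction terms coming from $\lambda_0$ and $c_1$. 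Without tracking the parameter expansions you cannot identify the correct Schwartz profiles $Q_k$, nor prove~\eqref{th:timem}.
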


Second, we give new sharp pointwise asymptotics of the solution $S(t)$ and of all its space derivatives close to the blow up time.

\begin{theorem}[Space asymptotics] \label{thm:mainspace}
Let $S$ be defined as in~\ref{existsS} of Theorem~\ref{thm:previous}.
Then there exists $T_0>0$ such that the following holds.
\begin{enumerate}[label=\emph{(\roman*)}]
\item \label{item:ptleft} \emph{Pointwise asymptotics on the left.}
For all $t\in (0,T_0]$, $x\leq -\frac 1t - 1$,
\be \label{th:ptwise0}
S(t,x) = -\frac 12 {\|Q\|_{L^1}} |x|^{-\frac 32} + O\left(|x|^{-\frac 32-\frac 1{21}}\right).
\ee
For all $m\geq 1$, there exists $C_m>0$ such that,
for all $t\in (0,T_0]$, $x\leq -\frac 1t -1$,
\be \label{th:ptwisem}
\left|\partial_x^m S(t,x)\right|\leq C_m |x|^{-\frac 32-m}.
\ee
\item \emph{Pointwise bounds on the right.}
For all $m\geq 0$, there exist $C_m'>0$ and $\g_m>0$ such that,
for all $t\in (0,T_0]$, $x\geq -\frac 1t$,
\be \label{th:ptwiser}
|\partial_x^mS(t,x)|\leq \frac{C_m'}{t^{\frac 12+m}} \exp\left(-\g_m {\frac { x+\frac 1t} t}\right).
\ee
\item \emph{Integral.}
For all $t\in (0,T_0]$, $S(t)\in L^1(\RR)$ and
\[
\int_\RR S(t,x)\,dx =0.
\]
\end{enumerate}
\end{theorem}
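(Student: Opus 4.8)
The plan is to combine the profile description of Theorem~\ref{thm:maintime} (equivalently the construction in~\cite{MMR2}) with a Duhamel representation of $S$ that exposes the far-left tail, namely
\[
S(t) = -\int_0^t e^{-(t-s)\partial_x^3}\,\partial_x\!\bigl(S^5(s)\bigr)\,ds ,\qquad t>0 .
\]
I would obtain this by writing Duhamel's formula from an initial time $t_1\in(0,t)$, namely $S(t)=e^{-(t-t_1)\partial_x^3}S(t_1)-\int_{t_1}^t e^{-(t-s)\partial_x^3}\partial_x(S^5(s))\,ds$, and letting $t_1\downarrow0$: using the $L^2$ decomposition $S(t_1)=t_1^{-1/2}Q(\tfrac{\cdot+1/t_1}{t_1}+c_0)+\eta(t_1)$ with $\|\eta(t_1)\|_{L^2}\to0$, the free term tends to $0$ pointwise in $x$ (and in $L^2_{\rm loc}$), since $\|S(t_1)\|_{L^2}=\|Q\|_{L^2}$ stays bounded while the soliton part of $S(t_1)$ concentrates near $-1/t_1\to-\infty$, where the Airy kernel $(3\tau)^{-1/3}\m{Ai}((3\tau)^{-1/3}\cdot)$ is negligible; the $s$-integral converges near $s=0$ because $S^5(s)$ is concentrated near $-1/s$, super-exponentially far from any fixed $x$ relative to the kernel. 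This representation is the engine for all three items.

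\emph{Item (iii).} Integrability $S(t)\in L^1$ follows from the bound $|S(t,x)|\lesssim|x|^{-3/2}$ on the left (a by-product of the analysis of item~(i)) and from item~(ii) on the right. For the vanishing of the integral: for every $g\in L^1$ with $g(\pm\infty)=0$ one has $\int_\RR\bigl(e^{-\tau\partial_x^3}\partial_x g\bigr)(x)\,dx=0$, since the Airy propagator preserves the total integral while $\partial_x$ has zero integral; applying this with $g=S^5(s)$ and interchanging $\int_\RR dx$ with $\int_0^t ds$ — legitimate by the kernel decay away from the support of $S^5(s)$ together with the estimates of item~(i) — yields $\int_\RR S(t,x)\,dx=0$. (Equivalently, $\tfrac{d}{dt}\int S(t)=-\int(S^5)_x(t)=0$, and by the uniform $L^1$ control from (i)--(ii) one checks $\int S(t)\to0$ as $t\downarrow0$.)

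\emph{Item (ii).} I would write $S(t)=R(t)+\eta(t)$ as in~\cite{MMR2} (and in the proof of Theorem~\ref{thm:maintime}), with $R(t)$ a sum of Schwartz profiles at scale $\lambda(t)\simeq t$ centered at $x(t)\simeq-1/t$, so that $|\partial_x^m R(t,x)|\lesssim t^{-\frac12-m}e^{-c(x+\frac1t)/t}$ for $x\geq-\frac1t$, the remainder $\eta$ obeying the exponentially weighted (monotonicity) energy estimates of~\cite{MMR2}. Keeping track of the $t$-dependence and iterating these estimates over $\partial_x^k\eta$ for $0\leq k\leq m$, a weighted Sobolev embedding gives $|\partial_x^m\eta(t,x)|\lesssim t^{-\frac12-m}e^{-\g_m(x+\frac1t)/t}$ with $\g_m$ decreasing in $m$, whence~\eqref{th:ptwiser}. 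This upgrades Theorem~\ref{thm:previous}\,(ii) to all derivatives and to the self-similar weight; one could alternatively bootstrap from the $m=0$ bound via Kato local smoothing for the linearized flow.

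\emph{Item (i): the main point.} On $\{x\leq-\tfrac1t-1\}$ the profile $R(t,x)$ is super-exponentially small (a Schwartz function evaluated at $\tfrac{x+1/t}{t}+c_0\to-\infty$), so up to negligible terms
\[
S(t,x)=-\int_0^t\!\!\int_\RR (3(t-s))^{-2/3}\,\m{Ai}'\!\Bigl(\tfrac{x-y}{(3(t-s))^{1/3}}\Bigr)\,S^5(s,y)\,dy\,ds .
\]
I would insert $S(s,y)=s^{-1/2}Q\bigl(\tfrac{y+1/s}{s}+c_0\bigr)+(\text{corrections})+\eta(s,y)$ together with the identity $\int_\RR Q^5=\int_\RR(Q''+Q^5)=\int_\RR Q=\|Q\|_{L^1}$, so that $\int_\RR S^5(s,\cdot)\simeq\|Q\|_{L^1}s^{-3/2}$ with mass concentrated near $-1/s$. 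The dominant contribution comes from $s$ near $s_\star:=|x|^{-1}$, the time at which the soliton center $\approx-1/s$ passes through the point $x$; a leading computation with $S^5(s,\cdot)$ replaced by the point mass $\|Q\|_{L^1}s^{-3/2}\delta_{-1/s}$ produces a term proportional to $\int_\RR\m{Ai}'(u)\,du=0$, so the naive leading order \emph{cancels} and the genuine leading term is one power of $|x|$ smaller, of the form $c\,|x|^{-3/2}$ with a universal constant $c$. The value $c=-\tfrac12\|Q\|_{L^1}$ is then pinned down by item~(iii): the soliton-plus-Schwartz part of $S(t)$ carries mass $t^{1/2}\|Q\|_{L^1}+o(t^{1/2})$, which must be cancelled by $\int_{-\infty}^{-1/t-1}c\,|x|^{-3/2}\,dx\sim2c\,t^{1/2}$. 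Differentiating the representation in $x$ (each derivative falls on $\m{Ai}$ and gains one more power of decay) yields~\eqref{th:ptwisem}. The hard part is the quantitative error control: one must bound, \emph{uniformly for} $t\in(0,T_0]$, the contribution of the finite width and motion of $S^5(s,\cdot)$ and — more delicate — of the remainder $\eta(s)$, after convolution with the Airy kernel and integration over $s\in(0,t)$, and show it is $O\bigl(|x|^{-3/2-1/21}\bigr)$; this requires a sufficiently quantitative (rate-$t^{\theta}$) form of the profile expansion of Theorem~\ref{thm:maintime}, refining the analysis of~\cite{MMR2}, together with careful oscillatory-integral estimates for $\m{Ai}$ and $\m{Ai}'$ near their stationary regime.
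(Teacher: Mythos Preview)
Your approach via the Duhamel representation and Airy-kernel analysis is genuinely different from the paper's, and while parts of it are sound (your treatment of (ii) is close to what the paper does in Section~4.3, and your second argument for (iii) is essentially the paper's), the core of (i) has real gaps.

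\textbf{What the paper does for (i).} The paper never touches the Airy kernel. Instead it exploits the \emph{minimal mass} property directly: mass conservation forces the exact balance $\int\e^2(t)=-2b(t)\int PQ+O(|b|^{(3-\g)/2})=\tfrac18\|Q\|_{L^1}^2t^2+O(t^{3-\g})$ (Lemma~\ref{lemma:decomposition}(v)). A separate localized $L^2$ estimate (Lemma~\ref{l:gch}, giving~\eqref{Sloc}) shows this mass lives to the left of the soliton. Hence at the time $t_R$ when the soliton sits at $-R$, one has $\int_{x<-R}S^2(t_R)=\tfrac18\|Q\|_{L^1}^2R^{-2}+O(R^{-2-a_0})$ with the constant read off from $(P,Q)$. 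Then Kato-type monotonicity functionals $I_{m,k}(t)=\int(\partial_x^m S)^2\,g^k((x+R)/R^{1/2})$ (Lemma~\ref{le:Imk}) propagate this to all $t\in[t_R,T_0]$, giving Proposition~\ref{decayR}. A pointwise lemma (Lemma~\ref{simple}) converts the integral bounds to $S^2(t,-R)=\tfrac14\|Q\|_{L^1}^2R^{-3}+O(R^{-3-a})$; the \emph{sign} is fixed by a direct computation in the intermediate zone $R\in[t^{-1}+1,t^{-1}+2t^{-1/2}]$, where $S\approx\lambda^{-1/2}b\,P_1$ is visibly negative. The constant $-\tfrac12\|Q\|_{L^1}$ thus comes from the algebraic identity $(P,Q)=\tfrac{1}{16}\|Q\|_{L^1}^2$, not from $\int S=0$; in the paper, $\int S=0$ is a \emph{corollary} of (i)--(ii).

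\textbf{Where your argument for (i) breaks down.} First, your own heuristic says the point-mass approximation gives zero (because $\int\mathrm{Ai}'=0$), so the genuine leading term must come from the \emph{corrections}: the variation of $s^{-3/2}$ and $(t-s)^{-2/3}$ across the stationary window, the finite width of $S^5(s)$, and the tail of $\eta(s)$. Working out even the first of these, one finds a contribution proportional to $\|Q\|_{L^1}R^{-3/2}\int u\,\mathrm{Ai}'(u)\,du=-\|Q\|_{L^1}R^{-3/2}$ with the wrong numerical prefactor; the other corrections are of comparable size and do not obviously combine to a single $t$-independent constant $c$. Without that, your plan to recover $c$ from $\int S=0$ never gets off the ground. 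Second, your Duhamel integrand contains $S^5(s,y)$ for \emph{all} $y$, including $y$ far to the left of $-1/s$: controlling that piece requires the very tail bound $|\eta(s,y)|\lesssim|y|^{-3/2}$ you are trying to prove, and no bootstrap mechanism is proposed. Third, the regime $R\in[t^{-1}+1,t^{-1}+2t^{-1/2}]$ (i.e., $x$ just left of the soliton) is where the stationary point $s_\star\approx 1/R$ is within $O(t^{-1/2})\cdot t^2$ of the endpoint $t$, so $t-s_\star$ is small and your ``$(t-s)\approx t$'' approximation collapses; the paper handles this zone by an entirely separate direct computation using the profile $b(t)P_1$. Finally, you never invoke the minimal mass hypothesis, yet the exact constant $-\tfrac12\|Q\|_{L^1}$ is a \emph{rigidity} statement specific to minimal mass --- supercritical blow-up solutions need not have this tail --- and the paper's proof makes this dependence explicit through~\eqref{minmass}.
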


\medskip

\noindent\emph{Comments on the results.}

\smallskip

\emph{1.} Theorem~\ref{thm:maintime} answers questions left open in~\cite{MMR2} on the asymptotic behavior of $S(t)$ as~$t$ goes to $0$,
in particular the interesting question of the convergence in $H^1$ and the value of the energy of $S$.
Indeed, we deduce from the proof that $E(S)=\frac{1}{16} \|Q\|_{L^1}^2$.

The expansion~\eqref{th:timem} may seem similar to what is easily obtained for $S_\mathrm{NLS}(t)$ from the explicit formula~\eqref{defSNLS}
and a Taylor expansion of $e^{-i\frac t4|y|^2}$ with $y=\frac xt$, which gives, for all $m\geq0$ (and for instance in dimension $N=1$),
\[
S_\mathrm{NLS}(t) - e^{-\frac it} \sum_{k=0}^m \frac1{t^{\frac12-k}} Q_k^\mathrm{NLS}\left(\frac\cdot t \right)
\to 0\quad \m{ in }\ H^m \m{ as }\ t\downarrow 0,
\]
where, for $k\geq 0$, $Q_k^\mathrm{NLS}\in \mathcal{S}(\RR)$ is a smooth profile defined by
$Q_k^\mathrm{NLS}(y) = \frac{(-1)^k}{k!}\left(\frac i4\right)^k y^{2k}Q(y)$.

However, there is a subtle difference, which is related to the $|x|^{-3/2}$ tail for $S(t)$ displayed in Theorem~\ref{thm:mainspace}.
Indeed, all functions $Q_k$ in~\eqref{th:timem} are also Schwartz functions (their decay at infinity is actually similar to the one of $Q$),
but we cannot replace $Q_k^{(m-k)}$ by the space derivative of order~$m$ of a Schwartz function.
For example, for $m=2$, it holds
\[
\partial_x^2 S(t)- \frac 1{t^{\frac 52}} Q''\left(\frac{\cdot+\frac1t}t+c_0\right)
-\frac 1{t^{\frac 12}} Q_1'\left(\frac{\cdot+\frac1t}t+c_0\right)
\to 0\quad\m{ in }\ L^2 \m{ as }\ t\downarrow 0,
\]
where $Q_1\in \mathcal{S}$ is not the derivative of a Schwartz function --- see~\eqref{Qun} for a formula for $Q_1$.
This is the reason why the statement in~\eqref{th:timem} is given for each level of spatial derivative.
We refer to Proposition~\ref{prop:SHm} for a more technical statement concerning the behavior of $S(t)$ as $t$ goes to $0$.

\smallskip

\emph{2.} Estimates~\eqref{th:ptwise0}--\eqref{th:ptwisem} in Theorem~\ref{thm:mainspace}~\emph{\ref{item:ptleft}}
say that $S(t,x)$ behaves exactly as the fixed power function $-c|x|^{-\frac 32}$, with gain of decay by space differentiation.
Such low decay for $S(t)$ was expected from~\cite{MMduke,MMR2}.
It also has analogies with decay of blow up profiles for (NLS) and (gKdV) in~\cite{MRcmp} and~\cite{MMR1}.
However, it is a surprise to obtain such a precise description of the tail of the solution.
Observe that this explicit tail, unlike the concentrating soliton, does not translate.
Note also from the proof that the error term $O(|x|^{-\frac 32-\frac 1{21}})$ in~\eqref{th:ptwise0} is uniform in $t$,
and is certainly not optimal.

Moreover, for a given $t\in (0,T_0]$, the condition $x\leq -\frac 1t- 1$
corresponds to points on the left at distance $|x+\frac 1t|\geq 1$ to the center of the soliton (located at $x\sim-\frac 1t$).
This means that, for $t$ close enough to $0$, we have a precise description of the solution up to a finite distance to the center of the soliton.

Also, it is clear from~\eqref{kdv} that one obtains bounds similar to~\eqref{th:ptwisem} for time derivatives of~$S$ (or mixed time-space derivatives).

\smallskip

\emph{3.} We briefly tell our motivations for studying in details the minimal mass solution.

First, we believe that these asymptotics have interest in themselves since they concern a very specific solution of (gKdV) equation.
At first sight, the solution $S$ may seem similar to $S_\mathrm{NLS}$.
But, unlike for the (NLS) equation, $S$ is not explicit and has a power-like tail on the left that makes it non symmetric,
and thus qualitatively quite different.
In a situation where numerically it does not even seem clear how to plot this solution,
Theorems~\ref{thm:maintime} and~\ref{thm:mainspace} give real new insight on the minimal mass blow up for (gKdV).
To our knowledge, obtaining precise space asymptotics such as~\eqref{th:ptwise0}--\eqref{th:ptwisem} for a truly time-dependent solution
(not derived from an elliptic problem) of a nonlinear dispersive equation is new.
Note that general blow up solutions with supercritical mass are expected to have quite different behavior, with oscillations, see \emph{e.g.}~\cite[Fig.~3]{KP}.
Also, the behavior~\eqref{th:ptwise0}--\eqref{th:ptwisem} has nothing to do with the decay properties of the Airy function, which is strongly oscillating.

Second, there is a clear connection between these asymptotics and the construction of exotic blow up solutions with supercritical mass in~\cite{MMR3}.
Indeed, the key idea in~\cite{MMR3} to construct exotic blow up rates for solutions of~\eqref{kdv} close to the soliton
is to consider a soliton-like solution on the background of a fixed explicit tail of the form $-|x|^{-\theta}$ for large $|x|$
(recall that $\theta\in (1,\frac{29}{18})$ in~\cite{MMR3}).
Under certain conditions, this tail drives the blow up and each choice of~$\theta$ selects a specific blow up rate.
The value $\theta=\frac32$ corresponds to the blow up rate $\frac1t$, as $S(t)$.
It is thus consistent to find such a tail for the minimal mass blow up solution,
even though the solutions constructed in~\cite{MMR3} have supercritical mass.

Third, we expect these asymptotics to be decisive when studying multi-soliton blow up, in the spirit of~\cite{Me0} for the (NLS) equation.
This subject is under investigation by the authors and will be the object of a forthcoming publication.

\subsection{Strategy and outline of the proofs of Theorems~\ref{thm:maintime} and~\ref{thm:mainspace}.}

The overall strategy and the tools used in this paper are largely inspired from~\cite{MMR1,MMR2},
but we push forward the analysis on various points, focusing on the question of the asymptotic behavior as $t\downarrow 0$ of
the minimal mass solution. Recall that the solution $S(t)$ is constructed in~\cite{MMR2} in the following explicit form,
for $t>0$ close to $0$:
\begin{gather*}
S(t,x)=\frac{1}{\lambda^{1/2}(t)}\left(Q_{b(t)}(y)+\e(t,y)\right),\ y = \frac{x-x(t)}{\lambda(t)}, \\
\lambda(t)\sim t,\ x(t)\sim -\frac 1t,\ b(t)\sim -t^2,\ \|\e(t)\|_{H^1}\ll 1 \quad \m{as } t\downarrow 0,
\end{gather*}
where $Q_{b(t)}$ is an explicit perturbation of $Q$.

The first improvement is of technical order and concerns the ansatz $Q_b$ used in the above decomposition.
In~\cite{MMR1,MMR2}, the ansatz is of the form $Q_b=Q+bP$, where $P$ is a fixed function
(up to a technical cut-off due to the fact that $P\not \in L^2$).
In Section~\ref{sec:Qb} of the present paper, we define an ansatz for the (gKdV) blow up at any order of $b$,
of the form $Q_b=Q+\sum_{k=1}^K b^k P_k$, for any fixed $K\geq 1$.
This refinement is essential in the sequel to understand the minimal blow up solution at any order of $t$ as $t\downarrow 0$,
\emph{i.e.}~to obtain $\e$ arbitrarily small around the soliton.
None of the functions~$P_k$ are in $L^2$, and so we also have to use a cut-off argument to obtain a profile in~$H^m$.
We refer to~\cite{Ko} for a more direct approach to the construction of exact blow up profile
in the case of the supercritical (gKdV) equation in search of self-similar blow up solutions.

In Section~\ref{sec:time-estimates}, we prove the main time estimates, \emph{i.e.}~Theorem~\ref{thm:maintime}.
First, we actually provide an alternative construction of the minimal mass solution $S(t)$ in $H^1$, as discussed in Remark~3.2 of~\cite{MMR2}.
Second, using special functionals at any level of Sobolev regularity, we are able to prove optimal decay estimates
for $\e(t,y)$ in Sobolev spaces and related weighted spaces.
This is somewhat in the spirit of some arguments in~\cite{Kato,Ma1,LM} but in the blow up context, which is more involved.
We refer to Proposition~\ref{prop:SHm} for results written in terms of~$\e(t)$ in various norms.
Theorem~\ref{thm:maintime} requires an additional Taylor expansion of the parameters $b(t)$, $\lambda(t)$ and $x(t)$ as $t\downarrow 0$,
which is obtained in Section~\ref{sec:proofmaintime}.

Section~\ref{sec:space-estimates} is probably the newest part of the paper.
We show how time estimates and the special properties of $S(t)$ as $t\downarrow 0$ provide sharp asymptotics in space on the left.
The estimates are first proved in Sobolev norms and then translated into pointwise estimates.
To prove the precise asymptotics~\eqref{th:ptwise0}, we use the $L^2$ norm conservation
and a resulting mechanism of exchange of mass between the two terms $Q_{b(t)}$ and $\e(t)$ in the decomposition of $S(t)$.
To find the correct sign in~\eqref{th:ptwise0}, we connect estimates from the region where $\e(t)$ is preponderant
in the expression of $S(t)$ to the region where $b(t) P_1$ (the second term in the series defining $Q_{b(t)}$) is preponderant.

\subsection{Notation}

For $f,g\in L^2(\RR)$, their $L^2$ scalar product is denoted as
\[
(f,g) = \int_\RR f(x)g(x)\,dx.
\]
The Schwartz space $\mathcal{S}$ is classically defined as
\[
\mathcal{S} = \mathcal{S}(\RR) = \{ f\in \Cinfini\ |\ \forall j\in\NN, \forall \alpha\in\NN, \exists C_{j,\alpha}\geq 0,
\forall y\in\RR, |y^\alpha f^{(j)}(y)|\leq C_{j,\alpha} \}.
\]
Let the generator of $L^2$ scaling be
\be \label{Lamb}
\Lambda\e = \frac12\e + y\e'.
\ee
The linearized operator close to $Q$ is defined by
\be \label{defL}
L\e = -\e'' + \e - 5 Q^4\e.
\ee
The following norms will be needed for the time estimates in Section~\ref{sec:time-estimates}:
\[
\|\e\|_{\dot H_B^m}^2 = \int (\partial_y^m \e)^2(y) e^{y/B}\, dy,\qquad
\|\e\|_{\dot H^m}^2   = \int (\partial_y^m \e)^2(y)\, dy.
\]
For $x\in\RR$, we denote $[x]$ its integer part, defined as the unique $n\in\ZZ$ such that $n\leq x<n+1$. \\
All numbers $C$ appearing in inequalities are real positive constants (with respect to the
context), which may change in each step of an inequality. \\
Finally, when $I\subset\RR$ is an interval, $\charfunc{I}$ denotes the characteristic function of $I$.

\subsection*{Acknowledgements}

The authors thank Herbert Koch, Frank Merle and Pierre Rapha\"el for enlightening discussions.
This material is based upon work supported by the National Science Foundation under Grant No.~0932078 000
while Y.M.~was in residence at the Mathematical Sciences Research Institute in Berkeley, California, during the Fall 2015 semester.
This work was also partly supported by the Labex CEMPI (ANR-11-LABX-0007-01) and by the project ERC 291214 BLOWDISOL.

\section{Nonlinear profiles at any order} \label{sec:Qb}

We start by refining the blow up profile and considering an approximation to the renormalized equation.
Looking for a solution to (gKdV) of the form
\[
u(t,x)\sim\frac1{\lambda^{1/2}(t)}\wt Q_{b(t)}\left(\frac{x-x(t)}{\lambda(t)}\right),\quad
\frac{ds}{dt}=\frac{1}{\lambda^3},\quad \frac{x_s}{\lambda}\sim 1,\quad b\sim-\frac{\lambda_s}{\lambda},
\]
leads to the following self-similar equation for $\Qb$:
\[
b_s\frac{\partial \Qb}{\partial b}+b\Lambda \Qb+(\Qb''-\Qb+\Qb^5)'\sim 0.
\]
Expanding the approximate modulated ansatz $\Qb$ and $b_s$ in powers of $b$,
\be \label{def:Qbt}
\Qb := Q+bP_1+b^2 P_2 +\cdots+b^KP_K,\quad b_s \sim -\beta_2b^2-\beta_3 b^3-\cdots -\beta_Kb^K=:-\theta(b),
\ee
where the unknowns are the functions $P_1,\ldots,P_K$ and the coefficients $\beta_2,\ldots,\beta_K$, the above equation writes
\be \label{eq:bs}
-\theta(b) \frac{\partial \Qb}{\partial b}+b\Lambda \Qb+(\Qb''-\Qb+\Qb^5)'\sim 0.
\ee
Let the linearized operator close to $Q$ be given by~\eqref{defL},
then the order $b^k$ in the expansion of~\eqref{eq:bs} leads to the equation
\be \label{eq:LPprime}
(LP_k)' =\Omega'_k+\Lambda P_{k-1}-\Theta_k,
\ee
where $\Omega_k$ and $\Theta_k$ are the coefficients of $b^k$ in the expansions of $\Qb^5$
(except for the nonlinear term involved in $LP_k$)
and $\theta(b) \frac{\partial \Qb}{\partial b}$ respectively.
More precisely, denoting $P_0:=Q$, we set for~$k\geq 1$:
\begin{align}
\Omega_k &:= \sum_{i_1+\cdots+i_5=k} P_{i_1}\cdots P_{i_5} -5Q^4P_k, \label{def:Omegak} \\
\Theta_k &:= \sum_{i=1}^{k-1} i\beta_{k+1-i}P_i = \beta_kP_1+2\beta_{k-1}P_2+\cdots+(k-1)\beta_2P_{k-1}. \label{def:Thetak}
\end{align}
Note that $\Omega_1=\Theta_1=0$, and so~\eqref{eq:LPprime} reduces to $(LP_1)'=\Lambda Q$ for $k=1$.

We will prove in Lemma~\ref{lemma:Qbt} below that~\eqref{eq:LPprime} admits solutions up to any order $K\geq 1$,
such that $\Qb$ defined by~\eqref{def:Qbt} is an approximate solution of~\eqref{eq:bs},
which generalizes the result obtained in~\cite{MMR1} for $K=1$.
As in~\cite{MMR1}, the functions $P_k$ are not in $L^2$ and thus $\Qb$ is a formal blow up profile.
A truncation of $\Qb$ is defined in Section~\ref{sec:locprofiles}.

\subsection{Construction of formal blow up profiles}

\begin{notation}
Define the following functional spaces
\begin{align*}
\Y &= \{ f\in \Cinfini\ |\ \forall j\in\NN, \exists C_j,r_j\geq 0, \forall y\in\RR, |f^{(j)}(y)|\leq C_j(1+|y|)^{r_j}e^{-|y|} \}, \\
\Y_- &= \{ f\in \Cinfini\ |\ \forall j\in\NN, \exists C_j,r_j\geq 0, \forall y<0, |f^{(j)}(y)|\leq C_j(1+|y|)^{r_j}e^{-|y|} \}.
\end{align*}
For $\ell\in\ZZ$, set
\begin{align*}
\Z\ell = \{ f\in \Cinfini\ |\
&\forall j\geq 0, \exists C_j^+,r_j^+\geq0, \forall y>0, |f^{(j)}(y)|\leq C_j^+(1+|y|)^{r_j^+} e^{-|y|} \\
&\forall j>\ell, \exists C_j^-,r_j^-\geq0, \forall y<0, |f^{(j)}(y)|\leq C_j^-(1+|y|)^{r_j^-} e^{-|y|} \\
&\forall 0\leq j\leq\ell, \exists D_j^-\geq0, \forall y<0, |f^{(j)}(y)|\leq D_j^- (1+|y|)^{\ell-j} \}.
\end{align*}
Note that $\Z\ell = \Y$ for $\ell<0$, and that $\Y\subset\mathcal{S}$.
\end{notation}

We now recall without proof the standard properties of the linearized operator $L$
that will be useful for our purpose (see \emph{e.g.}~\cite{MMgafa,W1985}),
and the properties of the function $P_1=P$ constructed in~\cite[Proposition~2.2]{MMR1},
which solves~\eqref{eq:LPprime} for $k=1$.

\begin{lemma}[Properties of $L$] \label{lemma:L}
The self-adjoint operator $L$ defined by~\eqref{defL} on $L^2$ satisfies:
\begin{enumerate}[label=\emph{(\roman*)}]
\item \label{kernel} Kernel: $\ker L = \{aQ' ; a\in\RR\}$;
\item \label{scaling} Scaling: $L(\Lambda Q)=-2Q$;
\item \label{inverse} Invertibility: for any function $h\in\Y$ orthogonal to $Q'$ for the $L^2$ scalar product,
there exists a unique function $f\in\Y$ orthogonal to $Q'$ such that $Lf=h$;
\item Coercivity: there exists $\mu_0>0$ such that, for all $f\in H^1$,
\be \label{coercivity}
(Lf,f) \geq \mu_0\|f\|_{H^1}^2 -\frac1{\mu_0} \left[ (f,Q)^2 +(f,\Lambda Q)^2 +(f,y\Lambda Q)^2 \right].
\ee
\end{enumerate}
\end{lemma}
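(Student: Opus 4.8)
\textbf{Proof proposal for Lemma~\ref{lemma:L}.}

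The four statements are classical, so the plan is to recall the standard variational and ODE arguments rather than reprove them from scratch; I will indicate the structure one would follow. For~\ref{kernel}, differentiating the elliptic equation $Q''+Q^5=Q$ gives $LQ'=0$, so $\RR Q'\subset\ker L$. For the reverse inclusion, one uses that $L$ is a Schr\"odinger operator on the line with potential $1-5Q^4$ decaying to~$1$ at $\pm\infty$; its essential spectrum is $[1,+\infty)$, and below it there are finitely many simple eigenvalues. Since $Q'$ has exactly one zero, Sturm oscillation theory identifies $0$ as the \emph{second} eigenvalue, hence simple, so $\ker L=\RR Q'$. For~\ref{scaling}, one simply applies $\Lambda$ to the equation $Q''-Q+Q^5=0$ and uses the commutator identity for $-\partial_y^2+1$ with $\Lambda$ together with $\Lambda(Q^5)=5Q^4\Lambda Q$; a short computation yields $L(\Lambda Q)=-2Q$. (Equivalently, differentiate the rescaled family $Q_\lambda(y)=\lambda^{1/2}Q(\lambda y)$, which solves the scaled equation, with respect to $\lambda$ at $\lambda=1$.)

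For~\ref{inverse}, given $h\in\Y$ with $(h,Q')=0$, existence and uniqueness of $f\in L^2$ with $Lf=h$, $f\perp Q'$, follows from~\ref{kernel}: on $\{Q'\}^\perp$ the operator $L$ is invertible because $0$ is an isolated simple eigenvalue with eigenfunction $Q'$, and the resolvent is bounded there (Fredholm alternative, using that the continuous spectrum starts at~$1>0$). The only real work is the regularity and decay claim $f\in\Y$: smoothness is immediate from elliptic regularity since $f''=f-5Q^4f-h$ bootstraps, and the exponential decay with polynomial weight is obtained by a standard ODE argument — away from the origin $f$ solves $f''-f=g$ with $g=-5Q^4f-h$ exponentially decaying, so writing $f$ via variation of parameters against $e^{\pm y}$ and using $f\in L^2$ to kill the growing mode shows $|f(y)|\lesssim (1+|y|)^{r}e^{-|y|}$; differentiating the equation propagates this to all $f^{(j)}$. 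This is exactly the content of~\cite[Proposition~2.2]{MMR1} applied here, so one may simply cite it.

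For the coercivity~\eqref{coercivity}, the starting point is the classical fact (see~\cite{W1985}) that $L\geq0$ on $H^1$ and is strictly positive on the subspace orthogonal to its kernel $Q'$ \emph{and} to the negative/zero directions; more precisely one knows $(Lf,f)\geq c\|f\|_{H^1}^2$ whenever $f\perp Q'$ and $f\perp Q^3$ (the direction coming from the scaling/mass relation, since $(\Lambda Q,Q)=0$ forces a careful choice). The standard route is: first establish the spectral gap estimate $(Lf,f)\geq c_0\|f\|_{L^2}^2$ for $f$ orthogonal to a fixed finite set of $\Y$-functions spanning the non-positive directions, by a compactness-contradiction argument (a minimizing sequence converges weakly, the limit is a minimizer, hence in $\ker L$ up to the constraints, forcing it to vanish); then upgrade from $L^2$ to $H^1$ control using $\|f'\|_{L^2}^2=(Lf,f)+5\int Q^4f^2-\|f\|_{L^2}^2$ and absorbing the lower-order term. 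Finally one removes the hard orthogonality constraints at the cost of the three quadratic penalties $(f,Q)^2$, $(f,\Lambda Q)^2$, $(f,y\Lambda Q)^2$ by the usual splitting $f=f^\sharp+\sum a_i e_i$ with $e_i$ chosen among $\{Q,\Lambda Q,y\Lambda Q\}$ to satisfy the constraints, noting these three functions are not orthogonal to the relevant directions. The only delicate point is the bookkeeping of which three directions suffice; this is precisely the statement proved in~\cite{MMgafa}, and the cleanest exposition is to cite it. The main obstacle, if one insisted on a self-contained proof, would be this last coercivity step — not conceptually hard but requiring the spectral picture of $L$ and a somewhat fiddly choice of compensating directions — whereas everything else is routine ODE analysis.
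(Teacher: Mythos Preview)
The paper does not actually prove this lemma: it is stated as a recall of ``standard properties'' with a pointer to~\cite{MMgafa,W1985}, and no argument is given. Your proposal is a reasonable and essentially correct sketch of the classical proofs that those references contain, so in that sense you are aligned with the paper's intent; there is nothing to compare since the paper offers no proof of its own.

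Two small corrections are worth making. For~(iii), the reference~\cite[Proposition~2.2]{MMR1} is not quite the right citation: that proposition constructs the specific function~$P$ solving $(LP)'=\Lambda Q$, not the general invertibility of~$L$ on~$\Y\cap\{Q'\}^\perp$; the decay argument you describe is correct, but it is more accurately attributed to the general ODE analysis in~\cite{MMgafa} (or one just does it by hand as you outline). For~(iv), your parenthetical ``$f\perp Q^3$'' is not the relevant direction here; the negative direction of~$L$ is spanned by~$Q^3$ indeed, but the coercivity statement~\eqref{coercivity} penalizes $(f,Q)$, $(f,\Lambda Q)$, $(f,y\Lambda Q)$, and the point is that these three directions together control the projection onto the nonpositive eigenspace (spanned by~$Q^3$ and~$Q'$) via nondegeneracy relations like $(Q,Q^3)\neq 0$ and $(y\Lambda Q,Q')\neq 0$. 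Your final paragraph captures this mechanism correctly, just with a slightly muddled aside.
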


\begin{lemma}[Properties of $P$] \label{lemma:P}
There exists a unique function $P\in\Z0$ such that
\begin{gather}
(LP)'=\Lambda Q,\quad \lim_{y\to -\infty} P(y) = \frac 12 \int Q,\quad \lim_{y\to +\infty} P(y) =0, \label{eq:LP} \\
(P,Q) = \frac 1{16} \left(\int Q\right)^2 > 0,\quad (P,Q')=0. \label{eq:PQ}
\end{gather}
\end{lemma}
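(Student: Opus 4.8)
The plan is to establish existence and uniqueness of $P$ separately and to track its asymptotic behavior at $\pm\infty$ by direct analysis of the ODE $(LP)'=\Lambda Q$, together with the information provided by Lemma~\ref{lemma:L}. First I would integrate the equation: since $(LP)'=\Lambda Q$ and $\Lambda Q=\frac12 Q+yQ'=\frac12(yQ)'+\frac12\cdot\text{(lower order)}$, more precisely one checks $\int_y^{+\infty}\Lambda Q = -[\frac12 zQ(z)]$-type primitive, but the clean route is to note $\Lambda Q\in\Y$ with $\int\Lambda Q=0$ (this follows from $\int\Lambda Q=\frac12\int Q+\int yQ'=\frac12\int Q-\frac12\int Q=0$). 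Hence $\Lambda Q$ has a primitive $G(y):=\int_{-\infty}^y \Lambda Q$ which lies in $\Y$ (exponential decay at both ends, using $\int\Lambda Q=0$ for the decay at $+\infty$). Then $LP=G+c_1 Q'$ for some constant $c_1$, because $\ker L=\{aQ'\}$ by Lemma~\ref{lemma:L}\ref{kernel}. To apply the invertibility statement Lemma~\ref{lemma:L}\ref{inverse} I must choose $c_1$ so that the right-hand side is orthogonal to $Q'$: since $(Q',Q')>0$, set $c_1=-(G,Q')/(Q',Q')$. This gives a unique $P_\sharp\in\Y$ with $LP_\sharp=G+c_1Q'$ and $(P_\sharp,Q')=0$; the general solution of $LP=G+c_1Q'$ is then $P=P_\sharp+aQ'$.

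Next I would pin down the constant $a$ and verify the boundary conditions. The point is that $P$ itself need not decay at $-\infty$: although $LP\in\Y$, $L$ applied to a \emph{constant} function $\kappa$ gives $L\kappa=\kappa-5Q^4\kappa$, which is $\kappa+\Y$, i.e.\ constants are "almost" in a generalized kernel direction at $-\infty$. Concretely, writing $P=\kappa_- + (\text{decaying})$ as $y\to-\infty$ is consistent with $LP\in\Y$ only if we allow $P$ to tend to a constant; so $P\in\Z0$ (tending to constants, with all derivatives decaying) is the right class, not $\Y$. But $P_\sharp\in\Y$ and $Q'\in\Y$, so $P=P_\sharp+aQ'$ would be in $\Y$ — this forces a reconsideration: the resolution is that equation~\eqref{eq:LP} is $(LP)'=\Lambda Q$, \emph{not} $LP=\Lambda Q$, so $LP$ is only determined up to an additive constant, i.e.\ $LP=G+c$ for arbitrary $c\in\RR$ in addition to the $Q'$ freedom. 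Choosing $c\neq0$ produces a particular solution that behaves like a constant at $-\infty$: solve $L\Phi=1$ (which has a solution $\Phi$ with $\Phi\to 1$ as $y\to-\infty$ and $\Phi\to$ another constant, or exponential, at $+\infty$, obtained by variation of parameters against the two fundamental solutions of $L\phi=0$, one of which is $Q'\in\Y$ and the other growing). Then $P=P_\sharp + c\,\Phi + aQ'$, and one tunes $c$ and $a$ to match $\lim_{-\infty}P=\frac12\int Q$ and $\lim_{+\infty}P=0$, and to get $(P,Q')=0$.

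The cleanest organization, therefore, is: (1) Fix $f_0$ = the unique $\Y$-solution of $Lf_0 = G - \bar G$ orthogonal to $Q'$, where $\bar G := \lim_{+\infty} G$-correction making things orthogonal — rather, let me instead directly invoke the structure: the homogeneous equation $L\phi=0$ has fundamental system $\{Q', \phi_+\}$ with $\phi_+$ growing at $-\infty$, decaying at... — I will use variation of parameters for $LP=G+c$ and impose the decay of $P'$ (hence membership in $\Z0$) which kills the growing mode and fixes one integration constant; the remaining constant of integration is $a$ (coefficient of $Q'$) and the parameter $c$; these two are fixed by $(P,Q')=0$ and by matching the jump $\lim_{-\infty}P - \lim_{+\infty}P = \frac12\int Q$, which is computable as $c\cdot(\text{Wronskian-type integral})$. (2) Verify $P\in\Z0$: derivatives decay exponentially (from the ODE and decay of $G$, $c$ being absorbed since we look at $P'$, $P''$, \ldots, all of which satisfy equations with $\Y$ right-hand sides), while $P$ itself is bounded and tends to constants. (3) Compute $(P,Q)$: using $L(\Lambda Q)=-2Q$ from Lemma~\ref{lemma:L}\ref{scaling}, write $(P,Q)=-\frac12(P,L\Lambda Q)=-\frac12(LP,\Lambda Q)$; now $LP=G+c$, and $(\Lambda Q,c)=c\int\Lambda Q=0$, so $(P,Q)=-\frac12(G,\Lambda Q)=-\frac12\int G\cdot\Lambda Q = \frac12\int G\cdot G' \cdot$ — wait, $\Lambda Q=G'$, so $(G,\Lambda Q)=\int G G' = \frac12[G^2]_{-\infty}^{+\infty}=\frac12((\lim_{+\infty}G)^2-0)$, and $\lim_{+\infty}G=\int_\RR\Lambda Q=0$... so this gives $(P,Q)=0$, contradicting~\eqref{eq:PQ}. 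The discrepancy again signals that the correct identity involves the extra constant: $LP=G+c$ with $c$ the specific nonzero value found in step~(1), and $(P,Q)=-\frac12(LP,\Lambda Q)=-\frac12(G,\Lambda Q)-\frac{c}{2}(1,\Lambda Q)$; the second term vanishes but I must redo the first using the actual primitive normalization — with $G(y)=\int_{-\infty}^y\Lambda Q$ one has $G(-\infty)=0$, $G(+\infty)=0$, so that term is genuinely $0$, which means the nonzero value $\frac1{16}(\int Q)^2$ must come from a more careful pairing, likely $(P,Q)=-\frac12(LP,\Lambda Q)$ is not quite right because $P\notin L^2$ and $\Lambda Q$ decays, so integration by parts $( P, L\Lambda Q)=(LP,\Lambda Q)$ needs no boundary terms (both $\Lambda Q$ and its derivatives decay, $P$ and derivatives are bounded) — so it IS valid, and the only way to get $\frac1{16}(\int Q)^2$ is that my claim $G(+\infty)=0$ combined with the identity is incomplete.

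I expect the main obstacle to be exactly this bookkeeping of constants: getting the right inhomogeneous term ($G+c$ with the correct $c$), correctly identifying that $P$ lies in $\Z0$ rather than $\Y$, and then recomputing $(P,Q)$ so that the nonzero value $\frac1{16}(\int Q)^2$ emerges — most plausibly via $(P,Q) = -\tfrac12 (LP, \Lambda Q)$ where $LP$ is NOT simply a primitive of $\Lambda Q$ but rather $LP = \Lambda Q$'s primitive $G$ \emph{plus} a term proportional to $\int Q$ coming from the constant-at-$-\infty$ behavior, i.e.\ the pairing picks up $(\text{const},\Lambda Q)$-type contributions through a careful limiting argument on $[-R,R]$ with $R\to\infty$, where the boundary term at $-R$ involving $P(-R)\to\frac12\int Q$ and $(\Lambda Q)'(-R)$-type quantities survives. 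Uniqueness is then the easy part: two solutions $P_1,P_2$ in $\Z0$ with the same boundary data satisfy $(L(P_1-P_2))'=0$, so $L(P_1-P_2)=\text{const}$; since $P_1-P_2$ is bounded and $L\kappa = \kappa - 5Q^4\kappa$ for constant $\kappa$, comparing limits at $-\infty$ forces the constant to be $0$, hence $P_1-P_2\in\ker L=\{aQ'\}$, and $(P_1-P_2,Q')=0$ forces $a=0$.
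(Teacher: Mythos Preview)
The paper does not prove this lemma; it is recalled without proof from \cite[Proposition~2.2]{MMR1}. So there is no ``paper's own proof'' to compare against, only your attempt.

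Your attempt has a concrete arithmetic error that is the source of all your later confusion: you claim $\int \Lambda Q = \frac12\int Q + \int yQ' = \frac12\int Q - \frac12\int Q = 0$, but in fact $\int yQ' = [yQ]_{-\infty}^{+\infty} - \int Q = -\int Q$, so
\[
\int_\RR \Lambda Q = \tfrac12\int Q - \int Q = -\tfrac12\int Q \neq 0.
\]
This means your primitive $G(y)=\int_{-\infty}^y\Lambda Q$ is \emph{not} in $\Y$: it satisfies $G(-\infty)=0$ and $G(+\infty)=-\frac12\int Q$. Once you know this, everything falls into place. Since $P\to\frac12\int Q$ at $-\infty$ and $P\to 0$ at $+\infty$, you read off directly that $LP(-\infty)=\frac12\int Q$ and $LP(+\infty)=0$, consistent with $(LP)'=\Lambda Q$. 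Thus $LP(y)=\frac12\int Q + G(y)$, with no free constant $c$ to tune. Your pairing computation then gives
\[
(P,Q) = -\tfrac12(LP,\Lambda Q)
= -\tfrac14\Bigl(\int Q\Bigr)\!\int\Lambda Q \;-\;\tfrac12\int G\,G'
= \tfrac18\Bigl(\int Q\Bigr)^2 - \tfrac14\bigl[G^2\bigr]_{-\infty}^{+\infty}
= \tfrac18\Bigl(\int Q\Bigr)^2 - \tfrac1{16}\Bigl(\int Q\Bigr)^2
= \tfrac1{16}\Bigl(\int Q\Bigr)^2,
\]
exactly as claimed. There is no mysterious boundary term, no need to solve $L\Phi=1$, and no extra constant $c$: the nonzero value comes entirely from the fact that $\int\Lambda Q\neq 0$. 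For existence, the decomposition used in the paper's proof of Lemma~\ref{lemma:Qbt} (specialized to $K=1$) is to write $P=\wt P - \int_y^{+\infty}\Lambda Q$ with $\wt P\in\Y$ solving an equation of the form $L\wt P = S_1\in\Y$ orthogonal to $Q'$; this is cleaner than variation of parameters because it isolates the non-$\Y$ part of $P$ explicitly. Your uniqueness argument at the end is correct.
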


We are now ready to prove the main result of this section, which is a generalization of Proposition~2.2 in~\cite{MMR1}
up to any fixed order $K\geq 1$.

\begin{lemma} \label{lemma:Qbt}
Let $K\geq1$. Then, for all $1\leq k\leq K$, there exist $P_k\in\Z{k-1}$ and $\beta_k\in\RR$ such that,
defining $\Qb$ and $\theta(b)$ by
\be \label{def:Qbthetab}
\Qb = Q+R_b\ \m{ where }\ R_b = \sum_{k=1}^K b^kP_k, \quad \theta(b)=\sum_{k=2}^K \beta_kb^k,
\ee
and $\Omega_k$ and $\Theta_k$ by~\eqref{def:Omegak} and~\eqref{def:Thetak}, then $(LP_k)' =\Omega'_k+\Lambda P_{k-1}-\Theta_k$.

Moreover, $\Qb$ is an approximate solution of~\eqref{eq:bs}, in the sense that the quantity $\Psib$ defined by
\be \label{def:Psibt}
-\Psib = (\Qb''-\Qb+\Qb^5)' +b\Lambda\Qb -\theta(b)\frac{\partial\Qb}{\partial b}
\ee
satisfies
\be \label{eq:Psibt}
-\Psib = b^{K+1} \left[ \sum_{k=1}^{K-1} b^{k-1}R_{K+k-2} + \sum_{k=K}^{4K} b^{k-1} R_{K+k-6} \right],
\ee
where $R_\ell\in\Z\ell$ for $\ell\in\ZZ$. Finally, note that $\beta_2=2$.
\end{lemma}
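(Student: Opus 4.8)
The plan is to prove Lemma~\ref{lemma:Qbt} by induction on $k$, constructing the pair $(P_k,\beta_k)$ so that the solvability condition for~\eqref{eq:LPprime} is met at each step, and then to expand the residual $\Psib$ explicitly by collecting powers of $b$.

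\textbf{Step 1: Inductive construction of $P_k$ and $\beta_k$.} First I would set $P_0=Q$ and recall that $P_1=P$ is given by Lemma~\ref{lemma:P}, with $P_1\in\Z0$, $\Omega_1=\Theta_1=0$, and $(LP_1)'=\Lambda Q$. Now suppose $P_1,\ldots,P_{k-1}$ and $\beta_2,\ldots,\beta_{k-1}$ have been constructed with $P_i\in\Z{i-1}$. Equation~\eqref{eq:LPprime} reads $(LP_k)'=\Omega'_k+\Lambda P_{k-1}-\Theta_k$; integrating once in $y$ this is equivalent to $LP_k = \Omega_k + \int_{-\infty}^y(\Lambda P_{k-1}-\Theta_k) + c_k$ for a constant $c_k$. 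To apply Lemma~\ref{lemma:L}\ref{inverse} I need the right-hand side to lie in $\Y$ and to be orthogonal to $Q'$. The key point is that $\Lambda P_{k-1}$ fails to be in $\Y$ on the left (it grows polynomially, since $P_{k-1}\in\Z{k-2}$), so the primitive $\int_{-\infty}^y \Lambda P_{k-1}$ is only in $\Z{k-1}$; this is exactly why $P_k\in\Z{k-1}$ and not in $\Y$. One then chooses $\beta_k$ so that $\Theta_k$ (which contains the new term $\beta_k P_1$, and $P_1\to\frac12\int Q\neq 0$ at $-\infty$) cancels the non-decaying constant part coming from $\int_{-\infty}^y\Lambda P_{k-1}$ on the left — this is the mechanism that keeps the whole construction consistent — and one chooses the integration constant $c_k$ (equivalently adds a multiple of $\ker L = \RR Q'$) to enforce the orthogonality $(LP_k,Q')=0$, using that $Q'$ is odd. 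Computing $\beta_2$ explicitly from the $k=2$ case (where $\Omega_2$ and $\Lambda P_1$ are known in terms of $Q$ and $P$, using $(P,Q)=\frac1{16}(\int Q)^2$) gives $\beta_2=2$.

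\textbf{Step 2: Expansion of the residual $\Psib$.} Once all $P_k$ are constructed so that~\eqref{eq:LPprime} holds for $1\le k\le K$, I plug $\Qb=Q+\sum_{k=1}^K b^kP_k$ and $\theta(b)=\sum_{k=2}^K\beta_k b^k$ into the definition~\eqref{def:Psibt} of $\Psib$. By construction every power $b^k$ with $1\le k\le K$ cancels, so $-\Psib$ is a polynomial in $b$ with lowest-order term $b^{K+1}$. The terms surviving come from three sources: the nonlinearity $\Qb^5$ (which produces monomials $b^{i_1+\cdots+i_5}P_{i_1}\cdots P_{i_5}$ up to degree $5K$), the linear term $b\Lambda\Qb$ (degree up to $K+1$), and $\theta(b)\partial_b\Qb$ (degree up to $2K-1$). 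Grouping by the degree and tracking the decay class of each coefficient — a product of five factors $P_{i_j}\in\Z{i_j-1}$ lies in a space $\Z{\ell}$ with $\ell$ controlled by the number of ``left-growing'' factors, and $\Lambda$ or differentiation shifts the index predictably — yields precisely the stated form~\eqref{eq:Psibt}, after re-indexing so that the coefficient of $b^{K+k}$ is some $R_\ell\in\Z\ell$. The split between $k\le K-1$ and $k\ge K$ in~\eqref{eq:Psibt} reflects the transition from terms where only one $P_{i_j}$ has high index (linear-type contributions, giving $R_{K+k-2}$) to genuinely quintic combinations (giving $R_{K+k-6}$). This step is bookkeeping: one checks each monomial's degree and decay class against the claimed formula.

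\textbf{Main obstacle.} The hard part is Step~1 — more precisely, verifying at each induction stage that the choice of $\beta_k$ indeed removes \emph{exactly} the obstruction to inverting $L$, i.e.\ that the non-decaying part of $\int_{-\infty}^y(\Lambda P_{k-1}-\Theta_k)$ vanishes \emph{and} that the resulting function is $\Y$-valued and $Q'$-orthogonal so that Lemma~\ref{lemma:L}\ref{inverse} applies. This requires a careful description of the asymptotic behavior of $P_{k-1}$ and of the lower-order $\Theta_k$ as $y\to-\infty$: one must show that the limit $\lim_{y\to-\infty}P_i(y)$ exists (generalizing $\lim_{y\to-\infty}P_1=\frac12\int Q$) and identify it, so that the single free parameter $\beta_k$ suffices to cancel the leading constant. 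Keeping precise track of the polynomial-times-exponential bounds through the nonlinear term $\Omega_k$ — which couples all the previously constructed profiles — and confirming that nothing worse than the $\Z{k-1}$ growth appears, is the delicate combinatorial and asymptotic core of the proof; everything else is either a direct appeal to Lemmas~\ref{lemma:L} and~\ref{lemma:P} or routine power-counting.
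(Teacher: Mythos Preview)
Your overall strategy (induction plus power-counting for $\Psib$) matches the paper, but Step~1 contains a genuine gap in the mechanism by which you satisfy the two constraints at each stage.

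First, the roles of your free parameters are reversed. The solvability condition for $L\wt P_k=S_k$ via Lemma~\ref{lemma:L}\ref{inverse} is $(S_k,Q')=0$; this is what $\beta_k$ must achieve. Indeed, since $\Theta_k=\beta_kP_1+\cdots$ and $(P_1,Q)\ne 0$, integrating by parts shows that $\beta_k$ enters $(S_k,Q')$ nondegenerately and is uniquely fixed by this Fredholm condition. Your proposal to enforce orthogonality by an integration constant $c_k$ or by adding a multiple of $Q'$ cannot work: $(c_k,Q')=c_k\int Q'=0$ because $Q'$ is odd, and adding $Q'$ to $P_k$ leaves $LP_k$ unchanged.

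Second, and more seriously, your claim that $\lim_{y\to-\infty}P_i(y)$ exists is false for $i\ge 2$: since $P_{k-1}\in\Z{k-2}$, it grows like $y^{k-2}$ on the left, so $\Lambda P_{k-1}$ also has polynomial growth and your primitive $\int_{-\infty}^y\Lambda P_{k-1}$ diverges. The paper integrates from $+\infty$ instead, which is fine on the right but produces a polynomial of degree $k-1$ on the left. Consequently the obstruction to having $S_k\in\Y$ is not a single constant but $k-2$ polynomial coefficients, and the single parameter $\beta_k$ (already spent on orthogonality) cannot cancel them. The paper resolves this by writing
\[
P_k=\wt P_k-\int_y^{+\infty}(\Lambda P_{k-1}-\Theta_k)-\sum_{j=1}^{k-2}d_{j,k}P_j,
\]
introducing $k-2$ extra parameters $d_{j,k}$; since each $P_j$ has leading term $c_{1,j}y^{j-1}$ on the left with $c_{1,j}\ne 0$, the resulting linear system in the $d_{j,k}$ is triangular and uniquely solvable. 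Without these additional corrections your induction breaks at $k=3$.

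Step~2 is essentially correct and matches the paper's bookkeeping.
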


\begin{remark} \label{rem:Pk}
In the proof of Lemma~\ref{lemma:Qbt} that follows, we will actually show a more precise behavior of $P_k$ on the left
than the assertion $P_k\in\Z{k-1}$. Indeed, we will prove that, for all $1\leq k\leq K$, $P_k^{(k)}\in\Y$, and in particular
there exist $c_{1,k},\ldots,c_{k,k}\in\RR$ and $P_k^{\sharp}\in\Y_-$ such that, for all $y<0$,
\be \label{eq:Pkinf}
P_k(y) = c_{1,k}y^{k-1} +c_{2,k}y^{k-2}+\cdots+c_{k,k} +P_k^{\sharp}(y),
\ee
with the relations $c_{1,k} = -\frac{k-1/2}{k-1}\, c_{1,k-1}$ for $k\geq 2$, and $c_{1,1}=\frac12\int Q >0$ from~\eqref{eq:LP}.

As a consequence, we have $c_{1,k}\neq 0$ for all $k\geq 1$, which proves that the profiles constructed above
exhibit an \emph{actual} polynomial growth on the left, which will be handled by a suitable cut-off, see~\eqref{def:locprofile}.
Finally, since $c_{1,k}$ has the sign of $(-1)^{k-1}$, we can also affirm that $\lim_{y\to -\infty} P_k(y)=+\infty$ for all $k\geq 2$.
\end{remark}

\begin{remark} \label{rem:uniquePk}
In Lemma~\ref{lemma:Qbt}, if we constraint the functions $P_k$ to be orthogonal to $Q'$
(or to any other direction not orthogonal to $Q'$),
we obtain unique functions $P_k$ and unique values of $\beta_k$,
from the characterization of the kernel of $L$ given in Lemma~\ref{lemma:L}~\emph{\ref{kernel}}.
Note that it is easy to impose such orthogonality by adding $a_k Q'$ to each $P_k$, for a suitable $a_k$.
However, this constraint is not needed in the proof and any choice of $P_k$ leads to identical results
(see also Remark~\ref{rem:akPk}).
\end{remark}

\begin{proof}
The proof is done by induction on $K\geq 1$.
Note that the case $K=1$ is true by the Proposition~2.2 of~\cite{MMR1} if we denote $P_1=P$
(recall that we have set $P_0=Q$ and that $\Omega_1=\Theta_1=0$).
Now, with $K\geq 2$ given, we assume there exist $P_k$ and $\beta_k$ for $1\leq k\leq K-1$
which satisfy the lemma and~\eqref{eq:Pkinf}, and we prove the assertions for $k=K$.

\emph{Equation of $P_K$.} Let us solve the equation $(LP_K)' = \Omega'_K +\Lambda P_{K-1}-\Theta_K$ with $P_K$ under the form
\[
P_K = \wt{P}_K -\int_y^{+\infty} (\Lambda P_{K-1} -\Theta_K) - \sum_{k=1}^{K-2} d_{k,K}P_k,
\]
with $\wt{P}_K\in\Y$ and $d_{k,K}\in\RR$ for $1\leq k\leq K-2$ to be determined.
We also assume that the $P_k$ for $1\leq k\leq K-1$ were found under the same form,
which is true for $K=1$ by the proof of Proposition~2.2 in~\cite{MMR1}.
Then we have
\[
(L\wt{P}_K)' = (LP_K)' +\left[ L\int_y^{+\infty} (\Lambda P_{K-1} -\Theta_K)\right]' + \sum_{k=1}^{K-2} d_{k,K}(LP_k)' = S'_K,
\]
where
\begin{align*}
S_K &= \Omega_K + (\Lambda P_{K-1} -\Theta_K)' -5Q^4\int_y^{+\infty} (\Lambda P_{K-1} -\Theta_K) \\
&\quad + \sum_{k=1}^{K-2} d_{k,K}\Omega_k -\sum_{k=1}^{K-2} d_{k,K} \int_y^{+\infty} (\Lambda P_{k-1} -\Theta_k).
\end{align*}

\emph{Choice of $\beta_K$.} Since $LQ'=0$, we can ensure that $(S_K,Q')=0$ by a unique suitable choice of~$\beta_K$.
Indeed, since $(P,Q)\neq0$ from~\eqref{eq:PQ}, we have the following equivalences:
\begin{align*}
\int S_KQ' =0
&\Longleftrightarrow \int S'_KQ=0 \Longleftrightarrow \int [\Omega'_K +\Lambda P_{K-1}-\Theta_K] Q=0 \\
&\Longleftrightarrow \int [\Omega'_K +\Lambda P_{K-1} -\beta_K P -2\beta_{K-1}P_2-\cdots -(K-1)\beta_2 P_{K-1}]Q=0 \\
&\Longleftrightarrow \beta_K = \frac{1}{(P,Q)} \int [\Omega'_K +\Lambda P_{K-1} -2\beta_{K-1}P_2-\cdots -(K-1)\beta_2 P_{K-1}]Q.
\end{align*}
We also obtain, from~\eqref{eq:PQ} and the flux type computation~(2.43) of~\cite{MMR1}, the identity
\[
\beta_2 = \frac{1}{(P,Q)} \int (\Omega'_2 +\Lambda P)Q = \frac{(10(Q^3P^2)'+\Lambda P,Q)}{(P,Q)}
= \frac{\frac18 \|Q\|_{L^1}^2}{\frac{1}{16} \|Q\|_{L^1}^2} = 2.
\]

\emph{Choice of the $d_{k,K}$.} We now want to ensure that $S_K\in\Y$, so that we may apply~\emph{\ref{inverse}} of Lemma~\ref{lemma:L}
and solve $L\wt{P}_K=S_K$ with a unique solution $\wt{P}_K\in\Y$ such that $(\wt{P}_K,Q')=0$.
It is for this purpose that the parameters $d_{k,K}$ were introduced and will be fixed.
Indeed, since we assumed the construction of the previous $P_k$ was similar, we can write
\begin{align*}
S_K &= \Omega_K + \left[ \frac12 P_{K-1} +yP'_{K-1} -\beta_KP -\cdots -2(K-1)P_{K-1}\right]' +\sum_{k=1}^{K-2} d_{k,K}\Omega_k \\
&\quad -\sum_{k=1}^{K-2}d_{k,K} \left[ \wt{P}_k-P_k -\sum_{\ell=1}^{k-2} d_{\ell,k}P_\ell \right] -5Q^4\int_y^{+\infty} (\Lambda P_{K-1}-\Theta_K),
\end{align*}
with $\wt{P}_k\in\Y$ for $1\leq k\leq K-2$. Hence, we may write $S_K$ as
\begin{align*}
S_K &= \Omega_K -(2K-7/2)P'_{K-1} +yP''_{K-1}-\beta_KP'-\cdots -\beta_3(K-2)P'_{K-2} \\
&\quad +\sum_{k=1}^{K-2} d_{k,K}P_k +\sum_{k=1}^{K-2} d_{k,K}\Omega_k +\sum_{k=1}^{K-2} d_{k,K} \sum_{\ell=1}^{k-2} d_{\ell,k}P_\ell \\
&\quad -\sum_{k=1}^{K-2} d_{k,K}\wt{P}_k -5Q^4\int_y^{+\infty} (\Lambda P_{K-1}-\Theta_K) = A+B+C,
\end{align*}
where $A$ are the source terms, $B$ the terms involving the unknowns $d_{k,K}$, and $C\in\Y$.
Finally, since the only obstruction for having $S_K\in\Y$ is when $y\to -\infty$,
we use~\eqref{eq:Pkinf} to expand $S_K$ as series of powers of $y$ when $y\to -\infty$:
\begin{align*}
S_K &= y^{K-3} [d_{K-2,K}c_{1,K-2} -(2K-7/2)(K-2)c_{1,K-1} +(K-2)(K-3)c_{1,K-1}] \\
&\quad +y^{K-4} [d_{K-3,K}c_{1,K-3} +d_{K-2,K}c_{2,K-2}+\cdots] \\
&\quad +\cdots \\
&\quad + y[d_{2,K}c_{1,2}+\cdots] + [d_{1,K}c_{1,1}+\cdots] + \wt{C},
\end{align*}
with $\wt{C}\in\Y_-$.
So, to have $S_K\in\Y$, it is necessary and enough that the $(K-2)$ brackets cancel,
which is possible since this is a triangular system in the unknowns $d_{1,K},\ldots,d_{K-2,K}$ and that $c_{1,1},\ldots,c_{1,K-2}\neq 0$.
For instance, the first bracket gives the identity $d_{K-2,K}c_{1,K-2} = (K-2)(K-1/2)c_{1,K-1}$, which leads,
thanks to the relation between $c_{1,K-1}$ and $c_{1,K-2}$ given by the induction, to
\[
d_{K-2,K} = (K-2)(K-1/2)\frac{c_{1,K-1}}{c_{1,K-2}} = -(K-1/2)(K-3/2).
\]
Then, knowing the value of $d_{K-2,K}$, the second bracket gives $d_{K-3,K}$, and so on.

\emph{Properties of $P_K$.} Thanks to the parameters $\beta_K$ and $d_{1,K},\ldots,d_{K-2,K}$ fixed above,
we have ensured that $S_K\in\Y$ and $(S_K,Q')=0$. Hence, by~\emph{\ref{inverse}} of Lemma~\ref{lemma:L},
there exists a unique $\wt{P}_K\in\Y$ such that $(\wt{P}_K,Q')=0$ and $L\wt{P}_K=S_K$,
and \emph{a fortiori} $(LP_K)' = \Omega'_K +\Lambda P_{K-1} -\Theta_K$ as expected.
To check~\eqref{eq:Pkinf} for $P_K$, we first compute
\begin{align*}
P'_K &= \wt{P}'_K +\Lambda P_{K-1} -\Theta_K -\sum_{k=1}^{K-2} d_{k,K} P'_k \\
&= \wt{P}'_K -(2K-5/2)P_{K-1} +yP'_{K-1} -\beta_K P -\cdots -\beta_3(K-2)P_{K-2} -\sum_{k=1}^{K-2} d_{k,K} P'_k.
\end{align*}
Hence, $P_K^{(K)}\in\Y$, which proves that $P_K$ has an expansion as in~\eqref{eq:Pkinf} when $y\to -\infty$,
with coefficients $c_{1,K},\ldots,c_{K,K}$ (and in particular $P_K\in\Z{K-1}$).
By identification of the terms in~$y^{K-2}$ in the previous equality, we obtain the identity
\[
(K-1)c_{1,K} = -(2K-5/2)c_{1,K-1}+(K-2)c_{1,K-1} = -c_{1,K-1}(K-1/2),
\]
which leads to $c_{1,K} = -\frac{K-1/2}{K-1}\, c_{1,K-1}$ as expected.

\emph{Expression of $\Psib$.} From the definitions~\eqref{def:Omegak} and~\eqref{def:Thetak} of $\Omega_k$ and $\Theta_k$ respectively, we have
\begin{align*}
-\Psib &= \left[ Q'' + \sum_{k=1}^K b^k P''_k -Q -\sum_{k=1}^K b^kP_k +Q^5 +\sum_{k=1}^{5K} b^k(5Q^4P_k) +\sum_{k=1}^{5K} b^k\Omega_k \right]' \\
&\quad +b\sum_{k=0}^K b^k \Lambda P_k - \sum_{k=1}^{2K-1} b^k \Theta_k.
\end{align*}
Since $Q''+Q^5=Q$, the terms of order $0$ cancel and we obtain
\[
-\Psib = \sum_{k=1}^K b^k[ (-LP_k)' +\Omega'_k +\Lambda P_{k-1}-\Theta_k]
+b^{K+1}\Lambda P_K - \sum_{k=K+1}^{2K-1} b^k \Theta_k + \sum_{k=K+1}^{5K} b^k(\Omega_k+5Q^4P_k)'.
\]
From the equations satisfied by the $P_k$, the first sum cancels and we get
\[
-\Psib = b^{K+1}\Lambda P_K - \sum_{k=K+1}^{2K-1} b^k \Theta_k + \sum_{k=K+1}^{5K} b^k(\Omega_k+5Q^4P_k)'.
\]
Finally, since $P_K\in\Z{K-1}$ and $\Omega'_k\in\Z{k-6}$, we may rewrite $-\Psib$ as
\[
-\Psib = \sum_{k=K+1}^{2K-1} b^k R_{k-2} +\sum_{k=2K}^{5K} b^k R_{k-6}
= b^{K+1} \left[ \sum_{k=1}^{K-1} b^{k-1}R_{K+k-2} + \sum_{k=K}^{4K} b^{k-1} R_{K+k-6} \right],
\]
where $R_\ell\in\Z\ell$ for $\ell\in\ZZ$, which concludes the proof of the lemma.
\end{proof}

\subsection{Definition and estimates of the localized profiles} \label{sec:locprofiles}

We now proceed to a simple localization of the profile to avoid some artificial growth at~$-\infty$.
Let $\chi\in \Cinfini$ be such that $0\leq \chi \leq 1$, $\chi'\geq 0$ on $\RR$,
$\chi\equiv 1$ on $[-1,+\infty)$, $\chi\equiv 0$ on $(-\infty,-2]$.
We fix
\be \label{def:gamma}
\frac{17}{20} < \g \leq 1.
\ee
Now we define the localized profiles:
\be \label{def:locprofile}
\chi_b(y)= \chi(|b|^\g y),\quad Q_b(y) = Q(y) + R_b(y) \chi_b(y).
\ee

\begin{lemma}[Properties of localized profiles] \label{lemma:locprofile}
Let $K\geq 1$. The following holds for all $|b|\leq 1$.
\begin{enumerate}[label=\emph{(\roman*)}]
\item \emph{Estimates on $Q_b$:} For all $j\geq 0$ and all $y\in\RR$, we have
\begin{align}
|Q_b^{(j)}(y)| &\lesssim e^{-|y|/2} + |b|^{1+j}\charfunc{[-1,0]}(|b|^\g y) + |b|^{1+j\g} \charfunc{[-2,-1]}(|b|^\g y), \label{eq:Qbj} \\
|(Q_b-Q)^{(j)}(y)| &\lesssim |b|e^{-|y|/2} + |b|^{1+j}\charfunc{[-1,0]}(|b|^\g y) + |b|^{1+j\g} \charfunc{[-2,-1]}(|b|^\g y) \lesssim |b|, \label{eq:QbtoQ} \\
\left| \left( \frac{\partial Q_b}{\partial b} \right)^{(j)} (y) \right| &\lesssim e^{-|y|/2}
+ |b|^j\charfunc{[-1,0]}(|b|^\g y) + |b|^{j\g} \charfunc{[-2,-1]}(|b|^\g y). \label{eq:dQb}
\end{align}
\item \emph{Equation of $Q_b$:} Let
\be \label{def:Psib}
-\Psi_b=(Q_b''- Q_b+ Q_b^5)'+b \Lambda Q_b -\theta(b)\frac{\partial Q_b}{\partial b}.
\ee
Then, for all $0\leq j\leq K-1$ and for all $y\in\RR$,
\be \label{eq:Psibj}
|\Psi_b^{(j)}(y)| \lesssim |b|^{K+1}e^{-|y|/2} + |b|^{K+1}|y|^{K-1-j}\charfunc{[-1,0]}(|b|^\g y) + |b|^{1+(j+1)\g} \charfunc{[-2,-1]}(|b|^\g y).
\ee
\item \emph{Mass and energy of $Q_b$:}
\begin{gather}
\left|\int Q_b^2 - \int Q^2 - 2 b \int PQ \right| \lesssim |b|^{2-\g}, \label{QbL2} \\
\left|E(Q_b)+ b \int PQ \right| \lesssim |b|^2. \label{QbEn}
\end{gather}
\end{enumerate}
\end{lemma}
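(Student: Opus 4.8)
The plan is to prove the three assertions of Lemma~\ref{lemma:locprofile} by propagating, via the explicit cut-off $\chi_b(y)=\chi(|b|^\g y)$, the pointwise bounds already available for $Q$, for the profiles $P_k\in\Z{k-1}$ (together with their refined expansion~\eqref{eq:Pkinf}), and for $\Psib$ from Lemma~\ref{lemma:Qbt}.

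\medskip

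\emph{Estimates on $Q_b$ (item (i)).}
First I would treat $(Q_b-Q)^{(j)}=(R_b\chi_b)^{(j)}$ by Leibniz: $(R_b\chi_b)^{(j)}=\sum_{i=0}^j\binom{j}{i}R_b^{(i)}\chi_b^{(j-i)}$. On the region $\{|b|^\g y\geq -1\}$ one has $\chi_b\equiv 1$ so only $i=j$ survives; there $R_b=\sum_{k\geq 1}b^kP_k$ and, using~\eqref{eq:Pkinf}, $|R_b^{(j)}(y)|\lesssim |b|\,e^{-|y|/2}$ for $y\geq 0$ (the $P_k$ are in $\Y$ on the right) while for $-|b|^{-\g}\leq y\leq 0$ one has $|R_b^{(j)}(y)|\lesssim |b|\,(1+|y|)\lesssim |b|\cdot|b|^{-\g}$... — here I would instead keep the sharper bookkeeping: on $[-|b|^{-\g},0]$, $|b^kP_k^{(j)}(y)|\lesssim |b|^k|y|^{k-1-j}$ when $j\le k-1$ and $\lesssim |b|^k e^{-|y|/2}$ otherwise, and since $|y|\le|b|^{-\g}$ the worst term is of size $|b|^{k-(k-1-j)\g}$; the choice $\g\le 1$ and $\g>17/20$ (more precisely, that $k-(k-1-j)\g$ is minimized, giving the exponent $1+j$ claimed in~\eqref{eq:Qbj}--\eqref{eq:QbtoQ} up to harmless constants) is exactly what makes the stated bound $|b|^{1+j}\charfunc{[-1,0]}(|b|^\g y)$ come out. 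On the transition region $\{-2\leq |b|^\g y\leq -1\}$, the derivatives $\chi_b^{(j-i)}$ contribute factors $|b|^{(j-i)\g}$ and, since there $|y|\sim|b|^{-\g}$, the polynomial growth of $R_b^{(i)}$ of order up to $K-1$ is converted into a power of $|b|^{-\g}$; collecting the largest contribution gives the term $|b|^{1+j\g}\charfunc{[-2,-1]}(|b|^\g y)$. Finally, on $\{|b|^\g y\leq -2\}$ we have $\chi_b\equiv 0$, so $Q_b=Q$ and only the $e^{-|y|/2}$ term is present. The bound~\eqref{eq:Qbj} then follows from $|Q_b^{(j)}|\leq |Q^{(j)}|+|(Q_b-Q)^{(j)}|$ and $|Q^{(j)}(y)|\lesssim e^{-|y|/2}$, and~\eqref{eq:dQb} is obtained identically after noting $\frac{\partial Q_b}{\partial b}=\frac{\partial R_b}{\partial b}\chi_b+R_b\,\g|b|^{\g-1}\mathrm{sgn}(b)\,y\chi'(|b|^\g y)$, with one power of $|b|$ less than $R_b$ on each term.

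\medskip

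\emph{Equation of $Q_b$ (item (ii)).}
The point is that $Q_b=\Qb$ on $\{|b|^\g y\geq -1\}$, so there $\Psi_b=\Psib$ and~\eqref{eq:Psibt} (with $R_\ell\in\Z\ell$) gives directly the first two terms of~\eqref{eq:Psibj}: the factor $b^{K+1}$ is explicit, the $\Z\ell$ decay yields $e^{-|y|/2}$ for $y\ge 0$, and the at-most-degree-$\ell$ polynomial growth on the left, with $\ell\le K-1$ after multiplication by $b^{K+1}$ and the extra powers of $b$, produces $|b|^{K+1}|y|^{K-1-j}$ for $-1\le|b|^\g y\le 0$. On $\{|b|^\g y\leq -2\}$ we have $Q_b=Q$, hence $-\Psi_b=b\Lambda Q-\theta(b)\frac{\partial Q}{\partial b}=b\Lambda Q$, which is $O(|b|e^{-|y|/2})$ and thus absorbed by the first term of~\eqref{eq:Psibj} (for $|b|\le 1$). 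The genuinely new computation is the transition zone $\{-2\le|b|^\g y\le-1\}$: there $Q_b=Q+R_b\chi_b$ with $\chi_b$ and all its derivatives nontrivial, so one expands $-\Psi_b$ by Leibniz and plugs in the pointwise bounds~\eqref{eq:QbtoQ}--\eqref{eq:dQb} already proved, together with $|y|\sim|b|^{-\g}$; the dominant term turns out to come from the highest derivative landing on $\chi_b$, giving size $|b|^{1+(j+1)\g}$, which is exactly the third term of~\eqref{eq:Psibj}. This transition estimate, keeping careful track of how each derivative is distributed between $R_b$ and $\chi_b$ and how $|y|\sim|b|^{-\g}$ interacts with the polynomial parts of $R_b$, is where the bulk of the bookkeeping lies; I expect it to be the main obstacle.

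\medskip

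\emph{Mass and energy of $Q_b$ (item (iii)).}
For~\eqref{QbL2}, write $\int Q_b^2=\int Q^2+2\int Q R_b\chi_b+\int (R_b\chi_b)^2$. In the cross term, replace $\chi_b$ by $1$ at the cost of $\int_{|b|^\g y\le -1} Q R_b$, which is exponentially small in $|b|^{-\g}$ hence $O(|b|^\infty)$; then $2\int Q R_b=2b\int QP_1+\sum_{k\geq 2}2b^k\int QP_k=2b(P,Q)+O(|b|^2)$ using that $P_k\in\Z{k-1}\subset L^2$ for $k\ge 2$... — but here I must be careful, since for the uniqueness-free choice of $P_k$ the functions $P_k$ ($k\ge2$) are not in $L^2$ because of their polynomial growth, so instead I would use $\int_{y\ge 0}QP_k$ converges and $\int_{-1/|b|^\g\le y\le 0}QP_k$ is $O(1)$ (even exponentially controlled by the $Q$ factor), giving $\int Q R_b\chi_b=b(P,Q)+O(|b|^2)$; recalling $(P,Q)=\int PQ$ this is the claimed main term. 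The quadratic term $\int(R_b\chi_b)^2$ is supported in $\{y\gtrsim -|b|^{-\g}\}$ and, splitting into $y\ge 0$ (exponential, $O(|b|^2)$) and $-|b|^{-\g}\le y\le 0$ (where $|R_b|\lesssim |b|$ by~\eqref{eq:QbtoQ}, over an interval of length $|b|^{-\g}$), is $O(|b|^2\cdot|b|^{-\g})=O(|b|^{2-\g})$, which dominates the error and gives~\eqref{QbL2}. For~\eqref{QbEn}, I would use the standard identity (as in~\cite{MMR1}) that $E(Q)=0$ and that, integrating by parts, $E(Q_b)=E(Q)-\int(LR_b)\,R_b\cdot\tfrac12 + \dots$; more cleanly, $2E(Q_b)=\int(Q_b')^2-\tfrac13\int Q_b^6$ and one expands to first order in $b$: the $b^0$ term is $2E(Q)=0$, the $b^1$ term is $\int Q'P_1'-\int Q^5P_1=-\int(Q''+Q^5)P_1+\text{(boundary)}=\ldots$; using $(LP_1)'=\Lambda Q$ and the flux computation~(2.43) of~\cite{MMR1} (already invoked in the proof of Lemma~\ref{lemma:Qbt}) this reduces to $-b\int PQ$ up to $O(|b|^2)$, exactly as asserted. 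The cut-off again only produces $O(|b|^\infty)$ errors from the right tail and $O(|b|^2)$ errors from the left transition zone, by the same splitting as for the mass. This completes the proof of Lemma~\ref{lemma:locprofile}.
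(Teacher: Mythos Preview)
Your approach matches the paper's: Leibniz on $R_b\chi_b$ with region splitting for (i), the decomposition $-\Psi_b=-\Psib\chi_b+\text{(cut-off errors)}$ for (ii), and direct expansion of mass and energy for (iii). One point needs correction, however. In (ii), on the region $\{|b|^\g y\le -2\}$ you write that $-\Psi_b=b\Lambda Q$ is $O(|b|e^{-|y|/2})$ and ``thus absorbed by the first term of~\eqref{eq:Psibj}''. As stated this is false: $|b|e^{-|y|/2}$ is \emph{not} bounded by $|b|^{K+1}e^{-|y|/2}$. The missing step is that on this region $|y|\ge 2|b|^{-\g}$, so one splits $|\Lambda Q|\lesssim (1+|y|)e^{-|y|}\le (1+|y|)e^{-|y|/2}\cdot e^{-|y|/2}$ and uses $(1+|y|)e^{-|y|/2}\le (1+2|b|^{-\g})e^{-|b|^{-\g}}\lesssim |b|^{K}$ for any $K$; this is exactly how the paper handles its corresponding term $D_j$. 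Also, for the energy in (iii), the identity $(LP_1)'=\Lambda Q$ and the flux computation from~\cite{MMR1} are not needed: your own expansion already gives the $b^1$ term as $-\int(Q''+Q^5)P_1\chi_b=-\int QP_1\chi_b=-b\int PQ+O(|b|^2)$ directly from $Q''+Q^5=Q$, which is precisely the paper's argument.
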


\begin{remark}
To prove Theorem~\ref{thm:mainspace}, we will need two different values of $\g$ in the interval~\eqref{def:gamma}.
Indeed, on the one hand, to obtain~\eqref{th:ptwisem} for any $m\geq 1$, one needs to use the value $\g=1$;
on the other hand, to prove~\eqref{th:ptwise0}, we need $\g<1$ arbitrarily close to $\frac{17}{20}$
--- see for instance the proof of Lemma~\ref{le:atR} in Section~\ref{sec:space-estimates}.
At this point, one remarks from~\eqref{QbL2} that $\g=1$ creates an error term in $L^2$ of the size of
$\int Q_b^2- \int Q^2$, which is too rough to obtain~\eqref{th:ptwise0} --- see also~\eqref{minmass}.
\end{remark}

\begin{proof}[Proof of Lemma~\ref{lemma:locprofile}]
(i) First notice that~\eqref{eq:Qbj} is a direct consequence of~\eqref{eq:QbtoQ} and the exponential decay of $Q$ and its derivatives.
To prove~\eqref{eq:QbtoQ}, we first write
\[
(Q_b-Q)^{(j)}(y) = \sum_{k=1}^K b^kP_k^{(j)}(y)\chi_b(y) + \sum_{p=1}^j \sum_{k=1}^K \binom j p b^k P_k^{(j-p)}(y) \chi_b^{(p)}(y).
\]
Now recall that $P_k\in\Z{k-1}$ by construction,
which implies in particular that $|P_k^{(j)}(y)| \lesssim e^{-|y|/2}$ for all $y>0$, and also for $y<0$ in the case $k\leq j$.
When $k\geq j+1$, we just have the polynomial control $|P_k^{(j)}(y)| \lesssim (1+|y|)^{k-1-j}$.
Now, by distinguishing the three regions $y>0$, $-1<y<0$ and $-2|b|^{-\g}<y<-1$, we may estimate
\begin{align*}
|(Q_b-Q)^{(j)}(y)| &\lesssim |b|e^{-|y|/2} + \sum_{k=j+1}^K |b|^k |y|^{k-1-j} \charfunc{[-2,0]}(|b|^\g y) \\
&\quad + \sum_{p=1}^j \sum_{k=k_p}^K |b|^{k+\g p} |y|^{k+p-1-j} \charfunc{[-2,-1]}(|b|^\g y),
\end{align*}
where $k_p=\max(1,j-p+1)$.
Since $|b|^\g |y|\leq 2$ with $\g\leq1$ when $|b|^\g y \in [-2,0]$, we may estimate the terms in the first sum, where $j+1\leq k\leq K$, as
\[
|b|^k |y|^{k-1-j} \lesssim |b|^k |b|^{-\g k +(1+j)\g} = |b|^{(1-\g)k +(1+j)\g}
\lesssim |b|^{(1-\g)(1+j)+(1+j)\g} = |b|^{1+j}.
\]
Similarly, in the second sum, where $k\geq k_p\geq 1$,
\[
|b|^{k+\g p} |y|^{k+p-1-j} \lesssim |b|^{(1-\g)k+(1+j)\g} \lesssim |b|^{1+j\g}.
\]
Hence, we find as expected
\[
|(Q_b-Q)^{(j)}(y)| \lesssim |b|e^{-|y|/2} + |b|^{1+j}\charfunc{[-2,0]}(|b|^\g y) + |b|^{1+j\g} \charfunc{[-2,-1]}(|b|^\g y),
\]
which leads to~\eqref{eq:QbtoQ} since $\g\leq 1$.

Finally, to prove~\eqref{eq:dQb}, we first compute, from the definition~\eqref{def:locprofile} of~$\chi_b$,
\be \label{eq:chib}
b\,\frac{\partial\chi_b}{\partial b}(y) = \g y|b|^\g \chi'(|b|^\g y) = \g y \chi'_b(y).
\ee
Therefore, we find
\begin{align*}
\frac{\partial Q_b}{\partial b} &= \frac{\partial R_b}{\partial b}\chi_b +R_b \frac{\partial \chi_b}{\partial b}
= \sum_{k=1}^K kb^{k-1} P_k(y)\chi_b(y) + \sum_{k=1}^K \g y b^{k-1} P_k(y)\chi'_b(y) \\
&= \sum_{k=1}^K b^{k-1} R_{k-1}(y)\chi_b(y) + \sum_{k=1}^K b^{k-1} R_k(y)\chi'_b(y),
\end{align*}
with the convention $R_k\in\Z k$.
Proceeding as above for the proof of~\eqref{eq:QbtoQ}, it leads to~\eqref{eq:dQb}.

\medskip

(ii) From the definition~\eqref{def:locprofile} of $Q_b$, the expression~\eqref{def:Psib} of $\Psi_b$ may be written as
\begin{align*}
-\Psi_b &= \left[ Q''+R''_b\chi_b +2R'_b\chi'_b +R_b\chi''_b -Q-R_b\chi_b +Q_b^5 \right]' \\
&\quad +b\Lambda Q +b\Lambda R_b\chi_b +byR_b\chi'_b
-\theta(b) \left[ \frac{\partial R_b}{\partial b}\chi_b +R_b\frac{\partial\chi_b}{\partial b} \right].
\end{align*}
From the definitions~\eqref{def:Psibt} of $\Psib$ and~\eqref{def:Qbthetab} of $\Qb$,
using the expression $Q_b=\Qb\chi_b +(1-\chi_b)Q$ to expand $Q_b^5$ and also identity~\eqref{eq:chib} above,
we may rewrite the previous expression as
\[
-\Psi_b = -\Psib\chi_b + C_b +D_b,
\]
with
\[
C_b = (3R_b''-R_b+\Qb^5)\chi'_b +3R_b'\chi''_b +R_b\chi'''_b
+\left[b-\g\theta(b)b^{-1}\right] yR_b\chi'_b + \left[\Qb^5(\chi_b^5-\chi_b)\right]'
\]
and
\[
D_b= \sum_{k=1}^5 \binom 5k \left[ (1-\chi_b)^kQ^k\Qb^{5-k}\chi_b^{5-k} \right]'
-5(1-\chi_b)Q'Q^4 +b(1-\chi_b)\Lambda Q.
\]
Hence, we have to estimate the four terms
\[
-\Psi_b^{(j)} = -\Psib^{(j)}\chi_b + \sum_{p=1}^j \binom jp (-\Psib)^{(j-p)}\chi_b^{(p)} + C_b^{(j)} + D_b^{(j)}
= A_j+B_j+C_j+D_j.
\]

\emph{Estimate of $A_j$.}
From the expression~\eqref{eq:Psibt} of $\Psib$, we get
\[
-\Psib^{(j)}\chi_b = b^{K+1} \left[ \sum_{k=1}^{K-1} b^{k-1}R_{K+k-2}^{(j)}
+ \sum_{k=K}^{4K} b^{k-1} R_{K+k-6}^{(j)} \right]\chi_b,
\]
with the convention $R_\ell\in\Z\ell$ for $\ell\in\ZZ$.
Therefore, from the definition of $\Z\ell$, we are led to set $k_j = \max(1,j+2-K)$ and $k'_j = \max(K,j+6-K)$.
Considering the cases $y>0$, $-1<y<0$ and $-2|b|^{-\g}<y<-1$ as in the (i) of the proof, we find the estimate
\[
|A_j(y)| \lesssim |b|^{K+1}e^{-|y|/2} + |b|^{K+1} |y|^{K-1-j}
\! \left( \sum_{k=k_j}^{K-1} |b|^{k-1}|y|^{k-1} +\sum_{k=k'_j}^{4K} |b|^{k-1}|y|^{k-5}\right) \! \charfunc{[-2|b|^{-\g},-1]}(y).
\]
For the first sum, since $k\geq k_j\geq 1$ and $1\leq |y|\leq 2|b|^{-\g}$, we find
\[
|b|^{k-1}|y|^{k-1} = (|b||y|)^{k-1} \leq (|b|^\g |y|)^{k-1} \lesssim 1.
\]
Similarly, for the second sum, since $k\geq k'_j\geq K\geq 1$, we find
\[
|b|^{k-1}|y|^{k-5} = (|b||y|)^{k-1} |y|^{-4} \lesssim |y|^{-4} \lesssim 1.
\]
Finally, we found
\[
|A_j(y)| \lesssim |b|^{K+1}e^{-|y|/2} + |b|^{K+1}|y|^{K-1-j} \charfunc{[-2,0]}(|b|^\g y)
\]
and \emph{a fortiori}
\[
|A_j(y)| \lesssim |b|^{K+1}e^{-|y|/2} + |b|^{K+1}|y|^{K-1-j}\charfunc{[-1,0]}(|b|^\g y) + |b|^{1+(j+1)\g} \charfunc{[-2,-1]}(|b|^\g y).
\]

\emph{Estimate of $B_j$.}
Similarly to $A_j$, we are led to consider $k_{j,p}=\max(1,j+2-p-K)$ and $k'_{j,p} = \max(K,j+6-p-K)$ for every $1\leq p\leq j$,
and we find
\begin{multline*}
|B_j(y)| \lesssim |b|^{K+1}e^{-|y|/2} \\
+ \sum_{p=1}^j (|b|^{\g}|y|)^p |b|^{K+1} |y|^{K-1-j}
\left( \sum_{k=k_{j,p}}^{K-1} |b|^{k-1}|y|^{k-1} + \sum_{k=k'_{j,p}}^{4K} |b|^{k-1}|y|^{k-5} \right) \charfunc{[-2,-1]}(|b|^\g y).
\end{multline*}
As in the estimate for $A_j$, we obtain
\begin{align*}
|B_j(y)| &\lesssim |b|^{K+1}e^{-|y|/2} + |b|^{K+1}|y|^{K-1-j} \charfunc{[-2,-1]}(|b|^\g y) \\
&\lesssim |b|^{K+1}e^{-|y|/2} + |b|^{1+(j+1)\g} \charfunc{[-2,-1]}(|b|^\g y).
\end{align*}

\emph{Estimate of $C_j$.}
First notice from the definition~\eqref{def:Qbthetab} of $\theta(b)$ that $\theta(b)b^{-1}$ is not singular, and more precisely
\[
\left|b-\g\theta(b)b^{-1} \right| \lesssim |b|.
\]
Then, using again the convention $R_\ell\in\Z\ell$, we may rewrite $C_b$ as
\begin{align*}
C_b &= \left( \sum_{k=1}^K b^k R_{k-1} + \sum_{k=0}^{5K} b^k R_{k-5} \right) \chi'_b + \Big(b-\g\theta(b)b^{-1}\Big)\sum_{k=1}^K b^kR_k\chi'_b \\
&\quad +\sum_{k=1}^K b^k R_{k-2}\chi''_b +\sum_{k=1}^K b^k R_{k-1}\chi'''_b + \sum_{k=0}^{5K} b^k[R_{k-5}(\chi_b^5-\chi_b)]',
\end{align*}
and so
\begin{align*}
C_b^{(j)} &= \sum_{p=0}^j \sum_{k=1}^K \binom jp b^k \left[ R_{k-1}^{(j-p)}\chi_b^{(p+1)} +\left(b-\g\theta(b)b^{-1}\right)R_k^{(j-p)}\chi_b^{(p+1)} \right. \\
&\hspace{6cm} \left. {}+ R_{k-2}^{(j-p)}\chi_b^{(p+2)} +R_{k-1}^{(j-p)} \chi_b^{(p+3)} \right] \\
&\quad + \sum_{p=0}^j \sum_{k=0}^{5K} \binom jp b^k R_{k-5}^{(j-p)}\chi_b^{(p+1)}
+ \sum_{p=0}^{j+1} \sum_{k=0}^{5K} \binom{j+1}p b^k R_{k-5}^{(j+1-p)}(\chi_b^5-\chi_b)^{(p)}.
\end{align*}
From the definition of $\chi_b$, we now notice that, for all $p\geq 0$,
\begin{align*}
|(\chi_b^5-\chi_b)^{(p)}(y)| &\lesssim |b|^{p\g}\charfunc{[-2,-1]}(|b|^\g y), \\
|\chi_b^{(p+1)}(y)| &\lesssim |b|^{(p+1)\g}\charfunc{[-2,-1]}(|b|^\g y).
\end{align*}
Then, following the above estimates of $A_j$ and $B_j$, we denote $k_{1,p}=\max(1,j+1-p)$ and $k_{6,p}=\max(6,j+6-p)$,
so that we may write
\begin{multline*}
|C_j(y)| \lesssim \sum_{p=0}^j \sum_{k=k_{1,p}}^K |b|^k |y|^{k-1-j+p} |b|^{(p+1)\g} \charfunc{[-2,-1]}(|b|^\g y) \\
+\sum_{p=0}^{j+1} \sum_{k=k_{6,p}}^{5K} |b|^k |y|^{k-6-j+p}|b|^{p\g} \charfunc{[-2,-1]}(|b|^\g y) + e^{-|y|/2} \charfunc{[-2,-1]}(|b|^\g y).
\end{multline*}
For the first sum we estimate, since $k\geq k_{1,p}\geq 1$,
\[
|b|^k |y|^{k-1-j+p} |b|^{(p+1)\g} \lesssim |b|^{(1-\g)k+(j+1)\g+\g} \lesssim |b|^{1+(j+1)\g}.
\]
As before, we also find the same estimate for the second sum.
Finally, when $|b|^\g y\in [-2,-1]$, we have
\[
e^{-|y|/2} \leq e^{-\frac12 |b|^{-\g}} \lesssim |b|^{1+(j+1)\g}.
\]
Hence we found
\[
|C_j(y)| \lesssim |b|^{1+(j+1)\g} \charfunc{[-2,-1]}(|b|^\g y).
\]

\emph{Estimate of $D_j$.}
First recall that $\Qb\chi_b = Q_b -(1-\chi_b)Q$, so~\eqref{eq:Qbj} gives that $(\Qb\chi_b)^{5-k}$
and all its derivatives are bounded for $1\leq k\leq 5$. Thus, we may rewrite $D_b^{(j)}$ as
\[
D_b^{(j)} = (1-\chi_b)D_{0,b} + \sum_{p=1}^{j+1} D_{p,b} \chi_b^{(p)},
\]
where $D_{p,b}$ satisfy $|D_{p,b}(y)| \lesssim e^{-\frac34|y|}$ for all $0\leq p\leq j+1$.
To estimate the terms of the sum, we proceed as above and find, for all $1\leq p\leq j+1$,
\[
|D_{p,b}(y) \chi_b^{(p)}(y)| \lesssim e^{-\frac34|y|} \charfunc{[-2,-1]}(|b|^\g y)
\lesssim e^{-\frac34|b|^{-\g}} \charfunc{[-2,-1]}(|b|^\g y)
\lesssim |b|^{1+(j+1)\g} \charfunc{[-2,-1]}(|b|^\g y).
\]
To estimate the first term, we rely on the fact that $\chi\equiv 1$ on $[-1,+\infty)$, and so
\[
|(1-\chi_b(y))D_{0,b}(y)| \lesssim e^{-\frac34|y|} \charfunc{(-\infty,-1]}(|b|^\g y)
\lesssim e^{-\frac14|b|^{-\g}} e^{-|y|/2} \lesssim |b|^{K+1} e^{-|y|/2}.
\]
Finally, we found
\[
|D_j(y)| \lesssim |b|^{K+1} e^{-|y|/2} + |b|^{1+(j+1)\g} \charfunc{[-2,-1]}(|b|^\g y).
\]

\medskip

(iii) To prove~\eqref{QbL2}, we first compute
\[
\int Q_b^2 = \int \left( Q+ R_b\chi_b \right)^2 =\int Q^2 + 2 \int Q R_b\chi_b + \int (R_b\chi_b)^2.
\]
But, from one hand, we have
\[
2 \int Q R_b\chi_b = 2 b \int QP - 2 b \int Q P(1-\chi_b) + 2 \sum_{k=2}^K b^k \int Q P_k\chi_b,
\]
with
\[
\left|\int QP(1-\chi_b)\right|\lesssim\int_{y<-|b|^{-\g}} Q \lesssim |b|^{10}
\quad\m{and}\quad
\left| \sum_{k=2}^K b^k \int Q P_k\chi_b \right| \lesssim |b|^2.
\]
On the other hand, from~\eqref{eq:QbtoQ}, we find
\[
\int (R_b\chi_b)^2 = \int (Q_b-Q)^2 \lesssim |b|^{2-\g}.
\]
Gathering the above estimates, we obtain~\eqref{QbL2}.

To prove~\eqref{QbEn}, we write similarly
\begin{align*}
E(Q_b) 
&= E(Q) - \int (Q''+Q^5) R_b\chi_b + \frac12 \int \left(\partial_y \left(R_b\chi_b\right)\right)^2 \\
& \quad - \frac16 \int \left[(Q+R_b\chi_b)^6 -Q^6 -6 Q^5 \left(R_b\chi_b \right) \right]
= - b \int PQ + O(|b|^2),
\end{align*}
since $E(Q)=0$,
\[
\int (Q''+Q^5) R_b\chi_b = \int Q R_b\chi_b = b \int PQ + O(|b|^2),
\]
and, again from~\eqref{eq:QbtoQ},
\begin{gather*}
\int \left(\partial_y\left(R_b\chi_b\right)\right)^2 = \int \left( \partial_y(Q_b-Q) \right)^2 \lesssim |b|^2, \\
\int \left| (Q+R_b\chi_b)^6 -Q^6 -6 Q^5 \left(R_b\chi_b \right) \right| \lesssim \int (Q_b-Q)^6 + Q^4(Q_b-Q)^2 \lesssim |b|^2.
\end{gather*}
This concludes the proof of the lemma.
\end{proof}

\subsection{Modulation around the profile}

Let $u\in\mathcal{C}(I,H^1)$ be a solution of~\eqref{kdv} on a time interval~$I$ (with $0\in I$),
close in $H^1$ to the family of solitons,
\emph{i.e.}~assume that there exist $(\lambda_1(t),x_1(t))\in (0,+\infty)\times \RR$ and $\e_1(t)$
such that, for all $t\in I$,
\be \label{hypeprochien}
u(t,x)=\frac{1}{\lambda^{1/2}_1(t)}[Q+\e_1(t)]\left(\frac{x-x_1(t)}{\lambda_1(t)}\right),
\quad \|\e_1(t)\|_{H^1}\leq \delta_0,
\ee
for some small enough universal constant $\delta_0>0$.
As in~\cite{MMR1,MMR2}, this decomposition is refined using the $Q_b$ profiles and a standard modulation argument.

\begin{lemma}[Decomposition around the refined profiles] \label{lemma:decomposition}
Let $K\geq1$. Then there exists $\delta_0>0$ such that, assuming~\eqref{hypeprochien}, the following holds.
\begin{enumerate}[label=\emph{(\roman*)}]
\item \emph{Decomposition:} There exist unique $\mathcal{C}^1$ functions $(\lambda, x,b) : I\to (0,+\infty)\times \RR^2$ such that
\be \label{defofeps}
\e(t,y)=\lambda^{1/2}(t) u(t,\lambda(t) y + x(t)) - Q_{b(t)}(y),
\ee
defined for all $t\in I$, satisfies the orthogonality conditions
\be \label{ortho1}
(\e(t), y \Lambda Q ) = (\e(t),\Lambda Q) = (\e(t),Q) = 0
\ee
and
\be \label{controle}
|b(t)| + \|\e(t)\|_{H^1} \lesssim \delta_0.
\ee
\item \emph{Equation of $\e$:}
Let
\be \label{rescaledtime}
s = s(t)= \int_0^t \frac{dt'}{\lambda^3(t')}\quad \m{and}\quad y=\frac{x-x(t)}{\lambda(t)}.
\ee
Let $J=s(I)$. Then, on $J$, there holds
\begin{align}
\e_s + \left(\e_{yy}-\e + \left( Q_b+\e\right)^5 - Q_b^5 \right)_y
&= \frac{\lambda_s}{\lambda} \Lambda \e + \left(\frac{\lambda_s}{\lambda}+b\right){\Lambda} Q_b
+ \left(\frac{x_s}{\lambda} -1\right) (Q_b + \e)_y \nonumber \\
&\quad - (b_s +\theta(b)) \frac{\partial Q_b}{\partial b} + \Psi_{b}, \label{eqofeps}
\end{align}
where $\Psi_b$ is defined in~\eqref{def:Psib}.
\item \emph{Estimate induced by the conservation of the mass:}
for all $s\in J$, there holds
\be \label{twobound}
\|\e(s)\|^2_{L^2} \lesssim |b(s)|+\left|\int u_0^2-\int Q^2\right|.
\ee
\item \emph{Modulation equations:} on $J$, there hold
\begin{gather}
\left|\frac{\lambda_s}{\lambda} + b\right| + \left| \frac{x_s}{\lambda} - 1 \right| \lesssim
\left(\int \e^2 {e^{-|y|/2}} \right)^{\frac12} + |b|^{K+1}, \label{eq:2002} \\
|b_s+\theta(b)| \lesssim \int \e^2 {e^{-|y|/2}} +
|b|\left( \int \e^2 {e^{-|y|/2}} \right)^{\frac12}+|b|^{K+1}. \label{eq:2003}
\end{gather}
\item \emph{Minimal mass:}
if in addition $\|u_0\|_{L^2}=\|Q\|_{L^2}$, then $E(u_0)\geq 0$ and, on $J$,
\begin{gather}
\left| \int \e^2 + 2 b \int PQ \right|\lesssim |b|^{\frac12 (3-\g)}, \label{minmass} \\
\left| \int \e_y^2 - 2 \lambda^2 E(u_0) - 2b \int PQ \right| \lesssim |b|^2 +\int \e^2 e^{-|y|/2}. \label{energie}
\end{gather}
\end{enumerate}
\end{lemma}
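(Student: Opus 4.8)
The plan is to establish Lemma~\ref{lemma:decomposition} following the classical modulation scheme of~\cite{MMR1,MMR2}, adapted to the higher order profiles $Q_b$ constructed in Lemma~\ref{lemma:locprofile}. For part~(i), I would apply the implicit function theorem to the map $(\lambda,x,b)\mapsto \bigl((\e,y\Lambda Q),(\e,\Lambda Q),(\e,Q)\bigr)$, where $\e$ is given by~\eqref{defofeps}. At $b=0$, $\lambda=\lambda_1$, $x=x_1$, one has $\e=\e_1$ and the Jacobian of this map is, up to $O(\delta_0)$ corrections, governed by the matrix with entries built from $(\Lambda Q, y\Lambda Q)$, $(\Lambda Q,\Lambda Q)$, $(Q,Q)$, $(\partial_b Q_b|_{b=0},\cdot)=(P\chi_0,\cdot)$, etc.; using $(Q,\Lambda Q)=0$, $(Q,y\Lambda Q)<0$ and the non-degeneracy of $(P,Q)\neq 0$ from~\eqref{eq:PQ}, this Jacobian is invertible for $\delta_0$ small, which gives existence, uniqueness and $\mathcal C^1$ regularity of $(\lambda,x,b)$, together with~\eqref{controle}. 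Part~(ii) is a direct computation: substituting~\eqref{defofeps} into~\eqref{kdv}, rescaling time via $ds/dt=\lambda^{-3}$, and using the definition~\eqref{def:Psib} of $\Psi_b$ to absorb the terms $(Q_b''-Q_b+Q_b^5)'+b\Lambda Q_b-\theta(b)\partial_bQ_b$, one is left precisely with~\eqref{eqofeps} after collecting the modulation terms $\frac{\lambda_s}\lambda\Lambda\e$, $(\frac{\lambda_s}\lambda+b)\Lambda Q_b$, $(\frac{x_s}\lambda-1)(Q_b+\e)_y$ and $-(b_s+\theta(b))\partial_bQ_b$.

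For part~(iii), the conservation of mass gives $\int(Q_b+\e)^2=\int u_0^2$; expanding and using~\eqref{QbL2} to get $\int Q_b^2=\int Q^2+O(|b|)$ together with the orthogonality $(\e,Q)=0$ to kill a cross term at order $0$ in $b$ — more precisely $2\int Q_b\e = 2\int(Q_b-Q)\e = O(|b|\|\e\|_{L^2})$ — one obtains $\|\e\|_{L^2}^2 \lesssim |b| + |\int u_0^2 - \int Q^2| + |b|\|\e\|_{L^2}$, and absorbing the last term yields~\eqref{twobound}. Part~(iv): I would take the $L^2$ scalar product of~\eqref{eqofeps} with the three functions $Q$, $\Lambda Q$ and $y\Lambda Q$ (or rather with well-chosen fixed directions), and differentiate the orthogonality relations~\eqref{ortho1} in $s$ to eliminate $\e_s$. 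Each pairing produces a linear system in the three small quantities $\frac{\lambda_s}\lambda+b$, $\frac{x_s}\lambda-1$ and $b_s+\theta(b)$, whose matrix is again a perturbation of the invertible matrix from part~(i). The right-hand sides are controlled using the Schwartz decay of $Q,\Lambda Q,y\Lambda Q$ (so that all pairings with $\e$ are bounded by $(\int\e^2e^{-|y|/2})^{1/2}$ after Cauchy--Schwarz), the estimate~\eqref{eq:Psibj} on $\Psi_b$ (giving the $|b|^{K+1}$ terms), and the structure of the nonlinear term $(Q_b+\e)^5-Q_b^5$; the only subtlety is that the pairing against $Q$ exploits a cancellation (coming from $L\Lambda Q = -2Q$ and $(P,Q)\neq 0$) that promotes the $b_s$ equation to the quadratic estimate~\eqref{eq:2003} rather than merely linear.

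For part~(v), the lower bound $E(u_0)\geq 0$ follows from the sharp Gagliardo--Nirenberg inequality since $\|u_0\|_{L^2}=\|Q\|_{L^2}$. To prove~\eqref{minmass}, I would combine~\eqref{twobound} with the sharper expansion of $\int Q_b^2$ from~\eqref{QbL2}: from $\int u_0^2=\int Q^2$ and $\int(Q_b+\e)^2=\int Q^2$ we get $\int\e^2 = -2\int Q_b\e - (\int Q_b^2-\int Q^2)$; now $(\e,Q)=0$ lets us replace $\int Q_b\e$ by $\int(Q_b-Q)\e$, bounded via~\eqref{eq:QbtoQ} and Cauchy--Schwarz by $\lesssim |b|(\int\e^2)^{1/2} + |b|^{1+(1-\g)/2}\cdot|b|^{-\g/2}\lesssim |b|\|\e\|_{L^2} + |b|^{\frac12(3-\g)}$ — here the $L^2$ norm of the part of $Q_b-Q$ supported on $[-2|b|^{-\g},-1]$ is of size $|b|^{1}|b|^{-\g/2}=|b|^{1-\g/2}$, and pairing against $\e$ which is $O(|b|^{1/2})$ by~\eqref{twobound} gives the exponent $\frac12(3-\g)$ — while $\int Q_b^2-\int Q^2 = 2b\int PQ + O(|b|^{2-\g})$ by~\eqref{QbL2}; collecting and absorbing $|b|\|\e\|_{L^2}$ (itself $O(|b|^{3/2})$) yields~\eqref{minmass}. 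Finally~\eqref{energie} follows from $E(u_0)=E(S(t))=\lambda^{-2}\bigl[E(Q_b)+ (\text{cross terms}) + \frac12\int\e_y^2 + \text{l.o.t.}\bigr]$: using $E(Q_b)=-b\int PQ+O(|b|^2)$ from~\eqref{QbEn}, the orthogonality conditions to control the cross term $-\int(Q_b''-Q_b+Q_b^5)\e$ (its main part $-\int Q\e$ vanishes, the rest is $O(|b|\|\e\|_{L^2})+O(\|\e\|_{L^2}^2)$, the latter absorbed into $\int\e^2e^{-|y|/2}$ via coercivity-type localization), and bounding the nonlinear remainder by $\int\e^2e^{-|y|/2}$, one rearranges to get~\eqref{energie}. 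The main obstacle is bookkeeping: ensuring that every error term generated by the localization of $Q_b$ (the regions $|b|^\g y\in[-1,0]$ and $[-2,-1]$) is correctly tracked so that the exponents $|b|^{K+1}$ in~\eqref{eq:2002}--\eqref{eq:2003} and $|b|^{\frac12(3-\g)}$ in~\eqref{minmass} come out sharp; the invertibility of the modulation matrix and the structure of the $\e$-equation are standard once the profile estimates of Lemma~\ref{lemma:locprofile} are in hand.
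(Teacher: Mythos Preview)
Your proposal follows essentially the same strategy as the paper's proof: implicit function theorem for~(i), direct computation for~(ii), mass conservation plus~\eqref{QbL2} for~(iii), differentiation of the orthogonality conditions for~(iv), and the sharper mass and energy expansions~\eqref{QbL2}--\eqref{QbEn} for~(v). The key ingredients and cancellations you identify (non-degeneracy of $(P,Q)$, $L\Lambda Q=-2Q$, $(\e,Q)=0$, the exponent arithmetic giving $|b|^{\frac12(3-\g)}$) are exactly those used in the paper.

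There is, however, one genuine imprecision in your treatment of~\eqref{energie}. You estimate the cross term $\int \e\bigl[(Q_b-Q)_{yy}+(Q_b^5-Q^5)\bigr]$ by $O(|b|\|\e\|_{L^2})$, but this only gives $|b|^{3/2}$, which is \emph{not} $\lesssim |b|^2$. The paper instead splits according to the structure of~\eqref{eq:QbtoQ}: the exponentially localized part of $(Q_b-Q)_{yy}$ pairs against $\e$ to give $|b|\bigl(\int\e^2 e^{-|y|/2}\bigr)^{1/2}$, which is absorbed via Young's inequality into $|b|^2+\int\e^2 e^{-|y|/2}$; the tail part (size $|b|^{1+2\g}$ on an interval of length $|b|^{-\g}$) contributes $|b|^{1+2\g}\cdot|b|^{-\g/2}\|\e\|_{L^2}\lesssim |b|^{\frac32(1+\g)}$, and one needs $\g\geq\tfrac13$ to conclude this is $O(|b|^2)$. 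Similarly, your claim that $O(\|\e\|_{L^2}^2)$ is ``absorbed into $\int\e^2 e^{-|y|/2}$ via coercivity-type localization'' is not correct as stated --- $\|\e\|_{L^2}^2$ is not controlled by the localized norm. The paper handles the nonlinear remainder $\int|(Q_b+\e)^6-Q_b^6-6Q_b^5\e|$ by splitting $Q_b^4\e^2$ (localized by the decay of $Q_b^4$) and $\e^6$ (bounded by Gagliardo--Nirenberg: $\int\e^6\lesssim\|\e\|_{L^2}^4\|\e_y\|_{L^2}^2\lesssim|b|^2$). Once you refine these two estimates, your argument goes through.
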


\begin{proof}
(i) Let $\mathcal{M}=H^1\times (0,+\infty)\times \RR^2$.
For $M=(u,\lambda,x,b)\in\mathcal{M}$, we define the function $\e_M(y)=\lambda^{1/2}u(\lambda y+x)-Q_b(y)$
and consider the functional $\Phi: \mathcal{M} \to \RR^3$ defined by
\[
\Phi(M) = ((\e_M,y\Lambda Q),(\e_M,\Lambda Q),(\e_M,Q)).
\]
Now let $M_0=(Q,1,0,0)$, and note that $\Phi(M_0)=0$ since $\e_{M_0}\equiv 0$.
Moreover, the Jacobian of $\Phi$ in the direction of $(\lambda,x,b)$ at the point $M_0$ reads as
\begin{align*}
-
\begin{vmatrix}
(\Lambda Q,y\Lambda Q)&(Q',y\Lambda Q)&(P,y\Lambda Q) \\
(\Lambda Q,\Lambda Q)&(Q',\Lambda Q)&(P,\Lambda Q) \\
(\Lambda Q,Q)&(Q',Q)&(P,Q)
\end{vmatrix}
&= -
\begin{vmatrix}
0&\frac12\|yQ\|_{L^2}^2&(P,y\Lambda Q) \\
\|\Lambda Q\|_{L^2}^2&0&(P,\Lambda Q) \\
0&0&\frac{1}{16}\|Q\|_{L^1}^2
\end{vmatrix} \\
&= \frac{1}{32} \|Q\|_{L^1}^2 \|\Lambda Q\|_{L^2}^2 \|yQ\|_{L^2}^2 \neq 0.
\end{align*}
The implicit function theorem then gives the existence of a neighbourhood of $Q$ in $H^1$,
denoted $V_Q = \{ u\in H^1\ |\ \|u-Q\|_{H^1}\leq\bar\delta \}$ for some $\bar\delta>0$ (independent of $K$),
a neighbourhood of $(1,0,0)$ in~$\RR^3$, denoted $V_{(1,0,0)}$,
and a unique $\mathcal{C}^1$ map $(\lambda,x,b) : V_Q\to V_{(1,0,0)}$ such that $\e_u$,
defined by $\e_u(y) = \lambda^{1/2}(u)u[\lambda(u)y+x(u)] -Q_{b(u)}(y)$,
satisfies~\eqref{ortho1} and
\[
|\lambda(u)-1| +|x(u)| + |b(u)| + \|\e_u\|_{H^1} \lesssim \|u-Q\|_{H^1}.
\]
Assuming now $\delta_0\leq\bar\delta$, we may apply from~\eqref{hypeprochien} the above result to $u_0(t)$ for all $t\in I$,
where $u_0$ is defined by $u_0(t,y) = \lambda_1^{1/2}(t)u(t,\lambda_1(t)y+x_1(t))$.
Denoting $\lambda_0(t)=\lambda(u_0(t))$, $x_0(t)=x(u_0(t))$, $b_0(t)=b(u_0(t))$ and $\e_0(t)=\e_{u_0(t)}$,
and finally
\[
\lambda(t)=\lambda_0(t)\lambda_1(t),\quad x(t)=\lambda_1(t)x_0(t)+x_1(t),\quad b(t)=b_0(t),\quad \e(t)=\e_0(t),
\]
we obtain~\eqref{defofeps},~\eqref{ortho1} and~\eqref{controle} as expected.
Assuming $\delta_0$ even smaller,
we notice that such a decomposition is unique under the condition~\eqref{controle}.
Note that the smallness of $\delta_0$ to ensure the existence and uniqueness of the decomposition
is independent of the value of $K$.

\medskip

(ii) The equation of $\e$ comes from a straightforward calculation,
by plugging the expression of $u(t,x)$ given by~\eqref{defofeps} in~\eqref{kdv},
with the changes of variables~\eqref{rescaledtime}.

\medskip

(iii) We first use the conservation of the mass to get
\[
\int (Q_b+\e)^2 = \int Q_b^2 + 2\int \e Q_b + \int \e^2 = \int u_0^2
\]
and thus, from~\eqref{eq:QbtoQ},~\eqref{QbL2} and the orthogonality $(\e,Q)=0$, we obtain
\begin{align*}
\|\e\|_{L^2}^2
&\leq \left| \int u_0^2 -\int Q_b^2 \right| + 2 \left| \int \e(Q_b-Q) \right| \\
&\leq \left| \int u_0^2 -\int Q^2 \right| + \left| \int Q_b^2 -\int Q^2\right| +2\|\e\|_{L^2}\|Q_b-Q\|_{L^2} \\
&\leq \left| \int u_0^2 -\int Q^2 \right| + C|b| +C|b|^{2-\g} +\frac12 \|\e\|_{L^2}^2
\end{align*}
with $C>0$, which leads to~\eqref{twobound} as expected.

\medskip

(iv) First note that, from~\eqref{eq:Psibj}, we have
\[
\int |\Psi_b(y)| e^{-|y|/2}\, dy \lesssim |b|^{K+1}.
\]
Thus, differentiating the orthogonality conditions $(\e,\Lambda Q) = (\e,y \Lambda Q)= 0$
and using the equation of~$\e$, together with the cancellations $(Q',\Lambda Q)=(y\Lambda Q,\Lambda Q)=0$
and the non degeneracies $(\Lambda Q,\Lambda Q)\neq 0$ and $(Q',y\Lambda Q)\neq 0$
(as already stated in the expression of the Jacobian above), we find
\begin{align*}
\left| \frac{\lambda_s}{\lambda} + b\right| + \left| \frac{x_s}{\lambda} - 1 \right|
&\lesssim \left(\int \e^2 e^{-|y|/2}\right)^{\frac12}
+\left(\left| \frac{\lambda_s}{\lambda} + b\right| + \left| \frac{x_s}{\lambda} - 1 \right|\right)
\left[ \left(\int \e^2 e^{-|y|/2}\right)^{\frac12} +|b| \right] \\
&\quad + |b_s + \theta(b)| + |b|^{K+1}.
\end{align*}
Similarly, differentiating $(\e,Q)=0$, using $(P,Q)\neq 0$, $LQ'=0$, $(Q,\Lambda Q)=0$ and $(\e,\Lambda Q)=0$, we find
\begin{align*}
|b_s + \theta(b)| &\lesssim \int \e^2 e^{-|y|/2}
+ \left(\left| \frac{\lambda_s}{\lambda} + b\right| + \left| \frac{x_s}{\lambda} - 1 \right|\right)
\left[\left(\int \e^2 e^{-|y|/2}\right)^{\frac12} + |b|\right] \\
&\quad +|b||b_s+\theta(b)| + |b|\left(\int \e^2 e^{-|y|/2}\right)^{\frac12} + |b|^{K+1}.
\end{align*}
Combining these estimates with~\eqref{controle} and assuming $\delta_0$ small enough, we obtain~\eqref{eq:2002} and~\eqref{eq:2003}.

\medskip

(v) First note that, in the minimal mass case $\|u_0\|_{L^2}=\|Q\|_{L^2}$,
estimate~\eqref{twobound} above reduces simply to $\|\e\|_{L^2}^2 \lesssim |b|$.

To prove the more precise estimate~\eqref{minmass}, we start as in the proof of~(iii)
and use again the minimal mass assumption to get
\[
\int (Q_b+\e)^2 = \int Q_b^2 + 2\int \e Q_b + \int \e^2 = \int Q^2
\]
and thus, from~\eqref{QbL2} and the orthogonality $(\e,Q)=0$,
\[
2 b\int PQ + 2 \int \e (Q_b-Q) + \int \e^2 = O(|b|^{2-\g}).
\]
From $\|\e\|_{L^2}^2 \lesssim |b|$ and~\eqref{eq:QbtoQ}, we also get
\[
\left|\int \e (Q_b-Q)\right| \leq \|\e\|_{L^2} \|Q_b-Q\|_{L^2}
\lesssim |b|^{\frac12} |b|^{1-\frac\g2} = |b|^{\frac{3-\g}2},
\]
and~\eqref{minmass} follows.

To prove~\eqref{energie}, we first compute, using~\eqref{QbEn} and again the orthogonality $(\e,Q)=0$,
\begin{align*}
2\lambda^2 E(u_0) = 2E(Q_b+\e)
&= 2 E(Q_b) - 2 \int \e (Q_b)_{yy} + \int \e_y^2 - \frac13 \int \left[(Q_b+\e)^6 - Q_b^6\right] \\
&= - 2b \int PQ +\int \e_y^2 - 2 \int \e \left[ (Q_b-Q)_{yy} + (Q_b^5-Q^5)\right] \\
&\quad - \frac13 \int \left[(Q_b+\e)^6-Q_b^6-6Q_b^5 \e\right] + O(|b|^2).
\end{align*}
By~\eqref{eq:QbtoQ} and $\int_{-2 |b|^{-\g} <y<0} |\e|\lesssim \|\e\|_{L^2} |b|^{-\frac\g 2} \lesssim |b|^{\frac12(1-\g)}$, we get
\[
\left| \int \e (Q_b-Q)_{yy}\right|
\lesssim |b| \left(\int \e^2 e^{-|y|/2} \right)^{\frac12} + |b|^{1+2\g} \int_{-2 |b|^{-\g} <y<0} |\e|
\lesssim |b| \left(\int \e^2 e^{-|y|/2} \right)^{\frac12} + |b|^{\frac32 (1+\g)}.
\]
Next, by expanding $Q_b^5$ as $[Q+(Q_b-Q)]^5$, using the exponential decay of $Q$ and~\eqref{eq:QbtoQ}, we find, for all $y\in\RR$,
\[
|(Q_b^5-Q^5)(y)| \lesssim |b|e^{-|y|} + |(Q_b-Q)^5(y)| \lesssim |b|e^{-|y|} + |b|^5 \charfunc{[-2,0]}(|b|^\g y).
\]
So, by similar computations,
\[
\left| \int \e (Q_b^5-Q^5) \right|
\lesssim |b| \left(\int \e^2 e^{-|y|/2} \right)^{\frac12} + |b|^5 \int_{-2 |b|^{-\g} <y<0} |\e|
\lesssim |b| \left(\int \e^2 e^{-|y|/2} \right)^{\frac12} + |b|^{5+\frac12(1-\g)}.
\]
Finally, from~\eqref{eq:Qbj},~\eqref{controle} and $\|\e\|_{L^2}^2 \lesssim |b|$, we obtain
\begin{align*}
\int \left|(Q_b+\e)^6 - Q_b^6-6Q^5_b \e\right|
&\lesssim \int \e^6 + \int Q_b^4\e^2 \lesssim \|\e\|_{L^2}^4 \|\e_y\|_{L^2}^2 + \int \e^2 e^{-|y|/2} + |b|^4 \int \e^2 \\
&\lesssim |b|^2 + \int \e^2 e^{-|y|/2}.
\end{align*}
Gathering these estimates, we obtain~\eqref{energie} provided that $\g\geq\frac13$,
which is granted by the choice~\eqref{def:gamma} of $\g$.
\end{proof}

\subsection{Weak $H^1$ stability of the decomposition}

The decomposition of Lemma~\ref{lemma:decomposition} is stable by weak $H^1$ limit.
We refer to Lemma~17 and Appendix~D of~\cite{MMjmpa} for a similar statement.

\begin{lemma}[Weak $H^1$ stability and convergence of the parameters~\cite{MMjmpa}] \label{le:weak}
Let $(u_{0,n})$ be a sequence of $H^1$ initial data such that
\[
u_{0,n}\rightharpoonup u_0\quad \m{in } H^1\ \m{as }\ n\to +\infty.
\]
Assume that, for some $C_1, T_1>0$, for all $n$ large, the corresponding solution $u_n(t)$ of~\eqref{kdv}
exists on $[-T_1,0]$ and satisfies $\max_{t\in [-T_1,0]}\|u_n(t)\|_{H^1}\leq C_1$.
Let $u(t)$ be the solution of~\eqref{kdv} corresponding to $u_0$.
Then, $u(t)$ exists on $[-T_1,0]$ and
\be \label{we:1}
\forall t\in [-T_1,0], \quad u_n(t) \rightharpoonup u(t)\quad \m{in } H^1\ \m{as }\ n\to +\infty.
\ee
Assume further that $u_n(t)$ satisfies~\eqref{hypeprochien} on $[-T_1,0]$ and that the parameters of the decomposition
$(\lambda_n,x_n,b_n,\e_n)$ of~$u_n$ given by Lemma~\ref{lemma:decomposition} satisfy, for all $n$ large,
\be \label{hypoweak}
\forall t\in [-T_1,0],\quad 0<c\leq \lambda_n(t)<C ,\quad \lambda_n(0)=1,\quad x_n(0)=0.
\ee
Then, $u(t)$ satisfies~\eqref{hypeprochien} on $[-T_1,0]$ and its decomposition $(\lambda,x,b,\e)$ satisfies, as $n\to +\infty$,
\[
\forall t\in [-T_1,0],\quad \e_n(t)\rightharpoonup \e(t)\quad \m{in } H^1,\quad
\lambda_n(t)\to \lambda(t),\quad x_n(t)\to x(t), \quad b_n(t)\to b(t).
\]
\end{lemma}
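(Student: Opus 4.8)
The plan is to establish the two conclusions in order. The first one --- that $u$ is defined on all of $[-T_1,0]$ and that $u_n(t)\rightharpoonup u(t)$ in $H^1$ for each such $t$, i.e.~\eqref{we:1} --- is the weak-continuity statement for the critical (gKdV) flow, which I would prove exactly as in~\cite[Lemma~17 and Appendix~D]{MMjmpa}. Starting from the uniform bound $\max_{t\in[-T_1,0]}\|u_n(t)\|_{H^1}\leq C_1$, one controls each $u_n$ uniformly in $n$ on $[-T_1,0]$ in the retarded Strichartz and Kato smoothing norms of the local Cauchy theory~\cite{KPV,KPV2}, extracts a subsequence converging weakly in those spaces, passes to the limit in the Duhamel formula to see that the weak limit solves~\eqref{kdv} with data $u_0$, and identifies it with $u$ by uniqueness; in particular $u$ extends to $[-T_1,0]$. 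Since every subsequence yields the same limit, the whole sequence converges weakly in $H^1$ at each $t$. (Equivalently, once the weak convergence is known on the maximal interval of $u$, weak lower semicontinuity gives $\|u(t)\|_{H^1}\leq\liminf_n\|u_n(t)\|_{H^1}\leq C_1$ there, so the blow up criterion~\eqref{blowucifoi} --- applied backward, using the $(t,x)\mapsto(-t,-x)$ symmetry of~\eqref{kdv} --- prevents $u$ from blowing up before $-T_1$.) I expect this step to be the main obstacle: since the equation is $L^2$-critical, the local existence time is not controlled by the $H^1$ norm alone, so a naive energy argument does not suffice and one genuinely needs the smoothing/Strichartz machinery of the Cauchy theory.

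Next I would record that the decomposition parameters of the $u_n$ stay bounded on $[-T_1,0]$, uniformly in $n$. Indeed $|b_n(t)|\lesssim\delta_0$ by~\eqref{controle}, $\lambda_n(t)\in[c,C]$ by~\eqref{hypoweak}, and $x_n(t)$ is bounded: using~\eqref{rescaledtime} one has $\tfrac{dx_n}{dt}=\big(\tfrac{(x_n)_s}{\lambda_n}\big)\lambda_n^{-2}$, and~\eqref{eq:2002} together with~\eqref{controle} gives $\big|\tfrac{(x_n)_s}{\lambda_n}-1\big|\lesssim\delta_0$, so $\big|\tfrac{dx_n}{dt}\big|\lesssim c^{-2}$ and hence $|x_n(t)|=|x_n(t)-x_n(0)|\lesssim c^{-2}T_1$ by~\eqref{hypoweak}.

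Finally I would pass to the limit in the decomposition at a fixed $t\in[-T_1,0]$. Along an arbitrary subsequence, extract $\lambda_n(t)\to\lambda_\ast>0$, $x_n(t)\to x_\ast$, $b_n(t)\to b_\ast$. Rescaling and translating by a convergent sequence of parameters is weakly continuous on $H^1$, and $b\mapsto Q_b$ is continuous into $H^1$ (continuity at $b=0$ comes from~\eqref{eq:QbtoQ}), so~\eqref{defofeps} gives $\e_n(t)\rightharpoonup\e_\ast:=\lambda_\ast^{1/2}u(t,\lambda_\ast\,\cdot\,+x_\ast)-Q_{b_\ast}$ in $H^1$. By weak lower semicontinuity and~\eqref{eq:QbtoQ}, $\|\e_\ast\|_{H^1}+|b_\ast|\lesssim\delta_0$, so the corresponding rescaling of $u(t)$ lies $O(\delta_0)$-close to $Q$ in $H^1$; choosing $\delta_0$ small enough at the outset, $u(t)$ then satisfies~\eqref{hypeprochien} and Lemma~\ref{lemma:decomposition} applies to $u$. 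Passing to the weak $L^2$ limit in the orthogonality relations~\eqref{ortho1} for $\e_n(t)$ --- legitimate since $Q,\Lambda Q,y\Lambda Q\in L^2$ --- shows that $\e_\ast$ satisfies~\eqref{ortho1}, and we have just checked~\eqref{controle} for $(\lambda_\ast,x_\ast,b_\ast,\e_\ast)$; by the uniqueness part of Lemma~\ref{lemma:decomposition}(i), this quadruple is exactly the decomposition $(\lambda(t),x(t),b(t),\e(t))$ of $u(t)$. As the limit does not depend on the chosen subsequence, the full sequences converge for every $t\in[-T_1,0]$: $\lambda_n(t)\to\lambda(t)$, $x_n(t)\to x(t)$, $b_n(t)\to b(t)$ and $\e_n(t)\rightharpoonup\e(t)$ in $H^1$. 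Everything in this last part is soft once the weak convergence of the flow is in hand.
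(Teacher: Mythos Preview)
Your argument is correct and follows essentially the same route as the paper's sketch: both cite~\cite{MMjmpa} for the weak continuity of the flow, then extract subsequential limits of the parameters, pass to the limit in the orthogonality relations~\eqref{ortho1}, and invoke the uniqueness in Lemma~\ref{lemma:decomposition}(i) to identify the limit and upgrade to full-sequence convergence. The only minor difference is that the paper first bounds the time derivatives of $(\lambda_n,x_n,b_n)$ via~\eqref{eq:2002}--\eqref{eq:2003} and applies Ascoli's theorem to the parameter curves on $[-T_1,0]$, whereas you extract pointwise at each fixed $t$; both yield the pointwise convergence asserted in the lemma.
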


\begin{proof}[Sketch of proof]
The first part of the lemma, \emph{i.e.}~the weak convergence of $u_n(t)$ to $u(t)$, is the object of Lemma~30 in~\cite{MMjmpa}.
This result is stated with a special sequence of initial data $u_n(t,x)=v(t+t_n,x)$,
where $v$ is a (global) solution of ~\eqref{kdv} and $t_n\to +\infty$,
but the same proof applies for any weakly converging sequence.
Note that this part of the lemma is not related to closeness to soliton nor to any particular choice of decomposition close to the soliton.
For a different proof, we also refer the reader to Theorem~5 in~\cite{KMa} (in the case of the Benjamin--Ono equation)
and to Lemma~3.4 in~\cite{C} for the mass supercritical (gKdV) equation.

The second part of the lemma depends on the decomposition, but the general scheme of the proof of Lemma~17 given in~\cite{MMjmpa} applies.
We also refer to~\cite{MRinvent}, page 599, for a more detailed argument.
The first step of the proof is to note that estimates~\eqref{eq:2002}--\eqref{eq:2003}
provide uniform bounds on the time derivatives of the geometric parameters $(\lambda_n(t),x_n(t),b_n(t))$ on $[-T_1,0]$.
Therefore, by Ascoli's theorem, up to the extraction of a subsequence,
\[
(\lambda_n(t),x_n(t),b_n(t))\to (\wt\lambda(t),\wt x(t),\wt b(t)) \quad \m{on } [-T_1,0],
\]
for some functions $(\wt\lambda(t),\wt x(t),\wt b(t))$.
Writing the orthogonality conditions~\eqref{ortho1} in terms of $u_n(t)$ and $(\lambda_n(t),x_n(t),b_n(t))$,
using~\eqref{we:1} and passing to the limit as $n\to +\infty$,
we see that $u(t)$ and the limiting parameters $(\wt\lambda(t),\wt x(t),\wt b(t))$
satisfy the same orthogonality relations.
In particular, they correspond to the unique parameters $(\lambda(t),x(t),b(t))$ given by Lemma~\ref{lemma:decomposition}.
This uniqueness statement proves by a standard argument that, for the whole sequence,
$(\lambda_n(t),x_n(t),b_n(t))$ converges to $(\lambda(t),x(t),b(t))$ on $[-T_1,0]$ as $n\to +\infty$.
It follows in particular that $\e_n(t)\rightharpoonup \e(t)$ in $H^1$ as $n\to +\infty$.
\end{proof}

\section{Time estimates} \label{sec:time-estimates}

In this section, we prove refined estimates on minimal mass blow up solutions close to the blow up time.

\begin{proposition} \label{prop:SHm}
Let $S$ be a solution of~\eqref{kdv} which blows up as $t\downarrow 0$ and such that $\|S(t)\|_{L^2}=\|Q\|_{L^2}$.
Then there exist $0<T_0<1$, $\epsilon = \pm1$, $\ell_0>0$ and $x_0\in \RR$
such that $\epsilon S(t)$ satisfies~\eqref{hypeprochien} on $(0,T_0]$
and such that the following holds.

Let $K>20$ and $m_K = [K/2]-1$.
Then the decomposition $(\lambda,x,b,\e)$ of $\epsilon S(t)$ given by Lemma~\ref{lemma:decomposition},
\be \label{eq:decomposition}
\epsilon S(t,x) = \frac{1}{\lambda^{1/2}(t)}\left[Q_{b(t)}+\e(t)\right]\left(\frac{x-x(t)}{\lambda(t)}\right),
\ee
satisfies the following estimates:
for all $0<t\leq T_0$, for all $0\leq m\leq m_K$,
\begin{gather}
|\lambda(t) - \ell_0 t|\lesssim t^3,\quad
|x(t) + \ell_0^{-2} t^{-1} - x_0 |\lesssim t,\quad
|b(t) + \ell_0^3 t^2 |\lesssim t^4, \label{Spar} \\
\|\e(t)\|_{L^2} \lesssim t,\quad
\|\e(t)\|_{\dot H^m} \lesssim t^{\frac12(1+\g) + 2\g m},\quad
\|\e(t)\|_{\dot H^m_B} \lesssim t^{2K-m}, \label{SHm}
\end{gather}
for some constant $B>1$.
Moreover, for all $0<t\leq T_0$,
\be \label{Sloc}
\int_{y>-t^{-\frac 85}} \e^2(t,y)\, dy \lesssim t^7.
\ee
\end{proposition}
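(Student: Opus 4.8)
The plan is to run the bootstrap scheme of \cite{MMR1,MMR2}, pushed to arbitrary order, and then to identify the resulting object with $S$. By the uniqueness statement, part~\ref{uniqueS} of Theorem~\ref{thm:previous}, it suffices to construct \emph{one} minimal mass blow up solution $S^\sharp$ satisfying \eqref{Spar}, \eqref{SHm} and \eqref{Sloc}: the given $S$ then coincides with $S^\sharp$ up to the scaling, translation and sign symmetries, which is exactly the content of the free parameters $\ell_0$, $x_0$ and $\epsilon$. To build $S^\sharp$ I would use backward-in-time compactness, following Remark~3.2 of \cite{MMR2}: for a sequence $t_n\downarrow 0$, let $u_n$ be the solution of \eqref{kdv} whose datum at $t=t_n$ is the renormalized localized profile $\lambda_n^{-1/2}(t_n)\,Q_{b_n^\ast}\!\big((\cdot-x_n(t_n))/\lambda_n(t_n)\big)$ with $b_n^\ast=-\ell_0^3 t_n^2$, $\lambda_n(t_n)=\ell_0 t_n$, $x_n(t_n)=-\ell_0^{-2}t_n^{-1}+x_0$, so that the remainder $\e_n(t_n)$ of its decomposition vanishes identically. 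The advantage of this choice is that the modulation and energy functionals below start from zero at $t_n$.

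The first step is the parameter control for $u_n$ on $[t_n,T_0]$, by a continuity (bootstrap) argument. As long as $u_n$ remains in the $H^1$-tube, Lemma~\ref{lemma:decomposition} provides the decomposition; feeding the bootstrap bounds into the modulation equations \eqref{eq:2002}--\eqref{eq:2003} and using $\frac{ds}{dt}=\lambda^{-3}$ leads, at leading order, to $\frac{x_s}{\lambda}=1$, $\frac{\lambda_s}{\lambda}=-b$ and $b_s=-\theta(b)=-2b^2+O(b^3)$; integrating this ODE system from $t_n$, with the leading constants fixed to be consistent with the minimal mass identity \eqref{minmass} and the energy identity \eqref{energie} (this is also where the value of $E(S)$ gets pinned), yields \eqref{Spar}. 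In particular \eqref{twobound} specialized to the minimal mass case gives at once $\|\e\|_{L^2}^2\lesssim|b|\lesssim t^2$, which is the first estimate in \eqref{SHm}.

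Next comes the remainder control in the higher norms, which is the core of the argument. For each $0\le m\le m_K$ I would introduce an adapted functional $\mathcal F_m$ of the form $\|\e\|_{\dot H^m_B}^2$ corrected by lower-order quadratic terms and by the orthogonality-direction subtractions needed for coercivity as in \eqref{coercivity}, built with a monotone exponential weight (taking $B>1$ large). Differentiating $\mathcal F_m$ along \eqref{eqofeps}, the weight produces a favorable (negative) dispersive term; the profile error contributes $O(|b|^{2K})$ in the weighted norm thanks to \eqref{eq:Psibj} --- crucially the polynomial-in-$y$ growth of $\Psi_b$ on the cut-off annulus $\{|b|^\g|y|\in[1,2]\}$, which sits on the left, is killed by the super-exponentially small weight there; the nonlinear terms are absorbed by the bootstrap bounds together with Gagliardo--Nirenberg; the modulation terms by \eqref{eq:2002}--\eqref{eq:2003}; and the terms coupling to lower derivatives of $\e$ by induction on $m$. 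Integrating from $s_n=s(t_n)$, where $\mathcal F_m=0$, yields $\|\e(t)\|_{\dot H^m_B}\lesssim t^{2K-m}$. This makes $\int\e^2 e^{-|y|/2}$ negligible; combining it with the control of $\e$ on the left coming from the local mass estimates and with the polynomial structure of $Q_b$ and $\Psi_b$ on the region $|y|\lesssim|b|^{-\g}$, cf.~\eqref{eq:QbtoQ} (whence the $\g$-dependent exponent), gives $\|\e\|_{\dot H^m}\lesssim t^{\frac12(1+\g)+2\g m}$, completing \eqref{SHm}. For \eqref{Sloc} I would run a separate localized (virial-type) monotonicity functional supported in the region $y\gtrsim -t^{-8/5}$, using that to the right of it the only source is again $\Psi_b\lesssim|b|^{K+1}e^{-|y|/2}$ and that the flow transports mass to the left.

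Finally I would close the argument: the bootstrap estimates hold on $[t_n,T_0]$ uniformly in $n$, so $(u_n(T_0))$ is bounded in every $H^m$; extracting a weak-$H^1$ limit and invoking the weak stability Lemma~\ref{le:weak} produces a solution $S^\sharp$ on $(0,T_0]$ whose decomposition parameters and remainder are the limits of those of $u_n$, hence $S^\sharp$ inherits \eqref{Spar}, \eqref{SHm} and \eqref{Sloc}. Since $\lambda(t)\to 0$ as $t\downarrow 0$, $S^\sharp$ blows up at $t=0$, and $\|S^\sharp(t)\|_{L^2}=\lim_n\|u_n(t)\|_{L^2}=\lim_n\|Q_{b_n^\ast}\|_{L^2}=\|Q\|_{L^2}$ by mass conservation and \eqref{QbL2}. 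So $S^\sharp$ is a minimal mass blow up solution and therefore equals $S$ up to symmetries by part~\ref{uniqueS} of Theorem~\ref{thm:previous}, which proves the proposition. I expect the main obstacle to be the construction and uniform control of the family $\{\mathcal F_m\}_{0\le m\le m_K}$: one must simultaneously guarantee coercivity modulo the finitely many bad directions \emph{and} a sign-definite principal part in $\frac{d}{ds}\mathcal F_m$ for \emph{every} $m\le m_K$, while carefully tracking the commutators between $\partial_y^m$, the quintic nonlinearity and the weight, and the precise contribution of $\Psi_b$ (including its polynomial growth) to both the weighted and the unweighted norms. A secondary difficulty is the bookkeeping needed to extract the correct leading-order constants for $\lambda$, $b$ (and for $E(S)$) from \eqref{minmass}--\eqref{energie}.
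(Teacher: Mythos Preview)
Your overall architecture --- build approximate solutions $u_n$ from the pure profile at $t_n\downarrow 0$, close a bootstrap on $[t_n,T_0]$, pass to a weak limit via Lemma~\ref{le:weak}, and conclude by the uniqueness part of Theorem~\ref{thm:previous} --- is exactly the paper's strategy, and your description of the parameter ODEs and of the weighted $\dot H^m_B$ functionals is on target.

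There is, however, a real gap in your derivation of the \emph{unweighted} estimate $\|\e\|_{\dot H^m}\lesssim t^{\frac12(1+\g)+2\g m}$. You propose to obtain it from the weighted bound plus ``control of $\e$ on the left coming from the local mass estimates''. This cannot work for $m\geq 1$: the weight $e^{y/B}$ gives no information on $\partial_y^m\e$ for $y\ll 0$, and local mass (or virial) functionals only control $\e$ in $L^2$, not its derivatives. The paper instead runs a \emph{separate} unweighted energy functional $G_m=\lambda^{-2m}\int(\partial_y^m\e)^2$; the point is that without weight the linear part integrates to zero and the scaling term cancels exactly ($g_{m,2}=0$), so the only sources are the nonlinearity and $\Psi_b$. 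It is precisely the size of $\int(\partial_y^{2m}\Psi_b)\e$ on the region $|y|\lesssim |b|^{-\g}$ (via \eqref{eq:Psibj}) that produces the exponent $\frac12(1+\g)+2\g m$ --- not the structure of $Q_b$ itself. You should expect to need this second family of functionals.

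A second point you do not mention is how the bootstrap actually closes for intermediate $m$. The paper proves the $\dot H^m$ and $\dot H^m_B$ estimates directly only for $m=0$ and for a top index $m_0$ (with $4\leq m_0\leq m_K$), and then recovers $1\leq m<m_0$ by interpolation; this is what allows a single large constant $C^*$ to strictly improve all the bootstrap bounds simultaneously. Finally, one small wrinkle: from the weak limit you only get $\|S^\sharp\|_{L^2}\leq\|Q\|_{L^2}$; equality follows because subcritical-mass solutions are global, hence cannot blow up.
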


Recall that minimal mass solutions are constructed in~\cite{MMR2} using a compactness argument on a sequence of suitable global solutions of~\eqref{kdv}.
The proof of~\eqref{Spar}--\eqref{Sloc} below also relies on the use of a sequence of solutions $(u_n(t))$ of~\eqref{kdv} approximating $S(t)$
and provides an alternative construction of $S(t)$, as discussed in Remark~3.2 of~\cite{MMR2}.
Note also that~\eqref{Spar} and the estimate on the $L^2$ norm of $\e(t)$ in~\eqref{SHm} are already contained in~\cite{MMR2},
but estimates in~\eqref{SHm} are new for $m\geq 1$ (compare~\eqref{SHm} with $m=1$ and $\frac{17}{20} <\g \leq 1$
and estimate~(4.6) in Proposition~4.1 of~\cite{MMR2}).
Such refined estimates in homogeneous Sobolev norms of $\e$ are crucial in the proof of decay estimates in space in Section~\ref{sec:space-estimates}.

\subsection{The bootstrap setting}

Let $K>20$ and $m_K = [K/2]-1$.
Let $T_n = \frac1{\sqrt{n}}$ for $n>1$ large.
Let $u_n(t)$ be the solution of~\eqref{kdv} corresponding to the following initial data at $t=T_n$:
\[
u_n(T_n,x)=\frac1{\lambda_n^{1/2}(T_n)} Q_{b_n(T_n)} \left(\frac{x-x_n(T_n)}{\lambda_n(T_n)}\right),
\]
where
\be \label{inittrois}
\lambda_n(T_n)=\left(n-\frac{\beta_3}2 \log\left(\frac n2\right)\right)^{-\frac12},
\quad b_n(T_n)=-\lambda_n^2(T_n) +\frac{\beta_3}{2}\lambda_n^4(T_n),\quad x_n(T_n)=-\sqrt{n}.
\ee
For $t\geq T_n$, as long as the solution $u_n(t)$ exists and satisfies~\eqref{hypeprochien},
we consider its decomposition $(\lambda_n,x_n,b_n,\e_n)$ from Lemma~\ref{lemma:decomposition}.
At $T_n$, this decomposition satisfies~\eqref{inittrois} and $\e_n(T_n)\equiv 0$.

Set
\be \label{eq:time}
s=s(t)=S_n + \int_{T_n}^t \frac{dt'}{\lambda_n^3(t')},\quad S_n= s(T_n):= -\frac n2.
\ee
We consider all time dependent functions indifferently as functions of $t$ or $s$.
In particular, in the rescaled time variable $s$,~\eqref{inittrois} rewrites as
\be \label{initdeux}
\lambda_n(S_n) = \left(2|S_n| - \frac{\beta_3}2 \log|S_n|\right)^{-\frac12},
\quad b_n(S_n) = -\lambda_n^2(S_n)+ \frac{\beta_3}2 \lambda_n^4(S_n),
\quad x_n(S_n) = -\sqrt{2|S_n|}.
\ee
Note in particular that
\be \label{initquatre}
\left| b_n(S_n) - \frac{\beta_3}2 b_n^2(S_n) + \lambda_n^2(S_n)\right| =
\left| \frac{\beta_3}2 \right| \left| b_n^2(S_n) - \lambda_n^4(S_n)\right| \lesssim |S_n|^{-3}.
\ee
Let $B>1$ and $C^*>1$ to be chosen large enough.
We work with the following bootstrap estimates, for $0\leq m\leq m_K$:
\be \label{eq:BS}
\left.
\begin{aligned}
\left|\lambda_n(s)-\frac1{\sqrt{2|s|}}\right|\leq |s|^{-1},\quad
\left|b_n(s)+\frac1{2|s|}\right|\leq |s|^{-\frac32},\quad
\left|x_n(s) + \sqrt{2|s|}\right|\leq 1 & \\
\|\e_n(s)\|_{\dot H^m}\leq C^* |s|^{-\frac14 (1+\g) - \g m},\quad
\|\e_n(s)\|_{\dot H^m_B} \leq (C^*)^{\frac{m}2} |s|^{-K+\frac m2} &
\end{aligned}
\right\}
\ee

Note that, from~\eqref{initdeux} and the continuity of $t\mapsto u_n(t)$ in $H^1$,
there exists $\tau_n>0$ such that $u_n$ exists and satisfies~\eqref{hypeprochien} on $[S_n,S_n+\tau_n]$.
Moreover, by possibly taking a smaller $\tau_n>0$,~\eqref{eq:BS} is satisfied on $[S_n,S_n+\tau_n]$.
Thus, for $S_0<-1$ to be chosen later, we may set
\[
S^*_n = \sup\{S_n<\tilde{s}<S_0 \m{ such that~\eqref{eq:BS} is satisfied for all } s\in [S_n,\tilde{s}]\}.
\]

Note for future reference that, as a direct consequence of~\eqref{eq:BS},~\eqref{QbL2},~\eqref{twobound} and~\eqref{inittrois},
we have, for $|S_0|>1$ large enough (possibly depending on $C^*$), for all $s\in [S_n,S_n^*]$,
\be \label{eq:BS2}
\|\e_n(s)\|_{L^2} \lesssim |s|^{-\frac12},\quad
0<\lambda_n(s)\lesssim |s|^{-\frac12},\quad
-|s|^{-1} \lesssim b_n(s) <0,\quad -|s|^{\frac12} \lesssim x_n(s)<0.
\ee

\begin{proposition} \label{proposition:uniforme}
There exist $C^*>1$, $B>1$ and $S_0<-1$, independent of $n$, such that, for all~$n$ large enough, $S^*_n=S_0$.
\end{proposition}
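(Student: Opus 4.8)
The plan is to close the bootstrap~\eqref{eq:BS} by a standard continuity argument: assuming that all the estimates in~\eqref{eq:BS} hold on $[S_n,S_n^*]$, we must show that each of them is in fact satisfied with the constant strictly improved (say with $\frac12 C^*$, $\frac12 (C^*)^{m/2}$, and the geometric inequalities reduced by a factor~$2$), on the same interval. Combined with the fact that~\eqref{eq:BS} holds with room to spare at $s=S_n$ thanks to~\eqref{initdeux}--\eqref{initquatre} (where the initial data sit well inside the bootstrap box), this forces $S_n^*=S_0$ by continuity, provided $|S_0|$ and $C^*$ are chosen large enough and $n$ is large. Throughout, I would work in the rescaled variable $s$, use the $\e$-equation~\eqref{eqofeps}, the modulation estimates~\eqref{eq:2002}--\eqref{eq:2003}, the minimal mass relations~\eqref{minmass}--\eqref{energie}, and the profile bounds from Lemma~\ref{lemma:locprofile}.

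First I would treat the geometric parameters. Feeding the bootstrap bounds on $\|\e_n\|_{\dot H^0_B}\sim\|\e_n e^{\cdot/2B}\|$ (which controls $\int\e_n^2 e^{-|y|/2}$) into~\eqref{eq:2002}--\eqref{eq:2003}, one gets $|b_{n,s}+\theta(b_n)|\lesssim |s|^{-K}+\ldots$, and since $\theta(b)=\beta_2 b^2+O(b^3)=2b^2+O(b^3)$, the equation $b_s\approx -2b^2$ integrates to $b_n(s)\approx -\frac1{2|s|}$; more precisely, comparing with the ODE and using the sharp initial condition~\eqref{initquatre}, one recovers $|b_n(s)+\frac1{2|s|}|\le \frac12|s|^{-3/2}$. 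Then $\frac{\lambda_{n,s}}{\lambda_n}\approx -b_n\approx \frac1{2|s|}$ and $\frac{d s}{dt}=\lambda_n^{-3}$ integrate to give $\lambda_n(s)\approx \frac1{\sqrt{2|s|}}$ and $x_{n,s}\approx \lambda_n$, hence $x_n(s)\approx -\sqrt{2|s|}$, each with the improved constant. The minimal mass estimate~\eqref{minmass} then upgrades $\|\e_n\|_{L^2}^2\lesssim |b_n|\sim|s|^{-1}$, recovering the $\dot H^0$ bootstrap with margin (note $-\frac14(1+\g)\le -\frac14\cdot\frac{34}{20}<-\frac12$ fails — so actually the $\dot H^0$ bound must come directly from $\|\e_n\|_{L^2}\lesssim|s|^{-1/2}$ in~\eqref{eq:BS2}, which is the real statement; I would be careful here to read $m=0$ in the $\dot H^m$ line as consistent with, and implied by, the $L^2$ control).

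The heart of the argument, and the main obstacle, is the energy-type estimates for $\e_n$ in the weighted and unweighted homogeneous Sobolev norms $\dot H^m_B$ and $\dot H^m$ for $1\le m\le m_K$. For each such $m$ I would introduce a functional of the schematic form $\mathcal F_m(s)=\int (\partial_y^m\e_n)^2\psi_B + (\text{lower order corrections})$, where $\psi_B(y)$ interpolates between $e^{y/B}$ for $y\lesssim 0$ and a bounded weight for $y\gtrsim 0$, chosen (as in~\cite{MMR1,Kato,LM}) so that the leading quadratic part of $\frac{d}{ds}\mathcal F_m$ coming from the linear Airy dynamics has a favorable sign (a virial-type monotonicity: the $\int(\partial_y^{m+1}\e_n)^2\psi_B'$ term is coercive). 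Differentiating along~\eqref{eqofeps}, the bad terms are: the source term $\int\partial_y^m\Psi_{b_n}\,\partial_y^m\e_n\,\psi_B$, controlled by~\eqref{eq:Psibj} and Cauchy--Schwarz by $|b_n|^{K+1}\|\e_n\|_{\dot H^m_B}\lesssim |s|^{-K-1}\cdot(\ldots)$; the modulation terms $(\frac{\lambda_{n,s}}{\lambda_n}+b_n)$, $(\frac{x_{n,s}}{\lambda_n}-1)$, $(b_{n,s}+\theta(b_n))$ times derivatives of $Q_{b_n}$ or $\partial_b Q_{b_n}$, controlled via~\eqref{eq:2002}--\eqref{eq:2003}; the scaling term $\frac{\lambda_{n,s}}{\lambda_n}\Lambda\e_n$, which produces another $\|\e_n\|_{\dot H^m_B}^2/|s|$ that must be absorbed by the virial coercivity (this is where one needs $B$ large); and the nonlinear terms $((Q_{b_n}+\e_n)^5-Q_{b_n}^5)_y$, handled by Sobolev/Gagliardo--Nirenberg in one dimension together with the a priori smallness of $\|\e_n\|_{H^1}$. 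Assembling, one obtains a differential inequality $\frac{d}{ds}\mathcal F_m \ge \frac{c}{|s|}\|\e_n\|_{\dot H^{m+1}_B}^2 - \frac{C}{|s|}\mathcal F_m - C|s|^{-2K+m-1} - (\text{terms involving }\mathcal F_{m-1},\ldots)$, which after multiplying by the integrating factor $|s|^{2K-m}$ and integrating from $S_n$ (where $\e_n\equiv 0$, so $\mathcal F_m(S_n)=0$) yields $\|\e_n\|_{\dot H^m_B}^2\lesssim |s|^{-2K+m}$, improving the bootstrap constant once $C^*$ is large. The unweighted $\dot H^m$ bound with the $\g$-dependent exponent $|s|^{-\frac14(1+\g)-\g m}$ would be obtained similarly but with a weight $\psi$ that does not decay at $-\infty$, so the source and $Q_{b_n}$-profile contributions are only controlled in the weaker norms supported by Lemma~\ref{lemma:locprofile}~(i)--(ii) — specifically the $\charfunc{[-2,-1]}(|b|^\g y)$ pieces of size $|b|^{1+(j+1)\g}$, which is exactly where the restriction $\g>\frac{17}{20}$ enters to make the resulting exponents consistent. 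I would run these estimates by an ascending induction on $m$, so that the $\mathcal F_{m-1},\ldots,\mathcal F_0$ terms on the right are already controlled. The delicate bookkeeping is making sure every exponent of $|s|$ produced is strictly smaller (more negative) than the one being bootstrapped, which is a finite check given $K>20$ and $m\le m_K=[K/2]-1$; then a final choice of $C^*$ large, then $B$ large, then $|S_0|$ large (in that order of dependence) closes everything uniformly in $n$.
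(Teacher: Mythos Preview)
Your high-level bootstrap strategy is correct, and the treatment of the geometric parameters is essentially the paper's. However there are two genuine gaps in the $\e_n$ estimates.

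First, you never explain how to improve the weighted bound $\|\e_n\|_{\dot H^0_B}\le |s|^{-K}$. This is \emph{not} the unweighted $L^2$ norm controlled by mass conservation; it is the local norm $\int\e_n^2 e^{y/B}$, and its decay rate $|s|^{-K}$ is what feeds back into~\eqref{eq:2002}--\eqref{eq:2003} to give you the $|s|^{-K}$ smallness of the modulation terms you freely use. The paper obtains this via a Weinstein-type functional
\[
F_{0,n}=\frac1{\lambda_n^2}\left[\int(\e_n)_y^2\psi_B+\int\e_n^2 e^{y/B}-\tfrac13\int\bigl((Q_{b_n}+\e_n)^6-Q_{b_n}^6-6Q_{b_n}^5\e_n\bigr)\psi_B\right],
\]
whose coercivity rests on a localized virial inequality for the form $\int(3\e_y^2+\e^2-5Q^4\e^2+20yQ'Q^3\e^2)e^{y/B}$ under the orthogonality conditions~\eqref{ortho1} (Lemma~\ref{lem:virialcut}). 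Your generic $\mathcal F_m$ with ``virial-type monotonicity from $\int(\partial_y^{m+1}\e)^2\psi_B'$'' does not capture this: Kato smoothing alone cannot beat the potential $-5Q^4$, and the orthogonality conditions are essential precisely here. Relatedly, for $m\ge 1$ the paper does not rely on virial coercivity but on a tailored lower-order correction $-\int(\partial_y^{m-1}\e_n)^2 h_{m,n}$, with $h_{m,n}$ chosen so that $3(h_m)_y=5((2m+1)\partial_y(Q_b^4)-\tfrac1B Q_b^4)e^{y/B}$ exactly cancels the bad potential contribution; your sketch omits this mechanism.

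Second, the paper does \emph{not} run an ascending induction on $m$. It proves the $\dot H^m_B$ and $\dot H^m$ estimates directly only for $m=0$ and for $4\le m\le m_K$, and then recovers $1\le m\le 3$ by interpolation between $\|\e\|_{\dot H^0_B}$ and $\|\e\|_{\dot H^{m_0}_B}$ (respectively $\|\e\|_{L^2}$ and $\|\e\|_{\dot H^{m_0}}$). The resulting differential inequalities are of the clean form $\frac{dF_m}{ds}\le \kappa_m(C^*)^{m-1}|s|^{-2K+m-1}$ with no $\mathcal F_m$ term on the right-hand side and no integrating factor; one simply integrates from $S_n$ where $\e_n\equiv0$. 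Your inequality with a $-\frac C{|s|}\mathcal F_m$ term and an integrating factor is not what happens. Finally, the order of dependence is $B$ fixed first (in the $F_0$ coercivity), then $C^*$ large relative to the universal constants $\kappa_m$, then $|S_0|$ large depending on both --- the reverse of what you wrote.
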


In Sections~\ref{sec:parameters} to~\ref{sec:closingbootstrap}, we prove Proposition~\ref{proposition:uniforme}
and, in Section~\ref{sec:intermediate}, we prove additional $L^2$~estimates for $\e_n$ in intermediate region.
From these results, we deduce Proposition~\ref{prop:SHm} in Section~\ref{sec:proofprop}.
Finally, we show how Proposition~\ref{prop:SHm} implies Theorem~\ref{thm:maintime} in Section~\ref{sec:proofmaintime}.

\subsection{Parameters estimates} \label{sec:parameters}

To strictly improve the estimates on the parameters in~\eqref{eq:BS}, we claim the following.

\begin{lemma}[Parameters bootstrap]
For all $s\in [S_n,S_n^*]$,
\be \label{par2}
\left|\lambda_n(s)-\frac1{\sqrt{2|s|}}\right|\leq \frac12 |s|^{-1},\quad
\left|b_n(s)+\frac1{2|s|}\right|\leq \frac12 |s|^{-\frac32},\quad
\left|x_n(s)+\sqrt{2|s|}\right|\leq \frac12.
\ee
\end{lemma}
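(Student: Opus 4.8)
The plan is to strictly improve the parameter bootstrap estimates in~\eqref{eq:BS} by combining the modulation equations~\eqref{eq:2002}--\eqref{eq:2003} with the mass/energy information specific to the minimal mass case, namely~\eqref{minmass}--\eqref{energie}, together with the $\e_n$ estimates already postulated in the bootstrap. The strategy is classical in this circle of ideas (cf.~\cite{MMR1,MMR2}): we first establish a sharp ODE for $b_n$, integrate it from the initial time $S_n$ where the data is prepared so as to cancel the leading error (see~\eqref{initdeux},~\eqref{initquatre}), then feed the resulting control of $b_n$ back into the equation $\frac{\lambda_s}{\lambda}+b=O(\cdots)$ and into $\frac{x_s}{\lambda}-1=O(\cdots)$ to recover $\lambda_n$ and $x_n$.

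First I would extract from~\eqref{eq:2003} and the bootstrap hypothesis on $\|\e_n\|_{L^2}$, $\|\e_n\|_{\dot H^m_B}$ (which bound $\int \e_n^2 e^{-|y|/2}$ by $\lesssim |s|^{-2K}$, a tiny quantity) together with $K>20$, the refined modulation estimate
\[
\left| b_{n,s} + \theta(b_n) \right| \lesssim |s|^{-2K} + |b_n|^{K+1} \lesssim |s|^{-3},
\]
where the last inequality uses $|b_n|\lesssim |s|^{-1}$ from~\eqref{eq:BS2} and $K>20$. Recalling $\theta(b) = \beta_2 b^2 + \beta_3 b^3 + \cdots$ with $\beta_2 = 2$ (Lemma~\ref{lemma:Qbt}), this reads $b_{n,s} = -2b_n^2 - \beta_3 b_n^3 + O(|s|^{-3})$. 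The standard trick is then to look at the quantity $b_n - \frac{\beta_3}{2} b_n^2 + \frac{1}{2|s|}$ or, more robustly, to substitute $f_n = -1/(2s) = 1/(2|s|)$ and show $\frac{d}{ds}\left(\frac{1}{b_n} - \frac{1}{f_n}\right)$ is integrable with the right size; one finds $\left(\frac{1}{b_n}\right)_s = 2 + \beta_3 b_n + O(|s|^{-1})$, hence $\frac1{b_n} = 2s + \beta_3 \int b_n + O(\log|s|) = 2s + O(\log|s|)$, which after inversion and using the precise initial condition~\eqref{initdeux}--\eqref{initquatre} gives $|b_n(s) + \frac1{2|s|}| \lesssim |s|^{-2}\log|s| \leq \frac12 |s|^{-3/2}$ for $|S_0|$ large. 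This is a strict improvement of the middle estimate in~\eqref{eq:BS}.

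Next, with $b_n(s) = -\frac1{2|s|} + O(|s|^{-2}\log|s|)$ in hand, I would integrate $\frac{\lambda_{n,s}}{\lambda_n} = -b_n + O\big((\int\e_n^2 e^{-|y|/2})^{1/2} + |b_n|^{K+1}\big) = \frac1{2|s|} + O(|s|^{-2}\log|s|)$ from~\eqref{eq:2002} and~\eqref{eq:2003}; since $\int_{S_n}^s \frac{ds'}{2|s'|} = \frac12\log\frac{|S_n|}{|s|}$, exponentiating and using $\lambda_n(S_n) = (2|S_n|)^{-1/2}(1+O(\log|S_n|/|S_n|))$ from~\eqref{initdeux} yields $\lambda_n(s) = \frac1{\sqrt{2|s|}}\big(1 + O(|s|^{-1}\log|s|)\big)$, hence $|\lambda_n(s) - \frac1{\sqrt{2|s|}}| \lesssim |s|^{-3/2}\log|s| \leq \frac12|s|^{-1}$ for $|S_0|$ large — the required improvement of the first estimate. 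For $x_n$, from $\frac{x_{n,s}}{\lambda_n} = 1 + O(|s|^{-3/2})$ (again~\eqref{eq:2002} plus the small-$\e_n$ control) we get $x_{n,s} = \lambda_n(s) + O(|s|^{-2}) = \frac1{\sqrt{2|s|}} + O(|s|^{-2}\log|s|)$; integrating from $S_n$ with $x_n(S_n) = -\sqrt{2|S_n|}$ and $\int_{S_n}^s \frac{ds'}{\sqrt{2|s'|}} = \sqrt{2|S_n|} - \sqrt{2|s|}$ gives $x_n(s) = -\sqrt{2|s|} + O(|s|^{-1}\log|s|)$, so $|x_n(s) + \sqrt{2|s|}| \leq \frac12$, the last improvement.

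The main obstacle is the bookkeeping in the $b_n$ equation: one must be careful that the cubic term $\beta_3 b_n^3$ genuinely contributes only an $O(|s|^{-1})$ correction to $(1/b_n)_s$ (so that after integration it produces an $O(\log|s|)$, hence negligible, correction), and that the very specific choice of $\lambda_n(S_n), b_n(S_n)$ in~\eqref{inittrois}--\eqref{initquatre} — engineered precisely so that $b_n(S_n) - \frac{\beta_3}2 b_n^2(S_n) + \lambda_n^2(S_n) = O(|S_n|^{-3})$ — cancels what would otherwise be an $O(|s|^{-1})$ boundary error in the worst place. Everything else is a routine Gronwall/integration argument, but this cancellation at the initial time is what makes the improvement strict rather than merely of the same order, and it is where I would concentrate the care.
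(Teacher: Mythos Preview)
Your approach is correct and reaches the same conclusion, but it differs from the paper's route in how $b_n$ and $\lambda_n$ are decoupled. You integrate $(1/b_n)_s = 2 + \beta_3 b_n + O(|s|^{-1})$ directly to control $b_n$ first, then feed this into $\lambda_{n,s}/\lambda_n = -b_n + \cdots$ and exponentiate. The paper instead introduces the near-invariant
\[
\frac{d}{ds}\left(\frac{b-\frac{\beta_3}2 b^2}{\lambda^2}\right) = O(|s|^{-3}),
\]
integrates once using~\eqref{initquatre} to get $\frac{b-\frac{\beta_3}2 b^2}{\lambda^2} = -1 + O(|s|^{-2})$, and from this derives an ODE for $1/\lambda^2$ (so $\lambda$ is obtained first, then $b$ by back-substitution). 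The advantage of the paper's method is that the $\beta_3\log|s|$ terms never have to be tracked by hand: the quantity $\frac{b-\frac{\beta_3}2 b^2}{\lambda^2}$ is designed so that the initial cancellation~\eqref{initquatre} feeds directly into a single integration, whereas in your approach the $\log|S_n|$ correction in $1/b_n(S_n)$ must be seen to cancel against $\beta_3\int_{S_n}^s b_n$ (which it does, thanks to the $\log$ term in~\eqref{initdeux}). Your approach is more elementary but requires more bookkeeping; the paper's near-conservation law packages this cancellation cleanly.

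Two minor remarks. First, the references to~\eqref{minmass}--\eqref{energie} in your opening plan are spurious: neither you nor the paper actually uses the minimal mass identities here --- the lemma relies only on the modulation equations and the bootstrap bound on $\|\e_n\|_{\dot H^0_B}$. Second, your error for $x_{n,s}$ should be $O(|s|^{-3/2}\log|s|)$ (coming from your own bound on $\lambda_n$), not $O(|s|^{-2}\log|s|)$; after integration this gives $|x_n(s)+\sqrt{2|s|}| = O(|s|^{-1/2}\log|s|)$ rather than $O(|s|^{-1}\log|s|)$, but this is still $\leq\frac12$ for $|S_0|$ large, so the conclusion is unaffected.
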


\begin{proof}
In this proof, we denote $(\lambda_n,x_n,b_n,\e_n)$ simply by $(\lambda,x,b,\e)$.
From~\eqref{eq:2002},~\eqref{eq:2003} and the bootstrap estimate on $\|\e(s)\|_{\dot H^0_B}$ in~\eqref{eq:BS},
and since $K>20$ and we can assume $B$ large,
we have, for all $s\in [S_n,S_n^*]$,
\be \label{par1}
\left| \frac{\lambda_s}\lambda +b \right| +\left|\frac{x_s}{\lambda} - 1\right| + |b_s + 2b^2 + \beta_3 b^3| \lesssim |s|^{-4}.
\ee
In particular, from~\eqref{par1} and~\eqref{eq:BS},
\begin{align*}
\frac d{ds} \left( \frac{b-\frac{\beta_3}2 b^2}{\lambda^2}\right)
&= \frac1{\lambda^2} \left[b_s (1-\beta_3 b) - 2 \frac{\lambda_s}{\lambda} \left(b-\frac{\beta_3}2 b^2\right)\right] \\
&= \frac1{\lambda^2} \left[-(2b^2+ \beta_3 b^3) (1-\beta_3 b) + 2 b \left(b-\frac{\beta_3}2 b^2\right)\right]+ O(|s|^{-3})=O(|s|^{-3}).
\end{align*}
By integration on $[S_n,s]$ and then using~\eqref{initquatre}, we obtain
\[
\left(\frac{b-\frac{\beta_3}2 b^2}{\lambda^2}\right)(s) = \left(\frac{b-\frac{\beta_3}2 b^2}{\lambda^2}\right)(S_n)+ O(|s|^{-2})=-1+ O(|s|^{-2}).
\]
Note that, by~\eqref{eq:BS}, we have
\[
\left(\frac{b^2}{\lambda^2}\right)(s) = -\frac1{2s} + O(|s|^{-\frac32}).
\]
Thus, using~\eqref{par1} again,
\[
-\left(\frac{\lambda_s}{\lambda^3}\right)(s) +\frac{\beta_3}4 \frac1 s = -1+ O(|s|^{-\frac32}).
\]
Integrating on $[S_n,s]$ and then using~\eqref{initdeux}, we obtain
\begin{align*}
\frac1{2\lambda^2(s)} & = \left(\frac1{2\lambda^2(S_n)}+ \frac{\beta_3}4 \log|S_n| + S_n\right) - s - \frac{\beta_3}4 \log|s| + O(|s|^{-\frac12}) \\
& = - s - \frac{\beta_3}4 \log|s| + O(|s|^{-\frac12}).
\end{align*}
In particular,
\[
\left| \frac1{\lambda^2(s)} + 2s\right| \lesssim |s|^{\frac14}.
\]
Therefore, for $|S_0|$ large enough,
\[
\left| \lambda(s) - \frac1{\sqrt{2|s|}}\right| \lesssim |s|^{-\frac54} \leq \frac12 |s|^{-1}.
\]
From the previous estimates, we have
\[
\left( \frac b{\lambda^2} \right)(s) = - 1 + O(|s|^{-1}),
\]
and thus, for $|S_0|$ large enough,
\[
\left| b(s) -\frac1{2s}\right| \lesssim |s|^{-\frac74}\leq \frac12 |s|^{-\frac32}.
\]
Now, we turn to the estimate of $x(s)$.
From~\eqref{par1} and the previous estimate obtained on~$\lambda(s)$, we have
\[
\left| x_s(s) - \frac1{\sqrt{2|s|}} \right| \lesssim |s|^{-\frac54}.
\]
By integration and using~\eqref{initdeux}, we obtain, for $|S_0|$ large enough,
\[
\left|x(s)+\sqrt{2|s|} \right| \lesssim |s|^{-\frac14} \leq \frac12.
\]
This completes the proof of~\eqref{par2}.
\end{proof}

\subsection{$\dot H^0_B$ estimates}

Let $\psi\in\Cinfini$ be a nondecreasing function such that
\[
\psi(y) = \left\{
\begin{aligned}
e^y \quad & \m{for } y<-1, \\
1 \quad & \m{for } y >-\frac12,
\end{aligned}
\right.
\]
and $\psi(y)\geq e^y$ for $y<0$.

Let $\psi_B(y) = \psi\left(\frac y B\right)$, for $B> 1$ large to be chosen.
Note that, directly from the definition of~$\psi$, we have the estimates
\be \label{psi-estimates}
\left\{
\begin{aligned}
\psi_B(y)+B\psi'_B(y)+B^3|\psi'''_B(y)| \lesssim e^{y/B}, &\quad\m{for all } y\in\RR, \\
e^{y/B} \leq \psi_B(y), &\quad\m{for all } y<0.
\end{aligned}
\right.
\ee
Let
\[
F_{0,n} = \frac{1}{\lambda_n^2}
\left[ \int (\e_n)_y^2 \psi_B + \int \e_n^2 e^{y/B}
- \frac13 \int \left( (Q_{b_n}+\e_n)^6 - Q_{b_n}^6 - 6 Q_{b_n}^5 \e_n\right) \psi_B\right].
\]

\begin{lemma}[Local energy estimates] \label{le:enerloc}
There exist $B>100$, $\mu>0$, and $\kappa(B)>0$ a constant depending only on $B$ such that, for all $s\in [S_n,S_n^*]$, the following holds.
\begin{enumerate}[label=\emph{(\roman*)}]
\item \emph{Coercivity of $F_{0,n}$:}
\be \label{el1}
F_{0,n} \geq \frac{\mu}{\lambda_n^2} \left[\int (\e_n)_y^2 \psi_B + \int \e_n^2 e^{y/B}\right].
\ee
\item \emph{Time variation of $F_{0,n}$:}
\be \label{el2}
\frac{d F_{0,n}}{ds} \leq \kappa(B) |s|^{-2K-1}.
\ee
\end{enumerate}
\end{lemma}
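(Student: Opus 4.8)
The plan is to prove the two assertions of Lemma~\ref{le:enerloc} by the classical Weinstein-type energy method adapted to the blow up setting, exactly as in~\cite{MMR1,MMR2}, but keeping track of the extra powers of $|s|$ that come from the refined profile $Q_b$ constructed in Lemma~\ref{lemma:Qbt}.

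\emph{Coercivity.} First I would observe that $F_{0,n}$ is, up to the factor $\lambda_n^{-2}$ and the weight $\psi_B$, a localized version of the quadratic form $(L\e_n,\e_n)$: expanding the sextic term, the leading quadratic part is $\int (\e_n)_y^2 \psi_B + \int \e_n^2 e^{y/B} - 5\int Q_{b_n}^4 \e_n^2\psi_B$, and since $Q_{b_n}^4 = Q^4 + O(|b_n|)$ with $Q^4$ exponentially localized, a commutator argument (moving $\psi_B$ past derivatives, controlled by \eqref{psi-estimates} which gives $|\psi_B'|,|\psi_B'''|\lesssim B^{-1}e^{y/B}$) reduces the problem to the unweighted coercivity~\eqref{coercivity} of $L$ up to a term $O(B^{-1})\int (\e_n)_y^2\psi_B$ that is absorbed for $B$ large. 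The three orthogonality conditions~\eqref{ortho1} kill the negative directions $(\e,Q)$, $(\e,\Lambda Q)$, $(\e,y\Lambda Q)$ in~\eqref{coercivity}; the remaining higher-order terms $\int \e_n^6 + \int Q_{b_n}^4\e_n^2$ beyond quadratic are controlled using $\|\e_n\|_{L^2}\lesssim |s|^{-1/2}$ and the Sobolev/Gagliardo--Nirenberg inequality, hence are $o(1)$. This yields~\eqref{el1} with a fixed $\mu>0$, for $|S_0|$ large and $B$ large.

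\emph{Time variation.} This is the substantial computation. I would differentiate $F_{0,n}$ in $s$, using the equation~\eqref{eqofeps} for $\e_n$ together with the modulation equations~\eqref{eq:2002}--\eqref{eq:2003} and the parameter estimates~\eqref{par1}. Several families of terms appear: (a) the leading dispersive term, which after integration by parts produces a good sign $-\frac 1{\lambda_n^2}\int \big(3(\partial_y^2\e_n)^2 + \dots\big)\psi_B'$ up to lower-order pieces, using that $\psi_B'\geq 0$ and the structure of $\psi$; (b) terms coming from $\frac{\lambda_s}{\lambda}$ acting on the $\lambda_n^{-2}$ prefactor and on $\Lambda\e_n$, which combine favorably thanks to $\frac{\lambda_s}{\lambda}+b \approx 0$ and $b_n<0$; (c) the source terms involving $\Psi_{b_n}$, bounded using~\eqref{eq:Psibj} by $|b_n|^{K+1}\lesssim |s|^{-(K+1)}$ against the exponentially weighted $\e_n$, hence contributing $\lesssim |s|^{-(K+1)}\|\e_n\|_{\dot H^0_B}\lesssim |s|^{-(K+1)}|s|^{-K}$, well below the claimed $|s|^{-2K-1}$ with room to spare; (d) terms with the modulation errors $|\frac{\lambda_s}{\lambda}+b|$, $|\frac{x_s}{\lambda}-1|$, $|b_s+\theta(b)|$ and $(\partial_{b}Q_b)$, each $\lesssim |s|^{-4}$ or better by~\eqref{par1}, multiplied by localized $\e_n$ norms controlled by the bootstrap, again giving at most $|s|^{-2K-1}$ once one checks the bookkeeping; (e) the nonlinear terms of order $\geq 2$ in $\e_n$, handled by Gagliardo--Nirenberg and $\|\e_n\|_{L^2}\lesssim|s|^{-1/2}$. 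Collecting, all bad terms are bounded by $\kappa(B)|s|^{-2K-1}$, and the good terms are discarded for an upper bound, yielding~\eqref{el2}.

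\emph{Main obstacle.} The delicate point will be step (d): the terms generated by the time derivative of the prefactor $\lambda_n^{-2}$ and by $\Lambda\e_n$ are individually only $O(|s|^{-2}\cdot\|\e_n\|_{\dot H^0_B}^2)=O(|s|^{-2}|s|^{-2K})$, which is \emph{exactly} the claimed order and not better, so one must verify that the precise algebraic combination (as in the computation of $\frac{d}{ds}(b/\lambda^2)$ in Section~\ref{sec:parameters}) does not lose the factor $|s|^{-1}$, i.e. that the dangerous contributions either cancel against the dispersive good term or carry an extra smallness from $|b_n|\lesssim |s|^{-1}$. This is where the refined ansatz and the sharp modulation equations~\eqref{eq:2002}--\eqref{eq:2003} are used in an essential way; everything else is a (lengthy but routine) integration-by-parts bookkeeping against the weight $\psi_B$, exactly parallel to~\cite[Section~3]{MMR2}.
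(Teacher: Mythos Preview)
Your outline for part~(i) is essentially the paper's argument and is fine. For part~(ii), the broad decomposition into dispersive, scaling, modulation, source, and nonlinear pieces is also correct, but two concrete mechanisms are missing, and your ``main obstacle'' paragraph misdiagnoses the difficulty.

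\medskip

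\textbf{The $\Lambda\e$ term is the real problem, and your estimate of it is wrong.} After integrating by parts, the scaling contribution (your (b), the paper's $f_2$) produces a term of the form
\[
\frac{\lambda_s}{\lambda}\int_{y<0} |y|\,(\e_y^2+\e^2)\,e^{y/B}.
\]
You claim this is $O(|s|^{-2}\|\e\|_{\dot H^0_B}^2)=O(|s|^{-2K-2})$, but that is not what one gets: the extra factor of $|y|$ cannot be absorbed into the $e^{y/B}$ weight without losing all exponential localization. Since $|y|e^{y/B}\lesssim C(B)$, the naive bound is only $\frac{\lambda_s}{\lambda}\cdot C(B)\|\e\|_{L^2}^2\lesssim |s|^{-2}$, hopelessly far from $|s|^{-2K-1}$. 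The sign $b<0$ (hence $\lambda_s/\lambda>0$) lets you \emph{drop} certain terms but does not by itself control this one, and there is no cancellation against the prefactor derivative as you suggest. The paper's device is a H\"older interpolation between the exponentially weighted norm and the unweighted $L^2$ norm,
\[
\int_{y<0} |y|\,\e^2 e^{y/B} \le \left(\int |y|^{4K}\e^2 e^{y/B}\right)^{\frac1{4K}}\left(\int \e^2 e^{y/B}\right)^{1-\frac1{4K}}
\lesssim C(B)\,\|\e\|_{L^2}^{\frac1{2K}}\,\|\e\|_{\dot H^0_B}^{2(1-\frac1{4K})},
\]
followed by Young's inequality $ab\le a^{4K/(4K-1)}+b^{4K}$: one piece is a small multiple of $\int(\e_y^2+\e^2)e^{y/B}$ absorbed into the good dispersive term, the other is $|s|^{-4K}$. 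This mechanism is essential and does not appear in your plan; relatedly, you should \emph{not} ``discard the good terms for an upper bound'' until after this absorption is done.

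\medskip

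\textbf{The dispersive term needs a virial coercivity, not just coercivity of $L$.} In the region $y>-B/2$ where $\psi_B\equiv 1$, the leading piece of $f_1$ becomes
\[
-\frac1B\int_{y>-B/2}\bigl[3\e_y^2+\e^2-5Q^4\e^2+20\,yQ'Q^3\e^2\bigr]e^{y/B},
\]
whose positivity is \emph{not} the coercivity~\eqref{coercivity} of $L$ but the virial quadratic form of~\cite[Proposition~4]{MMjmpa}; the paper localizes that result (Lemma~\ref{lem:virialcut}). Your item~(a) treats the dispersive term as automatically negative thanks to $\psi_B'\ge 0$, but on $\{y>-B/2\}$ one has $\psi_B'=0$ and the sign comes from this separate spectral input. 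Without it the estimate does not close.
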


\begin{proof}
As before, we denote $(\lambda_n,x_n,b_n,\e_n)$ and $F_{0,n}$ simply by $(\lambda,x,b,\e)$ and $F_0$.

(i) We first decompose $F_0$ as
\begin{align*}
\lambda^2 F_0 &= \int \left( \e_y^2\psi_B +\e^2e^{y/B} -5Q^4\e^2\psi_B \right) -5\int (Q_b^4-Q^4)\e^2\psi_B \\
&\quad -\frac13 \int \left((Q_b+\e)^6-Q_b^6-6Q_b^5\e-15Q_b^4\e^2\right) \psi_B.
\end{align*}
To estimate the first term, we rely on the coercivity of the linearized energy~\eqref{coercivity},
together with the choice of orthogonality conditions~\eqref{ortho1} and standard localization arguments.
Proceeding for instance as in the Appendix~A of~\cite{MMannals} or as in the proof of Lemma~\ref{lem:virialcut} below,
we obtain, for some $\bar\mu>0$ and for $B$ large enough,
\[
\int \left( \e_y^2\psi_B +\e^2e^{y/B} -5Q^4\e^2\psi_B \right) \geq \bar\mu \int (\e_y^2\psi_B+\e^2e^{y/B}).
\]
To estimate the second term, we use~\eqref{eq:QbtoQ},~\eqref{psi-estimates} and the bootstrap estimates~\eqref{eq:BS}
to obtain, for $|S_0|$ large enough,
\[
5\left|\int (Q_b^4-Q^4)\e^2\psi_B \right| \lesssim |b|\int \e^2e^{y/B} \leq \frac{\bar\mu}3 \int \e^2e^{y/B}.
\]
Finally, using also~\eqref{eq:Qbj}, the nonlinear term is estimated as
\begin{align*}
\frac13 \left| \int \left((Q_b+\e)^6-Q_b^6-6Q_b^5\e-15Q_b^4\e^2\right) \psi_B \right|
&\lesssim \int (|Q_b|^3|\e|^3 +|\e|^6)\psi_B \lesssim \|\e\|_{L^\infty} \int \e^2e^{y/B} \\
&\leq \frac{\bar\mu}3 \int \e^2e^{y/B},
\end{align*}
by choosing again $|S_0|$ large enough.
Gathering the above estimates and letting $\mu=\frac{\bar\mu}3>0$, we obtain~\eqref{el1}.

\medskip

(ii) An integration by parts first gives
\begin{align*}
\frac{dF_0}{ds} &= \frac{2}{\lambda^2} \int \e_s\left\{ -\psi'_B\e_y -\psi_B\e_{yy} +\e e^{y/B} -\psi_B[(Q_b+\e)^5-Q_b^5] \right\} \\
&\quad -2\frac{\lambda_s}{\lambda}F_0 -\frac{2}{\lambda^2} \int (Q_b)_s \left[(Q_b+\e)^5-Q_b^5-5Q_b^4\e\right]\psi_B.
\end{align*}
Denoting $G_B(\e) = \Big\{ -\psi'_B\e_y -\psi_B\e_{yy} +\e e^{y/B} -\psi_B[(Q_b+\e)^5-Q_b^5] \Big\}$
and using the equation~\eqref{eqofeps} satisfied by $\e$, we find
\[
\lambda^2 \frac{dF_0}{ds} = f_1+f_2+f_3+f_4+f_5,
\]
where
\begin{align*}
f_1 &= 2\int \left( -\e_{yy} +\e -[(Q_b+\e)^5-Q_b^5]\right)_y G_B(\e), \\
f_2 &= 2\frac{\lambda_s}{\lambda} \int \Lambda\e\, G_B(\e) - \e_y^2\psi_B -\e^2 e^{y/B} +\frac13\left( (Q_b+\e)^6-Q_b^6-6Q_b^5\e\right)\psi_B, \\
f_3 &= 2\left(\frac{\lambda_s}{\lambda}+b\right)\int \Lambda Q_b G_B(\e) + 2\left(\frac{x_s}{\lambda}-1\right) \int (Q_b+\e)_y G_B(\e)
     - 2(b_s+\theta(b))\int \frac{\partial Q_b}{\partial b} G_B(\e), \\
f_4 &= 2\int \Psi_b G_B(\e), \\
f_5 &= -2\int (Q_b)_s \left[(Q_b+\e)^5-Q_b^5-5Q_b^4\e \right]\psi_B.
\end{align*}
We now estimate each of these five terms separately.

\medskip

\textbf{Control of $f_1$.}
From multiple integrations by parts, we may rewrite $f_1$ as
\begin{align*}
f_1 &= -\int \left[ 3\psi'_B\e_{yy}^2 +\left( \frac3B e^{y/B}+\psi'_B-\psi'''_B\right)\e_y^2 +\frac1B\left(1-\frac{1}{B^2}\right) e^{y/B} \e^2 \right] \\
&\quad -2\int \left[ \frac{(Q_b+\e)^6}{6} -\frac{Q_b^6}{6} -Q_b^5\e -\left((Q_b+\e)^5-Q_b^5\right)\e \right] \left(\frac1B e^{y/B}-\psi'_B\right) \\
&\quad +2\int (Q_b)_y \left[ (Q_b+\e)^5-Q_b^5-5Q_b^4\e \right] \left( \psi_B -e^{y/B} \right) \\
&\quad +10\int \psi'_B \e_y \left[ (Q_b)_y\left((Q_b+\e)^4-Q_b^4\right) +\e_y(Q_b+\e)^4 \right] \\
&\quad -\int \psi'_B \left\{ \left[ -\e_{yy}+\e -\left( (Q_b+\e)^5-Q_b^5\right) \right]^2 -[-\e_{yy}+\e]^2 \right\} \\
&= f_1^< + f_1^>,
\end{align*}
where $f_1^<$ and $f_1^>$ respectively correspond to integration on $y<-\frac B2$ and $y>-\frac B2$.

\emph{Estimate of $f_1^<$.}
From~\eqref{psi-estimates} we find, taking $B$ large enough,
\begin{align*}
Bf_1^< &\leq -3\int_{y<-B/2} B\psi'_B\e_{yy}^2 - \frac12 \int_{y<-B/2} (\e_y^2+\e^2)e^{y/B} \\
&\quad +C\int_{y<-B/2} e^{y/B} (\e^6 + Q_b^4\e^2) \\
&\quad +C\int_{y<-B/2} Be^{y/B} |(Q_b)_y| \left( |\e|^5 +|Q_b|^3\e^2 \right) \\
&\quad +C\int_{y<-B/2} e^{y/B} |\e_y| \left[ |(Q_b)_y| \left(|Q_b|^3|\e| +\e^4\right) +|\e_y|(Q_b^4+\e^4) \right] \\
&\quad +C\int_{y<-B/2} B\psi'_B \left(Q_b^4|\e|+|\e|^5\right) \left( |\e_{yy}|+|\e|+Q_b^4|\e|+|\e|^5 \right),
\end{align*}
where $C$ denotes various positive constants.

Using the estimates~\eqref{eq:Qbj} on $Q_b$ when $y<-B/2$, which give for instance
\[
Q_b^4 \lesssim e^{-B} +|b|^4,\quad |(Q_b)_y| \lesssim e^{-B/4} +|b|^{1+\g},
\]
we get
\begin{align*}
Bf_1^< &\leq -3\int_{y<-B/2} B\psi'_B\e_{yy}^2 - \frac12 \int_{y<-B/2} (\e_y^2+\e^2)e^{y/B} \\
&\quad +C_0(\|\e\|_{L^\infty}+Be^{-B/4}+B|b|)\int_{y<-B/2} (\e_y^2+\e^2)e^{y/B} \\
&\quad +C_1(\|\e\|_{L^\infty}+e^{-B}+|b|^4)\int_{y<-B/2} B\psi'_B\e_{yy}^2,
\end{align*}
for some $C_0,C_1>0$.
Now we choose $B$ large enough, then $|S_0|$ large enough (depending on~$B$) such that $|b|$ and $\|\e\|_{L^\infty}$
are small enough thanks to the bootstrap estimates~\eqref{eq:BS}, so that we have
\[
C_0(\|\e\|_{L^\infty}+Be^{-B/4}+B|b|) \leq \frac14\quad \m{and}\quad C_1(\|\e\|_{L^\infty}+e^{-B}+|b|^4)\leq 1.
\]
Hence we obtain
\[
Bf_1^< \leq -2\int_{y<-B/2} B\psi'_B\e_{yy}^2 - \frac14 \int_{y<-B/2} (\e_y^2+\e^2)e^{y/B},
\]
and \emph{a fortiori}
\[
Bf_1^< \leq - \frac14 \int_{y<-B/2} (\e_y^2+\e^2)e^{y/B}.
\]

\emph{Estimate of $f_1^>$.}
When $y>-\frac B2$, we have $\psi_B(y)=1$ and in particular $\psi'_B\equiv\psi'''_B\equiv 0$, so $f_1^>$ reduces to
\begin{align*}
Bf_1^> &= -\int_{y>-B/2} \left[3\e_y^2 + \left(1-\frac{1}{B^2}\right) \e^2 \right] e^{y/B} \\
&\quad -2\int_{y>-B/2} \left[ \frac{(Q_b+\e)^6}{6} -\frac{Q_b^6}{6} -Q_b^5\e -\left((Q_b+\e)^5-Q_b^5\right)\e \right] e^{y/B} \\
&\quad +2\int_{y>-B/2} (Q_b)_y \left[ (Q_b+\e)^5-Q_b^5-5Q_b^4\e \right]e^{y/B} B\left( e^{-y/B}-1 \right).
\end{align*}
Expanding the last expression, we find
\[
Bf_1^> = -\int_{y>-B/2} \left[ 3\e_y^2 +\e^2 -5Q^4\e^2 +20yQ'Q^3\e^2 \right] e^{y/B} +R_{\mathrm{Vir}}(\e),
\]
where
\begin{align*}
R_{\mathrm{Vir}}(\e) &= \frac{1}{B^2}\int_{y>-B/2} \e^2 e^{y/B} +\int_{y>-B/2} 5\e^2 e^{y/B}(Q_b^4-Q^4) \\
&\quad + \int_{y>-B/2} 20\e^2 e^{y/B} [y+B(e^{-y/B}-1)]Q'Q^3 \\
&\quad + \int_{y>-B/2} 20\e^2 e^{y/B} B(e^{-y/B}-1)Q'(Q_b^3-Q^3) \\
&\quad + \int_{y>-B/2} 20\e^2 e^{y/B} B(e^{-y/B}-1)Q_b^3(Q_b-Q)_y \\
&\quad + \int_{y>-B/2} \left[ \frac{40}{3} Q_b^3\e +15Q_b^2\e^2 +8Q_b\e^3 +\frac53\e^4 \right]\e^2 e^{y/B} \\
&\quad + \int_{y>-B/2} (Q_b)_y \e^2 e^{y/B} B(e^{-y/B}-1) \left[ 20Q_b^2\e +10Q_b\e^2 +2\e^3 \right].
\end{align*}
To estimate the first term in $Bf_1^>$, we rely on the following coercivity result under the orthogonality conditions~\eqref{ortho1},
which is a variant of~\cite[Lemma~3.4]{MMR1} adapted to our case.

\begin{lemma} \label{lem:virialcut}
There exist $B_0>1$ and $\mu_1>0$ such that, for all $B\geq B_0$,
\[
\int_{y>-B/2} \left[ 3\e_y^2 +\e^2 -5Q^4\e^2 +20yQ'Q^3\e^2 \right] e^{y/B} \geq \mu_1\int_{y>-B/2} (\e_y^2+\e^2)e^{y/B} -\frac{1}{B} \int \e^2 e^{-|y|/2}.
\]
\end{lemma}

\begin{proof}
It is a simple consequence of the following coercivity property of a virial quadratic form under suitable repulsivity properties, proved in~\cite{MMjmpa}.
It is stated in Proposition~4 therein that there exists $\mu_0>0$ such that, for all $v\in H^1(\RR)$,
\be \label{eq:virial}
\int (3v_y^2 + v^2 -5 Q^4v^2 +20y Q'Q^3v^2) \geq
\mu_0\int (v_y^2+v^2) -\frac{1}{\mu_0} \left(\int vy\Lambda Q\right)^2 -\frac{1}{\mu_0} \left( \int vQ\right)^2.
\ee
We now proceed to a suitable localization argument.
Let $\zeta$ be a smooth nondecreasing function such that
\[
\zeta(y) = \left\{
\begin{aligned}
0 &\quad\m{for } y<-\frac12, \\
e^{y/2} &\quad\m{for } y>-\frac14,
\end{aligned}
\right.
\]
and $\zeta(y)\leq e^{y/2}$ for all $y\in\RR$.
Set $\tilde\e(y) = \e(y)\zeta_B(y)$, where $\zeta_B(y) = \zeta(\frac{y}{B})$ and $B>1$ will be chosen large enough.
Applying~\eqref{eq:virial} to $\tilde\e$, we obtain
\begin{multline} \label{eq:virial-loc}
(3-\mu_0)\int \tilde\e_y^2 +(1-\mu_0)\int \tilde\e^2 -5\int Q^4\tilde\e^2 +20\int y Q'Q^3\tilde\e^2 \\
\geq -\frac{1}{\mu_0} \left(\int \tilde\e y\Lambda Q\right)^2 -\frac{1}{\mu_0} \left( \int \tilde\e Q \right)^2.
\end{multline}
To estimate the left-hand side of~\eqref{eq:virial-loc}, we first notice that
\[
\int \tilde\e^2 = \int \e^2 \zeta_B^2 \leq \int_{y>-B/2} \e^2e^{y/B},
\]
and
\begin{align*}
\int \tilde\e_y^2
&= \int \e_y^2\zeta_B^2 +\int \e^2(\zeta'_B)^2 -\frac12 \int \e^2(\zeta_B^2)'' \\
&\leq \int_{y>-B/2} \e_y^2 e^{y/B} -\frac{1}{4B^2}\int_{y>-B/4} \e^2 e^{y/B}
+ \int_{-B/2<y<-B/4} \left[ \e^2(\zeta'_B)^2 -\frac12 \e^2(\zeta_B^2)'' \right] \\
&\leq \int_{y>-B/2} \e_y^2 e^{y/B} +\frac{C}{B^2}\int_{-B/2<y<-B/4} \e^2 e^{y/B},
\end{align*}
with $C>0$. Then, from $yQ'<0$ and the exponential decay of $Q$ and $Q'$, we have
\begin{multline*}
-5\int Q^4\tilde\e^2 +20\int y Q'Q^3\tilde\e^2 \leq -5\int_{y>-B/4} Q^4\e^2e^{y/B} +20\int_{y>-B/4} y Q'Q^3\e^2e^{y/B} \\
\leq -5\int_{y>-B/2} Q^4\e^2e^{y/B} +20\int_{y>-B/2} y Q'Q^3\e^2e^{y/B} +Ce^{-\frac{B}{2}} \int_{-B/2<y<-B/4} \e^2 e^{y/B}.
\end{multline*}
Thus, for $B$ large and assuming $\mu_0<1$,
\begin{multline*}
(3-\mu_0)\int \tilde\e_y^2 +(1-\mu_0)\int \tilde\e^2 -5\int Q^4\tilde\e^2 +20\int y Q'Q^3\tilde\e^2 \leq (3-\mu_0)\int_{y>-B/2} \e_y^2e^{y/B} \\
+ \left(1-\frac{\mu_0}{2}\right)\int_{y>-B/2} \e^2e^{y/B} -5\int_{y>-B/2} Q^4\e^2e^{y/B} +20\int_{y>-B/2} y Q'Q^3\e^2e^{y/B}.
\end{multline*}
Finally, to estimate the right-hand side of~\eqref{eq:virial-loc}, we rely on the orthogonality conditions~\eqref{ortho1}
and obtain by the Cauchy--Schwarz inequality
\begin{align*}
\left(\int \tilde\e y\Lambda Q\right)^2
&= \left(\int \e(\zeta_B-1) y\Lambda Q\right)^2 \lesssim \left(\int |\e||\zeta_B-1| e^{-|y|/2} \right)^2 \\
&\lesssim \left( \int (\zeta_B-1)^2 e^{-|y|/2} \right) \left( \int \e^2 e^{-|y|/2} \right).
\end{align*}
From the inequality $|e^\frac{y}{2B}-1| \leq \frac{|y|}{2B}e^\frac{|y|}{2B}$, valid for all $y\in\RR$, we get
\[
\int (\zeta_B-1)^2 e^{-|y|/2} \leq \int_{y<-B/4} e^{y/2} +\frac{1}{4B^2} \int_{y>-B/4} y^2e^{|y|/B}e^{-|y|/2} \lesssim \frac{1}{B^2},
\]
and so, for $B$ large enough,
\[
\left(\int \tilde\e y\Lambda Q\right)^2 \leq \frac{\mu_0}{2B} \int \e^2 e^{-|y|/2}.
\]
Since we may get similarly the same estimate for the term $\int \tilde\e Q$, we finally reach the conclusion of Lemma~\ref{lem:virialcut}
by inserting all the above estimates in~\eqref{eq:virial-loc} and letting $\mu_1=\frac{\mu_0}{2}>0$.
\end{proof}

To estimate the term $R_{\mathrm{Vir}}(\e)$, we rely on~\eqref{eq:Qbj},~\eqref{eq:QbtoQ} and the inequalities
\[
|y+B(e^{-y/B}-1)|\leq \frac{y^2}{2B}e^{|y|/B},\qquad |B(e^{-y/B}-1)|\leq |y|e^{|y|/B},
\]
to obtain
\[
|R_{\mathrm{Vir}}(\e)| \lesssim \left( \frac1B +|b|+\|\e\|_{L^\infty}\right) \int_{y>-B/2} \e^2 e^{y/B}.
\]
Indeed, we have for instance the estimates, for all $y>-B/2$,
\begin{gather*}
|[y+B(e^{-y/B}-1)](Q'Q^3)(y)| \lesssim \frac{1}{B}y^2e^{|y|/B} e^{-4|y|} \lesssim \frac{1}{B}, \\
|B(e^{-y/B}-1)Q_b^3(y)| \lesssim |y|e^{|y|/B} \left[ e^{-3|y|/2} +|b|^3\charfunc{[-2,0]}(|b|^\g y) \right] \lesssim e^{-|y|} +|b|^2 \lesssim 1, \\
|B(e^{-y/B}-1)(Q_b)_y(y)| \lesssim |y|e^{|y|/B} \left[ e^{-|y|/2} +|b|^{1+\g}\charfunc{[-2,0]}(|b|^\g y) \right] \lesssim e^{-|y|/4} + |b| \lesssim 1.
\end{gather*}
Taking $B$ and $|S_0|$ large enough, we obtain
\[
Bf_1^> \leq -\frac{\mu_1}{2} \int_{y>-B/2} (\e_y^2+\e^2)e^{y/B} +\frac1B \int_{y<-B/2} \e^2 e^{y/B}.
\]

\emph{Estimate of $f_1$.}
Gathering the above estimates of $Bf_1^<$ and $Bf_1^>$ and assuming $B$ large enough and $\mu_1<\frac14$, we finally obtain,
with $\mu_2 = \frac{\mu_1}{2}>0$ independent of $B$,
\[
Bf_1 \leq -\mu_2 \int (\e_y^2 +\e^2)e^{y/B}.
\]

\textbf{Control of $f_2$.}
Using integrations by parts and the definition~\eqref{Lamb} of $\Lambda\e$, we obtain
\begin{align*}
-\int \Lambda\e(\psi_B\e_y)_y &= \int \psi_B\e_y^2 -\frac12 \int y\psi'_B\e_y^2, \\
\int \Lambda\e\,\e\, e^{y/B} &= -\frac{1}{2B}\int y \e^2 e^{y/B},
\intertext{and}
-\int \Lambda\e\,\psi_B \left[(Q_b+\e)^5-Q_b^5\right] &= \frac16\int [y\psi'_B-2\psi_B]\left[ (Q_b+\e)^6-Q_b^6-6Q_b^5\e\right] \\
&\quad +\int \psi_B\Lambda Q_b \left[ (Q_b+\e)^5-Q_b^5-5Q_b^4\e \right].
\end{align*}
From the definition of $f_2$, this leads to
\begin{multline*}
f_2 = \frac{2\lambda_s}{\lambda} \left\{ -\frac12 \int y\psi'_B\e_y^2 -\frac{1}{2B}\int y\e^2e^{y/B} -\int \e^2e^{y/B}
+\frac16\int y\psi'_B \left[ (Q_b+\e)^6-Q_b^6-6Q_b^5\e\right] \right. \\
\left. +\int \psi_B\Lambda Q_b \left[ (Q_b+\e)^5-Q_b^5-5Q_b^4\e \right] \right\}.
\end{multline*}
Now recall that $b<0$ from~\eqref{eq:BS2} and so, using~\eqref{eq:2002} and~\eqref{eq:BS}, $\frac{\lambda_s}{\lambda}>0$ (for $|S_0|$ large enough).
Since $\psi'_B(y)=0$ for $y>-\frac B2$, we thus obtain, using the control~\eqref{psi-estimates}, the \emph{inequality}
\begin{multline*}
Bf_2 \leq \frac{\lambda_s}{\lambda} \left\{ \int_{y<-\frac B2} |y|\e_y^2e^{y/B}
+\int_{y<0} |y|\e^2e^{y/B} +\frac13 \int_{y<-\frac B2} |y| \left| (Q_b+\e)^6-Q_b^6-6Q_b^5\e\right|e^{y/B} \right. \\
\left. {}+2\int B |\Lambda Q_b| \left| (Q_b+\e)^5-Q_b^5-5Q_b^4\e \right|e^{y/B} \right\}.
\end{multline*}
From the estimate~\eqref{eq:Qbj} of $Q_b$, we get
\[
Bf_2 \lesssim \left( \left|\frac{\lambda_s}{\lambda}+b\right| +|b|\right) \left( \int_{y<0} |y|(\e_y^2+\e^2)e^{y/B} +\int B\e^2e^{y/B} \right).
\]
But, from~\eqref{eq:2002} and~\eqref{eq:BS}, we have
\[
\left|\frac{\lambda_s}{\lambda}+b\right| +|b| \lesssim \|\e\|_{\dot H^0_B} +|b| \lesssim |s|^{-1}.
\]
Therefore, choosing $|S_0|$ large enough, we get
\[
Bf_2 \leq \frac{\mu_2}{10}\int \e^2 e^{y/B} + C_2|s|^{-1} \int_{y<0} |y|(\e_y^2+\e^2)e^{y/B},
\]
for some constant $C_2>0$.
To estimate the last integral, we use Hölder's inequality and~\eqref{eq:BS} to obtain,
with $C(B)>0$ a constant depending only on $B$,
\begin{align*}
C_2 \int_{y<0} |y|(\e_y^2+\e^2)e^{y/B}
&\leq C_2 \left( \int_{y<0} |y|^{4K} (\e_y^2+\e^2)e^{y/B} \right)^\frac{1}{4K} \left( \int_{y<0} (\e_y^2+\e^2)e^{y/B} \right)^{1-\frac{1}{4K}} \\
&\leq C_2 C(B) \left( \int \e_y^2+\e^2 \right)^\frac{1}{4K} \left( \int (\e_y^2+\e^2)e^{y/B} \right)^{1-\frac{1}{4K}} \\
&\leq \frac{\mu_2}{10} \left( \int (\e_y^2+\e^2)e^{y/B} \right)^{1-\frac{1}{4K}},
\end{align*}
by choosing $|S_0|$ large enough, as before depending on $B$.

Finally, using the inequality $ab\leq a^\frac{4K}{4K-1}+b^{4K}$ valid for all $a,b\geq 0$,
we get the estimate
\[
Bf_2 \leq \frac{2\mu_2}{10} \int (\e_y^2+\e^2)e^{y/B} + \frac{\mu_2}{10} |s|^{-4K}.
\]

\textbf{Control of $f_3$.}
Let us denote $f_{3,1}$, $f_{3,2}$ and $f_{3,3}$ the three terms involved in $f_3$ and estimate them separately.

\emph{Estimate of $f_{3,1}$.}
We first rewrite this term as
\begin{align*}
f_{3,1} &= 2\left(\frac{\lambda_s}{\lambda}+b\right) \int \Lambda Q(L\e) \\
&\quad + 2\left(\frac{\lambda_s}{\lambda}+b\right) \int \Lambda Q\e(e^{y/B}-1) \\
&\quad + 2\left(\frac{\lambda_s}{\lambda}+b\right) \int \Lambda Q \left\{ (1-\psi_B)\e_{yy} -\psi'_B\e_y +(1-\psi_B)\left[(Q_b+\e)^5-Q_b^5\right] \right\} \\
&\quad - 2\left(\frac{\lambda_s}{\lambda}+b\right) \int \Lambda Q \left[ (Q_b+\e)^5 -Q_b^5 -5Q^4\e \right] \\
&\quad + 2\left(\frac{\lambda_s}{\lambda}+b\right) \int \Lambda (Q_b-Q) \left\{ -\psi'_B\e_y -\psi_B\e_{yy} +\e e^{y/B} -\psi_B[(Q_b+\e)^5-Q_b^5] \right\} \\
&= f_{3,1,1} + f_{3,1,2} + f_{3,1,3} + f_{3,1,4} + f_{3,1,5}.
\end{align*}
To estimate the first term, we rely on the property~\emph{\ref{scaling}} of Lemma~\ref{lemma:L}
and on the orthogonality condition $(\e,Q)=0$ to obtain
\[
\int \Lambda Q(L\e) = (L\e,\Lambda Q) = (\e,L\Lambda Q) = -2(\e,Q) = 0,
\]
and so $f_{3,1,1} = 0$.
For the second term, using the orthogonality condition $(\e,y\Lambda Q)=0$,
together with~\eqref{eq:2002} and the inequality $|e^x-1-x|\leq \frac{x^2}{2}e^{|x|}$ valid for all $x\in\RR$, we get
\begin{align*}
|f_{3,1,2}| &= 2\left| \left(\frac{\lambda_s}{\lambda}+b\right) \int \Lambda Q\e\left(e^{y/B}-1-\frac{y}{B}\right) \right| \\
&\lesssim \frac{1}{B^2} \left[ \left(\int \e^2 {e^{-|y|/2}} \right)^{\frac12} + |b|^{K+1} \right] \int |\e| y^2 e^{-|y|/2} e^{|y|/B},
\end{align*}
and so
\[
|f_{3,1,2}| \lesssim \frac{1}{B^2} \left[ \left(\int \e^2 {e^{-|y|/2}} \right)^{\frac12} + |b|^{K+1} \right] \left(\int \e^2 e^{-|y|/2} \right)^{\frac12}
\lesssim \frac{1}{B^2} \left[ \int \e^2 e^{y/B} + |b|^{2K+2} \right],
\]
which gives, for $B$ chosen large enough and using~\eqref{eq:BS},
\[
B|f_{3,1,2}| \leq \frac{\mu_2}{50} \int \e^2 e^{y/B} + \frac{\mu_2}{50} |s|^{-2K-2}.
\]
For the next term $f_{3,1,3}$, we first integrate by parts to remove all derivatives on $\e$, then notice that $\psi_B\equiv 1$ on $[-B/2,+\infty)$,
and finally use the exponential decay of $Q$ and its derivatives to obtain, by choosing $B$ large enough as above,
\[
B|f_{3,1,3}| \lesssim Be^{-B/4} \left[ \left(\int \e^2 {e^{-|y|/2}} \right)^{\frac12} + |b|^{K+1} \right] \left(\int \e^2 e^{-|y|/2} \right)^{\frac12}
\leq \frac{\mu_2}{50} \int \e^2 e^{y/B} + \frac{\mu_2}{50} |s|^{-2K-2}.
\]
For the term $f_{3,1,4}$, using~\eqref{eq:QbtoQ} and the exponential decay of $\Lambda Q$, we obtain
\begin{align*}
\left| \int \Lambda Q \left[ (Q_b+\e)^5 -Q_b^5 -5Q^4\e \right] \right|
&\leq \left| \int \Lambda Q \left[ (Q_b+\e)^5 -Q_b^5 -5Q_b^4\e \right] \right| + 5\left| \int \Lambda Q \e (Q_b^4-Q^4) \right| \\
&\lesssim \int \e^2e^{-|y|/2} +|b| \left(\int \e^2 e^{-|y|/2} \right)^{\frac12} \\
&\lesssim (\|\e\|_{\dot H_B^0} +|b|) \left(\int \e^2 e^{-|y|/2} \right)^{\frac12}.
\end{align*}
Choosing $|S_0|$ large enough (depending on $B$ as before), we also get
\[
B|f_{3,1,4}| \leq \frac{\mu_2}{50} \int \e^2 e^{y/B} + \frac{\mu_2}{50} |s|^{-2K-2}.
\]
Finally, for the last term $f_{3,1,5}$, we first integrate by parts to remove all derivatives on $\e$ as before, and use~\eqref{eq:QbtoQ} to obtain
\[
B|f_{3,1,5}| \leq C(B)|b| \left| \frac{\lambda_s}{\lambda}+b\right| \int |y| \left[ e^{-|y|/2} + \charfunc{(-\infty,0]}(y) \right] |\e| e^{y/B},
\]
with $C(B)>0$.
But the Cauchy--Schwarz inequality gives
\[
\int_{y<0} |y||\e| e^{y/B} \leq \left(\int_{y<0} y^2e^{y/B}\right)^{\frac12}
\left(\int_{y<0} \e^2 e^{y/B} \right)^{\frac12} \leq C(B) \left(\int \e^2 e^{y/B} \right)^{\frac12},
\]
and similarly
\[
\int |y|e^{-|y|/2} |\e|e^{y/B} \leq \left(\int y^2e^{-|y|}e^{y/B}\right)^{\frac12}
\left(\int \e^2 e^{y/B} \right)^{\frac12} \lesssim \left(\int \e^2 e^{y/B} \right)^{\frac12}.
\]
Thus, we obtain
\[
B|f_{3,1,5}| \leq C(B)|b| \left| \frac{\lambda_s}{\lambda}+b\right| \left(\int \e^2 e^{y/B} \right)^{\frac12}
\leq \frac{\mu_2}{50} \int \e^2 e^{y/B} + \frac{\mu_2}{50} |s|^{-2K-2},
\]
by choosing $|S_0|$ large enough as above.

Gathering all the previous estimates obtained above, we get
\[
B|f_{3,1}| \leq \frac{\mu_2}{10} \int \e^2 e^{y/B} + \frac{\mu_2}{10} |s|^{-2K-2}.
\]

\emph{Estimate of $f_{3,2}$.}
Integrating by parts, we first find the identity
\[
\int \psi_B(Q_b+\e)_y \left[(Q_b+\e)^5-Q_b^5\right] = -\frac16 \int \psi'_B \left[(Q_b+\e)^6-Q_b^6-6Q_b^5\e\right] +5\int \psi_B(Q_b)_yQ_b^4\e.
\]
Hence, we may rewrite $f_{3,2}$ as
\begin{align*}
f_{3,2} &= 2\left( \frac{x_s}{\lambda}-1 \right) \int Q'[L\e +(e^{y/B}-1)\e] \\
&\quad +2\left( \frac{x_s}{\lambda}-1 \right) \int Q'[-\psi'_B\e_y+(1-\psi_B)\e_{yy}+5(1-\psi_B)Q^4\e] \\
&\quad +\frac13\left( \frac{x_s}{\lambda}-1 \right) \int \psi'_B \left[ (Q_b+\e)^6-Q_b^6-6Q_b^5\e\right] \\
&\quad -10\left( \frac{x_s}{\lambda}-1 \right) \int \e\psi_B[(Q_b)_yQ_b^4-Q'Q^4] \\
&\quad +2\left( \frac{x_s}{\lambda}-1 \right) \int (Q_b-Q)_y [-\psi'_B\e_y -\psi_B\e_{yy} +\e e^{y/B}] \\
&\quad +2\left( \frac{x_s}{\lambda}-1 \right) \int \e_y [-\psi'_B\e_y -\psi_B\e_{yy} +\e e^{y/B}] \\
&= f_{3,2,1} + f_{3,2,2} + f_{3,2,3} + f_{3,2,4} + f_{3,2,5} + f_{3,2,6}.
\end{align*}
To estimate these six terms, since $\left|\frac{x_s}{\lambda}-1\right|$ satisfies
the same control as $\left|\frac{\lambda_s}{\lambda}+b\right|$ by~\eqref{eq:2002},
we will closely follow the calculation done for $f_{3,1}$.

For instance, to estimate the first term, we rely on the cancelation $LQ'=0$
and the orthogonality conditions $(\e,Q)=(\e,\Lambda Q)=0$ which give $(\e,yQ')=0$ to obtain,
as for $f_{3,1,2}$ and by choosing $B$ large enough,
\begin{align*}
B|f_{3,2,1}| &= 2B \left|\frac{x_s}{\lambda}-1\right| \left| \int Q'\e \left(e^{y/B}-1-\frac{y}{B} \right) \right| \\
&\lesssim \frac{1}{B} \left|\frac{x_s}{\lambda}-1\right| \int y^2 e^{-|y|}|\e|e^{|y|/B}
\leq \frac{\mu_2}{50} \int \e^2 e^{y/B} + \frac{\mu_2}{50} |s|^{-2K-2}.
\end{align*}
The term $f_{3,2,2}$, handled as $f_{3,1,3}$, also satisfies the same estimate by choosing $B$ large enough,
and \emph{we definitely fix $B$ to this value}.

For the term $f_{3,2,3}$, we use~\eqref{eq:2002} and~\eqref{psi-estimates} to obtain
\[
B|f_{3,2,3}| \lesssim (\|\e\|_{\dot H_B^0} +|b|^{K+1}) \int \e^2 e^{y/B} \leq \frac{\mu_2}{50} \int \e^2 e^{y/B},
\]
by choosing $|S_0|$ large enough.

For the next term $f_{3,2,4}$, we rewrite $(Q_b)_yQ_b^4-Q'Q^4 = Q'(Q_b^4-Q^4)+Q_b^4(Q_b-Q)_y$ then estimate,
using~\eqref{eq:Qbj} and~\eqref{eq:QbtoQ},
\[
\left| \int \e\psi_B Q'(Q_b^4-Q^4) \right|
\lesssim |b| \left( \int \e^2e^{y/B} \right)^{\frac12}
\left( \int e^{y/B}e^{-2|y|} \right)^{\frac12} \lesssim |b| \left( \int \e^2e^{y/B} \right)^{\frac12},
\]
and
\begin{align*}
\left| \int \e\psi_B Q_b^4(Q_b-Q)_y \right|
&\lesssim |b| \left( \int \e^2e^{y/B} \right)^{\frac12}
\left( \int e^{y/B}\left[ e^{-4|y|} +|b|^8\charfunc{[-2,0]}(|b|^\g y) \right] \right)^{\frac12} \\
&\lesssim B^{\frac12} |b| \left( \int \e^2e^{y/B} \right)^{\frac12}.
\end{align*}
Together with~\eqref{eq:2002}, these estimates lead to, choosing $|S_0|$ large enough,
\[
B|f_{3,2,4}| \lesssim B^{\frac32} |b| \left[ \left( \int \e^2e^{y/B} \right)^{\frac12} +|b|^{K+1}\right] \left( \int \e^2e^{y/B} \right)^{\frac12}
\leq \frac{\mu_2}{50} \int \e^2 e^{y/B} + \frac{\mu_2}{50} |s|^{-2K-2}.
\]

For the term $f_{3,2,5}$, we proceed as in the estimate of $f_{3,1,5}$ and therefore obtain the same control.
Finally, for the last term $f_{3,2,6}$, we first integrate by parts to obtain
\[
\int \e_y [-\psi'_B\e_y -\psi_B\e_{yy} +\e e^{y/B}] = -\frac12 \int \psi'_B\e_y^2 -\frac{1}{2B} \int \e^2 e^{y/B}.
\]
Thus, from~\eqref{psi-estimates} and~\eqref{eq:2002}, we get
\[
B|f_{3,2,6}| \lesssim (\|\e\|_{\dot H_B^0} +|b|^{K+1}) \int (\e_y^2+\e^2)e^{y/B} \leq \frac{\mu_2}{50} \int (\e_y^2+\e^2)e^{y/B},
\]
by choosing $|S_0|$ large enough.

Gathering all the previous estimates obtained above, we get the estimate
\[
B|f_{3,2}| \leq \frac{2\mu_2}{10} \int (\e_y^2+\e^2) e^{y/B} + \frac{\mu_2}{10} |s|^{-2K-2}.
\]

\emph{Estimate of $f_{3,3}$.}
We first integrate by parts to remove all derivatives on $\e$ as before, and use~\eqref{eq:dQb} to obtain
\[
B|f_{3,3}| \leq C(B) |b_s+\theta(b)| \int \left[ e^{-|y|/2} +\charfunc{(-\infty,0]}(y) \right] |\e|e^{y/B},
\]
with $C(B)>0$.
The last integral is handled as for the term $f_{3,1,5}$ by the Cauchy--Schwarz inequality,
and from~\eqref{eq:2003} we get
\begin{align*}
B|f_{3,3}| &\leq C(B) \left[ \int \e^2e^{y/B} + |b|\left( \int \e^2e^{y/B} \right)^{\frac12} + |b|^{K+1} \right] \left( \int \e^2e^{y/B} \right)^{\frac12},
\intertext{and so}
B|f_{3,3}| &\leq C(B)(\|\e\|_{\dot H_B^0} +|b|) \int \e^2e^{y/B} + C(B) |b|^{K+1} \left( \int \e^2e^{y/B} \right)^{\frac12} \\
&\leq \frac{\mu_2}{10} \int \e^2 e^{y/B} +C(B) \left[ \frac{\mu_2}{10C(B)}\int \e^2e^{y/B} + \frac{10C(B)}{4\mu_2} |b|^{2K+2} \right],
\end{align*}
by choosing $|S_0|$ large enough and using the inequality $ab\leq \sigma a^2 +\frac{1}{4\sigma} b^2$ valid for all $a,b,\sigma>0$.
Thus, we obtain, by using~\eqref{eq:BS},
\[
B|f_{3,3}| \leq \frac{2\mu_2}{10} \int \e^2 e^{y/B} + C(B)|s|^{-2K-2}.
\]

\emph{Estimate of $f_3$.}
Gathering the above estimates of $f_{3,1}$, $f_{3,2}$ and $f_{3,3}$, we finally obtain, with $C_0(B)>0$ a constant depending only on $B$,
\[
B|f_3| \leq \frac{5\mu_2}{10} \int (\e_y^2+\e^2) e^{y/B} + C_0(B) |s|^{-2K-2}.
\]

\textbf{Control of $f_4$.}
We proceed as for the term $f_{3,3}$ above, by first integrating by parts to remove all derivatives on $\e$,
and use~\eqref{eq:Psibj} to obtain
\[
B|f_4| \leq C(B) \int \left[ |b|^{K+1}e^{-|y|/2} + |b|^{K+1}|y|^{K-1}\charfunc{[-1,0]}(|b|^\g y)
+ |b|^{1+\g} \charfunc{[-2,-1]}(|b|^\g y) \right] |\e| e^{y/B},
\]
with $C(B)>0$. Using the Cauchy--Schwarz inequality, we get
\begin{align*}
B|f_4| &\leq C(B)|b|^{K+1} \left( \int e^{-|y|}e^{y/B} \right)^{\frac12} \left( \int \e^2 e^{y/B} \right)^{\frac12} \\
&\quad + C(B)|b|^{K+1} \left( \int_{y<0} |y|^{2K-2} e^{y/B} \right)^{\frac12} \left( \int \e^2 e^{y/B} \right)^{\frac12}
+ C(B)|b|^{1+\g} \int_{y<-|b|^{-\g}} e^{y/B},
\end{align*}
and so
\[
B|f_4| \leq C(B)|b|^{K+1} \left( \int \e^2 e^{y/B} \right)^{\frac12} + C(B) |b|^{1+\g} e^{-|b|^{-\g}/B}.
\]
Proceeding as in the estimate of $f_{3,3}$, we finally obtain, for some $C_1(B)>0$,
\[
B|f_4| \leq \frac{\mu_2}{10} \int \e^2 e^{y/B} + C_1(B) |s|^{-2K-2}.
\]

\textbf{Control of $f_5$.}
Since $(Q_b)_s = b_s \frac{\partial Q_b}{\partial b}$ and
$\left\| \frac{\partial Q_b}{\partial b} \right\|_{L^\infty} \lesssim 1$ by~\eqref{eq:dQb}, we find,
using also~\eqref{eq:2003} and the definition~\eqref{def:Qbt} of~$\theta(b)$,
\[
B|f_5| \lesssim B|b_s| \int \e^2 e^{y/B} \lesssim B(|b_s+\theta(b)|+|\theta(b)|) \int \e^2 e^{y/B}
\lesssim B(|b|^2 +\|\e\|_{\dot H_B^0}^2) \int \e^2 e^{y/B}.
\]
Choosing $|S_0|$ large enough, again depending on $B$, we get
\[
B|f_5| \leq \frac{\mu_2}{10} \int \e^2 e^{y/B}.
\]

\textbf{Conclusion.}
Gathering all the previous estimates above, we get
\[
B\lambda^2 \frac{dF_0}{ds} \leq -\frac{\mu_2}{10} \int (\e_y^2+\e^2)e^{y/B} + C_2(B) |s|^{-2K-2},
\]
for some $C_2(B)>0$. For $|S_0|$ large enough, we also have $\lambda^{-2}(s) \leq 8|s|$ by~\eqref{eq:BS}.
Dividing the previous inequality by $B\lambda^2>0$ and setting $\kappa(B)=\frac{8C_2(B)}{B}$,
we finally obtain
\[
\frac{dF_0}{ds} \leq -\frac{\mu_2}{10B\lambda^2} \int (\e_y^2+\e^2)e^{y/B} + \kappa(B)|s|^{-2K-1},
\]
and \emph{a fortiori}~\eqref{el2}, which concludes the proof of Lemma~\ref{le:enerloc}.
\end{proof}

\subsection{$\dot H^m_B$ estimates}

Let $B>100$ be chosen as in Lemma~\ref{le:enerloc}.
For $m\geq 1$, we set
\[
F_{m,n}= \int (\partial_y^m \e_n)^2 e^{y/B} - \int (\partial_y^{m-1} \e_n)^2 h_{m,n},
\]
where
\[
h_{m,n}(s,y)=\frac53 \int_{-\infty}^y \left( (2m+1) \partial_y (Q_{b_n}^4) - \frac1B Q_{b_n}^4\right)(y') e^{y'/B}\, dy'.
\]
Note that, for all $j\geq 0$, all $S_n\leq s\leq S_n^*$ and all $y\in\RR$,
\be \label{bdfm}
|\partial_y^j h_{m,n}(s,y)| \lesssim e^{y/B},\quad |\partial_y^j \partial_s h_{m,n}(s,y)|\lesssim |s|^{-2} e^{y/B}.
\ee
For the second estimate, note that we have used $|(b_n)_s|\lesssim |s|^{-2}$ from~\eqref{eq:2003}.

\begin{lemma} \label{le:Hmloc}
Let $4\leq m \leq m_K$. There exists $\kappa_m>0$ such that, for all $S_n\leq s\leq S_n^*$,
\be \label{Hml1}
\frac{d F_{m,n}}{ds}\leq \kappa_m (C^*)^{m-1} |s|^{-2K+m-1}.
\ee
\end{lemma}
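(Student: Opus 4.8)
The plan is to differentiate $F_{m,n}$ along the flow and to show that a single algebraic cancellation, built into the definition of $h_{m,n}$, removes the only dangerous term, after which everything that remains is either absorbed by a good negative term or is manifestly of size $\lesssim(C^*)^{m-1}|s|^{-2K+m-1}$. Write $v=\partial_y^m\e_n$ and $w=\partial_y^{m-1}\e_n$, so that $w_y=v$. Applying $\partial_y^m$ (resp. $\partial_y^{m-1}$) to the equation~\eqref{eqofeps} gives $v_s=-v_{yyy}+v_y-\partial_y^{m+1}\big((Q_{b_n}+\e_n)^5-Q_{b_n}^5\big)+\partial_y^m\mathcal{R}$ (resp. the same with $m$ replaced by $m-1$), where $\mathcal{R}$ collects the modulation terms and $\Psi_{b_n}$. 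Then
\[
\frac{dF_{m,n}}{ds}=2\int v\,v_s\,e^{y/B}-2\int w\,w_s\,h_{m,n}-\int w^2\,\partial_s h_{m,n},
\]
and the first sub-step is the dispersive/transport computation: two integrations by parts give $2\int v(-v_{yyy}+v_y)e^{y/B}\le-\frac3B\int v_y^2e^{y/B}-\frac7{8B}\int v^2e^{y/B}$ for $B$ large. This is the coercive term that will absorb every remainder carrying a small coefficient.

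The heart of the argument is the cancellation. Isolating in $2\int v\,v_s\,e^{y/B}$ the top-order part of the linearisation of the nonlinearity, namely $-10\int v\,\partial_y^{m+1}(Q_{b_n}^4\e_n)e^{y/B}$, and distributing one derivative so that only the terms carrying $v,v_y$ (or $v$ twice) are retained, one integration by parts produces the indefinite contribution $-5(2m+1)\int\partial_y(Q_{b_n}^4)\,v^2e^{y/B}+\frac5B\int Q_{b_n}^4\,v^2e^{y/B}$, whose sign is not controlled since $\partial_y(Q_{b_n}^4)$ changes sign. On the other hand, expanding $-2\int w\,w_s\,h_{m,n}$ and integrating by parts twice gives $3\int v^2\,(h_{m,n})_y-\int w^2(h_{m,n})_{yyy}+(\text{lower order})$, and since $(h_{m,n})_y=\frac53\big((2m+1)\partial_y(Q_{b_n}^4)-\frac1B Q_{b_n}^4\big)e^{y/B}$, the term $3\int v^2(h_{m,n})_y$ equals exactly $5(2m+1)\int\partial_y(Q_{b_n}^4)\,v^2e^{y/B}-\frac5B\int Q_{b_n}^4\,v^2e^{y/B}$, which cancels the indefinite contribution. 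This is precisely the $m$-th derivative analogue of the computation producing $f_1^>$ in Lemma~\ref{le:enerloc}, with $h_{m,n}$ playing the role that $\psi_B$ played there, and it is the reason for the coefficient $\frac53(2m+1)$.

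It remains to bound the leftover terms. Those quadratic in $v$ come with a coefficient that is small in the bootstrap regime --- $\|\e_n\|_{L^\infty}$, $|b_n|$, $\frac1B$, or $\frac{(\lambda_n)_s}{\lambda_n}\lesssim\frac1{|s|}$, so that the scaling-generated term $2m\frac{(\lambda_n)_s}{\lambda_n}\int v^2e^{y/B}\le\frac{Cm}{|s|}\int v^2e^{y/B}\le\frac1{8B}\int v^2e^{y/B}$ for $|S_0|$ large --- and are absorbed into the good term. The remaining bilinear terms $\int v\,w\cdot(\text{coeff})\,e^{y/B}$ (commutators in $\partial_y^{m+1}(Q_{b_n}^4\e_n)$), the term $-\int w^2(h_{m,n})_{yyy}$, the higher-order nonlinearities $\partial_y^{m+1}(Q_{b_n}^{5-j}\e_n^{\,j})$ with $j\ge2$, and $-\int w^2\partial_s h_{m,n}$ are handled by Young's inequality together with the coefficient bounds~\eqref{eq:Qbj} and~\eqref{bdfm} and the weighted interpolation $e^{-|y|/4}\le e^{y/B}$ (valid since $B>4$): each is bounded by $\sigma\int v^2e^{y/B}+C_m(B)\|\e_n\|_{\dot H^{m-1}_B}^2+C_m(B)\|\e_n\|_{\dot H^{m-2}_B}^2$, the first absorbed and the rest $\le C_m(B)(C^*)^{m-1}|s|^{-2K+m-1}$ by~\eqref{eq:BS}. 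The modulation and $\Psi_{b_n}$ contributions are estimated as $f_3$ and $f_4$ in Lemma~\ref{le:enerloc}: by~\eqref{eq:2002}--\eqref{eq:2003} the modulation factors are $\lesssim|s|^{-K}$, and in~\eqref{eq:Psibj} the polynomially growing piece $|b_n|^{K+1}|y|^{K-1-m}\charfunc{[-1,0]}(|b_n|^{\g}y)$ is harmless because $\int_{y<0}|y|^{2(K-1-m)}e^{y/B}\,dy$ is a finite constant; these give $\le\sigma\int v^2e^{y/B}+C_m(B)|s|^{-2K-2}$. Collecting everything and using $|s|^{-2K-2}\le|s|^{-2K+m-1}$ yields $\frac{dF_{m,n}}{ds}\le-\frac1B\int(v_y^2+v^2)e^{y/B}+\kappa_m(C^*)^{m-1}|s|^{-2K+m-1}$, which implies~\eqref{Hml1}.

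The main obstacle is the bookkeeping in step three, specifically the organisation of $\partial_y^{m+1}\big((Q_{b_n}+\e_n)^5-Q_{b_n}^5\big)$: one must arrange the Leibniz expansion so that there is a single top-order indefinite term (the one killed by $(h_{m,n})_y$) and every other term either carries a genuinely small coefficient or loses at least one derivative on the top $\e_n$-factor, so that it is controlled by $\|\e_n\|_{\dot H^{m-1}_B}$ together with the smallness of $\|\e_n\|_{L^\infty}$ and $\|\partial_y\e_n\|_{L^\infty}$ (obtained from the $\dot H^j$ bounds in~\eqref{eq:BS} via Gagliardo--Nirenberg). Tracking the dependence of the absorbed constants on $m$ but not on $C^*$ is what lets the final bound carry $(C^*)^{m-1}$ rather than $(C^*)^m$, hence close the bootstrap with a strict gain; this, and the need for several lower-order weighted and unweighted bounds on $\e_n$ to be simultaneously available, is why the statement is restricted to $4\le m\le m_K$.
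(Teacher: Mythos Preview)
Your proposal follows the paper's approach: the same splitting of $\frac{d}{ds}F_{m,n}$, the same cancellation of the indefinite term $-5(2m+1)\int\partial_y(Q_{b_n}^4)v^2e^{y/B}+\frac5B\int Q_{b_n}^4v^2e^{y/B}$ against $3\int v^2(h_{m,n})_y$, and the same hierarchy of remainders controlled by lower-order $\dot H^{m'}_B$ norms.

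One point needs more care. The scaling contribution is not simply $2m\frac{(\lambda_n)_s}{\lambda_n}\int v^2e^{y/B}$: since $\Lambda\e=\frac12\e+y\e_y$, after $m$ derivatives and integration against $v\,e^{y/B}$ there is also the piece $-\frac1B\frac{(\lambda_n)_s}{\lambda_n}\int y\,v^2e^{y/B}$, and on $y<0$ the weight $|y|e^{y/B}$ is \emph{not} dominated by $e^{y/B}$. The paper resolves this by H\"older,
\[
\int_{y<0}|y|\,v^2e^{y/B}\lesssim C(B)\Big(\int v^2\Big)^{\frac1{4K}}\Big(\int v^2e^{y/B}\Big)^{1-\frac1{4K}},
\]
and then invokes the \emph{unweighted} bootstrap bound $\|\e_n\|_{\dot H^m}\le C^*|s|^{-\frac14(1+\g)-\g m}$ to absorb the resulting factor. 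A similar $y$-weighted term appears in $-2\frac{(\lambda_n)_s}{\lambda_n}\int(\partial_y^{m-1}\Lambda\e_n)w\,h_{m,n}$. Your closing sentence about unweighted bounds being ``simultaneously available'' gestures toward this, but the mechanism should be made explicit; as written, the claim that the scaling term is absorbed because $\frac{(\lambda_n)_s}{\lambda_n}\lesssim|s|^{-1}$ is incomplete.
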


\begin{proof}
As before, we denote $(\lambda_n,x_n,b_n,\e_n)$, $h_{m,n}$ and $F_{m,n}$ simply by $(\lambda,x,b,\e)$, $h_m$ and $F_m$.

First, we have
\begin{align*}
\frac{dF_m}{ds}
&= 2 \int (\partial_y^m \e_s) (\partial_y^m \e) e^{y/B}
- 2 \int (\partial_y^{m-1} \e_s) (\partial_y^{m-1} \e) h_m - \int (\partial_y^{m-1} \e)^2 \partial_s h_m \\
&= - 2 \int (\partial_y^{m-1} \e_s) \left\{\partial_y [(\partial_y^{m} \e) e^{y/B}] + (\partial_y^{m-1} \e) h_m\right\}
- \int (\partial_y^{m-1}\e)^2 \partial_s h_m.
\end{align*}
Denoting $G_{m,B}= \Big\{ \partial_y [(\partial_y^{m} \e) e^{y/B}] + (\partial_y^{m-1} \e) h_m \Big\}$ and
using~\eqref{eqofeps}, we find
\[
\frac{dF_m}{ds} = f_{m,1}+f_{m,2}+f_{m,3}+f_{m,4}+f_{m,5},
\]
where
\begin{align*}
f_{m,1}& = 2 \int \partial_y^m\left(\e_{yy}- \e +5 Q_b^4 \e\right)G_{m,B}, \\
f_{m,2}& = -2 \frac{\lambda_s}{\lambda} \int (\partial_y^{m-1} \Lambda \e) G_{m,B}, \\
f_{m,3}& = 2 \int \partial_y^m\left( (Q_b+\e)^5 - Q_b^5 - 5 Q_b^4 \e\right) G_{m,B}, \\
f_{m,4}& = -2\left(\frac{\lambda_s}{\lambda}+b\right)\int (\partial_y^{m-1}\Lambda Q_b) G_{m,B} - 2\left(\frac{x_s}{\lambda}-1\right) \int (\partial_y^{m}(Q_b+\e)) G_{m,B} \\
       & \quad + 2(b_s+\theta(b))\int \left(\partial_y^{m-1}\frac{\partial Q_b}{\partial b}\right) G_{m,B}, \\
f_{m,5}& =- 2 \int (\partial_y^{m-1} \Psi_b) G_{m,B}- \int (\partial_y^{m-1}\e)^2 \partial_s h_m.
\end{align*}

\textbf{Control of $f_{m,1}$.}
First, by integration by parts,
\begin{align*}
2 \int \partial_y^m(\e_{yy}-\e) G_{m,B}
& = - \frac3B \int (\partial_y^{m+1} \e)^2 e^{y/B} - \frac1B\left(1-\frac1{B^2}\right) \int (\partial_y^m \e)^2 e^{y/B} \\
&\quad + 3 \int (\partial_y^m \e)^2 (h_m)_y + \int (\partial_y^{m-1} \e)^2 \left[(h_m)_y - (h_m)_{yyy}\right].
\end{align*}
By~\eqref{bdfm} and~\eqref{eq:BS},
\[
\left| \int (\partial_y^{m-1} \e)^2 \left[(h_m)_y - (h_m)_{yyy}\right]\right|
\lesssim \int (\partial_y^{m-1} \e)^2 e^{y/B}
\lesssim (C^*)^{m-1} |s|^{-2K + m-1}.
\]

Second, integrating by parts,
\begin{align*}
10\int \partial_y^m(Q_b^4\e)\partial_y[(\partial_y^m\e)e^{y/B}]
& = -10 \int \partial_y^{m+1} (Q_b^4\e) (\partial_y^m \e) e^{y/B} \\
& = -10 \int (\partial_y^{m+1} \e) (\partial_y^m \e) Q_b^4 e^{y/B} -10 (m+1) \int (\partial_y^m \e)^2 \partial_y (Q_b^4) e^{y/B} \\
&\quad + \sum_{j=0}^{m-1} c_{j,m} \int (\partial_y^j \e) (\partial_y^m \e) (\partial_y^{m+1-j} Q_b^4)e^{y/B},
\intertext{and so}
10\int \partial_y^m(Q_b^4\e)\partial_y[(\partial_y^m\e)e^{y/B}]
&= - 5 \int (\partial_y^m \e)^2 \left((2m+1) \partial_y (Q_b^4) - \frac1B Q_b^4\right) e^{y/B} \\
&\quad + c'_{m-1,m} \int (\partial_y^{m-1} \e)^2 \partial_y\left( (\partial_y^{2} Q_b^4)e^{y/B}\right) \\
&\quad +\sum_{j=0}^{m-2} c'_{j,m} \int (\partial_y^{m-1} \e) \partial_y\left((\partial_y^j \e) (\partial_y^{m+1-j} Q_b^4)e^{y/B}\right),
\end{align*}
where $c_{j,m}$ and $c'_{j,m}$ denote various constants depending only on $m$ and $j$.
Using~\eqref{eq:Qbj} and~\eqref{eq:BS}, we obtain
\begin{multline*}
\left|\int (\partial_y^{m-1} \e)^2 \partial_y\left( (\partial_y^{2} Q_b^4)e^{y/B} \right) \right|
+ \sum_{j=0}^{m-2} \left| \int (\partial_y^{m-1} \e) \partial_y\left( (\partial_y^j \e) (\partial_y^{m+1-j} Q_b^4) e^{y/B} \right) \right| \\
\lesssim \sum_{j=0}^{m-1} \int (\partial_y^j \e)^2 e^{y/B} \lesssim (C^*)^{m-1} |s|^{-2K+m-1}.
\end{multline*}
Due to the choice of $h_m$, we note that
\[
3 (h_m)_y = 5 \left((2m+1) \partial_y (Q_b^4) - \frac1B Q_b^4\right) e^{y/B}.
\]
This leads to the cancellation
\[
\left| 3 \int (\partial_y^m \e)^2 (h_m)_y + 10\int \partial_y^m(Q_b^4\e)\partial_y[(\partial_y^m\e)e^{y/B}] \right| \lesssim (C^*)^{m-1} |s|^{-2K+m-1}.
\]

Finally, proceeding as before using integration by parts,~\eqref{eq:Qbj},~\eqref{bdfm} and~\eqref{eq:BS}, we show
\[
\left|\int \partial_y^m (Q_b^4\e) (\partial_y^{m-1} \e) h_m \right| \lesssim (C^*)^{m-1} |s|^{-2K+m-1}.
\]

Therefore, gathering the information above and since $B>100$, we obtain the estimate
\[
f_{m,1} \leq - \frac1{2B} \int \left((\partial_y^{m+1} \e)^2+(\partial_y^{m} \e)^2\right) e^{y/B} +C_1 (C^*)^{m-1}|s|^{-2K+m-1},
\]
for some constant $C_1>0$.

\medskip

\textbf{Control of $f_{m,2}$.}
For this term, we use the same strategy as for the term $f_2$ in the proof of Lemma~\ref{le:enerloc}.
In particular, recall that $0<\frac{\lambda_s}\lambda\lesssim |s|^{-1}$.

First, note that
\begin{align*}
2 \int (\partial_y^{m} \Lambda \e) (\partial_y^m \e) e^{y/B}
&= \int \left ((2m+1) \partial_y^{m} \e +2 y(\partial_y^{m+1} \e) \right) (\partial_y^{m} \e) e^{y/B} \\
&= 2m \int (\partial_y^{m} \e)^2 e^{y/B} -\frac1B \int (\partial_y^{m} \e)^2 y e^{y/B}.
\end{align*}
For $|S_0|$ large enough, it follows by integration by parts that, for some $\wt C>0$,
\begin{align*}
- 2 \frac{\lambda_s}{\lambda} \int (\partial_y^{m-1} \Lambda \e) \partial_y[ (\partial_y^m\e)e^{y/B}]
&=2 \frac{\lambda_s}{\lambda} \int (\partial_y^{m} \Lambda \e) (\partial_y^m \e) e^{y/B} \\
&\leq \frac1{100 B} \int (\partial_y^{m} \e)^2 e^{y/B}+ \wt C |s|^{-1} \int_{y<0} (\partial_y^{m} \e)^2 |y| e^{y/B}.
\end{align*}
Moreover, using H\"older's inequality, as in the control of $f_2$ in the proof of Lemma~\ref{le:enerloc}, and then~\eqref{eq:BS},
we find, with $C(B)>0$ only depending on $B$ and then for $|S_0|$ large enough,
\begin{align*}
\int_{y<0} (\partial_y^{m} \e)^2 |y| e^{y/B}
&\lesssim C(B) \left(\int (\partial_y^{m} \e)^2 \right)^{\frac1{4K}}\left(\int (\partial_y^{m} \e)^2 e^{y/B}\right)^{1-\frac1{4K}} \\
&\leq \frac{1}{100\wt CB} \left(\int (\partial_y^{m} \e)^2 e^{y/B}\right)^{1-\frac1{4K}}.
\end{align*}
We thus obtain
\[
- 2 \frac{\lambda_s}{\lambda} \int (\partial_y^{m-1} \Lambda \e) \partial_y[ (\partial_y^m\e)e^{y/B}]
\leq \frac1{50 B} \int (\partial_y^{m} \e)^2 e^{y/B} + |s|^{-4K}.
\]

Second, we have
\begin{align*}
-2 \frac{\lambda_s}{\lambda} \int (\partial_y^{m-1} \Lambda \e) (\partial_y^{m-1} \e) h_m
&= -\frac{\lambda_s}{\lambda} \int \left ((2m-1) \partial_y^{m-1} \e +2 y(\partial_y^{m} \e) \right) (\partial_y^{m-1} \e) h_m \\
&= -2(m-1) \frac{\lambda_s}{\lambda} \int (\partial_y^{m-1} \e)^2 h_m +\frac{\lambda_s}{\lambda}\int (\partial_y^{m-1} \e)^2 y (h_m)_y.
\end{align*}
By~\eqref{bdfm}, the definition of $h_m$,~\eqref{eq:Qbj} (for $y>0$) and~\eqref{eq:BS},
\[
\left|\int (\partial_y^{m-1} \e)^2 h_m\right| + \left|\int_{y>0} (\partial_y^{m-1} \e)^2 y (h_m)_y \right|\lesssim (C^*)^{m-1} |s|^{-2K + m-1}.
\]
Moreover, by H\"older's inequality as before, and~\eqref{eq:BS},
\begin{align*}
|s|^{-1}\int_{y<0} (\partial_y^{m-1} \e)^2 |y| e^{y/B}
&\lesssim |s|^{-1}\left(\int (\partial_y^{m-1} \e)^2 \right)^{\frac1{4K}}\left(\int (\partial_y^{m-1} \e)^2 e^{y/B}\right)^{1-\frac1{4K}} \\
&\lesssim |s|^{-1}\left((C^*)^{m-1} |s|^{-2K + m-1} \right)^{1-\frac1{4K}}\lesssim (C^*)^{m-1} |s|^{-2K + m-1}.
\end{align*}

In conclusion for this term, we obtain, for some constant $C_2>0$,
\[
f_{m,2} \leq \frac1{50 B} \int (\partial_y^{m} \e)^2 e^{y/B} + C_2(C^*)^{m-1}|s|^{-2K + m-1}.
\]

\textbf{Control of $f_{m,3}$.}
First, we see using~\eqref{bdfm} that
\be \label{bdGm}
|G_{m,B}|\lesssim \left(|\partial_y^{m+1} \e| + |\partial_y^{m} \e|+ |\partial_y^{m-1} \e|\right)e^{y/B}.
\ee
Note that $(Q_b+\e)^5-Q_b^5-5Q_b^4\e$ is quadratic order and higher in $\e$.
It is thus suitable to use $L^\infty$ estimates for this term.
From~\eqref{eq:BS}, for $0\leq m' \leq m-1$, we observe that
\be \label{linf}
\|\partial_y^{m'} \e \|_{L^\infty} \lesssim \|\e\|_{\dot H^{m'+1}}^{\frac12}\|\e\|_{\dot H^{m'}}^{\frac12}
\lesssim C^* |s|^{-\frac14 -\frac\g 4- \g (m'+\frac12)}\lesssim |s|^{-\frac14},
\ee
for $|S_0|$ large enough (depending on $C^*$).
In particular, using~\eqref{eq:Qbj} and~\eqref{linf}, we obtain the pointwise bound
\begin{align*}
\left|\partial_y^m\left((Q_b+\e)^5-Q_b^5-5Q_b^4\e\right)\right|
&\lesssim \left(\sum_{m'=0}^{m-1} ( \|\partial_y^{m'} \e\|_{L^\infty}+\|\partial_y^{m'} \e\|_{L^\infty}^4)\right)
\left(\sum_{m'=0}^{m} |\partial_y^{m'} \e| \right) \\
&\lesssim |s|^{-\frac14} \sum_{m'=0}^{m} |\partial_y^{m'} \e|.
\end{align*}
Therefore, using the Cauchy--Schwarz inequality and~\eqref{eq:BS}, we find
\begin{align*}
\left|f_{m,3}\right|
&\lesssim |s|^{-\frac14} \left(\sum_{m'=0}^{m+1}\int (\partial_y^{m'} \e)^2 e^{y/B} \right) \\
&\lesssim |s|^{-\frac14} \int \left((\partial_y^{m+1} \e)^2 +(\partial_y^{m} \e)^2 \right)e^{y/B} + (C^*)^{m-1} |s|^{-2K+m-1}.
\end{align*}
For $|S_0|$ large enough, we obtain, for some constant $C_3>0$,
\[
\left|f_{m,3}\right| \leq \frac1{50 B} \int \left((\partial_y^{m+1} \e)^2 +(\partial_y^m \e)^2 \right) e^{y/B} + C_3(C^*)^{m-1} |s|^{-2K+m-1}.
\]

\textbf{Control of $f_{m,4}$.}
Note from~\eqref{eq:2002},~\eqref{eq:2003} and~\eqref{eq:BS} that
\be \label{moduls}
\left|\frac{\lambda_s}{\lambda}+b\right|+\left|\frac{x_s}{\lambda}-1\right|\lesssim |s|^{-K},\quad
\left|b_s+\theta(b)\right|\lesssim |s|^{-K-1}.
\ee
Moreover, by~\eqref{eq:Qbj} and~\eqref{bdfm}, using integration by parts, we find
\[
\left| \int (\partial_y^{m-1}\Lambda Q_b) G_{m,B} \right| +\left|\int (\partial_y^{m}Q_b) G_{m,B} \right|
\lesssim \|\e\|_{\dot H^0_B} \lesssim |s|^{-K}.
\]
Also, using~\eqref{bdGm} and~\eqref{eq:BS},
\[
\left| \int (\partial_y^{m}\e) G_{m,B}\right|
\lesssim \int \left((\partial_y^{m+1} \e)^2 +(\partial_y^{m} \e)^2 \right) e^{y/B} + (C^*)^{m-1} |s|^{-2K+m-1}.
\]
Therefore, for some constant $C>0$ and for $|S_0|$ large enough,
\begin{multline*}
\left|\left(\frac{\lambda_s}{\lambda}+b\right)\int (\partial_y^{m-1}\Lambda Q_b) G_{m,B}\right|
+\left|\left(\frac{x_s}{\lambda}-1\right) \int (\partial_y^{m}(Q_b+\e)) G_{m,B}\right| \\
\leq \frac1{100 B} \int \left((\partial_y^{m+1} \e)^2 +(\partial_y^{m} \e)^2 \right) e^{y/B}+ C|s|^{-2K}.
\end{multline*}
Similarly, we obtain, using integration by parts,~\eqref{eq:dQb},~\eqref{moduls} and~\eqref{eq:BS},
\[
\left| (b_s+\theta(b))\int \left(\partial_y^{m-1}\frac{\partial Q_b}{\partial b}\right) G_{m,B}\right| \lesssim |s|^{-2K-1}.
\]
In conclusion, for $|S_0|$ large enough and for some constant $C_4>0$,
\[
|f_{m,4}|\leq \frac1{100 B} \int \left((\partial_y^{m+1} \e)^2 +(\partial_y^{m} \e)^2 \right) e^{y/B} + C_4 |s|^{-2K}.
\]

\textbf{Control of $f_{m,5}$.}
As a consequence of~\eqref{eq:Psibj} and~\eqref{eq:BS}, first note that there holds, for all~$ 0\leq j \leq K-1$,
\[
\int (\partial_y^j \Psi_b)^2(y) e^{y/B} \,dy \lesssim |s|^{-2K-2}.
\]
Therefore, integrating by parts (note that $2m\leq K-1$ since $m\leq m_K$), using the Cauchy--Schwarz inequality and~\eqref{bdfm},
\[
\left| \int (\partial_y^{m-1} \Psi_b) G_{m,B} \right| \lesssim |s|^{-K-1} \|\e\|_{\dot H^0_B} \lesssim |s|^{-2K-1}.
\]
Finally, by~\eqref{bdfm} and~\eqref{eq:BS},
\[
\left|\int (\partial_y^{m-1}\e)^2 \partial_s h_m\right| \lesssim |s|^{-2} (C^*)^{m-1} |s|^{-2K + m-1},
\]
and so, for some constant $C_5>0$,
\[
|f_{m,5}|\leq C_5(C^*)^{m-1} |s|^{-2K + m-3}.
\]

\textbf{Conclusion.}
Gathering all the previous estimates above, we get
\[
\frac{d F_{m}}{ds}\leq -\frac1{4B} \int \left((\partial_y^{m+1} \e)^2+(\partial_y^m \e)^2\right) e^{y/B} + \kappa_m (C^*)^{m-1} |s|^{-2K+m-1},
\]
for some $\kappa_m>0$ depending on $m$ (but independent of $C^*$) and $|S_0|$ large enough (possibly depending on $C^*$),
and \emph{a fortiori}~\eqref{Hml1}, which concludes the proof of Lemma~\ref{le:Hmloc}.
\end{proof}

\subsection{$\dot H^m$ estimates}

For $m\geq 1$, we set
\[
G_{m,n} = \frac1{\lambda^{2m}} \int (\partial_y^m \e_n)^2.
\]

\begin{lemma} \label{le:Hm}
Let $4\leq m \leq m_K$. There exists $\kappa_m'>0$ such that, for all $S_n\leq s\leq S_n^*$,
\be \label{Hm1}
\frac{d G_{m,n}}{ds}\leq \kappa_m' |s|^{-\frac32 - \frac\g 2 - (2\g-1)m}.
\ee
\end{lemma}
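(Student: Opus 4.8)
The plan is to adapt the scheme of the proof of Lemma~\ref{le:Hmloc}, dropping the exponential weight and keeping track of the exact power of $|s|$. Writing $(\lambda,x,b,\e)$ for $(\lambda_n,x_n,b_n,\e_n)$ and $G_m$ for $G_{m,n}$, one first computes
\[
\frac{dG_m}{ds}=-2m\frac{\lambda_s}{\lambda}G_m+\frac{2}{\lambda^{2m}}\int(\partial_y^m\e_s)(\partial_y^m\e)
\]
and substitutes equation~\eqref{eqofeps}. The crucial algebraic point is that $\int(\partial_y^m\Lambda\e)(\partial_y^m\e)=m\int(\partial_y^m\e)^2$ (two integrations by parts), so the contribution of the term $\frac{\lambda_s}{\lambda}\Lambda\e$ in~\eqref{eqofeps} cancels $-2m\frac{\lambda_s}{\lambda}G_m$ exactly: unlike in Lemma~\ref{le:Hmloc} there is \emph{no} residual term $O(|s|^{-1}G_m)$, but there is also no favorable dissipative term, the leading dispersive contribution $-2\int\partial_y^{m+1}(\e_{yy}-\e)(\partial_y^m\e)$ being a total derivative. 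One is then left with
\[
\lambda^{2m}\frac{dG_m}{ds}=g_1+g_3+g_4+g_5,
\]
with $g_1=-10\int\partial_y^{m+1}(Q_b^4\e)(\partial_y^m\e)$, $g_3=-2\int\partial_y^{m+1}\big((Q_b+\e)^5-Q_b^5-5Q_b^4\e\big)(\partial_y^m\e)$, $g_4$ collecting the three modulation terms, and $g_5=2\int(\partial_y^m\Psi_b)(\partial_y^m\e)$; I will estimate each and multiply by $\lambda^{-2m}\lesssim|s|^m$ at the end. As a guiding principle, the target exponent $-\tfrac32-\tfrac\g2-(2\g-1)m$ is exactly $\tfrac{d}{ds}$ of the conjectured size $|s|^{-\frac12(1+\g)-(2\g-1)m}$ of $G_m=\lambda^{-2m}\|\e\|_{\dot H^m}^2$ (consistent with $\|\e\|_{\dot H^m}\sim|s|^{-\frac14(1+\g)-\g m}$ from~\eqref{eq:BS}), so every estimate must land on or below that threshold.

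For $g_1$ and $g_3$ I repeatedly integrate by parts so as never to leave the spaces $\dot H^0,\dots,\dot H^m$ and $\dot H^0_B,\dots,\dot H^m_B$: in $g_1$ the single term carrying $\partial_y^{m+1}\e$ (with $Q_b^4$ undifferentiated) is turned into $5\int\partial_y(Q_b^4)(\partial_y^m\e)^2$, and in $g_3$ the sub-terms of the form $Q_b^{a}\e^{b-1}\partial_y^{m+1}\e$ (coming from $\partial_y^{m+1}(\e^b)$, $b\ge2$) are turned into $\int\partial_y(Q_b^a\e^{b-1})(\partial_y^m\e)^2$; every remaining term then involves only $\e$-derivatives of order $\le m$. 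Each such term either carries a factor $Q_b$ (possibly differentiated) or is the pure power $\e^5$. In the first case I use $|\partial_y^j(Q_b^a)(y)|\lesssim e^{-|y|/2}+|b|^{a+j}\charfunc{[-2,0]}(|b|^\g y)$ (from~\eqref{eq:Qbj} together with the polynomial expansion~\eqref{eq:Pkinf}): the $e^{-|y|/2}$ part is distributed over two $\e$-factors, which then appear in $\dot H^{\le m}_B$ norms, hence of size $|s|^{-K+O(m)}$, negligible since $K>20$; the profile part, carrying a power $|b|^{a+j}\sim|s|^{-(a+j)}$, is estimated by Cauchy--Schwarz with $L^2$ on the (at most two) $\e$-factors of highest order and the sharp Gagliardo--Nirenberg bounds $\|\partial_y^{m'}\e\|_{L^\infty}\lesssim C^*|s|^{-\frac14(1+\g)-\g(m'+\frac12)}$, $0\le m'\le m-1$ (a counting argument shows the $L^\infty$ factors all have order $\le m-1$). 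A short arithmetic check shows that in every case the resulting exponent lies on or below the target, precisely because $\g>\tfrac{17}{20}$; the pure $\e^5$ term is even easier, being quintic. For $g_4$, by~\eqref{eq:2002}--\eqref{eq:2003} and~\eqref{eq:BS} one has $\big|\tfrac{\lambda_s}{\lambda}+b\big|+\big|\tfrac{x_s}{\lambda}-1\big|\lesssim|s|^{-K}$ and $|b_s+\theta(b)|\lesssim|s|^{-K-1}$; using also $\int(\partial_y^{m+1}\e)(\partial_y^m\e)=0$ and that $\Lambda Q_b$, $\tfrac{\partial Q_b}{\partial b}$ and their derivatives are bounded in $L^2$, one gets $\lambda^{-2m}|g_4|\lesssim|s|^{-K+O(m)}$, far below the target for $K>20$.

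The term $g_5$, coming from the profile error $\Psi_b$, is where the unweighted estimate genuinely differs from Lemma~\ref{le:Hmloc}, and it is the only place where I expect a real subtlety. With the weight $e^{y/B}$ the cutoff-transition part of $\Psi_b$ (supported near $y\sim-|b|^{-\g}$) is annihilated, so that $\|\partial_y^j\Psi_b\|_{\dot H^0_B}\lesssim|b|^{K+1}$; unweighted, however, by~\eqref{eq:Psibj} that part only yields $\|\partial_y^j\Psi_b\|_{L^2}\lesssim|b|^{1+j\g+\g/2}$, and pairing it directly (with $j=m$) against $\partial_y^m\e$ overshoots the target by a factor $|s|^{1-\g}$ when $\g<1$. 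The fix is to integrate by parts $m$ times in $g_5$, moving all derivatives onto $\Psi_b$ — legitimate since $2m\le K-1$ for $m\le m_K$, so that~\eqref{eq:Psibj} applies at order $2m$ — giving $g_5=2(-1)^m\int(\partial_y^{2m}\Psi_b)\e$, whence $|g_5|\lesssim\|\partial_y^{2m}\Psi_b\|_{L^2}\|\e\|_{L^2}\lesssim|b|^{1+2m\g+\g/2}|s|^{-1/2}\sim|s|^{-\frac32-2m\g-\frac\g2}$, so $\lambda^{-2m}|g_5|\lesssim|s|^{-\frac32-\frac\g2-(2\g-1)m}$, exactly the target. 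Collecting the estimates for $g_1,g_3,g_4,g_5$, multiplying by $\lambda^{-2m}\lesssim|s|^m$, and choosing $|S_0|$ large enough (with $B$, $C^*$ already fixed as in Lemma~\ref{le:enerloc}), one obtains~\eqref{Hm1}. Apart from $g_5$, the main burden is the exponent bookkeeping in $g_3$ — checking that $\g>\tfrac{17}{20}$ suffices across all the Leibniz sub-terms; the rest is, as in Lemma~\ref{le:Hmloc}, a lengthy but routine sequence of integrations by parts.
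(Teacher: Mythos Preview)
Your proposal is correct and follows essentially the same route as the paper: the same decomposition of $\lambda^{2m}\tfrac{dG_m}{ds}$, the exact cancellation $\int(\partial_y^m\Lambda\e)(\partial_y^m\e)=m\int(\partial_y^m\e)^2$ (the paper's $g_{m,2}=0$), integration by parts on the nonlinear term to keep all $\e$-derivatives at order $\le m$ and then splitting into an $e^{-|y|/2}$-localized part (controlled by the $\dot H^{m'}_B$ bootstrap bounds) and a profile part (controlled by the $\dot H^{m'}$ bounds via $L^\infty$ interpolation), and, crucially, integrating by parts $m$ further times in the $\Psi_b$ term to pair $\partial_y^{2m}\Psi_b$ with $\e$ in $L^2$. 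The only cosmetic difference is that the paper splits $\partial_y^{2m}\Psi_b$ into its three pieces from~\eqref{eq:Psibj} before pairing (using $\|\e\|_{\dot H^0_B}$ on the exponentially localized piece and $\int_{-2|b|^{-\g}<y<0}|\e|\lesssim|b|^{-\g/2}\|\e\|_{L^2}$ on the rest), whereas you use a single Cauchy--Schwarz $\|\partial_y^{2m}\Psi_b\|_{L^2}\|\e\|_{L^2}$; both land on the same exponent. One small inaccuracy: $C^*$ is not yet fixed at this point in the paper --- it is chosen only after all three Lemmas~\ref{le:enerloc},~\ref{le:Hmloc},~\ref{le:Hm} are proved --- so here one takes $|S_0|$ large depending on $C^*$, exactly as the paper does.
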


\begin{proof}
As before, we denote $(\lambda_n,x_n,b_n,\e_n)$ and $G_{m,n}$ simply by $(\lambda,x,b,\e)$ and $G_m$.

First, we have, using~\eqref{eqofeps},
\begin{align*}
\lambda^{2m} \frac{dG_m}{ds}
&= 2 \int (\partial_y^m \e_s) (\partial_y^m \e)
- (2m) \frac{\lambda_s}{\lambda} \int (\partial_y^{m} \e)^2 \\
&= g_{m,1}+g_{m,2}+g_{m,3}+g_{m,4},
\end{align*}
where
\begin{align*}
g_{m,1}& = -2 \int \partial_y^{m+1}\left(\e_{yy}- \e +(Q_b+\e)^5 - Q_b^5 \right) (\partial_y^m \e), \\
g_{m,2}& = 2 \frac{\lambda_s}{\lambda} \int (\partial_y^{m} \Lambda \e) (\partial_y^m \e)
         - (2m) \frac{\lambda_s}{\lambda} \int (\partial_y^{m} \e)^2, \\
g_{m,3}& = 2 \left(\frac{\lambda_s}{\lambda}+b\right) \int (\partial_y^{m}\Lambda Q_b) (\partial_y^m \e)
          +2 \left(\frac{x_s}{\lambda}-1\right) \int (\partial_y^{m+1}(Q_b+\e))(\partial_y^m \e) \\
       &\quad - 2 (b_s+\theta(b))\int \left(\partial_y^{m}\frac{\partial Q_b}{\partial b}\right)(\partial_y^m \e), \\
g_{m,4}& = 2 \int (\partial_y^m \Psi_b) (\partial_y^m \e).
\end{align*}

\textbf{Control of $g_{m,1}$.}
Integrating by parts, we have
\[
g_{m,1}= -2 \int \partial_y^{m+1}\left((Q_b+\e)^5 - Q_b^5 \right) (\partial_y^m \e).
\]
For $1\leq j\leq 5$, we have by integration by parts
\begin{multline*}
2\int \partial_y^{m+1}\left(Q_b^{5-j} \e^j\right) (\partial_y^m \e) =
j(2m-1) \int (\partial_y^m \e)^2 \partial_y(Q_b^{5-j} \e^{j-1}) \\
+2 \int \left[\partial_y^{m+1}\left(Q_b^{5-j} \e^{j} \right) -j (\partial_y^{m+1} \e)(Q_b^{5-j} \e^{j-1})
-m j(\partial_y^m \e) \partial_y(Q_b^{5-j} \e^{j-1})\right] (\partial_y^m \e).
\end{multline*}

First, we address the first term on the right-hand side, using~\eqref{eq:BS} and~\eqref{eq:Qbj}:
for $j=1$,
\[
\left|\int (\partial_y^m \e)^2 \partial_y(Q_b^4 ) \right|
\lesssim \int (\partial_y^m \e)^2 e^{-|y|/2} + |s|^{-4-\g} \int (\partial_y^m \e)^2
\lesssim (C^*)^{m}|s|^{-2K+m}+ |s|^{-4} \int (\partial_y^m \e)^2;
\]
for $2\leq j\leq 4$, since by~\eqref{eq:BS} and the choice~\eqref{def:gamma} of $\g$ we have
\[
\|\e\|_{L^\infty}+\|\partial_y\e\|_{L^\infty}\lesssim
\|\e\|_{\dot H^1}^{\frac12}(\|\e\|_{L^2}^{\frac12}+\|\e\|_{\dot H^2}^{\frac12}) \lesssim |s|^{-\frac78},
\]
then we obtain
\begin{align*}
\left|\int (\partial_y^m \e)^2 \partial_y(Q_b^{5-j} \e^{j-1}) \right|
&\lesssim \left(\|\e\|_{L^\infty}+\|\partial_y\e\|_{L^\infty}\right)
\left(\int (\partial_y^m \e)^2 e^{-|y|/2} + |s|^{-1}\int (\partial_y^m \e)^2\right) \\
&\lesssim (C^*)^{m}|s|^{-2K+m}+ |s|^{-\frac{15}8} \int (\partial_y^m \e)^2;
\end{align*}
and, for $j=5$,
\[
\left|\int (\partial_y^m \e)^2 \partial_y(\e^4) \right|
\lesssim \left(\|\e\|_{L^\infty}^3\|\partial_y\e\|_{L^\infty}\right) \int (\partial_y^m \e)^2
\lesssim |s|^{-\frac72} \int (\partial_y^m \e)^2.
\]
Therefore, for this term, we obtain, for $|S_0|$ large enough,
\begin{align*}
\sum_{j=1}^5\left| \int (\partial_y^m \e)^2 \partial_y(Q_b^{5-j} \e^{j-1})\right|
&\lesssim (C^*)^{m}|s|^{-2K+m}+ |s|^{-\frac{15}8} \int (\partial_y^m \e)^2 \\
&\lesssim (C^*)^{m}|s|^{-2K+m}+ |s|^{-2-\frac\g 2 - 2\g m}.
\end{align*}

Second, for $1\leq j\leq 4$, let
\[
\Sigma_m^j =\left\{\bar m=(m_1,\ldots,m_j), \ |\bar m| = \sum_{k=1}^j m_k\leq m+1,\
0\leq m_1\leq \cdots \leq m_j \leq m-1\right\}.
\]
Then, for $1\leq j\leq 4$,
\begin{multline*}
\left|\partial_y^{m+1}\left(Q_b^{5-j} \e^{j} \right) -j (\partial_y^{m+1} \e)(Q_b^{5-j} \e^{j-1})
-m j(\partial_y^m \e) \partial_y(Q_b^{5-j} \e^{j-1}) \right| \\
\lesssim \sum_{\bar m \in \Sigma_m^j} \left( \prod_{k=1}^j|\partial_y^{m_k} \e|\right) \left|\partial_y^{m+1-|\bar m|} (Q_b^{5-j})\right|.
\end{multline*}
Using~\eqref{eq:Qbj}, for $1\leq j\leq 4$, we have
\[
\left|\partial_y^{m+1-|\bar m|} (Q_b^{5-j})\right| \lesssim e^{-|y|/2} + |s|^{-(5-j) - (m+1-|\bar m|) \g}.
\]
We also observe that, from~\eqref{eq:BS},
\[
\|\partial_y^{m_k}\e\|_{L^\infty}
\lesssim \|\e\|_{\dot H^{m_k}}^{\frac12} \|\e\|_{\dot H^{m_k+1}}^{\frac12}
\lesssim C^* |s|^{-\frac14 - \frac\g 4 - \g (m_k+\frac12)}.
\]
Thus, using again~\eqref{eq:BS},
\begin{align*}
&\left| \int \left[\partial_y^{m+1}\left(Q_b^{5-j} \e^{j} \right) -j (\partial_y^{m+1} \e)(Q_b^{5-j} \e^{j-1})
-mj(\partial_y^m \e) \partial_y(Q_b^{5-j} \e^{j-1})\right] (\partial_y^m \e)\right| \\
&\lesssim \sum_{m'=0}^{m} \|\e\|_{\dot H_B^{m'}}^2 +\sum_{\bar m\in \Sigma_m^j} |s|^{-(5-j)-(m+1-|\bar m|)\g}
\left(\prod_{k=1}^{j-1} \|\partial_y^{m_k}\e\|_{L^\infty} \right) \|\e\|_{\dot H^{m_j}} \|\e\|_{\dot H^m} \\
&\lesssim (C^*)^{m} |s|^{-2K+m} + (C^*)^{5} |s|^{-2-4\g-2\g m}.
\end{align*}
For $j=5$, we set
\[
\Sigma_m =\left\{\bar m=(m_1,\ldots,m_5), \ |\bar m| = \sum_{k=1}^5 m_k= m+1,\
0\leq m_1\leq \cdots \leq m_5 \leq m-1\right\}.
\]
Then,
\begin{multline*}
\left| \int \left[\partial_y^{m+1}(\e^{5}) -5 (\partial_y^{m+1} \e)(\e^4)
-5m(\partial_y^m \e) \partial_y(\e^{4})\right] (\partial_y^m \e)\right|
\lesssim \sum_{\bar m\in \Sigma_m} \int \left(\prod_{j=1}^5 |\partial_y^{m_j} \e| \right) |\partial_y^m \e| \\
\lesssim \sum_{\bar m\in \Sigma_m} \left(\prod_{j=1}^4 \|\partial_y^{m_j} \e\|_{L^\infty}\right) \|\e\|_{\dot H^{m_5}} \|\e\|_{\dot H^m}
\lesssim (C^*)^{6} |s|^{-\frac32-\frac92\g-2\g m},
\end{multline*}
by similar arguments.

In conclusion, for $|S_0|$ large enough, depending as before on $C^*$,
\[
|g_{m,1}| \lesssim (C^*)^{m}|s|^{-2K+m}+ |s|^{- \frac32 -\frac\g 2- 2 \g m}
\lesssim |s|^{-\frac32 -\frac\g 2- 2 \g m},
\]
since $m\leq m_K$ and so $2K-m \geq 2+\frac\g 2+2 \g m$.

\medskip

\textbf{Control of $g_{m,2}$.}
It is easy to see that $g_{m,2}=0$ since, by integration by parts,
\[
\int (\partial_y^m \Lambda \e) (\partial_y^m \e) = m \int (\partial_y^m \e)^2.
\]

\textbf{Control of $g_{m,3}$.}
First, we note that $\int (\partial_y^{m+1} \e)(\partial_y^m \e)=0$ by integration by parts.
Then, using~\eqref{moduls}, integration by parts,~\eqref{eq:Qbj},~\eqref{eq:BS} and~\eqref{eq:BS2},
\begin{align*}
&\left| \left(\frac{\lambda_s}{\lambda}+b\right) \int (\partial_y^{m}\Lambda Q_b) (\partial_y^m \e)\right|
+\left| \left(\frac{x_s}{\lambda}-1\right) \int (\partial_y^{m+1}Q_b)(\partial_y^m \e)\right| \\
&=\left| \left(\frac{\lambda_s}{\lambda}+b\right) \int (\partial_y^{2m}\Lambda Q_b) \e\right|
+\left| \left(\frac{x_s}{\lambda}-1\right) \int (\partial_y^{2m+1}Q_b) \e\right| \\
&\lesssim |s|^{-K}\left( \|\e\|_{\dot H^0_B} + |s|^{-1+\frac\g2-2\g m} \|\e\|_{L^2}\right)
\lesssim |s|^{-K-\frac32+\frac\g2-2\g m}.
\end{align*}
Note that we used again that $m\leq m_K\leq K/2-1$.
Arguing similarly for
\[
(b_s+\theta(b))\int \left(\partial_y^{m}\frac{\partial Q_b}{\partial b}\right)(\partial_y^m \e)
\]
and using~\eqref{eq:dQb}, we obtain, since $K>1\geq\g$,
\[
|g_{m,3}|\lesssim |s|^{-\frac32 - \frac\g 2 - 2\g m}.
\]

\textbf{Control of $g_{m,4}$.}
By integration by parts,~\eqref{eq:Psibj} (note that $2m\leq K-1$ since $m\leq m_K$) and~\eqref{eq:BS2},
we obtain
\begin{align*}
|g_{m,4}|
&= 2\left|\int (\partial_y^{2m} \Psi_b) \e \right|
\lesssim |s|^{-(K+1)} \|\e\|_{\dot H_B^0} + |s|^{-(1+(1+2m)\g)} \int_{-2|b|^{-\g}<y<0} |\e| \\
&\lesssim |s|^{-2K-1} +|s|^{-1-(1+2m)\g} |s|^{\frac\g 2} \|\e\|_{L^2}
\lesssim |s|^{-\frac32 - \frac\g 2 - 2\g m}.
\end{align*}

\textbf{Conclusion.}
Gathering all the above estimates and using $\lambda^{-2m}(s)\lesssim |s|^m$, we finally get
\[
\frac{d G_m}{ds}\lesssim |s|^{-\frac32 - \frac\g 2 - (2\g-1)m},
\]
which concludes the proof of Lemma~\ref{le:Hm}.
\end{proof}

\subsection{Closing the estimates for $\e_n$} \label{sec:closingbootstrap}

Now, we use Lemmas~\ref{le:enerloc},~\ref{le:Hmloc} and~\ref{le:Hm} to close the estimates in $\e_n$ in~\eqref{eq:BS}.
We still denote $\e_n,F_{0,n},F_{m,n}$ and $G_{m,n}$ simply by $\e,F_0,F_m$ and $G_m$.

Let $s\in [S_n,S_n^*]$. Let $4\leq m_0\leq m_K$.
First, integrating~\eqref{el2} on $[S_n,s]$ and using~\eqref{el1} and $\e(S_n)\equiv 0$, we have,
for some constant $C>0$,
\be \label{bo1}
\|\e(s)\|^2_{\dot H^0_B}\leq C |s|^{-1} F_0(s) \leq C |s|^{-2K-1} \leq \frac12 |s|^{-2K},
\ee
for $|S_0|$ large enough.

\begin{remark}
Note that the use of the functional $F_0$ provides an estimate on $\|\e(s)\|_{\dot H^0_B}$, but not on $\|\e(s)\|_{\dot H^1_B}$
since the term in $\e_y$ has a stronger weight.
This functional, related to Weinstein's stability proof in~\cite{W1986}, decouples the behavior of $b$ and $\e$ at the main order.
No such quantity (even formally) exists at the level of regularity $L^2$ for $\e$, thus one has to use an $H^1$ functional.
The estimate of $\|\e(s)\|_{\dot H^m_B}$ for $1\leq m\leq m_K$ will be obtained by interpolation from $m=0$ to $m=m_0$, where $4\leq m_0\leq m_K$.
\end{remark}

Second, integrating~\eqref{Hm1} on $[S_n,s]$, and using $\lambda^{2m_0}(s) \lesssim |s|^{-m_0}$,
\[
\|\e(s)\|_{\dot H^{m_0}}^2 \lesssim |s|^{-\frac12 -\frac\g 2 -2 \g m_0}.
\]

Third, integrating~\eqref{Hml1} on $[S_n,s]$,
\[
F_{m_0}(s) \lesssim (C^*)^{m_0-1} |s|^{-2K+m_0}.
\]
By~\eqref{bdfm} and~\eqref{eq:BS},
\[
F_{m_0}(s) \geq \|\e(s)\|_{\dot H_B^{m_0}}^2 - C \|\e(s)\|_{\dot H_B^{m_0-1}}^2
\geq \|\e(s)\|_{\dot H_B^{m_0}}^2 - C (C^*)^{m_0-1} |s|^{-2K+m_0-1},
\]
with $C>0$.
Therefore,
\be \label{bo3}
\|\e(s)\|_{\dot H_B^{m_0}}^2\lesssim (C^*)^{m_0-1} |s|^{-2K+m_0}.
\ee

It is clear taking $C^*$ large enough that we have improved the estimates on $\e$ in~\eqref{eq:BS} for $m=0$ and $m=m_0$.

Now, we use simple interpolation to improve the estimates for $1\leq m <m_0$, possibly taking a larger $C^*$.
First, it is clear that
\[
\|\e(s)\|_{\dot H^m}\lesssim \|\e(s)\|_{\dot H^{m_0}}^{\frac m{m_0}}\|\e(s)\|_{L^2}^{1-\frac m{m_0}}
\lesssim |s|^{-\frac m{m_0}(\frac14+\frac\g 4 +\g m_0)-\frac12 \left(1-\frac m{m_0}\right)}.
\]
Since $\frac12 \geq \frac14 + \frac\g 4$, we have $\frac12 (m_0-m) \geq (\frac14 +\frac\g 4) (m_0-m)$, and thus
\[
\frac m{m_0} \left(\frac14+\frac\g 4 +\g m_0 \right)+\frac12 \left(1-\frac m{m_0}\right) \geq \frac14+\frac\g 4+\g m.
\]
Therefore, for $|S_0|$ large enough, we have
\[
\|\e(s)\|_{\dot H^m}\lesssim |s|^{-\frac14-\frac\g 4-\g m}.
\]

We conclude by considering the norm $\|\e(s)\|_{\dot H^m_B}$.
Let $v=\e e^{y/2B}$.
Note first that
\begin{align*}
\|\e(s)\|_{\dot H^m_B}^2
&= \int \left(\partial_y^m (ve^{-y/2B})\right)^2 e^{y/B}
\lesssim \sum_{j=0}^m \|v(s)\|_{\dot H^j}^2
\lesssim \sum_{j=0}^m \|v(s)\|_{\dot H^{m_0}}^{2\frac j{m_0}}\|v(s)\|_{L^2}^{2\left(1-\frac j{m_0}\right)} \\
&\lesssim \sum_{j=0}^m \left(\sum_{k=0}^{m_0}\|\e(s)\|_{\dot H^{k}_B}^{2}\right)^{\frac j{m_0}} \|\e(s)\|_{\dot H_B^0}^{2\left(1-\frac j{m_0}\right)}.
\end{align*}
Thus, using~\eqref{eq:BS},~\eqref{bo1} and~\eqref{bo3}, we have, for $1\leq m<m_0$,
\begin{align*}
\|\e(s)\|_{\dot H^m_B}^2
&\lesssim \sum_{j=0}^m \left( (C^*)^{m_0-1} |s|^{-2K+m_0}\right)^{\frac j{m_0}} \left(|s|^{-2K}\right)^{\left(1-\frac j{m_0}\right)} \\
&\lesssim |s|^{-2K} \sum_{j=0}^m (C^*)^{j\frac{m_0-1}{m_0}} |s|^{j}\lesssim (C^*)^{({m_0-1}) \frac m{m_0}} |s|^{-2K+m}.
\end{align*}

Taking and \emph{definitely fixing} $C^*$ large enough, we have strictly improved all the estimates in~\eqref{eq:BS}, and thus proved $S_n^*=S_0$,
which concludes the proof of Proposition~\ref{proposition:uniforme}.

\subsection{Additional $L^2$ estimates in intermediate region} \label{sec:intermediate}

Let $h$ be a smooth nonnegative function on $\RR$ supported on $[-2,-1]$.
Set $H(y) = \int_{-\infty}^y h^K(y')\,dy'$.
Let $\frac34<\nu<\g$.
We set
\[
J_n(s) = \int \e_n^2(s,y) H(|s|^{-\nu}y)\,dy.
\]

\begin{lemma} \label{l:gch}
There exists $\kappa>0$ such that, for all $S_n\leq s\leq S_0$,
\be \label{e:gch1}
\frac{dJ_n}{ds} \leq \kappa |s|^{-\frac92} + \kappa |s|^{-(1-\nu)K-\frac32}.
\ee
\end{lemma}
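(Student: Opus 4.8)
The plan is to differentiate $J_n(s)=\int \e_n^2(s,y)\,H(|s|^{-\nu}y)\,dy$ in $s$, substitute the equation~\eqref{eqofeps} for $\e_s=\e_{n,s}$, integrate by parts, and bound each resulting term using the bootstrap estimates~\eqref{eq:BS}--\eqref{eq:BS2} established in Proposition~\ref{proposition:uniforme} (which applies since $S_n^*=S_0$). There are two sources of terms: the "transport/virial" terms coming from the $s$-derivative of the cut-off $H(|s|^{-\nu}y)$ (which produces a factor $\nu s^{-1}\cdot(-|s|^{-\nu}y)\,h^K(|s|^{-\nu}y)$, supported on $|s|^\nu\leq|y|\leq 2|s|^\nu$), and the terms coming from plugging in the right-hand side of~\eqref{eqofeps}.

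**Key steps.** First I would write
\[
\frac{dJ_n}{ds} = 2\int \e_n \e_{n,s}\,H(|s|^{-\nu}y)\,dy - \nu s^{-1}\int \e_n^2\, |s|^{-\nu} y\, h^K(|s|^{-\nu}y)\,dy,
\]
and note the second term is $\lesssim |s|^{-1}\int_{|s|^\nu\leq |y|\leq 2|s|^\nu} \e_n^2 \lesssim |s|^{-1}\|\e_n\|_{L^2}^2 \lesssim |s|^{-2}$, which is better than needed. For the main term, substitute~\eqref{eqofeps}: the leading piece $-\int \e_n(\e_{n,yy}-\e_n+(Q_{b_n}+\e_n)^5-Q_{b_n}^5)_y\,H(|s|^{-\nu}y)$ is handled by integration by parts, moving derivatives onto $H$; each derivative of $H(|s|^{-\nu}\cdot)$ costs $|s|^{-\nu}$ and is supported where $|y|\sim |s|^\nu$. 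The dominant surviving term is the Airy-type term $\int \e_{n,y}^2 \cdot |s|^{-\nu} h^K{}'(|s|^{-\nu}y)$ type contribution; since $h^K$ and its derivatives are bounded and supported on $[-2,-1]$, this is controlled by $|s|^{-\nu}\int_{|y|\sim|s|^\nu}\e_{n,y}^2$, and one uses the $\dot H^1$ bound $\|\e_n\|_{\dot H^1}\lesssim |s|^{-\frac14-\frac\g4-\g}$ (or rather the weighted/interpolated version) together with $\nu<\g$ to absorb it. The modulation terms $\frac{\lambda_s}{\lambda}\Lambda\e_n$, $(\frac{\lambda_s}{\lambda}+b_n)\Lambda Q_{b_n}$, $(\frac{x_s}{\lambda}-1)(Q_{b_n}+\e_n)_y$, $(b_s+\theta(b))\partial_b Q_{b_n}$ are estimated using~\eqref{eq:2002}--\eqref{eq:2003}, the smallness of $|b_n|\lesssim |s|^{-1}$, and the exponential localization of $Q_{b_n}$ and its $b$-derivative away from the support of $H$ (which is at distance $\gtrsim |s|^\nu\to\infty$ from the soliton core); these give contributions that are exponentially small in $|s|^\nu$, hence negligible. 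Finally the $\Psi_{b_n}$ term: on the support of $H$ (i.e. $|b_n|^\g|y|$ not in $[-2,-1]$ in general, since $|s|^\nu \ne |b_n|^{-\g}$), one uses~\eqref{eq:Psibj} — the relevant bound there is $|b_n|^{K+1}e^{-|y|/2}$ plus a polynomially-growing-in-$y$ piece $|b_n|^{K+1}|y|^{K-1}$ on $[-1,0]$ in the $|b_n|^\g$-scale; pairing with $\e_n$ via Cauchy--Schwarz and using $\|\e_n\|_{L^2}\lesssim|s|^{-1/2}$ gives something like $|s|^{-(K+1)}\cdot(\text{poly in }|s|)\cdot|s|^{-1/2}$, which for $K>20$ is far smaller than $|s|^{-9/2}$.

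**Main obstacle.** The delicate point is the second term on the right of~\eqref{e:gch1}, $|s|^{-(1-\nu)K-\frac32}$: this must come from carefully tracking where the cut-off $H(|s|^{-\nu}y)$ overlaps the polynomially growing part of the profile $R_{b_n}$ inside $Q_{b_n}$ and of $\Psi_{b_n}$. On the support of $H$ we have $|y|\sim|s|^\nu$, while the profile $R_{b_n}\chi_{b_n}$ is genuinely of polynomial size there — of order $|b_n||y|^{\cdots}$ up to the cutoff scale $|b_n|^{-\g}\sim |s|^{\g}$; since $\nu<\g$ the support of $H$ lies inside the region where $\chi_{b_n}\equiv 1$, so one really does see the polynomial tail, and the nonlinear interaction terms $(Q_{b_n}+\e_n)^5-Q_{b_n}^5$ contain pieces like $Q_{b_n}^4\e_n$ with $Q_{b_n}^4\sim |b_n|^4|y|^{\cdots}$ contributing, after integration by parts against $H$ and using Cauchy--Schwarz plus $\|\e_n\|_{L^2}\lesssim|s|^{-1/2}$ and $\|\e_n\|_{\dot H^1}$-type bounds, exactly a term of size $|s|^{-(1-\nu)K-\frac32}$ (the exponent $(1-\nu)K$ reflecting the $K$-th power structure of $\chi_{b_n}$ cut-offs, or more precisely the gain from $|b_n|^{\text{power}}$ versus the loss $|y|^{\text{power}}\lesssim |s|^{\nu\cdot\text{power}}$). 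I would organize the computation so that this particular term is isolated first and bounded with care, and then verify that every other term is $O(|s|^{-9/2})$ or smaller; the bookkeeping of the $\g$-dependent and $\nu$-dependent powers, using $\frac34<\nu<\g\le 1$, is where the real work lies, but it is entirely parallel to the estimates already carried out for $F_{0,n}$ in the proof of Lemma~\ref{le:enerloc}.
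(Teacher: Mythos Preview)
Your strategy misses the central structural point of the argument. After integrating by parts, the linear piece $-2\int\partial_y(\e_{yy}-\e)\,\e\,H$ produces
\[
-|s|^{-\nu}\int(3\e_y^2+\e^2)\,h^K(|s|^{-\nu}y)\,dy + |s|^{-3\nu}\int\e^2\,(h^K)''(|s|^{-\nu}y)\,dy,
\]
and the first term is \emph{negative} (since $h^K\geq 0$). It is not to be bounded; it is the coercive term that \emph{absorbs} several other contributions that are genuinely too large to be $O(|s|^{-9/2})$ on their own. Concretely: your bound $|s|^{-2}$ on the cutoff time-derivative term is not ``better than needed''~--- since $|s|\gg 1$ one has $|s|^{-2}\gg |s|^{-9/2}$, so that estimate is far too weak; in the paper this term is bounded by $\tfrac{1}{10}|s|^{-\nu}\int\e^2 h^K$ and absorbed. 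The scaling term $\tfrac{\lambda_s}{\lambda}\int(\Lambda\e)\,\e\,H$ is likewise $\lesssim |s|^{-1}\int\e^2 h^K$ and must be absorbed. And your direct attempt to bound $|s|^{-\nu}\int\e_y^2 h^K$ by the $\dot H^1$ norm gives at best $|s|^{-\nu-\frac12(1+\g)-2\g}$, which for admissible $\nu,\g$ is roughly $|s|^{-3.4}$, nowhere near $|s|^{-9/2}$.

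Two further errors. First, your claim that the modulation terms are ``exponentially small in $|s|^\nu$'' because $Q_{b_n}$ is localized away from $\mathrm{supp}\,H$ is wrong: $H(|s|^{-\nu}\cdot)$ is supported on $[-2|s|^\nu,+\infty)$, which \emph{contains} the soliton core $y\approx 0$; these terms are controlled instead by the smallness of the modulation coefficients~\eqref{moduls}. Second, you misidentify the sources of the two terms in the statement: the $|s|^{-(1-\nu)K-\frac32}$ comes from the $\Psi_b$ contribution (via the piece $|b|^{K+1}|y|^{K-1}$ on $|y|\lesssim|s|^\nu$, paired with $\e$ by Cauchy--Schwarz), not from the nonlinear $Q_b^4\e$ interaction; and the $|s|^{-9/2}$ is produced by the genuinely nonlinear term $\int|(Q_b)_y|\,|\e|^5\,H\lesssim\|\e\|_{L^\infty}^3(\|\e\|_{\dot H^0_B}^2+|b|^2\|\e\|_{L^2}^2)$.
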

Note that, integrating~\eqref{e:gch1} on $[S_n,s]$, using $\e_n(S_n)\equiv 0$ and $H\gtrsim 1$ on $[-1,+\infty)$, we obtain
\be \label{addgch}
\int_{y>-|s|^\nu} \e_n^2(s,y)\, dy \lesssim J_n(s)\lesssim |s|^{-\frac72} + |s|^{-(1-\nu)K-\frac12}.
\ee
\begin{proof}[Proof of Lemma~\ref{l:gch}]
As before, we denote $(\lambda_n,x_n,b_n,\e_n)$ and $J_n$ simply by $(\lambda,x,b,\e)$ and $J$.
For the sake of brevity, we will also omit the variable of $H$ and its derivatives.

First, we have, using~\eqref{eqofeps},
\[
\frac{dJ}{ds} = 2 \int \e_s \e H - \nu s^{-1} \int (|s|^{-\nu}y)\e^2 H'
= j_1+j_2+j_3+j_4+j_5,
\]
where
\begin{align*}
j_1 &= -2 \int \partial_y \left(\e_{yy}-\e\right) \e H,\\
j_2 &= -2 \int \partial_y \left((Q_b+\e)^5-Q_b^5\right) \e H,\\
j_3 &= 2 \frac{\lambda_s}{\lambda} \int (\Lambda \e) \e H,\\
j_4 &= 2 \left(\frac{\lambda_s}{\lambda}+b\right) \int \Lambda Q_b \e H
+ 2 \left(\frac{x_s}{\lambda}-1\right) \int \partial_y (Q_b+\e) \e H
- 2 (b_s+\theta(b)) \int \frac{\partial Q_b}{\partial b} \e H,\\
j_5 &= 2 \int \Psi_b \e H - \nu s^{-1} \int (|s|^{-\nu}y)\e^2 h^K.
\end{align*}

\textbf{Control of $j_1$.}
Integrating by parts, we first find
\[
j_1 = - |s|^{-\nu} \int \left( 3 \e_y^2 + \e^2\right) h^K + |s|^{-3\nu}\int \e^2 (h^K)''.
\]
Since $|(h^K)''|\lesssim h^{K-2}$, we get, from Young's inequality and then~\eqref{eq:BS2},
\[
|s|^{-2\nu}\int \e^2 |(h^K)''| \lesssim |s|^{-2\nu}\int \e^2 h^{K-2}
\leq \frac12 \int \e^2 h^K + C |s|^{-K\nu} \int \e^2
\leq \frac12 \int \e^2 h^K + C |s|^{-K\nu-1},
\]
for some $C>0$.
Thus, since $K>20$ and $\nu>\frac34$,
\[
j_1 \leq -\frac12 |s|^{-\nu} \int \left(\e_y^2 + \e^2\right) h^K + C|s|^{-17}.
\]

\textbf{Control of $j_2$.}
Integrating by parts, we find
\begin{align*}
j_2 &= -\frac13 |s|^{-\nu} \int \left[ (Q_b+\e)^6-Q_b^6-6Q_b^5\e \right]h^K
+ 2 |s|^{-\nu} \int \left[ (Q_b+\e)^5-Q_b^5 \right]\e h^K \\
&\quad - 2 \int (Q_b)_y \left[ (Q_b+\e)^5-Q_b^5-5Q_b^4\e \right] H,
\end{align*}
and so
\[
|j_2| \lesssim |s|^{-\nu} \int (Q_b^4\e^2+\e^6)h^K + \int |(Q_b)_y| (|Q_b|^3\e^2+|\e|^5)H.
\]
To estimate the first term, we rely on~\eqref{eq:Qbj} and the controls
$\|\e\|_{L^\infty} \lesssim \|\e\|_{H^1}\lesssim |s|^{-\frac12}$ and $|b|\lesssim |s|^{-1}$ given by~\eqref{eq:BS} to obtain
\[
\int (Q_b^4\e^2+\e^6)h^K \lesssim \int e^{-2|y|}\e^2h^K + |b|^4 \int \e^2h^K + \|\e\|_{L^\infty}^4 \int \e^2h^K
\lesssim |s|^{-2} \int \e^2h^K.
\]
To estimate the second term, we rely on the same controls and the fact that, since $\nu<\g$,
$\charfunc{[-2,-1]}(|b|^\g y)=0$ when $y>-2|s|^\nu$, and $H(|s|^{-\nu}y)=0$ when $y\leq -2|s|^\nu$
from the definition of~$H$. Hence, we obtain the limiting terms
\[
\int |(Q_b)_y||Q_b|^3 \e^2 H \lesssim \int e^{-|y|}\e^2 + |b|^5 \int \e^2
\lesssim \|\e\|_{\dot H_B^0}^2 +|b|^5\|\e\|_{L^2}^2 \lesssim |s|^{-6}
\]
and
\[
\int |(Q_b)_y| |\e|^5 H \lesssim \|\e\|_{L^\infty}^3 \left( \|\e\|_{\dot H_B^0}^2 +|b|^2\|\e\|_{L^2}^2 \right)
\lesssim |s|^{-\frac32} \left( |s|^{-2K} + |s|^{-3} \right) \lesssim |s|^{-\frac92}.
\]
Thus, for $|S_0|$ large enough, we have obtained, for some $C>0$,
\[
|j_2| \leq \frac1{10} |s|^{-\nu} \int \e^2 h^K + C|s|^{-\frac92}.
\]

\textbf{Control of $j_3$.}
By integration by parts, we get, from~\eqref{eq:2002} and~\eqref{eq:BS},
\[
|j_3| = \left| \frac{\lambda_s}{\lambda}|s|^{-\nu} \int y\e^2 H' \right|
\lesssim |s|^{-1} \int \e^2 h^K \leq \frac1{10} |s|^{-\nu} \int \e^2 h^K.
\]

\textbf{Control of $j_4$.}
Using the smallness of the modulation parameters~\eqref{moduls},
and using also estimates~\eqref{eq:Qbj} and~\eqref{eq:dQb}, we obtain,
after integrating by parts the term $\int \e_y\e H$,
\begin{align*}
|j_4| &\lesssim |s|^{-K} \left( \int \e^2 h^K + \int e^{-|y|/4} |\e| + \int_{-2 |s|^\nu<y<0} |\e| \right) \\
&\lesssim |s|^{-K} \left( \|\e\|_{L^2}^2 + \|\e\|_{\dot H_B^0} + |s|^{\frac\nu2} \|\e\|_{L^2} \right) \\
&\lesssim |s|^{-K} \left( 1 + |s|^{-(1-\nu)/2} \right)
\lesssim |s|^{-K} \lesssim |s|^{-20}.
\end{align*}

\textbf{Control of $j_5$.}
Since $\nu<\g$, we have, by~\eqref{eq:Psibj}, for $y>-2 |s|^{\nu}$,
\begin{align*}
|\Psi_b(y)| &\lesssim |b|^{K+1} e^{-|y|/2} + |b|^{K+1} |y|^{K-1} \charfunc{[-2,0]}(|s|^{-\nu}y) \\
&\lesssim |s|^{-K-1} e^{-|y|/2} + |s|^{-(1-\nu)K-1-\nu} \charfunc{[-2,0]}(|s|^{-\nu}y).
\end{align*}
Thus, similarly to the estimate of $j_4$, we obtain
\begin{align*}
\left|\int \Psi_b \e H\right| &\lesssim |s|^{-K-1} \|\e\|_{\dot H_B^0} + |s|^{-(1-\nu)K-1-\nu} |s|^{\frac\nu2} \|\e\|_{L^2} \\
&\lesssim |s|^{-2K-1} + |s|^{-(1-\nu)K-\frac32-\frac\nu2} \lesssim |s|^{-(1-\nu)K-\frac32}.
\end{align*}
Finally, the last term in $j_5$ is estimated as
\[
\left| s^{-1} \int (|s|^{-\nu}y) \e^2 h^K \right|
\lesssim |s|^{-1} \int \e^2 h^K \leq \frac1{10} |s|^{-\nu} \int \e^2 h^K.
\]

\textbf{Conclusion.}
Gathering all the previous estimates above, we get
\[
\frac {dJ}{ds} \leq -\frac1{10} \int \left(\e_y^2 + \e^2\right) h^K + \kappa |s|^{-\frac92} + \kappa |s|^{-(1-\nu)K-\frac32},
\]
for some $\kappa>0$, and \emph{a fortiori}~\eqref{e:gch1}, which concludes the proof of Lemma~\ref{l:gch}.
\end{proof}

\subsection{Proof of Proposition~\ref{prop:SHm}} \label{sec:proofprop}

Going back to the original variables $(t,x)$, we claim from Proposition~\ref{proposition:uniforme} that there exists $t_0=t_{0,K}>0$ small
and independent of $n$ (but \emph{a priori} dependent of~$K$) such that, for all $0\leq m\leq m_K$, $n$ large enough and $t\in [T_n,t_0]$,
\be \label{gbetan}
\begin{gathered}
\|\e_n(t)\|_{L^2} \lesssim t,\quad \|\e_n(t)\|_{\dot H^m} \lesssim t^{\frac12(1+\g) + 2\g m},\quad
\|\e_n(t)\|_{\dot H^m_B} \lesssim t^{2K-m}, \\
|\lambda_n(t)-t| \lesssim t^2, \quad |b_n(t)+t^2|\lesssim t^3,\quad \left|x_n(t)+\frac 1t\right|\lesssim 1.
\end{gathered}
\ee
Indeed, from~\eqref{eq:BS}, we have $\lambda_n^3(s) = (2|s|)^{-\frac 32} + O(|s|^{-2})$, and so~\eqref{eq:time} rewrites as
\[
t-T_n = \int_{S_n}^s \lambda_n^3(s')\, ds'
= \frac1{\sqrt2} \left(\frac 1{\sqrt{|s|}} - \frac 1{\sqrt{|S_n|}} \right) + O(|s|^{-1}).
\]
By the definitions of $T_n$ and $S_n$, it follows that $t=\frac 1{\sqrt{2|s|}} + O(|s|^{-1})$,
or $\frac 1{\sqrt{2 |s|}} = t+O(t^2)$ equivalently. Thus,~\eqref{eq:BS} and~\eqref{eq:BS2} imply~\eqref{gbetan}.
Moreover,~\eqref{addgch} applied with $K>20$ and $\nu = \frac{17}{20}>\frac45$ gives
\[
\int_{y>-|s|^{\frac{17}{20}}} \e_n^2(s,y)\, dy \lesssim |s|^{-\frac72}.
\]
Since $t^{-\frac 85}<|s|^{\frac{17}{20}}$ by choosing $t_0$ possibly smaller, we also obtain
\be \label{Slocn}
\int_{y>-t^{-\frac 85}} \e_n^2(t,y) \, dy \lesssim t^7.
\ee

Observe that the sequence $(u_n(t_0))$ is bounded in $H^1$,
and also that the sequences $(\lambda_n(t_0))$ and $(x_n(t_0))$ are bounded in $\RR$.
As a consequence, the sequence $(v_n)$ defined by
\[
v_n = \lambda_n^{1/2}(t_0)u_n(t_0,\lambda_n(t_0) \cdot {}+x_n(t_0)) = Q_{b_n(t_0)} +\e_n(t_0)
\]
is also bounded in $H^1$.
Thus, there exist a subsequence $(v_{n_k})$ and $v_\infty\in H^1$ such that $v_{n_k} \rightharpoonup v_\infty$ weakly in $H^1$ as $k\to+\infty$,
and also subsequences $(\lambda_{n_k}(t_0))$, $(x_{n_k}(t_0))$, and $\lambda_\infty>0$, $x_\infty\in\RR$,
such that $\lambda_{n_k}(t_0)\to \lambda_\infty$ and $x_{n_k}(t_0)\to x_\infty$ as $k\to+\infty$.

Let $v_k(t)$ be the maximal solution of~\eqref{kdv} such that $v_k(0)= v_{n_k}$,
and let $(\lambda_k,x_k,b_k,\e_k)$ be its decomposition as given by Lemma~\ref{lemma:decomposition}.
Note that, by uniqueness of such a decomposition, we have the identities
\be \label{tempparam}
\begin{alignedat}{2}
\lambda_k(t) &= \frac{\lambda_{n_k}(t_0+\lambda_{n_k}^3(t_0)t)}{\lambda_{n_k}(t_0)},\quad
&x_k(t) &= \frac{x_{n_k}(t_0+\lambda_{n_k}^3(t_0)t)-x_{n_k}(t_0)}{\lambda_{n_k}(t_0)}, \\
b_k(t) &= b_{n_k}(t_0+\lambda_{n_k}^3(t_0)t),\quad
&\e_k(t) &= \e_{n_k}(t_0+\lambda_{n_k}^3(t_0)t).
\end{alignedat}
\ee
Now let $T_1>0$ be such that $T_1<\frac{t_0}{\lambda_\infty^3}$,
so that we may apply Lemma~\ref{le:weak} to $v_k(t)$ on $[-T_1,0]$,
since the conditions~\eqref{hypoweak} are fulfilled for $k$ large enough on such an interval.
We obtain that the solution $v(t)$ of~\eqref{kdv} such that $v(0)=v_\infty$
exists on $\left(-\frac{t_0}{\lambda_\infty^3},0\right]$,
and its decomposition $(\lambda_v,x_v,b_v,\e_v)$ is obtained as the (weak) limit of $(\lambda_k,x_k,b_k,\e_k)$ as $k\to +\infty$.

Then we define the solution $\wt S(t)$ of~\eqref{kdv}, for all $t\in (0,t_0]$, by
\[
\wt S(t,x) = \frac{1}{\lambda_\infty^{1/2}}\, v\left(\frac{t-t_0}{\lambda_\infty^3},\frac{x-x_\infty}{\lambda_\infty}\right),
\]
and denote $(\wt \lambda,\wt x,\wt b,\wt \e)$ its decomposition.
From~\eqref{gbetan}--\eqref{tempparam} and uniqueness of such a decomposition, we obtain, for all $0\leq m\leq m_K$, $t\in (0,t_0]$,
\begin{gather}
\|\wt \e(t)\|_{L^2} \lesssim t,\quad \|\wt \e(t)\|_{\dot H^m} \lesssim t^{\frac12(1+\g) + 2\g m},\quad
\|\wt \e(t)\|_{\dot H^m_B} \lesssim t^{2K-m},\quad \int_{y>-t^{-\frac 85}} \wt \e^2(t,y)\, dy \lesssim t^7, \nonumber \\
|\wt \lambda(t) - t| \lesssim t^2,\quad |\wt b(t)+t^2|\lesssim t^3,\quad \left|\wt x(t)+\frac 1t\right|\lesssim 1. \label{gbetanlim}
\end{gather}
Since $\wt \lambda(t)\to 0$ as $t\downarrow 0$, the solution $\wt S(t)$ blows up at time $0$.
Moreover, by weak convergence,~\eqref{inittrois} and~\eqref{QbL2}, we have
\begin{multline*}
\|\wt S(t)\|_{L^2} = \|v_\infty\|_{L^2} \leq \liminf \|v_{n_k}\|_{L^2} = \liminf \|u_{n_k}(t_0)\|_{L^2}
= \liminf \|u_{n_k}(T_{n_k})\|_{L^2} \\ = \liminf \|Q_{b_{n_k}(T_{n_k})}\|_{L^2} = \|Q\|_{L^2},
\end{multline*}
and thus $\|\wt S(t)\|_{L^2}=\|Q\|_{L^2}$
(recall that solutions with $\|u_0\|_{L^2}<\|Q\|_{L^2}$ are global and bounded in $H^1$).

The arguments above prove the bounds~\eqref{SHm} on the particular minimal mass blow up solution~$\wt S(t)$,
obtained by the limiting argument above.
Now, we recall how to obtain refined estimates on the parameters $(\wt \lambda, \wt x,\wt b)$ (see proof of Proposition~4.1 in~\cite{MMR1}).
Using~\eqref{gbetanlim}, we rewrite~\eqref{eq:2002} and~\eqref{eq:2003} applied to the solution $\wt S(t)$
in the $t$ variable (recall that $dt = \wt \lambda^3 ds$). We obtain
\be \label{odet}
|\wt \lambda^2 \wt\lambda_t + \wt b | + |\wt \lambda^2 \wt x_t - 1| + |\wt \lambda^3 \wt b_t + \theta(\wt b)|\lesssim t^K.
\ee
In particular, since $\beta_2=2$ in~\eqref{def:Qbthetab}, and using~\eqref{gbetanlim}, we get
\[
\frac{d}{dt}\left(\frac{\wt b}{\wt\lambda^2}\right)
= \frac{\wt\lambda^2 \wt b_t - 2 \wt b\wt\lambda \wt\lambda_t}{\wt\lambda^4}
= \frac {-\theta(\wt b) + 2\wt b^2+ O(t^K)}{\wt\lambda^5} = \beta_3 t + O(t^2).
\]
Since by~\eqref{gbetanlim} we have $\lim_{t\downarrow 0} \frac{\wt b}{\wt\lambda^2}(t)=-1$,
then by integrating on $(0,t)$, we obtain
\be \label{odett}
\left|\frac{\wt b}{\wt\lambda^2}(t) +1 - \frac{\beta_3}{2}t^2\right| \lesssim t^3.
\ee
Using~\eqref{odet} again, this proves $|\wt\lambda_t - 1 +\frac{\beta_3}{2}t^2 | \lesssim t^3$.
Thus, using $\lim_{t\downarrow 0} \wt\lambda(t) =0$, integrating on $(0,t)$, we obtain
$|\wt\lambda(t) - t + \frac{\beta_3}{6}t^3 | \lesssim t^4$.

Inserting this estimate again in~\eqref{odett}, we obtain $|\wt b(t) + t^2 -\frac{5\beta_3}{6}t^4|\lesssim t^5$.
Inserting the estimate on $\wt \lambda(t)$ in~\eqref{odet}, we obtain
\[
\left|\wt x_t - \frac{1}{t^2} -\frac{\beta_3}{3} \right| \lesssim t.
\]
It follows that $\wt x(t)+\frac 1t$ has a finite limit as $t\downarrow 0$, which we denote by $\wt x_0$.
In particular, integrating again on $(0,t)$, we obtain $|\wt x(t) + \frac 1t - \wt x_0 -\frac{\beta_3}{3} t | \lesssim t^2$.

Finally, from~\emph{\ref{uniqueS}} in Theorem~\ref{thm:previous}, any minimal mass blow up solution $S(t)$ which blows up at $t=0$
coincides with $\wt S(t)$ up to time independent sign change, scaling and space translation,
\emph{i.e.}~there exist $\epsilon_1=\pm1$, $\lambda_1>0$ and $x_1\in \RR$ such that, for all $t\in (0,\lambda_1^3 t_0]$,
\[
S(t,x) = \epsilon_1 \frac{1}{\lambda_1^{1/2}}\, \wt S\left(\frac{t}{\lambda_1^3},\frac{x-x_1}{\lambda_1}\right),
\]
and, by uniqueness of the decomposition $(\lambda,x,b,\e)$ of $\epsilon_1 S$ in Lemma~\ref{lemma:decomposition}, we have
\[
\lambda(t) = \lambda_1 \wt \lambda\left(\frac{t}{\lambda_1^3}\right),\quad
x(t) = x_1 +\lambda_1 \wt x\left(\frac{t}{\lambda_1^3}\right),\quad
b(t) = \wt b\left(\frac{t}{\lambda_1^3}\right),\quad
\e(t) = \wt\e\left(\frac{t}{\lambda_1^3}\right).
\]
By setting $T_0=T_{0,K}=\lambda_1^3t_0$, $\epsilon=\epsilon_1$, $\ell_0 = \frac{1}{\lambda_1^2}$ and $x_0= x_1+\lambda_1\wt x_0$,
estimates~\eqref{Spar}--\eqref{Sloc} for any such $S(t)$ decomposed as in~\eqref{eq:decomposition} thus follow.

To conclude, we claim that these parameters $T_0$, $\epsilon$, $\ell_0$ and $x_0$ do not depend on the choice of~$K$,
but only on the solution $S$ itself.
First, for $T_0$, we simply notice that, if~\eqref{Spar}--\eqref{Sloc} hold on $(0,T_{0,\wt K}]$ with $T_{0,\wt K}<T_0$,
then these estimates also hold on the compact set $[T_{0,\wt K},T_0]$
by the continuity of the functions $t\mapsto \lambda(t),x(t),b(t),\e(t)$,
and so on $(0,T_0]$ by choosing larger constants (depending on~$\wt K$) in~\eqref{Spar}--\eqref{Sloc}.
Note that we use the fact that $\delta_0$ given to ensure the existence and uniqueness
of the decomposition in Lemma~\ref{lemma:decomposition}~(i) is independent of $K$, as noticed in its proof.
Second, it is clear for $\epsilon$.
Next, for $\ell_0$, we first notice that $\|\e_y\|_{L^2}^2 +\|\e\|_{\dot H_B^0}^2 + |b|^2 =o(\lambda^2)$ from~\eqref{Spar}--\eqref{SHm}.
Thus, from~\eqref{energie} and then~\eqref{eq:PQ}, we obtain the following expression of $\ell_0$, which depends only on the energy of $S$:
\[
\ell_0 = - \lim_{t\to 0} \frac{b(t)}{\lambda^2(t)} = \frac{E(S)}{\int PQ} = \frac{16E(S)}{\|Q\|_{L^1}^2}.
\]
Finally, for $x_0$, if $(\lambda,x,b,\e)$ and $(\wt \lambda,\wt x,\wt b,\wt \e)$ are two decompositions corresponding
to two different values of $K$ and $\wt K$, then we have the identity
\[
[Q_{\wt b(t)} + \wt \e(t)](y) = (\wt \lambda(t) \lambda^{-1}(t))^{\frac 12} [Q_{b(t)} + \e(t)]
\Bigl(\wt \lambda(t) \lambda^{-1}(t) y + (\wt x(t) - x(t))\lambda^{-1}(t)\Bigl).
\]
Let $t\downarrow 0$. From~\eqref{Spar}--\eqref{SHm}, we have $\wt \lambda(t) \lambda^{-1}(t)\to 1$,
$\e(t),\wt \e(t)\to 0$ and $b(t),\wt b(t)\to 0$, thus, using~\eqref{eq:QbtoQ},
we have necessarily $(\wt x(t) - x(t))\lambda^{-1}(t)\to 0$.
Since $\lambda(t)\to 0$, it forces $\wt x(t)-x(t)\to 0$, and so $x_0=\wt x_0$ from~\eqref{Spar}, which finishes to prove the claim.

\subsection{Proof of Theorem~\ref{thm:maintime}} \label{sec:proofmaintime}

Let $m\geq 0$, and let $K>20$ be such that $0\leq m\leq m_K$, where $m_K=[K/2]-1$.
We also fix $\g=1$ throughout this proof.
Estimate~\eqref{th:timem} follows from the decomposition~\eqref{eq:decomposition},
the control of $\e(t)$ in $\dot H^m$ in Proposition~\ref{prop:SHm},
expansions of $\lambda(t)$, $b(t)$ and $x(t)$ in powers of $t$ as $t\downarrow 0$,
and the properties of the functions $P_k$ as noticed in Remark~\ref{rem:Pk}.

\medskip

\textbf{Step 1.}
First we notice that, from the definition~\eqref{def:locprofile} of $Q_b$
and the decomposition~\eqref{eq:decomposition} of~$S(t)$
with $\epsilon=1$, $\ell_0=1$ and $x_0=0$, we have
\[
S(t,x) = \frac1{\lambda^{1/2}(t)} \left[ Q +\sum_{k=1}^K b^k(t)P_k\chi_{b(t)} +\e(t)\right] \left(\frac{x-x(t)}{\lambda(t)}\right),
\]
and so
\begin{align}
\partial_x^m S(t,x)
&= \frac{1}{\lambda^{1/2}(t)} \partial_x^m \left[ Q\left(\frac{x-x(t)}{\lambda(t)}\right) \right]
+\sum_{k=1}^K \frac{b^k(t)}{\lambda^{1/2}(t)} \partial_x^m \left[ P_k\left(\frac{x-x(t)}{\lambda(t)}\right)
\chi_{b(t)}\left(\frac{x-x(t)}{\lambda(t)}\right) \right] \nonumber \\
&\quad + \frac{1}{\lambda^{1/2+m}(t)} \partial_y^m \e\left(t,\frac{x-x(t)}{\lambda(t)}\right). \label{eq:decompositionbis}
\end{align}
Moreover, the estimates of the parameters~\eqref{Spar} read
\be \label{Sparbis}
|\lambda(t) - t|\lesssim t^3,\quad |x(t) + t^{-1}|\lesssim t,\quad |b(t) + t^2 |\lesssim t^4.
\ee
Also, from~\eqref{SHm}, we have $\|\e(t)\|_{\dot H^m} \lesssim t^{1+2m}$ and thus
\be \label{aneps}
\left\| \frac{1}{\lambda^{1/2+m}(t)} \partial_y^m \e\left(t,\frac{\cdot-x(t)}{\lambda(t)}\right)\right\|_{L^2}
\lesssim t^{1+m} \to 0 \quad \m{as } t\downarrow 0.
\ee
Hence, we just have to estimate the first two terms in~\eqref{eq:decompositionbis} in $L^2$ to conclude.

\medskip

\textbf{Step 2.}
We claim that there exist $\lambda_k$, $b_k$ and $c_k$
such that, for all $0\leq p\leq m_K-2 $, $0<t\leq T_0$,
the parameters $\lambda(t)$, $b(t)$ and $x(t)$ admit the following expansions in powers of $t$:
\begin{align}
\lambda(t) &= t +\lambda_0t^3 +\lambda_1t^5 +\cdots +\lambda_{p-1}t^{2p+1} + O(t^{2p+3}), \label{expansion:lambda} \\
b(t) &= -t^2 +b_0t^4 +b_1t^6 +\cdots +b_{p-1}t^{2p+2} + O(t^{2p+4}), \label{expansion:b} \\
x(t) &= -t^{-1} -c_0t -c_1t^3 -\cdots -c_{p-1}t^{2p-1} + O(t^{2p+1}). \label{expansion:x}
\end{align}
First note that~\eqref{expansion:lambda}--\eqref{expansion:x} hold for $p=0$ from~\eqref{Sparbis}.
Hence, proceeding by induction on the first two expansions,
we may assume that~\eqref{expansion:lambda}--\eqref{expansion:b} hold for some $0\leq p\leq m_K-3$,
and prove that they hold for $p+1$, proceeding as in the proof of Proposition~\ref{prop:SHm} above.

Indeed, we first rely on the approximate equations~\eqref{odet}
satisfied by the functions $\lambda(t)$, $b(t)$ and $x(t)$, which read in this case
\be \label{odetbis}
|\lambda^2\lambda_t+b|+|\lambda^2 x_t-1|+|\lambda^3 b_t+\theta(b)|\lesssim t^K.
\ee
Next, since $\beta_2=2$ in~\eqref{def:Qbthetab}, we obtain similarly
\begin{align*}
\frac d{dt} \left( \frac{b}{\lambda^2}\right)
&= \frac{\lambda^2 b_t - 2 b\lambda \lambda_t}{\lambda^4}
= \frac {-\theta(b) + 2b^2+ O(t^K)}{\lambda^5} \\
&= -\frac{b^3}{\lambda^5} \left[ \beta_3+\beta_4 b +\cdots +\beta_Kb^{K-3} +O(t^{K-6}) \right],
\end{align*}
and so, from~\eqref{expansion:lambda}--\eqref{expansion:b},
there exist some constants $r_1,\ldots,r_p$ such that
\[
\frac d{dt} \left( \frac{b}{\lambda^2}\right)
= \beta_3 t + r_1t^3 +\cdots +r_pt^{2p+1} +O(t^{2p+3}).
\]
By integrating on $(0,t)$, and since by~\eqref{Sparbis}
we have $\lim_{t\downarrow 0} \frac{b}{\lambda^2}(t)=-1$, we obtain
\be \label{odettt}
\frac{b}{\lambda^2}(t) = -1 +\frac{\beta_3}2 t^2 +\frac{r_1}4 t^4 +\cdots +\frac{r_p}{2p+2} t^{2p+2} +O(t^{2p+4}).
\ee
Using $|\lambda^2\lambda_t+b|\lesssim t^K$ from~\eqref{odetbis}, we get an expansion for $\lambda_t$
which gives, after integration,
\[
\lambda(t) = t -\frac{\beta_3}6 t^3 -\cdots -\frac{r_p}{(2p+2)(2p+3)}t^{2p+3} +O(t^{2p+5}).
\]
Hence,~\eqref{expansion:lambda} holds for $p+1$. Using this expansion and again~\eqref{odettt},
we obtain an expansion of~$b(t)$ as in~\eqref{expansion:b} for $p+1$, which concludes the induction.

Finally, to prove~\eqref{expansion:x}, we simply rely on the estimates $|\lambda^2 x_t-1| \lesssim t^K$
and~\eqref{expansion:lambda} to obtain, for some constants $s_1,\ldots,s_{p-1}$,
\[
\frac{dx}{dt} = t^{-2} -2\lambda_0 +s_1t^2 +\cdots +s_{p-1}t^{2p-2} + O(t^{2p}).
\]
By integration, we thus obtain~\eqref{expansion:x} with $c_0=2\lambda_0=-\frac{\beta_3}3$
and $c_k = -\frac{s_k}{2k+1}$ for $1\leq k\leq p-1$.

\medskip

\textbf{Step 3.}
We now give a direct consequence of~\eqref{expansion:lambda}--\eqref{expansion:x}:
for any $A\in\Y$, $k\geq 0$,
there exist functions $A_\ell\in\Y$, for $0\leq \ell \leq \ell_K$, with $\ell_K=m_K-3$ and $A_0=(-1)^kA$, such that
\be \label{mm:paraA}
\frac{b^k(t)}{\lambda^{\frac 12+k}(t)}A\left(\frac{x-x(t)}{\lambda(t)}\right)
=\sum_{\ell=0}^{\ell_K}\frac 1{t^{\frac 12-k-2\ell}}A_\ell\left(\frac{x+\frac 1t}t +c_0\right)
+\E_{A,k}(t,x)
\ee
where $\E_{A,k}$ satisfies, for all $0\leq m\leq m_K$, $\|\E_{A,k}(t)\|_{\dot H^m}\lesssim t^{k-m+K-6}$.
Indeed, we first note that
\[
\frac{b^k(t)}{\lambda^{\frac 12+k}(t)}A\left(\frac{x-x(t)}{\lambda(t)}\right)
= \frac 1{t^{\frac 12-k}} \frac{\tilde b^k(t)}{\tilde\lambda^{\frac12+k}(t)}
A\left[\frac1{\tilde \lambda(t)} \left(\frac{x+\frac 1t}t+c_0\right) -\tilde x(t)\right]
\]
where, using~\eqref{expansion:lambda}--\eqref{expansion:x},
\begin{align*}
\tilde\lambda(t) &= \frac{\lambda(t)}t = 1+\sum_{\ell=1}^{\ell_K} \lambda_{\ell-1} t^{2\ell} +O(t^{K-6}), \\
\tilde b(t) &= \frac{b(t)}{t^2} = -1+\sum_{\ell=1}^{\ell_K} b_{\ell-1} t^{2\ell}+O(t^{K-6}), \\
\tilde x(t) &= \frac1{\lambda(t)} \left(x(t)+t^{-1}+c_0t\right) = -\frac1{\tilde \lambda(t)}\sum_{\ell=1}^{\ell_K} c_\ell t^{2\ell}+O(t^{K-6}).
\end{align*}
Note that, in the expansions of $\tilde \lambda(t)$, $\tilde b(t)$ and $\tilde x(t)$,
the variable $t$ appears systematically with an even exponent.
Moreover, when $t\downarrow 0$, $\tilde \lambda(t)\to 1$, $\tilde b(t)\to -1$ and $\tilde x(t)\to 0$.

Second, the Taylor formula with remainder of integral form gives, for any $A\in \Y$,
the existence of functions $A_{\ell_1,\ell_2}\in \Y$ (with $A_{0,0}=A$) such that,
for all $\tilde x\in\RR$, $\tilde \lambda>0$,
\[
A\left(\frac{y}{\tilde \lambda} -\tilde x\right)
= \sum_{\substack{\ell_1,\ell_2\geq 0 \\ \ell_1+\ell_2\leq \ell_K}} A_{\ell_1,\ell_2}(y)
\tilde x^{\ell_1} \left(\frac1{\tilde \lambda}-1\right)^{\ell_2} +\E_A(y),
\]
where $\|\E_A\|_{\dot H^m} \lesssim (|\tilde x|+ |\tilde \lambda^{-1}-1|)^{\ell_K+1}$ for all $m\geq 0$.
The combination of the two above observations gives~\eqref{mm:paraA}.

\medskip

\textbf{Step 4.}
Let $0\leq k\leq [m/2]$.
By Remark~\ref{rem:Pk}, we know that $P_k^{(k)} \in \Y$.
Thus, applying~\eqref{mm:paraA} to~$P_k^{(k)}$,
there exist functions $P_{k,\ell}\in \Y$, for $0\leq \ell \leq \ell_K$
and with $P_{k,0}=(-1)^k P_k^{(k)}$, such that, for all $0<t\leq T_0$,
\begin{align*}
\frac{b^k(t)}{\lambda^{\frac12}(t)} \partial_x^k \left[P_k\left(\frac{x-x(t)}{\lambda(t)}\right) \right]
&= \frac{b^k(t)}{\lambda^{\frac12+k}(t)} P_k^{(k)}\left(\frac{x-x(t)}{\lambda(t)}\right) \\
&= \sum_{\ell=0}^{\ell_K} \frac1{t^{\frac 12-k-2\ell}} P_{k,\ell}\left(\frac{x+\frac1t}t+c_0\right)+\E_k(t,x),
\end{align*}
where $\E_k$ satisfies $\|\E_k(t)\|_{\dot H^m}\lesssim t^{k-m+K-6}$.
Hence,
\[
\frac {b^{k}(t)}{\lambda^{\frac 12}(t)} \partial_x^m \left[ P_k\left(\frac{x-x(t)}{\lambda(t)}\right) \right]
= \sum_{\ell=0}^{\ell_K} \frac1{t^{\frac 12+m-2k-2\ell}} P_{k,\ell}^{(m-k)}\left(\frac{x+\frac1t}t+c_0\right) +\partial_x^{m-k}\E_k(t,x),
\]
where $\|\partial_x^{m-k}\E_k(t)\|_{L^2}\lesssim t^{2k-m+K-6}$.
Also, note that
\[
\left\| \frac1{t^{\frac 12+m-2k-2\ell}} P_{k,\ell}^{(m-k)}\left(\frac{\cdot+\frac1t}t+c_0\right)\right\|_{L^2} \lesssim t^{2k+2\ell-m}.
\]
At given $k$ and $m$, this term converges to $0$ in $L^2$ if $2k+2\ell-m>0$, \emph{i.e.}~if $\ell>\frac12 m-k$.
Thus, we restrict the sum to $0\leq \ell\leq [m/2] -k$.

First, for $k=0$, we obtain
\be \label{an0}
\frac1{\lambda^{\frac12}(t)} \partial_x^m \left[ Q\left(\frac{\cdot-x(t)}{\lambda(t)}\right) \right]
- \sum_{\ell=0}^{[m/2]} \frac1{t^{\frac12+m-2\ell}} P_{0,\ell}^{(m)}\left(\frac{\cdot+\frac1t}t+c_0\right)
\to 0\quad\m{in } L^2 \m{ as } t\downarrow 0.
\ee

Next, for $1\leq k\leq [m/2]$,
\begin{align*}
&\left\| \frac{b^k(t)}{\lambda^{\frac12}(t)} \partial_x^m \left[ P_k(1-\chi_{b(t)}) \left(\frac{\cdot-x(t)}{\lambda(t)}\right) \right]\right\|_{L^2}
\lesssim t^{2k-m} \sum_{j=0}^m \left\|\partial_y^{m-j}(1-\chi_{b(t)}) P_k^{(j)} \right\|_{L^2} \\
&\lesssim t^{2k-m}\sum_{j=0}^{k-1} \left\|\partial_y^{m-j}\chi_{b(t)} P_k^{(j)} \right\|_{L^2}
+t^{2k-m} \sum_{j=k}^m \left\|\partial_y^{m-j}(1-\chi_{b(t)}) P_k^{(j)} \right\|_{L^2} \\
&\lesssim t^{2k-m} \sum_{j=0}^{k-1} t^{2(m-j-\frac12)} t^{-2(k-j-1)} +e^{-1/t}
\lesssim t^{2k-m+2(m-k)+1} = t^{1+m} \to 0\quad \m{as } t\downarrow 0.
\end{align*}
Therefore,
\begin{multline} \label{an1}
\frac{b^k(t)}{\lambda^{\frac12}(t)} \partial_x^m \left[ P_k\left(\frac{\cdot-x(t)}{\lambda(t)}\right)
\chi_{b(t)}\left(\frac{\cdot-x(t)}{\lambda(t)}\right) \right]
\\ - \sum_{\ell=0}^{[m/2]-k} \frac1{t^{\frac12+m-2k-2\ell}} P_{k,\ell}^{(m-k)}\left(\frac{\cdot+\frac1t}t+c_0\right)
\to 0\quad\m{in } L^2 \m{ as } t\downarrow 0.
\end{multline}

Finally, for $ k\geq [m/2]+1$, one observes that
\be \label{an2}
\left\| \frac{b^k(t)}{\lambda^{\frac12}(t)} \partial_x^m \left[ P_k\left(\frac{\cdot-x(t)}{\lambda(t)}\right)
\chi_{b(t)}\left(\frac{\cdot-x(t)}{\lambda(t)}\right)\right] \right\|_{L^2} \lesssim
t^{2k-m-1}\to 0\quad \m{as } t\downarrow 0.
\ee

\medskip

\textbf{Step 5.}
In conclusion, gathering~\eqref{eq:decompositionbis},~\eqref{aneps} and~\eqref{an0}--\eqref{an2}, we obtain
\begin{multline*}
\left\|\partial_x^m S(t) - \sum_{k=0}^{[m/2]} \sum_{\ell=0}^{[m/2]-k} \frac1{t^{\frac12+m-2k-2\ell}}
P_{k,\ell}^{(m-k)}\left(\frac{\cdot+\frac1t}t+c_0\right) \right\|_{L^2} \\
= \left\|\partial_x^m S(t) - \sum_{k'=0}^{[m/2]} \frac1{t^{\frac12+m-2k'}}
\sum_{\ell'=0}^{k'} P_{k'-\ell',\ell'}^{(m-k'+\ell')}\left(\frac{\cdot+\frac1t}t+c_0\right) \right\|_{L^2}
\to 0\quad\m{as } t\downarrow 0.
\end{multline*}
Thus,~\eqref{th:timem} holds with
\[
Q_{k'} = \sum_{\ell'=0}^{k'} P_{k'-\ell',\ell'}^{(\ell')},\quad \m{and in particular }\
Q_0= P_{0,0} = P_0 = Q,
\]
which concludes the proof of Theorem~\ref{thm:maintime}.
Note that we also have the simple expression
\be \label{Qun}
Q_1=-P'_1-\lambda_0(\Lambda Q)'+c_1Q''.
\ee
Recall that $P'_1\in \mathcal{S}$, but $P'_1$ is not the derivative of a Schwartz function
(see Lemma~\ref{lemma:P} for the properties of $P_1=P$),
and thus $Q_1$ is not the derivative of a Schwartz function.

\begin{remark}\label{rem:akPk}
We have seen in Remark~\ref{rem:uniquePk} that there is a degree of freedom in the definition of each $P_k$ (addition of $a_k Q'$, for any $a_k$):
changing $a_k$ will change the functions $P_{k'}$ for $k'\geq k$.
Thus, the decomposition of $S(t)$ in Proposition~\ref{prop:SHm} is not unique.
In particular, the constants in the expansions of $\lambda$, $b$ and $x$ may also depend on $a_k$.

However, the profile $Q_k$ for any $k$ is uniquely defined by the uniqueness of the minimal mass solution
(obtained in~\cite{MMR2}) and the property~\eqref{th:timem}.
This is the reason why the formulation~\eqref{th:timem} is more canonical that the statement of Proposition~\ref{prop:SHm}.
\end{remark}

\begin{remark}
As a hint of the invariance pointed out in the previous remark, one can check by explicit computations that if $P_1$ is replaced by $P_1^a=P_1+aQ'$,
and $\beta_k^a$, $\lambda_k^a$, $c_k^a$ and $Q_1^a$ are the corresponding values of $\beta_k$, $\lambda_k$, $c_k$ and $Q_1$ respectively,
then $\beta_2^a=\beta_2=2$ (which is related to the universality of the blow up rate $1/t$ of minimal mass blow up solutions)
and $\beta_3^a=\beta_3$ (which is related to the universality of the constant~$c_0$ in the statement of Theorem~\ref{thm:maintime},
since $c_0=-\frac{\beta_3}3$ from the proof of~\eqref{expansion:x} above).

In contrast, the value of $\beta_4$ is changed to $\beta_4^a=\beta_4+30a$ but,
since $\lambda_0=-\frac{\beta_3}6$ and $c_1=\frac{\beta_3^2}{36}+\frac{\beta_4}{30}$
from the proof of~\eqref{expansion:lambda} and~\eqref{expansion:x} above,
we have $\lambda_0^a=\lambda_0$ and $c_1^a=c_1+a$, and so $Q_1^a=Q_1$ from~\eqref{Qun},
which shows the universality of the profile $Q_1$ exhibited in Theorem~\ref{thm:maintime}.
\end{remark}

\section{Space estimates} \label{sec:space-estimates}

\subsection{Integral estimates on the left of the soliton}

Let $S$ be a solution of~\eqref{kdv} which blows up as $t\downarrow 0$ and such that $\|S(t)\|_{L^2}=\|Q\|_{L^2}$.
Apply Proposition~\ref{prop:SHm} to~$S$ and, without loss of generality, assume that $\epsilon=1$, $\ell_0=1$ and $x_0=0$,
after suitable time independent sign change, scaling and space translation.
Note that such a solution $S$ corresponds to the solution considered in~\emph{\ref{existsS}} of Theorem~\ref{thm:previous}.

We now prove the following space decay estimates on the left of such a minimal mass blow up solution $S$.

\begin{proposition} \label{decayR}
Let $0<a_0<\frac3{20}$.
Then, for all $0<t\leq T_0$, $R\geq t^{-1}+2t^{-\frac 12}$,
\be \label{e:L2gche}
\int_{x<-R} S^2(t,x)\,dx = \frac{\|Q\|_{L^1}^2}{8} R^{-2} + O(R^{-2-a_0}),
\ee
and, for all $m\geq 1$,
\be \label{pr:R}
\int_{x<-R} (\partial_x^m S)^2(t,x)\,dx \lesssim R^{-2-2m}.
\ee
\end{proposition}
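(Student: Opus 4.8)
The plan is to pass to the rescaled variable $y=\frac{x-x(t)}{\lambda(t)}$ furnished by Proposition~\ref{prop:SHm} (with the normalization $\epsilon=1$, $\ell_0=1$, $x_0=0$ already made above), so that $S(t,x)=\lambda^{-1/2}(t)\bigl(Q_{b(t)}+\e(t)\bigr)(y)$, and to separate the contributions of the profile $Q_b$ and of the remainder $\e$. Setting $y_R=\frac{-R-x(t)}{\lambda(t)}$, the half-line $\{x<-R\}$ becomes $\{y<y_R\}$, and~\eqref{Spar} gives $y_R<0$, $|y_R|\gtrsim t^{-3/2}$ and $R=t|y_R|\bigl(1+O(t^2)\bigr)+t^{-1}\bigl(1+O(t^2)\bigr)$, whence $R^{-1}$ is comparable to $\min\{t,(t|y_R|)^{-1}\}$. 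By scaling invariance of the $L^2$ and $\dot H^m$ norms,
\[
\int_{x<-R}(\partial_x^m S)^2(t,x)\,dx=\lambda^{-2m}(t)\int_{y<y_R}\bigl(\partial_y^m(Q_b+\e)\bigr)^2(t,y)\,dy.
\]
I would run the argument with $\g=1$ for~\eqref{pr:R} and with $\g\in(\tfrac{17}{20},1-a_0)$ for~\eqref{e:L2gche} — an interval that is nonempty exactly because $a_0<\tfrac3{20}$. Using the pointwise bounds~\eqref{eq:Qbj} and $Q_b\equiv Q$ on $\{y<-2|b|^{-\g}\}$, one checks that the profile part $\lambda^{-2m}\int_{y<y_R}(\partial_y^m Q_b)^2$ is $O(R^{-2-2m})$ for $m\geq1$, and that for $m=0$ the profile term $\int_{y<y_R}Q_b^2$ and, using $\|\e\|_{L^2}\lesssim t$, the cross term $\int_{y<y_R}Q_b\e$ are $O(R^{-2-a_0})$; the only delicate regime is $R$ comparable to $t^{-1}$, where these two terms are $\lesssim|b|^{2-\g}\sim t^{4-2\g}$ and $\lesssim|b|^{1-\g/2}t\sim t^{3-\g}$ respectively, both $\lesssim t^{2+a_0}\sim R^{-2-a_0}$ thanks to $\g<1-a_0$, while for $R\gg t^{-1}$ the profile is exponentially small on $\{y<y_R\}$. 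Thus everything reduces to the $\e$-contribution.

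For the derivative bounds~\eqref{pr:R} I would use a monotonicity argument in $t$, the $x$-space counterpart of the weighted functionals in Lemmas~\ref{le:Hmloc}--\ref{l:gch}. For fixed $R$, differentiating $\int_{x<-R}(\partial_x^m S)^2$ and using~\eqref{kdv} produces a flux concentrated at $x=-R$, of the schematic form $\bigl[-2(\partial_x^m S)\partial_x^{m+2}S+(\partial_x^{m+1}S)^2-2(\partial_x^m S)\partial_x^m(S^5)\bigr]_{x=-R}+2\int_{x<-R}(\partial_x^{m+1}S)\,\partial_x^m(S^5)$. Estimating the pointwise values at $x=-R$ and the tail integral by the one–dimensional Gagliardo--Nirenberg inequality on $(-\infty,-R)$ — which is where a bootstrap in $R$ is needed, since these are precisely the estimates being proved — shows that the flux is $O(R^{-5-2m})$, hence integrable on the admissible time interval $[t_R,T_0]$, where $t_R$ is defined by $R=t_R^{-1}+2t_R^{-1/2}$. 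At $t=t_R$ one has $R\sim t_R^{-1}$, and the decomposition together with~\eqref{SHm} gives $\int_{x<-R}(\partial_x^m S)^2(t_R)\lesssim R^{-2-2m}$ directly; integrating the flux from $t_R$ to $t$ then yields~\eqref{pr:R} for all admissible $t$. Taking $K=K(m)$ large enough makes all the auxiliary Sobolev levels $m,\dots,m+3$ available, so that the constants $C_m$ exist for every $m$.

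For~\eqref{e:L2gche}, the profile estimates of the first step already give $\int_{x<-R}S^2(t,x)\,dx=\int_{y<y_R}\e^2(t,y)\,dy+O(R^{-2-a_0})$. The same monotonicity-in-$t$ argument — now for $\int_{x<-R}S^2$ itself, whose flux is $O(R^{-9/2})$ by the already-established bounds~\eqref{pr:R} — reduces the problem to $t=t_R$. There $y_R\sim-2t_R^{-3/2}>-t_R^{-8/5}$, so~\eqref{Sloc} gives $\int_{y>y_R}\e^2(t_R)\lesssim t_R^{7}$; meanwhile the $L^2$ conservation, in the form of~\eqref{minmass}, together with~\eqref{Spar} and $(P,Q)=\tfrac1{16}\|Q\|_{L^1}^2$, gives $\int_\RR\e^2(t_R)=-2b(t_R)(P,Q)+O(t_R^{3-\g})=\tfrac{t_R^2}{8}\|Q\|_{L^1}^2+O(t_R^{3-\g})$ — this is the "exchange of mass" between $Q_b$ and $\e$. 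Since $R^{-2}=t_R^2+O(t_R^{5/2})$ at $t=t_R$ and $t_R^{3-\g}\lesssim R^{-2-a_0}$ (again using $\g<1-a_0$), this yields $\int_{x<-R}S^2(t_R)=\tfrac{\|Q\|_{L^1}^2}{8}R^{-2}+O(R^{-2-a_0})$, and the flux bound propagates the identity to all admissible $t$.

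The main obstacle is closing the monotonicity argument: the flux estimates require Gagliardo--Nirenberg bounds on $(-\infty,-R)$ at several Sobolev levels — exactly what~\eqref{pr:R} asserts — so one must organize a bootstrap in $R$ (allowing $K$ to grow with $m$), controlling a priori the pointwise values and tail integrals in terms of the very quantities being estimated; and, in parallel, one must verify that every error term (the profile and cross terms on $\{y<y_R\}$, the $O(t^{3-\g})$ coming from~\eqref{minmass}, and the discrepancy between $R^{-2}$ and $t_R^2$ at $t=t_R$) is indeed $O(R^{-2-a_0})$, which is precisely what forces $\g$ to be taken arbitrarily close to $\tfrac{17}{20}$ and hence the restriction $a_0<\tfrac3{20}$.
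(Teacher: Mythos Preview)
Your overall strategy matches the paper's: start from a time $t_R$ comparable to $R^{-1}$ where the decomposition~\eqref{eq:decomposition} together with~\eqref{SHm},~\eqref{Sloc} and~\eqref{minmass} gives the sharp size directly, then propagate to all $t\in[t_R,T_0]$ by a monotonicity/flux argument using the equation~\eqref{kdv}. Your identification of the two roles of $\gamma$ (namely $\gamma=1$ for~\eqref{pr:R}, $\gamma\in(\tfrac{17}{20},1-a_0)$ for~\eqref{e:L2gche}) and of the mass exchange via~\eqref{minmass} is exactly right.

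The gap is in closing the bootstrap with a \emph{sharp} cutoff at $x=-R$. Your flux is the boundary expression
$\bigl[-2(\partial_x^m S)(\partial_x^{m+2}S)+(\partial_x^{m+1}S)^2-\cdots\bigr]_{x=-R}$,
and Gagliardo--Nirenberg on $(-\infty,-R)$ only gives
$|(\partial_x^{j}S)(t,-R)|^2\lesssim I_j^{1/2}I_{j+1}^{1/2}$,
with no extra power of $R$. Starting from the a priori bound $I_m\lesssim 1$ (which does hold on $[t_R,T_0]$), one iteration yields flux $\lesssim 1$, hence after time integration $I_m\lesssim I_m(t_R)+O(1)\lesssim 1$: no improvement. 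A direct continuity bootstrap (assume the sharp bounds with constant $2C$, recover $C$) fails for the same reason at the top three levels, since the flux at level $m$ calls levels up to $m+3$ and the chain never terminates. Your phrase ``bootstrap in $R$'' does not supply the missing gain.

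The paper's device that resolves this is to replace the characteristic function by a family of smooth cutoffs $g^k\bigl(\tfrac{x+R}{R^{1/2}}\bigr)$ and to work with
\[
I_{m,k}(t)=\int(\partial_x^m S)^2(t,x)\,g^k\!\left(\frac{x+R}{R^{1/2}}\right)dx.
\]
Two features are essential. First, the transition at scale $R^{1/2}$ makes every $x$-derivative of the cutoff produce a factor $R^{-1/2}$; this is the source of the iterative gain. Second, the exponent $k$ gives $|(g^{3k+3})'|\lesssim g^{3k+2}$ and $|(g^{3k+3})'''|\lesssim g^{3k}$, so that $\tfrac{d}{dt}I_{m,3k+3}$ is controlled by $R^{-1/2}(I_{m,k}+I_{m+1,k})$ plus nonlinear terms (Lemma~\ref{le:Imk}). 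One then iterates in $k$: from the trivial bound $I_{m,1}\lesssim 1$ (Lemma~\ref{le:atR}) each step gains $R^{-1/2}$ at the cost of one level of $m$, and after finitely many steps (this is why $m_K>5m_0+5$ is needed) one reaches the sharp bounds $I_{m,k_0}\lesssim R^{-2-2m}$ (Proposition~\ref{prop:inductionIm}). With these in hand, integrating~\eqref{eq:I0k} and combining with the sharp value~\eqref{eq:I0atR} at $t_R$ (obtained exactly as you describe, via~\eqref{minmass} and~\eqref{Sloc}) gives~\eqref{e:L2gche}. In short: your plan is the right one, but you need the smooth $g^k$-hierarchy at scale $R^{1/2}$ --- or an equivalent mechanism producing an $R^{-1/2}$ gain per step --- to make the iteration move.
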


\begin{proof}
Throughout this proof, we denote $S$ by $u$ for the simplicity of notation.
Let $m_0>1$. We consider $K>20$ so that $m_K>5m_0+5$, and $\g$ in the range~\eqref{def:gamma}.
From the choice of $\epsilon$, $\ell_0$ and~$x_0$ above, estimates~\eqref{Spar} then rewrite as
\be \label{rappel}
\lambda(t)=t+O(t^3),\quad x(t)=-\frac1t + O(t),\quad b(t)=-t^2+O(t^4).
\ee

We now define a smooth cut-off function $g$ as follows:
\[
g\equiv 1 \m{ on } (-\infty,-1], \quad
g\equiv 0 \m{ on } [-1/2,+\infty), \quad
g \m{ nonincreasing on } \RR.
\]
For $k\geq 1$ and $m\geq 0$, $t\in (0,T_0]$, $R>1$, we define
\[
I_{m,k}(t) = \int (\partial_x^m u)^2(t,x) g^k\left(\frac{x+R}{R^{1/2}}\right) dx.
\]
In particular, we have the control
\be \label{controlg}
\int_{x<-R-R^{\frac12}} (\partial_x^m u)^2(t,x)\,dx \leq I_{m,k}(t)
\leq \int_{x<-R-R^{\frac12}/2} (\partial_x^m u)^2(t,x)\,dx.
\ee

\textbf{Step 1.}
Let $R>1$ large. From~\eqref{rappel} and the continuity of $t\mapsto x(t)$,
it follows that there exists $t_R>0$ small such that $x(t_R)=-R$,
with $R = \frac1{t_R}+ O(t_R)$, and so
\be \label{eq:R}
t_R = R^{-1}+O(R^{-3}),\quad \lambda(t_R)= R^{-1}+O(R^{-3}),\quad b(t_R) = -R^{-2} + O(R^{-4}).
\ee
We claim the following \emph{a priori} estimates of the quantities $I_{m,k}$ on $[t_R,T_0]$.

\begin{lemma} \label{le:atR}
For all $k\geq 1$, $R>1$ large, $t_R\leq t\leq T_0$, $0\leq m \leq m_K$,
\be \label{eq:sbd}
I_{m,k}(t) \lesssim 1.
\ee
Let $0<a_0<\frac3{20}$.
Then, for all $k\geq 1$, $R>1$ large,
\be \label{eq:I0atR}
I_{0,k}(t_R) = \frac{\|Q\|_{L^1}^2}{8} R^{-2} + O(R^{-2-a_0}),
\ee
and, for all $1\leq m\leq m_K$,
\be \label{eq:ImatR}
I_{m,k}(t_R) \lesssim R^{-2-2m}.
\ee
\end{lemma}

\begin{proof}
From~\eqref{controlg},~\eqref{rappel} and~\eqref{eq:decomposition}, we have, for $t\in [t_R,T_0]$,
\begin{align*}
I_{m,k}(t)
&\leq \int_{x<-R-R^{\frac12}/2} (\partial_x^m u)^2(t)
\leq \int_{x<x(t)-R^{\frac12}/4} (\partial_x^m u)^2(t) \\
&\lesssim \lambda^{-2m}(t)\left( \int_{y<-R^{\frac12}/4} (\partial_y^m Q_{b(t)})^2 + \int (\partial_y^m \e)^2(t)\right).
\end{align*}
From~\eqref{rappel},~\eqref{eq:Qbj} and~\eqref{SHm}, we obtain, for $0\leq m\leq m_K$,
\[
I_{m,k}(t) \lesssim t^{-2m} \left( e^{-R^{\frac12}/4} + |b(t)|^{2-\g +2\g m} +t^{1+\g +4\g m} \right).
\]
Using~\eqref{rappel} and~\eqref{eq:R}, we first find, for all $t_R\leq t\leq T_0$, all $0\leq m\leq m_K$,
\be \label{pour2}
I_{m,k}(t) \lesssim R^{2m} e^{-R^{\frac12}/4} + t^{2(2-\g) +2(2\g-1)m} + t^{1+\g +2(2\g-1)m},
\ee
and so $I_{m,k}(t) \lesssim 1$, provided that $\g\geq\frac12$, which is granted by the choice~\eqref{def:gamma} of $\g$.
Thus,~\eqref{eq:sbd} is proved.
Second, we evaluate~\eqref{pour2} at $t=t_R$ and obtain, from~\eqref{eq:R}, for all $1\leq m\leq m_K$,
\[
I_{m,k}(t_R) \lesssim R^{2m} e^{-R^{\frac12}/4} + R^{-2(2-\g) -2(2\g-1)m} + R^{-(1+\g) -2(2\g-1)m}
\lesssim R^{- (1+\g) - 2(2\g -1)m}.
\]
Thus, taking $\g=1$,~\eqref{eq:ImatR} is proved.
To prove the more precise asymptotics~\eqref{eq:I0atR}, we rely on~\eqref{eq:PQ} and the $L^2$ norm conservation
for the minimal mass solutions~\eqref{minmass}, which give
\be \label{eq:normeps}
\int \e^2(t_R) = -2 b(t_R) \int PQ + O(|b(t_R)|^{\frac 12(3-\g)})
= \frac{\|Q\|_{L^1}^2}{8} R^{-2} + O(R^{-3+\g}).
\ee
Moreover, since $x(t_R)=-R$ by definition of $t_R$, we obtain, with $y=\frac{x+R}{\lambda(t_R)}$,
\begin{align*}
I_{0,k}(t_R) &= \int u^2(t_R,x)g^k\left( \frac{x+R}{R^{1/2}} \right) dx \\
&= \int Q_{b(t_R)}^2(y) g^k\left( \frac{\lambda(t_R)}{R^{1/2}}y \right) dy
+2 \int [Q_{b(t_R)}\e(t_R)](y) g^k\left( \frac{\lambda(t_R)}{R^{1/2}}y \right) dy \\
&\quad +\int \e^2(t_R,y) g^k\left( \frac{\lambda(t_R)}{R^{1/2}}y \right) dy \\
&= A_1+A_2+A_3.
\end{align*}
From the definition of $g$,~\eqref{eq:Qbj} and~\eqref{eq:R}, we first get
\[
A_1 \leq \int_{y<-\frac12 \frac{R^{1/2}}{\lambda(t_R)}} Q_{b(t_R)}^2(y)\,dy
\lesssim e^{-\frac12 \frac{R^{1/2}}{\lambda(t_R)}} + |b(t_R)|^{2-\g}
\lesssim e^{-\frac14 R^{3/2}} + R^{-4+2\g}.
\]
Using also~\eqref{eq:normeps} and the Cauchy--Schwarz inequality, we get
\[
|A_2| \lesssim \sqrt{A_1}\sqrt{A_3} \lesssim \left( e^{-\frac18 R^{3/2}} + R^{-2+\g} \right) \|\e(t_R)\|_{L^2}
\lesssim e^{-\frac18 R^{3/2}} + R^{-3+\g}.
\]
Finally, to estimate $A_3$, we first notice that, from~\eqref{eq:normeps},
\[ A_3 \leq \int \e^2(t_R) \leq \frac{\|Q\|_{L^1}^2}{8} R^{-2} + CR^{-3+\g},
\]
for some $C>0$.
To obtain the lower bound, we rely on~\eqref{eq:R} and~\eqref{Sloc} applied with $t=t_R$ to obtain
\[
\int_{y>-10 R^{3/2}} \e^2(t_R) \lesssim R^{-7}.
\]
Thus, we have, using again~\eqref{eq:normeps},
\[
A_3 \geq \int_{y<-\frac{R^{1/2}}{\lambda(t_R)}} \e^2(t_R) \geq \int_{y<-2R^{3/2}} \e^2(t_R)
\geq \frac{\|Q\|_{L^1}^2}{8} R^{-2} - CR^{-3+\g} -CR^{-7}.
\]
Gathering the above estimates of $A_1$, $A_2$ and $A_3$, and taking $\frac{17}{20}<\g<1$,
we obtain~\eqref{eq:I0atR} with $a_0=1-\g$,
which concludes the proof of Lemma~\ref{le:atR}.
\end{proof}

\textbf{Step 2.}
We estimate the time derivatives of $I_{m,k}$ as follows.

\begin{lemma} \label{le:Imk}
For all $k\geq 1$, $t\in (0,T_0]$,
\be \label{eq:I0k}
\left| \frac d{dt} I_{0,3k+3} \right| \lesssim R^{-1/2}\left(I_{0,k}+I_{1,k}\right).
\ee
For all $k\geq 1$, $t\in (0,T_0]$,
\be \label{eq:I1k}
\left| \frac d{dt} I_{1,3k+3} \right| \lesssim R^{-1/2}\left(I_{1,k}+I_{2,k}\right)
+I_{0,k}^{\frac34} \left(I_{0,k} + I_{1,k}\right)\left(I_{0,k}+I_{1,k} + I_{2,k} \right)^{\frac14} I_{1,k}.
\ee
For all $2\leq m\leq m_K-1$, $k\geq 1$, $t\in (0,T_0]$,
\be \label{eq:Imk}
\begin{aligned}
\left| \frac d{dt} I_{m,3k+3} \right|
&\lesssim R^{-1/2}\left(I_{m,k}+I_{m+1,k}\right) \\
&\quad +I_{0,k}\left(I_{0,k} + I_{1,k} \right) I_{m,k} +\left(I_{0,k} + I_{1,k} \right)^{\frac 32} \left(I_{1,k} + I_{2,k} \right)^{\frac12} I_{m,k} \\
&\quad +I_{m,k}^{\frac 12} \sum_{\bar m\in \Sigma_m} \left( \prod_{j=1}^4 (I_{m_j,k}+I_{m_j+1,k})^{\frac12} I_{m_5,k}^{\frac 12}\right),
\end{aligned}
\ee
where, for $m\geq 2$,
\[
\Sigma_m =\left\{\bar m=(m_1,\ldots,m_5), \ |\bar m| = \sum_{k=1}^5 m_k= m+1,\ 0\leq m_1\leq \cdots \leq m_5 \leq m-1\right\}.
\]
\end{lemma}

\begin{proof}
Observe that, for $k'\geq k$, $I_{m,k'}\leq I_{m,k}$.
We will use this property to simplify several expressions below.
For the sake of brevity, we will also omit the variable of $g$.
Therefore, using~\eqref{kdv} and integrations by parts, we get, for all $0\leq m\leq m_K-1$ and $k\geq1$,
\begin{align} \label{eq:dIm}
&\frac d{dt} I_{m,3k+3} = 2 \int (\partial_x^m \partial_t u) (\partial_x^m u) g^{3k+3} \nonumber \\
&= -\frac3{R^{1/2}} \int (\partial_x^{m+1} u)^2 (g^{3k+3})'
+ \frac1{R^{3/2}} \int (\partial_x^m u)^2 (g^{3k+3})''' - 2 \int \partial_x^{m+1} (u^5) (\partial_x^m u) g^{3k+3}.
\end{align}

\emph{Case $m=0$.} Using again an integration by parts,~\eqref{eq:dIm} reads as
\[
\frac d{dt} I_{0,3k+3} = -\frac3{R^{1/2}} \int u_x^2 (g^{3k+3})'
+ \frac1{R^{3/2}} \int u^2 (g^{3k+3})''' +\frac1{R^{1/2}} \frac53 \int u^6 (g^{3k+3})'.
\]
Note that, for $k\geq 1$,
\[
|(g^{3k+3})'|\lesssim g^{3k+2},\quad |(g^{3k+3})'''|\lesssim g^{3k}.
\]
Thus,
\[
\left| \int u^6 (g^{3k+3})'\right| \lesssim \int u^6 g^{3k+2} \lesssim \|u^2 g^{k+1}\|_{L^\infty}^2 \int u^2 g^k,
\]
and since $\partial_x(u^2 g^{k+1}) = 2 u_x u g^{k+1} + R^{-1/2} u^2 (g^{k+1})'$, $|(g^{k+1})'|\lesssim g^k$, we have
\be \label{ginfty}
\|u^2 g^{k+1}\|_{L^\infty} \lesssim \int \left| u u_x g^{k+1} \right|+ R^{-1/2} \int \left| u^2 (g^{k+1})'\right|
\leq I_{0,{k+1}}^{\frac12} I_{1,{k+1}}^{\frac12} + R^{-1/2} I_{0,k}.
\ee
Therefore, we obtain
\[
\left|\frac d{dt} I_{0,3k+3}\right| \lesssim R^{-1/2}\left[ I_{1,3k+2} + R^{-1} I_{0,3k}
+ \left(I_{0,{k+1}}^{\frac12} I_{1,{k+1}}^{\frac12} + R^{-1/2} I_{0,k}\right)^2 I_{0,k}\right].
\]
Using $I_{0,k'} \leq \int Q^2$ for all $k'\geq 1$, we obtain the simplified estimate~\eqref{eq:I0k}.

\emph{Case $m=1$.} In this case,~\eqref{eq:dIm} reads as
\[
\frac d{dt} I_{1,3k+3} = -\frac3{R^{1/2}} \int (\partial_x^{2} u)^2 (g^{3k+3})'
+ \frac1{R^{3/2}} \int (\partial_x u)^2 (g^{3k+3})''' - 2 \int \partial_x^{2} (u^5) (\partial_x u) g^{3k+3}.
\]
Note that, integrating by parts,
\begin{align*}
\int \partial_x^{2} (u^5) (\partial_x u) g^{3k+3}
&= 5 \int (\partial_x^2 u) (\partial_x u) u^4 g^{3k+3}
 + 20 \int \left( \partial_x u\right) ^3 u^3 g^{3k+3} \\
&= - \frac5{2R^{1/2}} \int (\partial_x u)^2 u^4 (g^{3k+3})' + 10 \int \left( \partial_x u\right) ^3 u^3 g^{3k+3}.
\end{align*}
First, by~\eqref{ginfty},
\begin{align*}
\left| \int (\partial_x u)^2 u^4 (g^{3k+3})' \right|
&\lesssim \int (\partial_x u)^2 u^4 g^{3k+2} \lesssim \|u^2 g^{k+1}\|_{L^\infty}^2 I_{1,k} \\
&\lesssim \left(I_{0,{k+1}}^{\frac12} I_{1,{k+1}}^{\frac12} + R^{-1/2} I_{0,k}\right)^2 I_{1,k}
\lesssim I_{0,k}\left(I_{0,k} + I_{1,k} \right) I_{1,k}.
\end{align*}
Second,
\begin{align*}
\left| \int (\partial_x u)^3 u^3 g^{3k+3} \right|
&\lesssim \|u^2 g^{k+1}\|_{L^\infty}^{3/2}\|(\partial_x u)^2 g^{k+1}\|_{L^\infty}^{1/2} I_{1,k+1} \\
&\lesssim \left(I_{0,{k+1}}^{\frac12} I_{1,{k+1}}^{\frac12} + R^{-1/2} I_{0,k}\right)^{\frac32}
\left(I_{1,{k+1}}^{\frac12} I_{2,{k+1}}^{\frac12} + R^{-1/2} I_{1,k}\right)^{\frac12} I_{1,k+1} \\
&\lesssim I_{0,k}^{\frac 34} \left(I_{0,k} + I_{1,k} \right)\left(I_{1,k} + I_{2,k} \right)^{\frac14} I_{1,k}.
\end{align*}
Thus,~\eqref{eq:I1k} follows.

\emph{Case $m\geq 2$.} In this case, we decompose the nonlinear term in~\eqref{eq:dIm} as
\begin{align*}
\int \partial_x^{m+1} (u^5) (\partial_x^m u) g^{3k+3}
&= 5 \int (\partial_x^{m+1}u) (\partial_x^m u) u^4 g^{3k+3} + 20 m \int (\partial_x^m u)^2 (\partial_x u) u^3 g^{3k+3} \\
&\quad + \int \left(\partial_x^{m+1} (u^5) - 5 (\partial_x^{m+1}u) u^4- 20 m (\partial_x^m u) (\partial_x u) u^3\right)(\partial_x^m u) g^{3k+3},
\intertext{which gives, after integrating by parts,}
\int \partial_x^{m+1} (u^5) (\partial_x^m u) g^{3k+3}
&= -\frac5{2R^{1/2}} \int (\partial_x^m u)^2 u^4 (g^{3k+3})' + 10 (2m -1) \int (\partial_x^m u)^2 (\partial_x u) u^3 g^{3k+3} \\
&\quad + \int \left(\partial_x^{m+1} (u^5) - 5 (\partial_x^{m+1}u) u^4- 20 m (\partial_x^m u) (\partial_x u) u^3\right)(\partial_x^m u) g^{3k+3}.
\end{align*}
First, by~\eqref{ginfty},
\begin{align*}
\left| \int (\partial_x^m u)^2 u^4 (g^{3k+3})' \right|
&\lesssim \int (\partial_x^m u)^2 u^4 g^{3k+2} \lesssim \|u^2 g^{k+1}\|_{L^\infty}^2 I_{m,k} \\
&\lesssim \left(I_{0,{k+1}}^{\frac12} I_{1,{k+1}}^{\frac12} + R^{-1/2} I_{0,k}\right)^2 I_{m,k}
\lesssim I_{0,k}\left(I_{0,k} + I_{1,k} \right) I_{m,k}.
\end{align*}
Second,
\begin{align*}
\left| \int (\partial_x^m u)^2 (\partial_x u) u^3 g^{3k+3} \right|
&\lesssim \|u^2 g^{k+1}\|_{L^\infty}^{3/2}\|(\partial_x u)^2 g^{k+1}\|_{L^\infty}^{1/2} I_{m,k+1} \\
&\lesssim \left(I_{0,{k+1}}^{\frac12} I_{1,{k+1}}^{\frac12} + R^{-1/2} I_{0,k}\right)^{\frac32}
\left(I_{1,{k+1}}^{\frac12} I_{2,{k+1}}^{\frac12} + R^{-1/2} I_{1,k}\right)^{\frac12} I_{m,k+1} \\
&\lesssim \left(I_{0,k} + I_{1,k} \right)^{\frac 32} \left(I_{1,k} + I_{2,k} \right)^{\frac12} I_{m,k}.
\end{align*}
Now let
\[
\Sigma_m =\left\{\bar m=(m_1,\ldots,m_5), \ |\bar m| = \sum_{k=1}^5 m_k= m+1,\ 0\leq m_1\leq \cdots \leq m_5 \leq m-1\right\}.
\]
Note that
\[
\left|\partial_x^{m+1} (u^5) - 5 (\partial_x^{m+1}u) u^4- 20 m (\partial_x^m u) (\partial_x u) u^3\right|
\lesssim \sum_{\bar m \in \Sigma_m} \left(\prod_{j=1}^5 | \partial_x^{m_j} u| \right),
\]
and thus
\begin{align*}
&\left| \int \left(\partial_x^{m+1} (u^5) - 5 (\partial_x^{m+1}u) u^4- 20 m (\partial_x^m u) (\partial_x u) u^3\right)(\partial_x^m u) g^{3k+3}\right| \\
&\lesssim \sum_{\bar m \in \Sigma_m} \int \left(\prod_{j=1}^5 | \partial_x^{m_j} u|\right) |\partial_x^m u| g^{3k+3}
\lesssim \sum_{\bar m \in \Sigma_m} \prod_{j=1}^4 \|( \partial_x^{m_j} u)^2 g^{k+1} \|_{L^\infty}^{\frac12} I_{m_5,k+1}^{\frac12}I_{m,k+1}^{\frac12} \\
& \lesssim I_{m,k+1}^{\frac12} \sum_{\bar m \in \Sigma_m} \left( \prod_{j=1}^4
\left(I_{m_j,k+1}^{\frac12} I_{m_j+1,k+1}^{\frac12} + R^{-1/2} I_{m_j,k}\right)^{\frac12} I_{m_5,k+1}^{\frac12}\right),
\end{align*}
using~\eqref{ginfty} on $\partial_x^{m_j} u$ with $0\leq m_j\leq m-1$.
In conclusion, we gather and simplify the above estimates to obtain~\eqref{eq:Imk},
which finishes the proof of Lemma~\ref{le:Imk}.
\end{proof}

\textbf{Step 3.}
Now, we apply several times Lemma~\ref{le:Imk}, combined with Lemma~\ref{le:atR},
to improve the \emph{a~priori} estimates~\eqref{eq:sbd} in view of the optimal bounds given by~\eqref{eq:I0atR}--\eqref{eq:ImatR}.
Let $q:k\mapsto 3k+3$ and $p(j)=q^j(1)=\frac12 ( 5\cdot3^j - 3)$.
Let $R>1$ large and consider any $t_R\leq t\leq T_0$.

First, integrating~\eqref{eq:I0k} with $k=1$ on $[t_R,t]$ and using~\eqref{eq:sbd} and~\eqref{eq:I0atR}, we obtain
\[
I_{0,p(1)}(t)\lesssim R^{-1/2} + I_{0,p(1)}(t_R) \lesssim R^{-1/2}.
\]
Second, we integrate~\eqref{eq:I1k} with $k=p(1)$ on $[t_R,t]$ and use~\eqref{eq:sbd},~\eqref{eq:ImatR}
and the previous estimate to get $I_{1,p(2)}(t)\lesssim R^{-3/8} + I_{1,p(2)}(t_R) \lesssim R^{-3/8}$.
Inserting these estimates again in~\eqref{eq:I1k}, with $k=p(2)$, and proceeding similarly, we obtain
\[
I_{1,p(3)}(t)\lesssim R^{-1/2} + I_{1,p(3)}(t_R) \lesssim R^{-1/2}.
\]
Note that, due to the two above estimates, we may now simplify~\eqref{eq:I1k} and~\eqref{eq:Imk}, for $k\geq p(3)$, as
\be \label{eq:I1kb}
\left| \frac d{dt} I_{1,3k+3} \right| \lesssim R^{-1/2}\left(I_{1,k}+I_{2,k}\right)
\ee
and, for $2\leq m\leq m_K-1$,
\be \label{eq:Imkb}
\left| \frac d{dt} I_{m,3k+3} \right| \lesssim R^{-1/2}\left(I_{m,k}+I_{m+1,k}\right)
+I_{m,k}^{\frac 12} \sum_{\bar m\in \Sigma_m} \left( \prod_{j=1}^4 (I_{m_j,k}+I_{m_j+1,k})^{\frac12} I_{m_5,k}^{\frac 12}\right).
\ee
Since
\[
\Sigma_2 = \Bigl\{ (0,0,1,1,1) \Bigl\}\quad \m{and}\quad \Sigma_3 = \Bigl\{ (0,1,1,1,1) , (0,0,1,1,2) , (0,0,0,2,2) \Bigl\},
\]
we obtain similarly, using~\eqref{eq:Imkb} and~\eqref{eq:ImatR},
\[
I_{2,p(4)}(t)\lesssim R^{-1/2},\quad \m{then}\quad I_{3,p(5)}(t)\lesssim R^{-1/2}.
\]
Now note that, if $\bar m\in\Sigma_m$ with $m\geq 4$, then $m_4\leq m-2$, and so we obtain by induction,
for all $0\leq m\leq m_K-1$, $I_{m,p(m+2)}(t)\lesssim R^{-1/2}$.
In other words, we have obtained the first uniform decay estimates, for all $0\leq m\leq m_K-1$:
\[
I_{m,p(m_K+1)}(t) \lesssim R^{-1/2}.
\]

Inserting these estimates in~\eqref{eq:I0k},~\eqref{eq:I1kb} and~\eqref{eq:Imkb}, with $k=p(m_K+1)$,
we obtain $I_{m,p(m_K+2)}(t)\lesssim R^{-1}$, for all $0\leq m\leq m_K-2$.
Boot-strapping again, we obtain next, for all $0\leq m\leq m_K-3$, $I_{m,p(m_K+3)}(t)\lesssim R^{-3/2}$.
With one more step, we obtain the following improved uniform decay estimates, for all $0\leq m\leq m_K-4$:
\[
I_{m,p(m_K+4)}(t) \lesssim R^{-2}.
\]
Note that, in view of~\eqref{eq:I0atR}, this is the optimal bound for $m=0$,
and that we may continue for $m\geq 1$.
Using~\eqref{eq:I1kb} and~\eqref{eq:Imkb}, we obtain similarly, for all $1\leq m\leq m_K-8$,
\[
I_{m,p(m_K+8)}(t)\lesssim R^{-4},
\]
which is the optimal bound for $m=1$ according to~\eqref{eq:ImatR}.
To obtain the optimal bounds for $m\geq 2$, we prove the following proposition by induction,
using now only the estimates~\eqref{eq:Imkb} and~\eqref{eq:ImatR}.

\begin{proposition} \label{prop:inductionIm}
Let $1\leq m\leq m_0$. Then we have, for all $t\in [t_R,T_0]$,
for all $0\leq\ell\leq m$,
\be \label{Ismall}
I_{\ell,p(m_K+4m+4)}(t) \lesssim R^{-2-2\ell};
\ee
and, for all $m+1\leq \ell\leq m_K-4m-4$,
\be \label{Ilarge}
I_{\ell,p(m_K+4m+4)}(t) \lesssim R^{-2-2m}.
\ee
\end{proposition}

\begin{proof}
We proceed by induction on $m\geq1$, noticing that the proposition holds for $m=1$
by the estimates obtained above.
Now let $m\geq 2$, assume that the proposition holds for $m-1$, and we prove that it holds for $m$.
In other words, we assume
\begin{align}
\forall 0\leq\ell\leq m-1,\quad I_{\ell,p(m_K+4m)}(t) &\lesssim R^{-2-2\ell}; \label{ind:Ismall} \\
\forall m\leq \ell\leq m_K-4m,\quad I_{\ell,p(m_K+4m)}(t) &\lesssim R^{-2m}; \label{ind:Ilarge}
\end{align}
and we prove~\eqref{Ilarge}, for all $m\leq \ell\leq m_K-4m-4$.
Note that~\eqref{Ilarge} is relevant provided that $m_K>5m+5$,
which is granted since we assume $m\leq m_0$ and we chose $K>20$ such that $m_K>5m_0+5$.

Let $m\leq \ell\leq m_K-4m$.
From~\eqref{eq:Imkb} with $k\geq p(m_K+4m)$, we obtain
\[
\left| \frac d{dt} I_{\ell,3k+3} \right| \lesssim R^{-1/2}\left(I_{\ell,k}+I_{\ell+1,k}\right)
+I_{\ell,k}^{\frac 12} \sum_{\bar \ell\in \Sigma_\ell} P_{\bar\ell},\quad \m{with}\quad
P_{\bar\ell} = \prod_{j=1}^4 (I_{\ell_j,k}+I_{\ell_j+1,k})^{\frac12} I_{\ell_5,k}^{\frac 12}.
\]
Let $\bar\ell\in\Sigma_\ell$. To estimate $P_{\bar\ell}$, we consider two cases:
\begin{itemize}
\item if $\ell_5\geq m$ then, using~\eqref{ind:Ilarge} and $I_{\ell',k}(t)\lesssim R^{-2}$ for all $0\leq \ell'\leq m_K-4m$,
we obtain
\[
P_{\bar\ell} \lesssim R^{-4-m};
\]
\item if $\ell_5\leq m-1$ then, using~\eqref{ind:Ismall} and also~\eqref{ind:Ilarge} in the case $\ell_j=m-1$, we obtain,
since $|\bar\ell| = \ell+1\geq m+1$,
\[
P_{\bar\ell} \lesssim \prod_{j=1}^5 R^{-1-\ell_j} = R^{-5-|\bar\ell|} \lesssim R^{-5-(m+1)} = R^{-6-m}.
\]
\end{itemize}
Using again~\eqref{ind:Ilarge}, we find, for all $k\geq p(m_K+4m)$,
\[
\left| \frac d{dt} I_{\ell,3k+3} \right| \lesssim R^{-1/2}\left(I_{\ell,k}+I_{\ell+1,k}\right) + R^{-4-2m}.
\]
As before, we integrate this estimate on $[t_R,t]$, use~\eqref{eq:ImatR},
and boot-strap the new estimates of $I_{\ell,3k+3}$ three more times to obtain, for all $m\leq \ell\leq m_K-4m-4$,
\[
I_{\ell,p(m_K+4m+4)}(t) \lesssim R^{-2-2m},
\]
which concludes the proof of the proposition.
\end{proof}

We now may apply Proposition~\ref{prop:inductionIm} with $m=m_0$.
Denoting $k_0=p(m_K+4m_0+4)$, the estimate~\eqref{Ismall} now reads,
for all $t_R\leq t\leq T_0$, $1\leq m\leq m_0$,
\be \label{resultIm}
I_{m,k_0}(t)\lesssim R^{-2-2m}.
\ee
Moreover, integrating~\eqref{eq:I0k} on $[t_R,t]$ with $t_R\leq t\leq T_0$,
we get $|I_{0,k_0}(t)-I_{0,k_0}(t_R)|\lesssim R^{-2-\frac 12}$.
Thus, from~\eqref{eq:I0atR}, we obtain, with $0<a_0<\frac3{20}$,
\be \label{resultI0}
I_{0,k_0}(t) = \frac{\|Q\|_{L^1}^2}{8} R^{-2} + O(R^{-2-a_0}).
\ee

\textbf{Step 4.}
To conclude the proof of Proposition~\ref{decayR}, it is enough to prove
how estimates~\eqref{controlg} and~\eqref{resultIm}--\eqref{resultI0} imply~\eqref{pr:R}--\eqref{e:L2gche}.
Let $1\leq m\leq m_0$ and $0<t\leq T_0$.
Let $R\geq t^{-1}+2t^{-\frac 12}$.
Let $R/2<R'<R$ be such that $R=R'+(R')^{\frac 12}$,
namely $R' = R - \frac{\sqrt{4R+1}-1}2 = R + O(R^{\frac12})$.
Then, by~\eqref{eq:R},
\[
t^{-1}_{R'}+2t^{-\frac 12}_{R'} \geq R'+(R')^{\frac 12}=R \geq t^{-1}+2t^{-\frac 12},
\]
and so $t\geq t_{R'}$.
Thus,~\eqref{resultIm} and~\eqref{controlg} applied to $R'$ and $t_{R'}\leq t\leq T_0$ imply~\eqref{pr:R}, since
\[
\int_{x<-R} (\partial_x^m u)^2(t) = \int_{x<-R'-(R')^{\frac12}} (\partial_x^m u)^2(t) \leq I_{m,k_0}(t)
\lesssim (R')^{-2-2m} \lesssim R^{-2-2m}.
\]
Let $0<a_0<\frac3{20}$.
We obtain similarly, from~\eqref{resultI0} and~\eqref{controlg} applied to $R'$ and $t_{R'}\leq t\leq T_0$,
\begin{align*}
\int_{x<-R} u^2(t) &= \int_{x<-R'-(R')^{\frac12}} u^2(t) \leq I_{0,k_0}(t)
\leq \frac{\|Q\|_{L^1}^2}{8} (R')^{-2} +C(R')^{-2-a_0} \\
&\leq \frac{\|Q\|_{L^1}^2}{8} R^{-2} + C R^{-2-\frac12} + C R^{-2-a_0}
\leq \frac{\|Q\|_{L^1}^2}{8} R^{-2} + C R^{-2-a_0},
\end{align*}
for some $C>0$.
Finally, we have similarly $t\geq t_{R''}$, where $R/2<R''<R$ is such that $R=R''+\frac 12 (R'')^{\frac 12}$.
Thus,~\eqref{resultI0} and~\eqref{controlg} applied to $R''$ and $t_{R''}\leq t\leq T_0$ give
\begin{align*}
\int_{x<-R} u^2(t) = \int_{x<-R''-\frac12 (R'')^{\frac12}} u^2(t) \geq I_{0,k_0}(t)
&\geq \frac{\|Q\|_{L^1}^2}{8} (R'')^{-2} -C(R'')^{-2-a_0} \\
&\geq \frac{\|Q\|_{L^1}^2}{8} R^{-2} - C R^{-2-a_0}.
\end{align*}
Hence,~\eqref{e:L2gche} is proved, which finishes to prove Proposition~\ref{decayR}.
\end{proof}

\subsection{Sharp pointwise asymptotics on the left}

Possibly taking a smaller $T_0$, we obtain the following.

\begin{proposition} \label{prop:ptgche}
Let $0<a<\frac 1{20}$.
For all $t\in (0,T_0]$, for all $R\geq t^{-1}+1$,
\be \label{eq:ptgche}
S(t,-R) = -\frac{\|Q\|_{L^1}}{2} R^{-\frac32}+ O(R^{-\frac32 - a}),
\ee
and, for all $m\geq 1$,
\be \label{eq:diffgche}
\left| \partial_x^m S(t,-R)\right| \lesssim R^{-\frac32 - m}.
\ee
\end{proposition}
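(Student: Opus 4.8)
The plan is to deduce Proposition~\ref{prop:ptgche} by combining the integral estimates of Proposition~\ref{decayR}, which govern $S(t)$ far from the concentration point, with the explicit decomposition~\eqref{eq:decomposition} and the localized bound~\eqref{Sloc}, which govern $S(t)$ close to it; the two descriptions overlap on a range of scales of size comparable to $t^{-1}$.

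\textbf{Step 1 (pointwise bounds).} First I would establish~\eqref{eq:diffgche} together with the crude bound $|S(t,-R)|\lesssim R^{-3/2}$, separately in two regions. For $R\geq t^{-1}+2t^{-\frac12}$ this comes from~\eqref{e:L2gche}--\eqref{pr:R} and a localized Gagliardo--Nirenberg inequality on an interval $I\ni -R$ of length comparable to $R$: from $\|\partial_x^m S\|_{L^\infty(I)}^2\lesssim\|\partial_x^m S\|_{L^2(I)}\|\partial_x^{m+1}S\|_{L^2(I)}+|I|^{-1}\|\partial_x^m S\|_{L^2(I)}^2$ and $\int_{x<-R'}(\partial_x^m S)^2\lesssim (R')^{-2-2m}$ one gets $|\partial_x^m S(t,-R)|\lesssim R^{-3/2-m}$. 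For $t^{-1}+1\leq R\lesssim t^{-1}$ I would use~\eqref{eq:decomposition} instead: here $y=(-R-x(t))/\lambda(t)$ stays in the range $y>-t^{-8/5}$ controlled by~\eqref{Sloc}, and $\chi_{b(t)}(y)=1$ by~\eqref{def:locprofile} and the choice~\eqref{def:gamma} of $\g$, so $Q_{b(t)}(y)=Q(y)+\sum_k b^k(t)P_k(y)$; combining the exponential decay of $Q$, the expansions of $P_k$ from Remark~\ref{rem:Pk}, the parameter estimates~\eqref{Spar}, and a pointwise bound for $\partial_y^m\e$ obtained from~\eqref{SHm} and~\eqref{Sloc} by interpolation, one bounds $\partial_x^m S(t,-R)$ by $\lesssim\lambda^{-\frac12-m}(t)\,|b(t)|^{m+1}\lesssim t^{m+\frac32}\sim R^{-\frac32-m}$.

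\textbf{Step 2 (sharp $L^2$-density far from the soliton).} Next I would upgrade~\eqref{e:L2gche} to a pointwise identity via a difference quotient. Taking $h=R^{1-a_0/2}$,
\[
\frac1h\int_{-R-h}^{-R}S^2(t,x)\,dx=\frac1h\Big(\int_{x<-R}S^2-\int_{x<-R-h}S^2\Big)=\frac{\|Q\|_{L^1}^2}{4}R^{-3}+O\big(hR^{-4}\big)+O\big(h^{-1}R^{-2-a_0}\big),
\]
while, by Step~1, $|\partial_x(S^2)|=2|S\,\partial_x S|\lesssim R^{-4}$ on $(-R-h,-R)$, so the left-hand average equals $S^2(t,-R)+O(hR^{-4})$. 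Hence $S^2(t,-R)=\tfrac{\|Q\|_{L^1}^2}{4}R^{-3}+O(R^{-3-a_0/2})$, uniformly in $t$, for all $R\geq t^{-1}+2t^{-\frac12}$, and taking square roots gives $|S(t,-R)|=\tfrac{\|Q\|_{L^1}}{2}R^{-3/2}+O(R^{-3/2-a_0/2})$.

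\textbf{Step 3 (sign, and conclusion).} It remains to fix the sign. Refining Step~1 in the range $t^{-1}+1\leq R\lesssim t^{-1}$, the leading contribution to $\lambda^{-\frac12}(t)Q_{b(t)}(y)$ is $\lambda^{-\frac12}(t)\,b(t)\,P_1(y)$, and since $P_1(y)\to\tfrac12\|Q\|_{L^1}$ as $y\to-\infty$ (Lemma~\ref{lemma:P}, Remark~\ref{rem:Pk}), $b(t)\sim -t^2$, $\lambda(t)\sim t$, and $R\sim t^{-1}$, this equals $-\tfrac12\|Q\|_{L^1}R^{-3/2}$ up to $O(R^{-3/2-2/5})$; the remaining pieces $Q(y)$, $\sum_{k\geq2}b^k(t)P_k(y)$ and $\lambda^{-\frac12}(t)\e(t,y)$ are all of strictly lower order, again using~\eqref{Sloc}. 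Thus $S(t,-R)=-\tfrac12\|Q\|_{L^1}R^{-3/2}+O(R^{-3/2-2/5})$, in particular $S(t,-R)<0$, on this whole range. For $t$ small this range overlaps the range $\{R\geq t^{-1}+2t^{-\frac12}\}$ of Step~2 and their union is all of $[t^{-1}+1,+\infty)$; hence, by Step~2 and the previous line, $|S(t,-R)|\gtrsim R^{-3/2}>0$ for every $R\geq t^{-1}+1$, so the continuous function $R\mapsto S(t,-R)$ never vanishes on $[t^{-1}+1,+\infty)$, and, being negative near $R=t^{-1}$, it is negative throughout. Therefore $S(t,-R)=-|S(t,-R)|$, and~\eqref{eq:ptgche} follows from Step~2 once $a_0$ is chosen close enough to $\tfrac3{20}$ that $a_0/2>\tfrac1{20}$ (shrinking $T_0$ if needed, which is why the statement allows a smaller $T_0$); $\eqref{eq:diffgche}$ is Step~1. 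The main obstacle is Step~3: one must propagate the sign between the two regimes and, crucially, extract the sharp constant \emph{together with its sign} from the decomposition near the soliton, which forces a careful accounting of the cut-off $\chi_b$, of the polynomial growth of the $P_k$, and of the pointwise size of $\e$ supplied by~\eqref{Sloc}.
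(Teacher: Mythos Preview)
Your proposal is correct and follows essentially the same strategy as the paper: split $R\geq t^{-1}+1$ into a near region $[t^{-1}+1,t^{-1}+2t^{-1/2}]$ treated via the decomposition~\eqref{eq:decomposition}, and a far region $R\geq t^{-1}+2t^{-1/2}$ treated via the integral bounds of Proposition~\ref{decayR}; then fix the sign near the soliton (where $b(t)P_1$ dominates) and propagate it by nonvanishing. The only noteworthy difference is in your Step~2: to pass from~\eqref{e:L2gche} to a pointwise formula for $S^2(t,-R)$, the paper isolates this as a separate lemma (Lemma~\ref{simple}) that combines the mean value theorem with a Cauchy--Schwarz bound on $\int |vv'|$, yielding an error $O(R^{-3-a_0/3})$; you instead take a difference quotient of~\eqref{e:L2gche} over an interval of length $R^{1-a_0/2}$ and use the pointwise bound $|\partial_x(S^2)|\lesssim R^{-4}$ already obtained in Step~1, which gives the slightly sharper error $O(R^{-3-a_0/2})$. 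Both are standard and interchangeable here.
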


To obtain such pointwise estimates from the integral estimates given
by Proposition~\ref{decayR}, we will rely on the following elementary observation.

\begin{lemma} \label{simple}
Let $0<a_0<1$ and $c_0>0$. Assume that a function $v$ satisfies, for all $R$ large,
\[
\int_{x<-R} v^2(x)\, dx = c_0 R^{-2} + O(R^{-2-a_0}),
\]
and
\[
\int_{x<-R} (v')^2(x)\, dx \lesssim R^{-4}.
\]
Then, for all $R$ large,
\[
v^2(-R) = 2c_0 R^{-3} +O(R^{-3-a}),
\]
with $a=\frac13 a_0>0$.
\end{lemma}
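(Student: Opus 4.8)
\textbf{Plan of proof for Lemma~\ref{simple}.}
The plan is to convert the two integral hypotheses into a pointwise statement for $v^2(-R)$ by a localization argument at a well-chosen scale. First I would write, for a length $h>0$ to be optimized at the end,
\[
\int_{-R-h<x<-R} v^2(x)\,dx = \int_{x<-R}v^2 - \int_{x<-R-h}v^2 = c_0\bigl(R^{-2}-(R+h)^{-2}\bigr) + O(R^{-2-a_0}).
\]
A Taylor expansion of $(R+h)^{-2}$ gives $R^{-2}-(R+h)^{-2} = 2hR^{-3} + O(h^2 R^{-4})$, so the left-hand integral equals $2hR^{-3} + O(h^2 R^{-4}) + O(R^{-2-a_0})$. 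The idea is then that, on the short interval $(-R-h,-R)$, the function $v^2$ is nearly constant because its derivative is small in $L^2$ (hence $v$ is Hölder-$\tfrac12$ with small constant there).

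Concretely, I would estimate the oscillation of $v^2$ on $[-R-h,-R]$. For $x,x'$ in this interval, $|v(x)-v(x')| \le \int_{-R-h}^{-R}|v'| \le h^{1/2}\bigl(\int_{x<-R}(v')^2\bigr)^{1/2} \lesssim h^{1/2}R^{-2}$ by Cauchy--Schwarz and the second hypothesis. Also $v$ is bounded on the interval: from $\int_{x<-R}v^2 \lesssim R^{-2}$ and the $L^2$ bound on $v'$, a standard Gagliardo--Nirenberg / Sobolev estimate on a unit interval to the left of $-R$ gives $\|v\|_{L^\infty(-R-h,-R)} \lesssim R^{-1}$ (say for $h\le 1$). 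Hence, for any $x$ in the interval,
\[
\bigl| v^2(x) - v^2(-R) \bigr| \le \|v(x)+v(-R)\|_{L^\infty}\,|v(x)-v(-R)| \lesssim R^{-1}\cdot h^{1/2}R^{-2} = h^{1/2}R^{-3}.
\]
Integrating this over the interval of length $h$ yields
\[
\Bigl| \int_{-R-h<x<-R} v^2(x)\,dx - h\,v^2(-R) \Bigr| \lesssim h^{3/2}R^{-3}.
\]
Combining with the expansion of the integral above, $h\,v^2(-R) = 2hR^{-3} + O(h^2R^{-4}) + O(R^{-2-a_0}) + O(h^{3/2}R^{-3})$, so dividing by $h$,
\[
v^2(-R) = 2R^{-3} + O(hR^{-4}) + O(h^{-1}R^{-2-a_0}) + O(h^{1/2}R^{-3}).
\]
(Here I wrote $c_0$; keeping $c_0$ general gives $v^2(-R)=2c_0R^{-3}+\cdots$, and the error terms are unchanged in order.)

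It remains to optimize $h=h(R)$. The dominant error contributions are $h^{1/2}R^{-3}$ and $h^{-1}R^{-2-a_0}$; balancing them gives $h^{3/2}\sim R^{1-a_0}$, i.e. $h\sim R^{(2/3)(1-a_0)}$, which is $\ll 1$ for $R$ large since $a_0<1$ — so the assumption $h\le 1$ used in the $L^\infty$ bound is legitimate for $R$ large. With this choice, $h^{1/2}R^{-3}\sim R^{-3+(1-a_0)/3} = R^{-3-a_0/3}$ and likewise $h^{-1}R^{-2-a_0}\sim R^{-2-a_0 - (2/3)(1-a_0)} = R^{-8/3 - a_0/3}$, which is even smaller than $R^{-3-a_0/3}$; and $hR^{-4}\sim R^{-4+(2/3)(1-a_0)}\le R^{-10/3}$ is also negligible. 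All error terms are therefore $O(R^{-3-a})$ with $a=\tfrac13 a_0$, giving $v^2(-R)=2c_0R^{-3}+O(R^{-3-a})$ as claimed.

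The only slightly delicate point — and the step I expect to need the most care — is the uniform $L^\infty$ bound $\|v\|_{L^\infty(-R-1,-R)}\lesssim R^{-1}$ used to control the oscillation; this must be derived purely from the two hypotheses (integrated over $x<-R-1$, say) via a Sobolev embedding on an interval, without any decay assumption on $v$ itself beyond what is given. Everything else is the elementary Cauchy--Schwarz and Taylor bookkeeping above, followed by the optimization of the scale $h$.
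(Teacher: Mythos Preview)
Your localization strategy is the right idea, and it is essentially the paper's approach, but the execution contains a real error that invalidates the conclusion as written.

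The problem is the $L^\infty$ bound you feed into the oscillation estimate. You use $\|v\|_{L^\infty(-R-h,-R)}\lesssim R^{-1}$, which yields $|v^2(x)-v^2(-R)|\lesssim h^{1/2}R^{-3}$ and hence the error term $h^{1/2}R^{-3}$ after dividing by $h$. Balancing this against $h^{-1}R^{-2-a_0}$ gives $h=R^{(2/3)(1-a_0)}$, and then both terms equal $R^{-8/3-a_0/3}$, \emph{not} $R^{-3-a_0/3}$. Your arithmetic in the final paragraph is off: $-3+(1-a_0)/3=-8/3-a_0/3$, and $R^{-8/3-a_0/3}$ is \emph{larger} than $R^{-3-a_0/3}$, not smaller. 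So with the $R^{-1}$ bound the argument only gives $v^2(-R)=2c_0R^{-3}+O(R^{-8/3-a_0/3})$, which is useless. (Incidentally, $h=R^{(2/3)(1-a_0)}\to\infty$, contradicting your claim that $h\ll 1$; but this is harmless once the $L^\infty$ bound is obtained correctly for all $y<-R$.)

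The fix is immediate: the hypotheses actually give the sharper bound $\|v\|_{L^\infty(x<-R)}\lesssim R^{-3/2}$, since $v^2(y)\le 2\bigl(\int_{x<y}v^2\bigr)^{1/2}\bigl(\int_{x<y}(v')^2\bigr)^{1/2}\lesssim R^{-1}\cdot R^{-2}$. With this, $|v^2(x)-v^2(-R)|\lesssim h^{1/2}R^{-7/2}$, the balanced scale becomes $h=R^{1-2a_0/3}$, and all error terms are $O(R^{-3-a_0/3})$ as claimed. The paper reaches the same endpoint slightly more cleanly: it applies the mean value theorem to find $x_R$ in the interval with $v^2(x_R)=2c_0R^{-3}+\cdots$, and then bounds $|v^2(-R)-v^2(x_R)|\le 2\bigl(\int_I v^2\bigr)^{1/2}\bigl(\int_I(v')^2\bigr)^{1/2}$ directly, using the already-computed $\int_I v^2\sim hR^{-3}$ in place of any separate $L^\infty$ estimate.
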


\begin{proof}
Fix $R$ large.
Let $0<c<1$ to be chosen. Then
\begin{align*}
\int_{[-R-R^{1-c},-R]}v^2(x)\, dx
&= c_0R^{-2}-c_0(R+R^{1-c})^{-2} + O(R^{-2-a_0}) \\
&= 2c_0 R^{-2-c} + O(R^{-2-2c}) + O(R^{-2-a_0}).
\end{align*}
From the mean value theorem, it follows that there exists $x_R\in [-R-R^{1-c},-R]$ such that
\[
v^2(x_R) = 2c_0 R^{-3} + O(R^{-3-c}) + O(R^{-3-a_0+c}).
\]
Next, by the Cauchy--Schwarz inequality,
\begin{align*}
|v^2(-R)-v^2(x_R)|
&\lesssim \int_{[-R-R^{1-c},-R]} |v(x)| |v'(x)|\, dx \\
&\lesssim \left(\int_{[-R-R^{1-c},-R]} v^2(x) \,dx \right)^{\frac 12}
	\left(\int_{(-\infty,-R]} (v')^2(x) \,dx \right)^{\frac 12} \\[2mm]
& \lesssim R^{-\frac 12 (2+c)}R^{-2} = R^{-3- \frac c2}.
\end{align*}
Thus,
\[
v^2(-R) = 2c_0 R^{-3} + O(R^{-3-\frac c2}) + O(R^{-3-a_0+c}).
\]
To conclude, it is enough to choose $c$ such that $c<a_0$ and $c>0$.
With $c=\frac23 a_0$, we reach the desired estimate.
\end{proof}

\begin{proof}[Proof of Proposition~\ref{prop:ptgche}]
The proof of~\eqref{eq:ptgche}--\eqref{eq:diffgche} proceeds in four steps,
making the distinction between the regions $R\geq t^{-1}+2 t^{-\frac 12}$ and $R\in [t^{-1}+1,t^{-1}+2t^{-\frac 12}]$.

\textbf{Step 1.}
First, we prove that, for all $0<a<\frac1{20}$, $t\in (0,T_0]$ and $R\geq t^{-1}+2 t^{-\frac 12}$,
\be \label{eq:ptgchesquare}
S^2(t,-R) = \frac{\|Q\|_{L^1}^2}{4}R^{-3}+ O(R^{-3-a}).
\ee
Indeed,~\eqref{e:L2gche} with $0<a_0<\frac3{20}$ implies
\[
\int_{x<-R} S^2(t,x)\,dx = \frac{\|Q\|_{L^1}^2}{8}R^{-2} + O(R^{-2-a_0}),
\]
and~\eqref{pr:R} with $m=1$ implies
\[
\int_{x<-R} S_x^2(t,x)\, dx \lesssim R^{-4}.
\]
Applying Lemma~\ref{simple} with $c_0=\frac18 \|Q\|_{L^1}^2$,
we obtain~\eqref{eq:ptgchesquare} for all $0<a<\frac1{20}$.

\medskip

\textbf{Step 2.}
Let $t\in (0,T_0]$ and $m\geq 1$.
For all $R\geq t^{-1}+2t^{-\frac12}$,~\eqref{eq:diffgche} holds.
Indeed, from~\eqref{pr:R}, we have, using the Cauchy--Schwarz inequality,
\begin{align*}
(\partial_x^m S)^2(t,-R) &\leq \|(\partial_x^m S)^2(t)\|_{L^\infty(x<-R)} \\
&\lesssim \left(\int_{x<-R} (\partial_x^m S)^2(t,x)\,dx \right)^{\frac12}
\left(\int_{x<-R} (\partial_x^{m+1} S)^2(t,x)\,dx \right)^{\frac12} \\
&\lesssim R^{-1-m} R^{-1-(m+1)} = R^{-3-2m}.
\end{align*}

\textbf{Step 3.}
Let $t\in (0,T_0]$, $m\geq 1$, and $R\in [t^{-1}+1,t^{-1}+2t^{-\frac 12}]$.
We now prove~\eqref{eq:ptgche}--\eqref{eq:diffgche} for such an $R$, by a straightforward calculation.
First, we set
\[
y_R=\frac {-R-x(t)}{\lambda(t)},\quad \m{which satisfies, by~\eqref{rappel},}\quad
-4 t^{-\frac 32}<y_R<-\frac 12 t^{-1}.
\]
Since $b(t)= -t^2 + O(t^4)$ and $\frac{17}{20}<\g \leq 1$, the definition~\eqref{def:locprofile} of $Q_b$
gives $Q_{b(t)}(y_R)=Q(y_R)+R_{b(t)}(y_R)$.
In particular, we deduce, from the definition of $R_b$ and the properties of the $P_k$
given in Lemma~\ref{lemma:Qbt},
\[
|Q_{b(t)}(y_R)-b(t)P_1(y_R)| \lesssim Q(y_R) + \sum_{k=2}^{K} |b(t)|^k |P_k(y_R)|
\lesssim e^{-\frac 12 t^{-1}} + \sum_{k=2}^K t^{2k} t^{-\frac32(k-1)} \lesssim t^{\frac 52}.
\]
But, from~\eqref{eq:Pkinf} in Remark~\ref{rem:Pk}, we have
\[ \left| P_1(y_R) -\frac12 \|Q\|_{L^1} \right| \lesssim e^{-|y_R|/2} \lesssim e^{-\frac 12 t^{-1}},
\]
and so
\[
\left| Q_{b(t)}(y_R) + \frac 12 \|Q\|_{L^1} t^2 \right| \lesssim t^{\frac 52}.
\]
Moreover, by~\eqref{Sloc} and~\eqref{SHm}, we obtain, for all $y>-t^{\frac85}$,
\be \label{eq:pteps}
|\e(t,y)|\lesssim \left(\int_{y>-t^{\frac85}} \e^2(t,y)\,dy \right)^{\frac14}
\left( \int \e_y^2(t,y)\,dy \right)^{\frac14} \lesssim t^{2+\frac 54 \g},
\ee
and in particular $|\e(t,y_R)| \lesssim t^{2+\frac 54 \g}$.
Since $S(t,-R) = \lambda^{-1/2}(t)[Q_{b(t)}(y_R) +\e(t,y_R)]$ from~\eqref{eq:decomposition},
we obtain, using $\lambda(t)=t+O(t^3)$ and $t = R^{-1} +O(R^{-\frac 32})$,
\[
S(t,-R) = -\frac 12 \|Q\|_{L^1} R^{-\frac 32} +O(R^{-2}),
\]
and \emph{a fortiori}~\eqref{eq:ptgche}.
To prove~\eqref{eq:diffgche}, we first notice that, from~\eqref{eq:Qbj},
\[
|\partial_y^m Q_{b(t)}(y_R)|\lesssim e^{-|y_R|/2}+ |b(t)|^{1+m}\lesssim t^{2+2m},
\]
and then, from~\eqref{SHm},
\[
\|\partial_y^m \e(t)\|_{L^\infty}\lesssim \|\e(t)\|_{\dot H^m}^{\frac 12} \|\e(t)\|_{\dot H^{m+1}}^{\frac 12}
\lesssim t^{\frac 12 (1+3\g) + 2\g m}.
\]
Using again $\lambda(t)=t+O(t^3)$, $t = R^{-1} +O(R^{-\frac 32})$ and~\eqref{eq:decomposition}, the two last estimates imply
\[
\left| \partial_x^m S(t,-R)\right| \lesssim
\lambda^{-m-\frac 12}(t) \left(|\partial_y^m Q_{b(t)}(y_R)|+\|\partial_y^m \e(t)\|_{L^\infty}\right) \lesssim
R^{-\frac 32 -m} + R^{-\frac32\g - (2\g-1)m},
\]
which leads to~\eqref{eq:diffgche} by taking $\g=1$.

\medskip

\textbf{Step 4.}
To conclude the proof of Proposition~\ref{prop:ptgche}, we note that, from the results of Steps~2 and~3,
we just need to prove that~\eqref{eq:ptgche} holds for all $R\geq R(t)$, where $R(t)=t^{-1}+2t^{-1/2}$.
To do so, we first notice that, taking $T_0$ small enough, $S(t,-R)$ cannot vanish for $R\geq R(t)$ from~\eqref{eq:ptgchesquare}.
But, from Step~3, taking possibly $T_0$ even smaller, $S(t,-R(t))$ is negative, and so $S(t,-R)$ is also negative for $R\geq R(t)$.
We can conclude that~\eqref{eq:ptgche} holds for all $R\geq R(t)$ from~\eqref{eq:ptgchesquare}.
\end{proof}

\subsection{Pointwise exponential decay on the right}

We prove~\eqref{th:ptwiser}, which is a direct consequence of the estimates of Proposition~\ref{prop:SHm}.
Let $m\geq 0$ and $K>20$ such that $0\leq m\leq m_K-1$, where $m_K=[K/2]-1$.
First, by~\eqref{SHm}, we have, for some $B>1$, for all $t\in (0,T_0]$, $y\in\RR$,
\[
e^{\frac y{B}} (\partial_y^m\e(t,y))^2 =\left(e^{\frac y{2B}} \partial_y^m\e(t,y)\right)^2
\lesssim \|\e\|_{\dot H_B^m}(\|\e\|_{\dot H_B^m}+\|\e\|_{\dot H_B^{m+1}})
\lesssim t^{4K-2m-1} \lesssim t^{3K+3} \lesssim 1.
\]
Using~\eqref{eq:Qbj} and~\eqref{Spar}, we deduce, for all $t\in (0,T_0]$, for all $y\in\RR$,
\[
|\partial_y^m Q_{b(t)}(y)|+| \partial_y^m\e(t,y)|\lesssim e^{-\frac y{2B}}.
\]
Thus, by~\eqref{eq:decomposition} and~\eqref{Spar}, with $\epsilon=1$, $\ell_0=1$ and $x_0=0$, we obtain, for all $x\in\RR$,
\[
|\partial_x^mS(t,x)|\leq \frac{C_m'}{t^{\frac 12+m}} \exp\left(- {\frac{x+\frac1t}{4Bt}}\right).
\]
Hence, setting $\g_m=(4B)^{-1}$,~\eqref{th:ptwiser} is proved, and we notice that this control, for $t\in (0,T_0]$ fixed, is only interesting
for $x\geq -\frac1t$, that is to say on the right of the soliton.

\subsection{$L^1$ norm and integral of $S(t)$}

\begin{proposition} \label{p:L1}
Let $0<a<\frac 1{20}$.
For all $t\in (0,T_0]$,
\be \label{e:L1}
\int |S(t,x)|\,dx = 2\|Q\|_{L^1}t^{\frac12} + O(t^{\frac12 +a}).
\ee
\end{proposition}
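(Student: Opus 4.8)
The plan is to split the integral at the point $x=-\frac1t-1$, which separates the explicit power-law tail region $x<-\frac1t-1$, covered by the pointwise asymptotics of Proposition~\ref{prop:ptgche}, from the region $x\geq-\frac1t-1$ containing the concentrating soliton, which is handled through the decomposition~\eqref{eq:decomposition}. Each of the two regions will be shown to contribute $\|Q\|_{L^1}t^{1/2}$ at main order, which accounts for the factor $2$ in~\eqref{e:L1}. First I would record that $S(t)\in L^1(\RR)$: exponential decay on the right by~\eqref{th:ptwiser}, the integrable power $|x|^{-3/2}$ on the left by~\eqref{eq:ptgche}, and local integrability in between.

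For the tail region, I would use~\eqref{eq:ptgche}: fixing $a\in(0,\frac1{20})$, for every $R\geq t^{-1}+1$ one has $S(t,-R)=-\frac12\|Q\|_{L^1}R^{-3/2}+O(R^{-3/2-a})$; in particular, taking $T_0$ small, $S(t,\cdot)<0$ on $(-\infty,-\frac1t-1)$, so $|S(t,-R)|=\frac12\|Q\|_{L^1}R^{-3/2}+O(R^{-3/2-a})$ there. Integrating in $R$ over $[t^{-1}+1,+\infty)$ and using $\int_{t^{-1}+1}^{\infty}R^{-3/2}\,dR=2(t^{-1}+1)^{-1/2}=2t^{1/2}+O(t^{3/2})$ together with $\int_{t^{-1}+1}^{\infty}R^{-3/2-a}\,dR=O(t^{1/2+a})$ gives $\int_{x<-\frac1t-1}|S(t,x)|\,dx=\|Q\|_{L^1}t^{1/2}+O(t^{1/2+a})$.

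For the region $x\geq-\frac1t-1$ I would change variables with $y=\frac{x-x(t)}{\lambda(t)}$ and write, using~\eqref{eq:decomposition} with $\epsilon=1$, $\ell_0=1$, $x_0=0$, that $\int_{x\geq-\frac1t-1}|S(t,x)|\,dx=\lambda^{1/2}(t)\int_{y\geq y_0(t)}\bigl|Q_{b(t)}(y)+\e(t,y)\bigr|\,dy$, where $y_0(t)=\frac{-\frac1t-1-x(t)}{\lambda(t)}=-\frac1t+O(1)$ by~\eqref{Spar}. Since $\bigl|\,|Q_b+\e|-|Q_b|\,\bigr|\leq|\e|$ pointwise, it suffices to estimate $\int_{y\geq y_0}Q$, $\int_{y\geq y_0}|R_{b}\chi_{b}|$ and $\int_{y\geq y_0}|\e|$. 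The first equals $\|Q\|_{L^1}+O(e^{-1/(2t)})$ because $|y_0|\sim\frac1t$ and $Q$ decays exponentially; the second is $O(|b|^{1-\g})=O(t^{2(1-\g)})$ by the bound~\eqref{eq:QbtoQ} with $j=0$, since the polynomially-growing part of $R_b\chi_b$ is supported in $\{|b|^\g|y|\leq2\}$, an interval of length $O(|b|^{-\g})$, on which $|R_b\chi_b|\lesssim|b|$; the third splits as $\int_{y_0\leq y<0}|\e|\leq|y_0|^{1/2}\bigl(\int_{y>-t^{-8/5}}\e^2\bigr)^{1/2}\lesssim t^{-1/2}t^{7/2}=t^{3}$ by~\eqref{Sloc} (note $y_0>-t^{-8/5}$ for $t$ small) plus $\int_{y\geq0}|\e|\lesssim\|\e\|_{\dot H^0_B}\lesssim t^{2K}$. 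Hence, using $\lambda^{1/2}(t)=t^{1/2}(1+O(t^2))$ from~\eqref{Spar}, one gets $\int_{x\geq-\frac1t-1}|S(t,x)|\,dx=\|Q\|_{L^1}t^{1/2}+O(t^{1/2+2(1-\g)})$.

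The only point requiring care is to make the cut-off error $O(t^{1/2+2(1-\g)})$ beat $O(t^{1/2+a})$: given $a\in(0,\frac1{20})$, one chooses $\g$ in the admissible range~\eqref{def:gamma} with $\frac{17}{20}<\g<1-\frac a2$, so that $2(1-\g)>a$ (this is precisely why two different values of $\g$ are permitted in Lemma~\ref{lemma:locprofile}). Summing the two regional estimates then yields $\int_\RR|S(t,x)|\,dx=2\|Q\|_{L^1}t^{1/2}+O(t^{1/2+a})$, which is~\eqref{e:L1}.
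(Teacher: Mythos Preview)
Your proof is correct and follows essentially the same approach as the paper: split at $x=-\tfrac1t-1$, use the pointwise asymptotics~\eqref{eq:ptgche} on the tail, and use the decomposition~\eqref{eq:decomposition} on the soliton side. The only differences are cosmetic: the paper splits into three regions (adding a cut at $x=-\tfrac1t+1$) and, by restricting the soliton integral to $|y|\lesssim t^{-1}$, obtains the sharper error $O(t^{3/2})$ for that piece, whereas your bound $O(t^{1/2+2(1-\g)})$ on the $R_b\chi_b$ contribution is weaker (though sufficient after choosing $\g<1-\tfrac a2$). In fact, since $y_0(t)\sim -t^{-1}>-|b|^{-\g}$, the support of $R_b\chi_b$ below $y_0$ is already of length $O(t^{-1})$, so your integral $\int_{y\geq y_0}|R_b\chi_b|$ is actually $O(t)$ as well; but your stated estimate is enough.
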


\begin{proof}
Let $0<a<\frac 1{20}$, $t\in (0,T_0]$, $R_1(t)=t^{-1}+1$ and $R_2(t)=t^{-1}-1$.
We estimate the $L^1$~norm of $S(t)$ on the three regions
$(-\infty,-R_1(t)]$, $[-R_1(t),-R_2(t)]$ and $[-R_2(t),+\infty)$.

First, we integrate~\eqref{eq:ptgche} on $[R_1(t),+\infty)$ and obtain
\[
\int_{-\infty}^{-R_1(t)} |S(t,x)|\, dx = \|Q\|_{L^1} (R_1(t))^{-\frac 12} + O((R_1(t))^{-\frac12-a})
= \|Q\|_{L^1} t^{\frac 12} + O(t^{\frac12+a}).
\]

Next, from the decomposition~\eqref{eq:decomposition} of~$S$, we have
\[
\int_{-R_1(t)}^{-R_2(t)} |S(t,x)|\,dx = \lambda^{\frac12}(t) \int_{y_1(t)}^{y_2(t)} |Q_{b(t)}(y) +\e(t,y)|\,dy,
\]
where $y_i(t)=\frac{-R_i(t)-x(t)}{\lambda(t)}$ for $i=1,2$ satisfy, from~\eqref{rappel},
\[
-\frac94 t^{-1} \leq y_1(t)\leq -\frac14 t^{-1}\quad \m{and}\quad \frac14 t^{-1} \leq y_2(t)\leq \frac94 t^{-1}.
\]
But, for $y\in [-3t^{-1},3t^{-1}]$, we have, using~\eqref{eq:QbtoQ} and~\eqref{eq:pteps},
\[
|Q_{b(t)}(y) +\e(t,y)| \leq Q(y) +C|b(t)| + Ct^{2+\frac54 \g} \leq Q(y) + Ct^2,
\]
for some $C>0$, and so, by integration,
\[
\int_{y_1(t)}^{y_2(t)} |Q_{b(t)}(y) +\e(t,y)|\,dy \leq \int_{-\frac94 t^{-1}}^{\frac94 t^{-1}} |Q_{b(t)}(y) +\e(t,y)|\,dy
\leq \|Q\|_{L^1} +Ct.
\]
Similarly, we obtain
\[
\int_{y_1(t)}^{y_2(t)} |Q_{b(t)}(y) +\e(t,y)|\,dy \geq \int_{-\frac14 t^{-1}}^{\frac14 t^{-1}} |Q_{b(t)}(y) +\e(t,y)|\,dy
\geq \|Q\|_{L^1} - C e^{-\frac14 t^{-1}} - C t.
\]
Since $\lambda(t)=t+O(t^3)$, we have obtained
\[
\int_{-R_1(t)}^{-R_2(t)} |S(t,x)|\,dx = \|Q\|_{L^1} t^{\frac 12} + O(t^{\frac32}).
\]

Finally, to estimate the term
\[
\int_{-R_2(t)}^{+\infty} |S(t,x)|\,dx = \lambda^{\frac12}(t) \int_{y>y_2(t)} |Q_{b(t)}(y) +\e(t,y)|\,dy,
\]
we use~\eqref{eq:Qbj} and the estimate of $\|\e(t)\|_{\dot H_B^0}$ in~\eqref{SHm} to obtain
\begin{align*}
\int_{-R_2(t)}^{+\infty} |S(t,x)|\,dx &\lesssim \int_{y>\frac14 t^{-1}} e^{-y/2}\,dy + \int_{y>0} |\e(t,y)|\,dy \\
&\lesssim e^{-\frac18 t^{-1}} + \left(\int_{y>0} \e^2(t,y)e^{y/B}\,dy \right)^{\frac12} \left( \int_{y>0} e^{-y/B}\,dy \right)^{\frac12} \\
&\lesssim e^{-\frac18 t^{-1}} + \|\e(t)\|_{\dot H_B^0} \lesssim t^{2K} \lesssim t^{40}.
\end{align*}

Thus, gathering the three above estimates of the $L^1$ norm of $S(t)$ for each region described previously, we obtain~\eqref{e:L1}.
\end{proof}

\begin{corollary}
For all $t\in (0,T_0]$,
\be \label{e:int}
\int S(t,x)\,dx=0.
\ee
\end{corollary}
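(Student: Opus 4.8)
The identity~\eqref{e:int} is the conservation of the zeroth momentum $\int u\,dx$ for~\eqref{kdv}, combined with the fact that $\|S(t)\|_{L^1}\to 0$ as $t\downarrow 0$. The plan is first to note that, by Proposition~\ref{p:L1}, the quantity
\[
m(t):=\int_\RR S(t,x)\,dx
\]
is well defined for every $t\in(0,T_0]$; then to show that $m$ is constant on $(0,T_0]$; and finally to let $t\downarrow 0$: since
\[
|m(t)|\leq\int_\RR|S(t,x)|\,dx=2\|Q\|_{L^1}t^{1/2}+O\bigl(t^{1/2+a}\bigr)\to 0
\]
by~\eqref{e:L1}, a constant function with this property must be identically zero, which is~\eqref{e:int}.

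To prove that $m$ is constant, I would fix $0<t_1<t_2\leq T_0$ and use that, by Theorem~\ref{thm:previous}, $S\in\mathcal{C}((0,+\infty),H^k)$ for every $k\geq 1$, so that $S$ is smooth on $(0,+\infty)\times\RR$ and $S_t=-(S_{xx}+S^5)_x$ pointwise. Integrating this identity over the rectangle $[-R,R']\times[t_1,t_2]$ and applying the fundamental theorem of calculus in each variable gives
\[
\int_{-R}^{R'}S(t_2,x)\,dx-\int_{-R}^{R'}S(t_1,x)\,dx=-\int_{t_1}^{t_2}\bigl[F(t,R')-F(t,-R)\bigr]\,dt,\qquad F:=S_{xx}+S^5.
\]
Letting $R,R'\to+\infty$, the left-hand side converges to $m(t_2)-m(t_1)$ because $S(t_1),S(t_2)\in L^1(\RR)$. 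For the right-hand side, the boundary flux is controlled by the pointwise bounds already established: on the right, \eqref{th:ptwiser} (with $m=0$ and $m=2$) gives $\sup_{t\in[t_1,t_2]}|F(t,R')|\to 0$ exponentially fast as $R'\to+\infty$, while on the left, \eqref{eq:diffgche} with $m=2$ together with~\eqref{eq:ptgche} gives $\sup_{t\in[t_1,t_2]}|F(t,-R)|\lesssim R^{-7/2}+R^{-15/2}\to 0$ as soon as $R\geq t_1^{-1}+1$. Hence the right-hand side tends to $0$, so $m(t_2)=m(t_1)$.

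The argument is nothing more than conservation of momentum, and the only step requiring a little care is the vanishing of the boundary flux $F(t,\pm R)$ in the limit; but this is exactly what the pointwise decay of $S$ and of its second derivative (exponential on the right, of order $|x|^{-3/2}$ and $|x|^{-7/2}$ on the left) provides, and the uniformity in $t\in[t_1,t_2]$ is automatic, since all the constants involved depend only on $t_1$. Combining the two steps yields~\eqref{e:int}. Qualitatively, this vanishing integral reflects the cancellation, visible in the proof of Proposition~\ref{p:L1}, between the positive mass $\sim\|Q\|_{L^1}t^{1/2}$ carried near the soliton and the negative contribution $\sim-\|Q\|_{L^1}t^{1/2}$ of the $|x|^{-3/2}$ tail on its left.
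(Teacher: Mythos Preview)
Your proof is correct and follows exactly the same strategy as the paper: use Proposition~\ref{p:L1} to get $S(t)\in L^1$, show that $\int S(t)$ is conserved, and let $t\downarrow 0$ using~\eqref{e:L1}. The only difference is that the paper states the conservation of $\int u$ for $L^1$ solutions of~\eqref{kdv} as a fact without justification, whereas you supply the details (integrate the equation over a box and kill the boundary flux using the pointwise bounds~\eqref{th:ptwiser} and~\eqref{eq:ptgche}--\eqref{eq:diffgche}); this is a legitimate and welcome expansion of a step the paper leaves implicit.
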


\begin{proof}
From Proposition~\ref{p:L1}, we deduce that, for all $t\in (0,T_0]$, $S(t)\in L^1(\RR)$ and in particular $\int S(t)$ is well defined.
Moreover, if $u$ is a solution of~\eqref{kdv} defined on some interval $I$ and $u(t)\in L^1(\RR)$ for all $t\in I$,
then $\int u(t)$ does not depend on $t$.
Thus, for some constant $C_0\in\RR$, we have $\int S(t)=C_0$ for all $t\in (0,T_0]$, and passing to the limit $t\to0$ in~\eqref{e:L1},
we obtain $C_0=0$, and so~\eqref{e:int}.
\end{proof}

\end{document}